\documentclass[11pt, a4paper]{article}
\usepackage[utf8]{inputenc}
\usepackage{amsmath,amssymb,amsfonts,dsfont}
\usepackage{amsthm}
\usepackage{a4wide}
\usepackage[usenames,dvipsnames]{xcolor}

\usepackage[unicode, bookmarks, colorlinks, breaklinks, 
	pdfauthor={Trien-Thuyen Dang},
]{hyperref}  
\usepackage[capitalize,nameinlink,noabbrev]{cleveref}

\newtheorem{theorem}{Theorem}
\newtheorem{proposition}[theorem]{Proposition}%
\newtheorem{remark}{Remark}%

\newtheorem{definition}{Definition}%

\newtheorem{lemma}[theorem]{Lemma}
\numberwithin{equation}{section}

\newcommand{\leb}[1]{\mathop \le \limits_{(#1)}}
\newcommand{\geb}[1]{\mathop \ge \limits_{(#1)}}
\newcommand{\lleb}[1]{\mathop < \limits_{(#1)}}
\newcommand{\ggeb}[1]{\mathop > \limits_{(#1)}}
\newcommand{\eqb}[1]{\mathop = \limits_{(~#1)}}

\newcommand{\hh}{\hspace*{.48in}}
\newcommand{\hk}{\hspace*{.12in}}

\def\beq{\begin{eqnarray*}}\def\eeq{\end{eqnarray*}}
\def\bq{\begin{equation}}\def\eq{\end{equation}}

\newcommand{\NN}{{\mathbb N}}

\newcommand{\RR}{{\mathbb R}}

\newcommand{\email}[1]{\href{mailto:#1}{\texttt{#1}}}

\newcommand{\cv}[1][]{%
\ifthenelse{\isempty{#1}}{\xrightarrow[\hphantom{~2~}]{}}{\xrightarrow[\hphantom{~2~}]{#1}}%
}
\newcommand{\wcv}[1][]{%
\ifthenelse{\isempty{#1}}{\xrightharpoonup[\hphantom{~2~}]{}}{\xrightharpoonup[\hphantom{~2~}]{#1}}%
}

\def\XXint#1#2#3{{\setbox0=\hbox{$#1{#2#3}{\int}$ }
\vcenter{\hbox{$#2#3$ }}\kern-.6\wd0}}

\allowdisplaybreaks

\begin{document}

\title{Green representations and global H\"{o}lder continuity  for solutions of elliptic equations}

\author{ Duong Minh Duc\footnote{
    Department of Mathematics and Computer Science, Vietnam
    National University Ho Chi Minh City--University of Science, 227
    Nguyen Van Cu Street, Phuong Cho Quan, Ho Chi Minh City, Vietnam
  (\email{dmduc@hcmus.edu.vn}).}}

\date{}

\maketitle

\begin{abstract}Let $N\in\NN$ and $u$ be a weak solution of equation $\displaystyle Lu\equiv -
\sum_{i,j=1}^{N}\frac{\partial}{\partial x_{j}}(\frac{\partial u}{\partial x_{i}}b^{ij})= f$ in $\Omega\subset \RR^{N}$. We obtain functions $G$  and  $H_{l}$  on $\Omega\times \Omega$ for every $l\in\{1,\cdots,N\}$ having following properties: if $f$ is in $L^{1}(\Omega)$, then  \\\hh
$\displaystyle \int_{\Omega}G(x,y)f(x)dx= u(y),$\\\hh
$\displaystyle\int_{\Omega}H_{l}(x,y)f(x)dx= -\frac{\
\partial u}{\partial x_{l}}(y)\hk a.e~y\in \Omega, \forall~l\in\{1,\cdots,N\}.$\\\hk 
Applying these results and using properties of Riesz potentials, we get the global H\"{o}lder continuity of $u$. $L$ may be not strictly nor uniformly elliptic and $u$ may  vanish on a part $A$ of the boundary and is free outside of $A$.  \end{abstract}

\paragraph{Keywords} elliptic, Green function, regularity, degenerate, singular

\paragraph{MSC Classification} 35J15, 35J08, 35B65, 35J70, 35J75



\maketitle
{{\bf Acknowledgments}.~~We would like to thank Professors James Eells,  Isabel M. C. Salavessa and Alberto Verjovski who have attracted us to the regularity for elliptic equations. We also would like to thank Dr. Thuyen Dang for helpful discussions.}

{\section{Introduction}\hk
 In this paper we study Green presentations for solutions and their gradients of elliptic equations and  obtain their global H\"{o}lder continuity by  combining these presentations with properties of Riesz operators.\\\hk
Let $\Omega$ be a bounded open subset of the euclidean space $\RR^{N}$ with $N\ge 2$, $\partial\Omega$ be the boundary $\Omega$, $A$ be a subset of $\partial\Omega$,  $t\in (\frac{2N^{2}+2N-2}{N^{2}+2N-1},2)$, $r=\frac{t(N+1)-2}{N-t}>2$, $\overline{r}\in (2,r)$,  $b^{ij}$ be a measurable function on $\Omega$ for every $i,j \in \{1,\cdots,N\}$, $B$ be $\{b^{ij}\}_{i,j}$,  $b$ and $\overline{b}$ be non-negative measurable functions on $\Omega$ such that $b^{ij}= b^{ji}$ for every $i,j \in \{1,\cdots,N\}$. We consider following conditions
\begin{equation} b|\xi|^{2}\le b^{ij}\xi_{i}\xi_{j}\le \overline{b}|\xi|^{2}\hh\forall~\xi=(\xi_{1},\cdots,\xi_{j})\in \RR^{N},
\label{c1}
\end{equation} 
 \begin{equation} b^{-1}\in L^{\frac{t}{2-t}}(\Omega),
\label{c3}
 \end{equation}
\begin{equation} b\in L^{\frac{\overline{r}}{\overline{r}-2}}(\Omega),
\label{c2}
 \end{equation}
 \begin{equation} \overline{b}\in L^{\frac{r}{r-2}}(\Omega),
\label{c6}
 \end{equation}
 \begin{equation} b^{-\frac{1}{2}}\overline{b}\in L^{\frac{2\overline{r}}{\overline{r}-2}}(\Omega),
\label{c4}
 \end{equation}\hk
  We put
 \begin{equation} L(u)= -\sum_{i,j=1}^{N}\frac{\partial}{\partial x_{j}}(\frac{\partial u}{\partial x_{i}}b^{ij}).
\label{c5}
 \end{equation}\hk
  Let $A$ be admissible with respect to $\Omega$ (see Definition \ref{d41}), $W_{B,A}(\Omega)$ be defined  as in Definition \ref{d44}. Then $W_{B,A}(\Omega)$  is  contained in the family of measurable function on $\Omega$ having first order generalized partial derivatives (see Lemma \ref{5z}).   If $B=\{\delta^{i}_{j}\}_{i,j}$ and $A = \partial\Omega$, then $W_{B,A}(\Omega)$ is the usual Sobolev'space $W^{1,2}_{0}(\Omega)$.\\\hk
   Let $f\in L^{1}(\Omega)$ and $u$ be in  $W_{B,A}(\Omega)$  such that $L(u)=f$ in weak sense, that is
\begin{equation}\int_{\Omega}\sum_{i,j=1}^{N}\frac{\partial u}{\partial x_{i}}\frac{\partial \varphi}{\partial x_{j}}b^{ij} dx = \int_{\Omega}f\varphi dx\hh\forall~\varphi \in W_{B,A}(\Omega).
 \label{eq}
 \end{equation} \hk
 Let $G$ be  a function on $\Omega\times \Omega$. If 
 \begin{equation}\int_{\Omega}G(x,y)f(x)dx= u(x)\hh a.e.~ x\in \Omega,
 \label{eqg}
 \end{equation}
 then $G$ is called the Green function of $L$ and \eqref{eqg} is called the Green presentation of $u$.\\\hk  
 If  $A=\partial\Omega$,  $\partial \Omega$ is $C^{1}$-smooth, $b$ and $\overline{b}$ are constant,  we have \eqref{c1}-\eqref{c4} and  $W_{B,A}(\Omega)$ is $W^{1,2}_{0}(\Omega)$ defined in  \cite[page 287]{BR}. In this case, if $\Omega$ is $B(a,s)$ in $\RR^{N}$, $N\in \NN$, $N\ge 3$, $\mu$ is measure  of bounded variation on $\Omega$ and 
\[u(x)=\int_{\Omega}G(x,y)d\mu,\]
 Littman, Stampachia and Weinberger have proved   $Lu=\mu$ in \cite[Theorem (6.1)]{LSW}.\\\hk
   If $N\ge 3$,  $\overline{b}$  is of class $A_{2}$ and 
\begin{equation} \frac{s}{h}[\frac{\overline{b}(B(x,s)}{\overline{b}(B(x,h)}]^{\theta}\le c[\frac{b(B(x,s)}{b(B(x,h)}]^{2},\hk 0<s<h, x\in \RR^{N},
\label{cha}
\end{equation}
 where $b(E)=\int_{E}bdx$ and $\overline{b}(E)=\int_{E}\overline{b}dx$, for some constant $c$ and $\theta >2$,
then Chanillo and Wheeden have proved: there is the Green  function $G$ defined on $\Omega\times\Omega = B(z,r)\times B(z,r)$ such that
\\\hh$\displaystyle u(y)=\int_{B(z,r)}f(x)G(x,y)dx \hk for~a.e.~y\in B(z,\frac{1}{2}r), \frac{f}{\overline{b}}\in L^{\frac{\delta}{\delta-1}}(B(z,r)). $\\
with some $\delta < \frac{\theta}{2}$ (see   \cite[Theorem 1.8]{CW}). The result in \cite{CW}  give us the local representation of solutions by Green function. \\\hk
  If $b$ and $\overline{b}$ are constant, in \cite{GW} Gr\"{u}ter and Widman have proved that there is the Green  $G$ on $\Omega\times \Omega$ such that $G(.,y)$ is in  $W^{1,2}(\Omega\setminus B(y,r))\cap W^{1,1}_{0}(\Omega)$ for every $y\in \Omega$ and $r$ in $(0,\infty)$ and
  \[\int_{\Omega}\sum_{i,j=1}^{N}G(x,y)Lu(x)dx=u(y)\hh\forall~u \in C^{\infty}_{c}(\Omega), y\in \Omega.\]\hk
  On other hand, if $b$ and $\overline{b}$ are constant and $\nabla b^{ij}$ is in $L^{2}(\Omega)$ for every $i,j \in\{1,\cdots,N\}$, Ladyzenskaja and Ural'ceva have proved the global H\"{o}lder continuity of solutions  in \cite[p.203]{LU}. If  coefficients $b^{ij}$ have the  VMO regularity and $\RR^{N}\setminus \Omega$  satisfies the uniform m-thickness condition, Byun,  Palagachev and Shin obtained this global H\"{o}lder continuity   in \cite{BPS}. If  these  coefficients satisfy the Dini mean oscillation in $\Omega$ and $ \partial\Omega$ is $C^{1,Dini}$, Dong, Escauriaza and  Kim  have proved that  the solution $u$ is in $C^{1}(\overline{\Omega})$ and $C^{2}(\overline{\Omega})$  in \cite{DEK}. \\\hk
 If $b^{ij}$ are only measurable and the equation is uniformly elliptic, Trudinger has proved the local H\"{o}lder continuity on $\partial\Omega$ of $u$ in \cite{TR}.\\\hk
 Our main results as follows.
 \begin{theorem}~~Let  $A$ be  admissible with respect to $\Omega$ and $\zeta \in (0,1)$. Then there are  $t(\zeta)\in (\frac{2N^{2}+2N-2}{N^{2}+2N-1},2)$ and $\overline{r}(\zeta)\in (2,\frac{t(\zeta)(N+1)-2}{N-t(\zeta)})$ such that for every   $t\in (t(\zeta),2)$, $r=\frac{t(N+1)-2}{N-t}$ and $\overline{r}\in (2,\overline{r}(\zeta))$ satisfying   \eqref{c1}-\eqref{c4},  there is a   function $G$ on $\Omega\times \Omega$ having following properties:  If  $u\in W_{B,A}(\Omega)$ and $f\in L^{1}(\Omega)$ are as in \eqref{eq}, then 
\[\int_{\Omega}G(x,y)f(x)dx= u(x)\hh a.e.~ x\in \Omega\]
in two following cases\\\hk
$(i)$~~$N\in  \{2,\cdots,8\}$ and \eqref{c9} holds.\\\hk
$(ii)$~~$N\in  \{3,4,\cdots\}$, $A=\partial\Omega$, $b$ and $\overline{b}$ are constant.
\label{th1}
\end{theorem}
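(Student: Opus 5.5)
Our plan is to build $G(\cdot,y)$ as the solution of the adjoint problem with a Dirac mass at $y$, obtained as a limit of solutions with approximate Dirac data. By symmetry of $B$, the weak formulation \eqref{eq} is self-adjoint. For $y\in\Omega$ and small $\rho>0$ we let $G_\rho(\cdot,y)\in W_{B,A}(\Omega)$ be the solution of
\[\int_\Omega b^{ij}\partial_i G_\rho\,\partial_j\varphi\,dx=\frac{1}{|B(y,\rho)\cap\Omega|}\int_{B(y,\rho)\cap\Omega}\varphi\,dx\qquad\forall\varphi\in W_{B,A}(\Omega).\]
Existence will come from Lax--Milgram (Riesz representation) on $W_{B,A}(\Omega)$ with the energy inner product. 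For this we need coercivity and a Sobolev-type embedding for $W_{B,A}(\Omega)$. This is where \eqref{c3} enters: Hölder's inequality gives $\|\nabla v\|_{L^t}\le\|b^{-1}\|_{L^{t/(2-t)}}^{1/2}\big(\int b|\nabla v|^2\big)^{1/2}$. Together with the Poincaré inequality for admissible $A$ and the Sobolev embedding $W^{1,t}\subset L^{t^*}$, this makes the right-hand side a bounded functional for each fixed $\rho$.

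Testing the equation for $G_\rho(\cdot,y)$ with $u$, and the equation \eqref{eq} for $u$ with $G_\rho(\cdot,y)$, gives the key identity
\[\int_\Omega G_\rho(x,y)f(x)\,dx=\frac{1}{|B(y,\rho)\cap\Omega|}\int_{B(y,\rho)\cap\Omega}u\,dx.\]
This holds at first for $f$ regular enough that $G_\rho f$ is integrable, for instance $f\in L^\infty$. By Lebesgue differentiation the right side tends to $u(y)$ for a.e.\ $y$. The task is therefore to pass to the limit on the left. We will do this by proving bounds on $G_\rho(\cdot,y)$ that are uniform in $\rho$: in $L^{q}$ for some $q$ (a weak Lebesgue estimate of Grüter--Widman type), and $\nabla G_\rho$ in $L^{s}$ for $s<N/(N-1)$. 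We obtain these by testing with truncations $\min(G_\rho,k)$ and the usual level-set arguments, converted to unweighted norms via \eqref{c3} and \eqref{c6}. The exponent conditions ($t$ close to $2$, and $r$, $\overline r$ close to $2$ according to $\zeta$) are chosen exactly so that these Hölder exponents close.

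A diagonal subsequence then gives $G_\rho(\cdot,y)\to G(\cdot,y)$ weakly. To extend from bounded $f$ to $f\in L^1$ we need $G(\cdot,y)$ to be bounded away from $y$, or more precisely $\sup_x\int$-type control. The route is to use $\int_\Omega G(x,y)f(x)\,dx$ as a function of $y$, apply Fubini, and approximate $f\in L^1$ by truncations $f_k$. The corresponding solutions $u_k$ converge to $u$ in a weak sense by the $L^1$ stability estimate, and we pass to the limit for a.e.\ $y$.

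We expect the main obstacle to be this uniform estimate in the degenerate setting, because De Giorgi--Moser bounds are not directly available. In case (i) we would use hypothesis \eqref{c9}, presumably an integrability or smallness assumption, together with the restriction $N\le 8$, which is what makes the exponent arithmetic with $t$, $r$, $\overline r$ consistent. In case (ii), with constant ellipticity and $A=\partial\Omega$, we would simply invoke the Grüter--Widman Green function \cite{GW}. Its pointwise bound $G(x,y)\le C|x-y|^{2-N}$ and symmetry, together with the duality identity tested first on $C_c^\infty$ and then extended by density to $f\in L^1$, give the representation directly.
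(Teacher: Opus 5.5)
Your route differs from the paper's, and its first half works. Three ingredients already give the representation for bounded $f$, with no pointwise information:
\begin{itemize}
\item the symmetric identity $\int_\Omega G_\rho(x,y)f(x)\,dx=|B(y,\rho)|^{-1}\int_{B(y,\rho)}u\,dx$;
\item a $\rho$-uniform weak-$L^{r/2}$ bound on $G_\rho(\cdot,y)$ from a level-set argument, as in Proposition \ref{lem62xz}(ii); since $r>2$, this gives a uniform $L^q$ bound with some $q>1$;
\item weak compactness.
\end{itemize}
Two side remarks. The bound $\nabla G_\rho\in L^s$, $s<N/(N-1)$, is neither justified in this weighted setting nor needed. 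The exponent conditions are not there to close such H\"older exponents, and $r$ approaches $2N/(N-2)$ (or $\infty$ if $N=2$), not $2$.

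The paper instead proves a uniform decay bound $0\le G_\rho(x,z)\le C|x-z|^{2-N-\zeta}$, in two stages.
\begin{itemize}
\item Off $B(z,2\rho)$, it uses the modified Moser iteration of Proposition \ref{pro46b}. This is started from the $L^\gamma$ bound with $\gamma$ near $r/2$, and $t,\overline r$ are tuned so that $\frac{r\overline r}{\gamma(r-\overline r)}\approx N-2$. This is exactly where $N\le 8$ enters: one needs $r/2\approx N/(N-2)\ge 4/3$ to get $\gamma/\overline r>\frac23-\delta$, the range of Lemma \ref{lemfs1b}.
\item On all of $\Omega\setminus\{z\}$, it feeds the representation for bounded data back in (Step 2 of Proposition \ref{pro62}).
\end{itemize}
For $f\in L^1$, the $\rho$-independent majorant $C|x-z|^{2-N-\zeta}|f(x)|$ is integrable for a.e.\ $z$ by Young and Fubini. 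Dominated convergence then passes $\rho\to0$ directly.

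The gap is your passage to $f\in L^1$. You correctly say Fubini needs $\sup_x\int_\Omega G(x,y)\,dy<\infty$, but you never derive it. Instead you place the difficulty in a pointwise estimate to be obtained ``presumably'' from \eqref{c9} and $N\le 8$. That misreads the hypotheses.
\begin{itemize}
\item \eqref{c9} is not an integrability or smallness condition. It is the equivalence of the completion norm with $\big(\int_\Omega b^{ij}\partial_iw\,\partial_jw\,dx\big)^{1/2}$. The paper uses it so that the Riesz solution (your $G_\rho$) satisfies the integral form of the equation, which is what your symmetric identity silently relies on.
\item $N\le 8$ plays no role in your argument.
\end{itemize}

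The missing bound follows from your own ingredients by duality. Testing with $G_\rho^-$ gives $G_\rho\ge 0$, hence $G\ge 0$. For bounded $g\ge 0$ with solution $u_g$, Tonelli and the representation give
\[\int_\Omega g(x)\Big(\int_\Omega G(x,y)\,dy\Big)dx=\int_\Omega u_g(y)\,dy\le C\|g\|_{L^1(\Omega)},\]
where the inequality is the same level-set estimate applied to $u_g$. Hence $\int_\Omega G(x,y)\,dy\le C$ for a.e.\ $x$, so $\int_\Omega G(x,y)|f(x)|\,dx<\infty$ for a.e.\ $y$. Now take truncations $f_k$ of $f$ and test with two-sided truncations of $u-u_k$ to get
\[|\{|u-u_k|>\lambda\}|\le \bigl(C\|f-f_k\|_{L^1}/\lambda\bigr)^{r/2},\]
which lets you pass to the limit. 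You also need $G$ chosen jointly measurable, a point the paper skips as well.

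Completed this way, your route gives the representation without any decay estimate, so without $N\le 8$ or the tuning of $t,\overline r$ to $\zeta$. What it does not give is \eqref{81bzz}, which the paper needs later for Riesz-potential bounds such as \eqref{gfz1}. Your treatment of case (ii) via \cite{GW} matches Remark \ref{gw1}.
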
\hk
 \begin{theorem}~~Let  $A$ be  admissible with respect to $\Omega$ and $\zeta \in (0,1)$. Then there are  $t(\zeta)\in (\frac{2N^{2}+2N-2}{N^{2}+2N-1},2)$ and $\overline{r}(\zeta)\in (2,\frac{t(\zeta)(N+1)-2}{N-t(\zeta)})$ such that for every   $t\in (t(\zeta),2)$, $r=\frac{t(N+1)-2}{N-t}$ and $\overline{r}\in (2,\overline{r}(\zeta))$ satisfying   \eqref{c1}-\eqref{c4},  there is  a function  $H_{l}$ on $\Omega\times \Omega$ for every $l\in\{1,\cdots,N\}$ having following properties:  If  $u\in W_{B,A}(\Omega)$ and $f\in L^{1}(\Omega)$ are as in \eqref{eq}, then 
\[\int_{\Omega}H_{l}(x,z)f(x)dx= -\frac{\
\partial u}{\partial x_{l}}(z)\hh a.e~z\in \Omega, \forall~l\in\{1,\cdots,N\},
\]
in two following cases\\\hk
$(i)$~~$N\in  \{2,\cdots,8\}$,  \eqref{c9} and \eqref{thuyen}  hold.\\\hk
$(ii)$~~$N\in  \{3,4,\cdots\}$, $A=\partial\Omega$, $b$ and $\overline{b}$ are constant, and $b^{ij}$ is Dini-continuous on $\Omega$ for every $i,j \in\{1,\cdots,N\}$ (see definition in \cite{GW}).
\label{th2}
\end{theorem}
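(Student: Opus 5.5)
The plan is to build $H_{l}$ from the Green function of Theorem \ref{th1}, as a derivative of $G$ in the singular variable. The aim is that the identity $u(z)=\int_{\Omega}G(x,z)f(x)\,dx$ can be differentiated in $z$. For a mollified pole I would set $G_{\rho}(\cdot,z)\in W_{B,A}(\Omega)$ to be the weak solution of
\[
\int_{\Omega}\sum_{i,j}\frac{\partial G_{\rho}(\cdot,z)}{\partial x_{i}}\frac{\partial\varphi}{\partial x_{j}}b^{ij}\,dx=\frac{1}{|B(z,\rho)\cap\Omega|}\int_{B(z,\rho)\cap\Omega}\varphi\,dx \qquad \forall\,\varphi\in W_{B,A}(\Omega).
\]
Existence comes from Lax--Milgram on $W_{B,A}(\Omega)$, using the coercivity from \eqref{c1} and the embedding of $W_{B,A}(\Omega)$ into $W^{1,t}$ from Lemma \ref{5z} and \eqref{c3}. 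Testing with $u$ and using symmetry of $b^{ij}$ gives $\int_{\Omega}G_{\rho}(x,z)f(x)\,dx=\fint_{B(z,\rho)}u$. The $G$ of Theorem \ref{th1} is obtained as the limit of $G_{\rho}$ as $\rho\to0$.

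For the gradient, I would replace the averaging functional by a difference-quotient functional $\varphi\mapsto -\frac{1}{h}\bigl(\fint_{B(z+he_{l},\rho)}\varphi-\fint_{B(z,\rho)}\varphi\bigr)$. The resulting solutions $H_{l,h,\rho}(\cdot,z)$ satisfy $\int_{\Omega}H_{l,h,\rho}(x,z)f(x)\,dx=-\frac{1}{h}\bigl(\fint_{B(z+he_l,\rho)}u-\fint_{B(z,\rho)}u\bigr)$. Let $h\to0$ and then $\rho\to0$. The right side tends to $-\partial_{l}u(z)$ at every Lebesgue point of $\partial_l u$, which holds a.e.\ since $\partial_l u\in L^{1}$ by Lemma \ref{5z}. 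Then define $H_{l}(x,z)=\lim H_{l,h,\rho}(x,z)$, which formally equals $-\partial_{z_{l}}G(x,z)$.

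The analytic core is a set of uniform bounds needed to pass to the limit inside $\int H f$ for arbitrary $f\in L^{1}$. Since $f$ is only integrable, I need $\sup_{x}|H_{l,h,\rho}(x,z)|$ controlled by an integrable function of $z$, or a bound on $\int\!\!\int$ that lets Fubini and dominated convergence work.

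In case $(i)$ I would get this by a Moser/De Giorgi iteration in the weighted setting. The exponents $t,r,\overline r$ are tuned through \eqref{c2}--\eqref{c4} to make the Sobolev gain beat the weight loss. The restriction $N\le 8$ together with \eqref{c9} and \eqref{thuyen} enters exactly here, and $t(\zeta),\overline r(\zeta)$ are chosen close to $2$ so that the iteration closes. The goal is an estimate $|H_{l}(x,z)|\le C|x-z|^{1-N}$ up to a weight factor, i.e.\ a Riesz-potential kernel, whose $z$-integral is finite.

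In case $(ii)$ the coefficients are uniformly elliptic, so I would instead invoke Grüter--Widman \cite{GW}. For Dini-continuous $b^{ij}$ they give $|\nabla_{y}G(x,y)|\le C|x-y|^{1-N}$, and I would set $H_{l}=-\partial_{y_{l}}G$ directly. The representation then follows by first proving it for $f\in C^{\infty}_{c}$ via their identity, and extending to $L^{1}$ by density, using that $f\mapsto\int|x-z|^{1-N}|f(x)|\,dx$ is bounded $L^{1}\to L^{1}$.

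The main obstacle is the uniform pointwise gradient bound on the approximate kernels in case $(i)$, since for merely measurable degenerate coefficients one cannot expect it pointwise. I would first try to prove only the weaker integrated bound $\sup_{x}\int_{\Omega}|H_{l,h,\rho}(x,z)|\,dz\le C$. By duality this is equivalent to an $L^{\infty}\to L^{\infty}$ bound for $f\mapsto\nabla u$, for which \eqref{thuyen} seems designed. With that bound, Fubini plus weak-$*$ compactness suffices to identify the limit for all $f\in L^{1}$.
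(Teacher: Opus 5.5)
Your overall scheme matches the paper's (Lemma \ref{lem62xh} and Proposition \ref{nab}): solve with a differentiated mollifier at the pole, test against $u$, apply Lebesgue differentiation on the right, and pass to the limit by domination. The paper uses the smooth datum $\partial_{l}\psi_{\rho}(\cdot-z)$ instead of your ball difference quotients, which also spares you a separate $h\to0$ limit of solutions.

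The gap is in case $(i)$, exactly at the step you call the analytic core: you never obtain a bound on the approximate kernels that is uniform in $\rho$.
\begin{itemize}
\item A Moser/De Giorgi iteration cannot give pointwise gradient bounds for measurable degenerate coefficients, as you concede.
\item Your fallback does not close. The quantity $\sup_{x}\int_{\Omega}|H(x,z)|\,dz$ integrates the kernel over the output variable $z$. It is therefore an $L^{1}\to L^{1}$ bound for $f\mapsto\nabla u$, not an $L^{\infty}\to L^{\infty}$ one.
\item You give no derivation of that bound from \eqref{thuyen}.
\item A uniform $L^{1}$ bound only gives compactness in the sense of measures in $z$. It produces neither a function $H_{l}$ nor the identity for a.e.\ $z$ and every $f\in L^{1}$ without extra domination or equi-integrability.
\end{itemize}

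The missing idea is that \eqref{thuyen} is not something to prove by iteration. It is a hypothesis: a pointwise Riesz-type bound on $\nabla_{x}G$ for the Green function already built in Theorem \ref{th1}. You should combine it with that theorem's representation. The approximate kernel $H_{\rho,l}(\cdot,z)=v_{z,\rho,l}$ solves \eqref{eq} with the bounded datum $\partial_{l}\psi_{\rho}(\cdot-z)$, so \eqref{834b} applies to it. One integration by parts in $x$ then gives
\[
|H_{\rho,l}(y,z)|=\Bigl|\int_{\Omega}\frac{\partial G}{\partial x_{l}}(x,y)\,\psi_{\rho}(x-z)\,dx\Bigr|\le C\rho^{-N}\int_{B(z,\rho)}|x-y|^{1-N-\zeta}\,dx\le C'|y-z|^{1-N-\zeta},
\]
uniformly in $\rho$, after splitting into $|y-z|<2\rho$ and $|y-z|\ge 2\rho$.

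This pointwise majorant drives the rest of the argument:
\begin{itemize}
\item the diagonal-subsequence limit defining $H_{l}$, with $|H_{l}(x,z)|\le C|x-z|^{1-N-\zeta}$;
\item Young's inequality plus Fubini, giving integrability of $|x-z|^{1-N-\zeta}|f(x)|$ for a.e.\ $z$;
\item dominated convergence in $\int_{\Omega} H_{\rho,l}(x,z)f(x)\,dx$.
\end{itemize}
The restriction $N\le 8$ and condition \eqref{c9} enter only through Theorem \ref{th1}, where the iteration bounds $G$. They play no role in the gradient step.

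In case $(ii)$, the Gr\"{u}ter--Widman gradient bound plays the role of \eqref{thuyen} in the same argument. Your direct route can also be made to work, with two corrections. First, the Gr\"{u}ter--Widman identity holds for test functions $u\in C^{\infty}_{c}(\Omega)$, not for solutions with smooth data. The representation $u=\int_{\Omega} G(x,\cdot)f(x)\,dx$ for solutions must come from Theorem \ref{th1}$(ii)$. Second, with that representation in hand, differentiating in the pole variable distributionally (using symmetry of $G$ and Fubini) gives the result for every $f\in L^{1}$ directly, so the density step is unnecessary.
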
\hk
    By Remarks \ref{r4} and \ref{r4b}, we get global representation of $u$ and $\nabla u$ when $\overline{b}$ may be not in class $A_{2}$ and $L$ may be degenerate and non-uniform.  Furthermore the solution $u$ in our resuts may be only null in $A$ and free outside of $A$.\\\hk
Applying these theorems and using properties of Riesz operators, we obtain the global H\"{o}lder continuity of $u$ as follows.
\begin{theorem} Assume conditions in Theorems \ref{th1} and \ref{th2} hold, $\partial\Omega$ is minimally smooth $($see definition in  \cite[p.189]{ST}$)$ and one of following conditions\\\hk
$(i)$~~$N\in  \{2,\cdots,8\}$,  \eqref{c9} and \eqref{thuyen}  hold.\\\hk
$(ii)$~~$N\in  \{3,4,\cdots\}$, $A=\partial\Omega$, $b$ and $\overline{b}$ are constant, and $b^{ij}$ is Dini-continuous on $\Omega$ for every $i,j \in\{1,\cdots,N\}$.\\\hk
 We have\\\hk
$(a)$~~If  $f\in L^{\frac{N}{2-s}}(\Omega)$ with some $s$ be in $(0,1)$. Then there  are  $C_{3}\in (0,\infty)$ and $\tau\in (0,1)$  such that \\\hh
$|u(x)-u(y)|\le C_{3}|x-y|^{\tau}\hh\forall~x,y\in\Omega.$\\\hk
$(b)$ If $\theta\in (0,\frac{1}{4})$, $\zeta= \frac{1}{4}\theta$,$\epsilon\in (0,\frac{2\theta}{N})$ is as in Remark \ref{rep}, $b^{-1}\in L^{\frac{1}{\epsilon}}(\Omega)$ and there are non-negative measurable function $a_{0}$, $a_{1}$, $a_{1}$  on $\Omega$ such that $(a_{0}+ a_{1}+a_{2}^{2})\in L^{\frac{N}{2-2\theta}}(\Omega)$ and $|f|\le a_{2}|\nabla u|+ a_{1}u+a_{0}$. Then  $u$ is H\"{o}lder continuous up to the boundary of $\Omega$.
\label{th3}
\end{theorem}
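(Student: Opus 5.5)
The approach is to combine the gradient representation of Theorem \ref{th2} with mapping properties of Riesz potentials, and then use Morrey's embedding on the minimally smooth domain. By Theorem \ref{th2}, for a.e. $z\in\Omega$ and each $l$,
\[-\frac{\partial u}{\partial x_{l}}(z)=\int_{\Omega}H_{l}(x,z)f(x)dx .\]
The key quantitative input, which I expect to extract from the construction of $H_{l}$ in the body of the paper (case $(i)$ via \eqref{c9}, \eqref{thuyen}; case $(ii)$ via the Dini continuity and the Gr\"{u}ter--Widman estimates), is a pointwise bound of the form $|H_{l}(x,z)|\le C|x-z|^{1-N}$, possibly with a small loss $|x-z|^{1-N-\eta}$ with $\eta=\eta(\zeta)\to 0$. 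Granting this, $|\nabla u|\le C\,I_{1-\eta}(|f|)$, where $I_{\alpha}$ is the Riesz potential.

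For $(a)$, take $f\in L^{N/(2-s)}$. The Hardy--Littlewood--Sobolev inequality gives $I_{1-\eta}:L^{p}\to L^{q}$ with $\frac1q=\frac1p-\frac{1-\eta}{N}$. With $p=\frac{N}{2-s}$ this is $\frac1q=\frac{1-s+\eta}{N}$, so $q=\frac{N}{1-s+\eta}>N$ once $\eta<s$. We fix $\zeta$, and hence $t(\zeta)$ and $\overline{r}(\zeta)$, small enough that this holds. Then $\nabla u\in L^{q}(\Omega)$ with $q>N$. Since $u\in W_{B,A}(\Omega)$ has generalized derivatives by Lemma \ref{5z}, and H\"{o}lder's inequality with \eqref{c3} gives $u\in W^{1,t}$, we get $u\in W^{1,q}(\Omega)$. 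A minimally smooth domain admits Stein's extension operator \cite[p.189]{ST}, so Morrey's inequality applies globally and yields $|u(x)-u(y)|\le C_{3}|x-y|^{\tau}$ with $\tau=1-N/q$.

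For $(b)$ I would bootstrap. First, $u\in L^{\infty}$ or at least $u\in L^{p}$ for large $p$; this comes from $b^{-1}\in L^{1/\epsilon}$ and \eqref{c3} together with the Green representation of Theorem \ref{th1}, giving $u\in L^{m}$ for some large $m$ via Sobolev and Riesz bounds. Then $|f|\le a_{2}|\nabla u|+a_{1}|u|+a_{0}$. By H\"{o}lder, $a_{0}\in L^{N/(2-2\theta)}$ and $a_{1}u\in L^{p_{1}}$ with $p_{1}$ slightly below $N/(2-2\theta)$. For the gradient term, $a_{2}\in L^{2N/(2-2\theta)}$, so if $\nabla u\in L^{q}$ then $a_{2}\nabla u\in L^{p}$ with $\frac1p=\frac{1-\theta}{N}+\frac1q$. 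Feeding this into $|\nabla u|\le C I_{1-\eta}(|f|)$ gives $\frac1{q'}=\frac1q-\frac{\theta-\eta}{N}$. Since $\zeta=\theta/4$ forces $\eta\le\epsilon<2\theta/N$, and in any case $\eta<\theta$, the exponent improves by a fixed amount $(\theta-\eta)/N$ at each step. After finitely many iterations $f\in L^{N/(2-s)}$ for some $s\in(0,1)$, and part $(a)$ finishes the proof. The iteration starts from $\nabla u\in L^{t}$ and $u\in L^{t^{*}}$.

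The main obstacle is the kernel bound for $H_{l}$ (and for $G$) with exponent loss $\eta$ controlled by $\zeta$. Everything else is standard Riesz-potential and embedding machinery. For this bound I would go back to how $G$ and $H_{l}$ are built as limits of approximate Green functions with $W^{1,t}$ or weighted estimates. I would try to show $\sup_{z}\|H_{l}(\cdot,z)|\cdot-z|^{N-1+\eta}\|_{\infty}<\infty$, or, failing a pointwise bound, a uniform weak-$L^{N/(N-1+\eta)}$ bound in $x$ uniformly in $z$. The weak bound suffices, because the Riesz/HLS argument (via Schur's test on weak-type kernels) only needs such integrability to give the same $L^{p}\to L^{q}$ mapping.
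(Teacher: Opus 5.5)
Your proposal is correct and follows essentially the paper's route. The kernel bound $|H_{l}(x,z)|\le C|x-z|^{-N+1-\zeta}$ gives $|\nabla u|\le C\,I_{\Omega,1-\zeta}(|f|)$, and Hardy--Littlewood--Sobolev then puts $\nabla u$ in $L^{q}$ with $q=\frac{N}{1-s+\zeta}>N$. This needs $s>\zeta$, exactly as the paper's Proposition \ref{prozz} assumes $\theta>\zeta$. Extension across the minimally smooth boundary plus Morrey finish (a), and (b) is the same finite bootstrap gaining $\frac{\theta-\zeta}{N}$ per step as in Proposition \ref{prozzbb}.

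Two small points. First, the kernel bound you call the main obstacle is not one: it is \eqref{81bzz1}, obtained directly from \eqref{thuyen} and the Green representation in Lemma \ref{lem62xh}, or from \cite{GW} in case $(ii)$. Second, in (b) you cannot assert $u\in L^{m}$ for large $m$ at the outset. You must improve the integrability of $u$ in lockstep with that of $\nabla u$; the paper uses $|u|\le M I_{\Omega,2-\zeta}(|f|)$, and Sobolev embedding on the minimally smooth domain also works. Done this way, the term $a_{1}|u|$ is never the bottleneck.
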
\hk
   In sections \ref{wi} and \ref{app}, we introduce and study the properties of functions spaces, which are used in  the proof of Theorems \ref{th1}-\ref{th3}. We shall use the techniques of Serrin  in \cite{SE}, but these technique are only applicable to local regularity. Therefore we have to modify auxiliary functions in \cite{SE} in section \ref{aux}. This section is long and has many computations, but we would like to make sure the validity of our results. In sections \ref{bo} we obtain the global boundedness of  solution of elliptic equations. These results  are essential for next sections. We get theorems \ref{th1} and \ref{th2}  in sections \ref{gr} and \ref{ho}. We prove Theorems \ref{th3} in section \ref{ho}.
 \section{Functions spaces}\label{wi}
\hk 
In this section, let $N\ge 2$, $\Omega$ be a bounded open in $\RR^{N}$,  $t\in (\frac{2N^{2}+2N-2}{N^{2}+2N-1},2)$, $r=\frac{t(N+1)-2}{N-t}$, $t^{\ast}=\frac{tN}{N-t}$, $b^{ij}$, $B$, $b$ be as in the introduction. Assume \eqref{c1} and \eqref{c3} hold.  We have
\[
 r= \frac{tN-(2-t)}{N-t}< t^{\ast},
\]
\[
r-2 = \frac{tN-(2-t) -2(N-t)}{N-t}= \frac{t(N+3) -2(N+1)}{N-t}
 \]
\[\ggeb{t>\frac{2N^{2}+2N-2}{N^{2}+2N-1}}\frac{2}{(N-t)(N^{2}+2N-1)}[(N^{2}+N-1)(N+3)-(N+1)(N^{2}+2N-1)]\]
\[=\frac{2(N^{2}+N-2)}{(N-t)(N^{2}+2N-1)}\ggeb{N\ge 2}0.\]\hk
Thus
\begin{equation}
  1< t<2 < r <t^{\ast}.
 \label{rt}
\end{equation}\\\hk
\begin{definition}Let $\overline{t}\in (\frac{2N^{2}+2N-2}{N^{2}+2N-1},2)$, $r=\frac{\overline{t}(N+1)-2}{N-\overline{t}}>2$, $\overline{r}\in (2,r)$. We put \\\hk
  $C^{1}(\overline{\Omega})$:~ the family of function $v|_{\Omega}$ with $v$ in  $C^{1}(U_{v})$, where $U_{v}$ is an open subset contained $\Omega$ in $\RR^{N}$, \\\hk
   $C^{1}(\Omega,A)$:~ the family of function $v|_{\Omega}$ with $v$ in  $C_{c}^{1}(\RR^{N})$ such that $v(x)=0$ for  every $x$ in $A$,\\\hk
     $\displaystyle|||v|||_{B} = \{\int_{\Omega}\sum_{i,j=1}^{N}\frac{\partial v}{\partial x_{i}}\frac{\partial v}{\partial x_{j}}b^{ij}dz\}^{\frac{1}{2}} \hh\forall~v \in C^{1}(\Omega,A),$\\\hk
    $\displaystyle|||v|||_{b} = \{\int_{\Omega}|\nabla  v|^{2}bdx\}^{\frac{1}{2}} \hh\forall~v \in C^{1}(\Omega,A),$\\\hk
      \label{space}
 \end{definition}\hk
  We have
  \begin{equation}
  |||w|||_{b} \leb{\eqref{c1}} |||w|||_{B}\hh\forall~w \in C^{1}(\Omega,A).
  \label{i1}
\end{equation}  
  \begin{definition} Let   $A\subset \partial\Omega$. We say ~$A$   is admissible with respect to $\Omega$, if there is a positive real number $C(r,\Omega,A)$ such that
  \begin{equation}\{\int_{\Omega} |v|^{r}dx\}^{\frac{1}{r}}\le C(r,\Omega,A)\{\int_{\Omega} |\nabla v|^{t}dx\}^{\frac{1}{t}} \hh\forall ~v \in C^{1}(\Omega,A).
\label{ineq}
\end{equation}   
  \label{d41}
 \end{definition}    
  Let $\Sigma_{N-1}$ be the unit sphere of $\RR^{N}$. We denote the canonical measure on $\Sigma_{N-1}$ by $\sigma$ (see  \cite[\S III.3]{CH}). We have a following example of admissible subsets.
  \begin{proposition}  $A$ is admissible with respect to $\Omega$ in following cases.\\\hk
$(i)$~~ $\Omega$ is convex, $A=\partial\Omega$ and $\displaystyle C(r,\Omega,A)= \frac{c(N,r,t)|\Omega|^{\frac{t^{\ast}-r}{rt^{\ast}}} }{\sigma(\Sigma_{N-1})}$ where $c(N,r,t)$ is a real number depending only from $N, r, t$.\\\hk
$(ii)$~~$U$ is a convex bounded open subset of $\RR^{N}$, $B'(z,2s)$ is a subset of $U$, $\Omega=U\setminus B'(z,2s)$, $A= \partial B(z,2s)$ and $\displaystyle C(r,\Omega,A)=\frac{c(N,r,t)|\Omega|^{\frac{t^{\ast}-r}{rt^{\ast}}} 2diam \Omega}{s\sigma(\Sigma_{N-1})}$.\\\hk
$(iii)$~~ $\Omega=B(z,ks)\setminus B'(z,2s)$, $A= \partial B(z,2s)$, $z\in \RR^{N}$, $k>2$  and $\displaystyle C(r,\Omega,A)=\frac{c(N,r,t)|\Omega|^{\frac{t^{\ast}-r}{rt^{\ast}}} k}{\sigma(\Sigma_{N-1})}$.\\\hk
$(iii)$~~$\Omega=(0,1)^{N}$ and $A =\partial\Omega \setminus (\{0\}\times[0,1]^{N-1})$.\\\hk
$(iv)$~~$\Omega$ is connected and of class $C^{1}$ and $\mu(A)>0$, where $\mu$ is the Riemannian measure on $\partial\Omega$ (see definition in \cite[pp.120-121]{CH}).
 \label{admissible}
  \end{proposition}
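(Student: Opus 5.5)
The core of the argument is the standard potential estimate
\[
|v(x)|\le \frac{d^{N}}{N|C_{x}|}\int_{C_{x}}\frac{|\nabla v(y)|}{|x-y|^{N-1}}\,dy ,
\]
valid for $v\in C^{1}$ vanishing on a suitable part of the boundary. Here $C_{x}$ is a region reachable from $x$ along segments that end where $v=0$, and $d$ is its diameter. I combine this with the boundedness of the Riesz potential $I_{1}$ from $L^{t}$ to $L^{t^{\ast}}$ (Sobolev--Hardy--Littlewood). Since $r<t^{\ast}$ by \eqref{rt}, H\"older's inequality on the bounded set $\Omega$ gives $\|v\|_{r}\le |\Omega|^{\frac{t^{\ast}-r}{rt^{\ast}}}\|v\|_{t^{\ast}}$. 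This explains the factor $|\Omega|^{\frac{t^{\ast}-r}{rt^{\ast}}}$ in every constant. So in each case it suffices to prove $\|v\|_{t^{\ast}}\le c\,\|\nabla v\|_{t}$ for $v\in C^{1}(\Omega,A)$.

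For $(i)$, $v\in C^{1}(\Omega,A)$ with $A=\partial\Omega$ is the restriction of a $C^{1}_{c}$ function vanishing on $\partial\Omega$. Setting it equal to zero outside $\Omega$ gives a Lipschitz compactly supported function. I write $v(x)=-\int_{0}^{\infty}\partial_{\rho}v(x+\rho\omega)\,d\rho$ and average over $\omega\in\Sigma_{N-1}$ to get
\[
|v(x)|\le \frac{1}{\sigma(\Sigma_{N-1})}\int_{\RR^{N}}\frac{|\nabla v(y)|}{|x-y|^{N-1}}\,dy .
\]
The Hardy--Littlewood--Sobolev bound then yields the stated constant $c(N,r,t)/\sigma(\Sigma_{N-1})$. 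Convexity is not even needed here, but the constant matches.

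For $(ii)$ and $(iii)$ the function only vanishes on the inner sphere $\partial B(z,2s)$. For $x\in\Omega$ I integrate along segments from $x$ to points of $\partial B(z,2s)$: by convexity of $U$, the segment from $x$ to any point of the ball $B'(z,2s)$ stays in $U$. Concretely, I use the radial segment toward the sphere from a point of $B(z,s)$: rays from $x$ toward $B(z,s)$ hit $\partial B(z,2s)$ before leaving $U$. The cone of such directions has solid angle comparable to $(s/\mathrm{diam}\,\Omega)^{N-1}$. Averaging only over this cone costs a factor of order $\mathrm{diam}\,\Omega/s$ once I use Gilbarg--Trudinger Lemma 7.16 with the convex hull of $\{x\}\cup B(z,s)$. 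This gives $C=c|\Omega|^{\frac{t^{\ast}-r}{rt^{\ast}}}2\,\mathrm{diam}\,\Omega/(s\sigma(\Sigma_{N-1}))$. Case $(iii)$ is the special instance $U=B(z,ks)$, where $\mathrm{diam}\,\Omega/s\sim k$. Getting the solid-angle bookkeeping to give exactly a linear factor is the delicate point. If Lemma 7.16's $d^{N}/|S|$ factor gives a higher power, I will instead restrict to the directions inside the cone and bound directly.

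For the cube, $v$ vanishes on every face except $\{x_{1}=0\}$, in particular on $\{x_{1}=1\}$. I integrate in the $x_{1}$ direction toward that face, but averaged over a cone of directions aimed at the face $\{x_{1}=1\}$ extended. Equivalently, I reflect $v$ evenly across $\{x_{1}=0\}$ to $(-1,1)\times(0,1)^{N-1}$. The reflected function vanishes on the whole boundary of this box, so it reduces to case $(i)$ with twice the norms.

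For $(iv)$ I argue by contradiction with compactness. $C^{1}$ domains are extension domains, so $W^{1,t}(\Omega)\hookrightarrow L^{r}(\Omega)$ compactly because $r<t^{\ast}$. Suppose $v_{n}\in C^{1}(\Omega,A)$ with $\|v_{n}\|_{r}=1$ and $\|\nabla v_{n}\|_{t}\to0$. The $v_{n}$ are bounded in $W^{1,t}$, since $\|v_{n}\|_{t}\le c\|v_{n}\|_{r}$. A subsequence converges in $L^{r}$ to some $v$ with $\nabla v=0$, so $v$ is constant on the connected set $\Omega$, and $\|v\|_{r}=1$. The trace operator $W^{1,t}(\Omega)\to L^{t}(\partial\Omega,\mu)$ is continuous. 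The traces of $v_{n}$ vanish on $A$, so the trace of $v$ vanishes on $A$. Since $\mu(A)>0$, the constant is $0$, a contradiction. The main obstacle overall is the quantitative constant in $(ii)$; $(iv)$ is clean but non-constructive.
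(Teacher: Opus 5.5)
Your arguments for (i), the cube, and (iv) are fine.

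\begin{itemize}
\item Part (i) is the paper's proof: integrate along rays, average over $\Sigma_{N-1}$, apply Hardy--Littlewood--Sobolev, then H\"older. You are right that convexity is superfluous, since the first exit point of every ray lies in $A=\partial\Omega$.
\item For the cube, the paper does not reflect. It averages only over the half-sphere $\{\omega_1>0\}$, whose rays all leave $(0,1)^N$ through $A$. Your even reflection across $\{x_1=0\}$ works just as well: your zero-extension version of (i) handles the merely Lipschitz reflected function, at the cost of a factor $2^{1/N}$.
\item Part (iv) is the paper's compactness argument. Note that $v_n\to v$ and $\nabla v_n\to 0$ in $L^t$ give strong convergence in $W^{1,t}$. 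That is what lets mere continuity of the trace carry the vanishing on $A$ to the limit.
\end{itemize}

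The gap is in (ii)/(iii), and for $N\ge 4$ it cannot be closed.

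\begin{itemize}
\item \textbf{The factor you get.} By your own count the usable cone of directions has measure $\asymp(s/\mathrm{diam}\,\Omega)^{N-1}\sigma(\Sigma_{N-1})$. So averaging over it costs $(\mathrm{diam}\,\Omega/s)^{N-1}$, not $\mathrm{diam}\,\Omega/s$. Lemma 7.16 of Gilbarg--Trudinger with $S=B(z,s)$ is worse, giving $(\mathrm{diam}\,\Omega/s)^{N}$. Neither route produces the stated constant.
\item \textbf{Why nothing can for $N\ge 4$.} Take $U=B(z,1)$ and $v=\phi(|x-z|)$, where $\phi=0$ on $[0,2s]$, $\phi=1$ on $[4s,\infty)$, and $0\le\phi'\le 1/s$. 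Then $\|v\|_{L^r(\Omega)}\ge c_N$ and $\|\nabla v\|_{L^t(\Omega)}\le c_N s^{N/t-1}$. So every admissible constant is at least $c\,s^{1-N/t}$, while the stated one is $O(s^{-1})$. Since $t<2\le N/2$, this fails as $s\to 0$. The same example with $ks=1$ refutes the factor $k$ in (iii).
\item \textbf{Where the paper goes wrong.} The paper gets the linear factor only from the bound $|S_x|\ge \frac{s}{2\,\mathrm{diam}\,U}\,\sigma(\Sigma_{N-1})$. That is the planar angle count. In $\RR^N$ a cone of half-angle $\alpha$ has measure $\asymp\alpha^{N-1}\sigma(\Sigma_{N-1})$, so the bound is false for $N\ge 3$.
\end{itemize}

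What your cone argument does prove is admissibility, which is the only thing used later in the paper. The constant is $C=c(N,r,t)|\Omega|^{\frac{t^{\ast}-r}{rt^{\ast}}}(\mathrm{diam}\,\Omega/s)^{N-1}/\sigma(\Sigma_{N-1})$, which matches the statement only for $N=2$.

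For (iii) the sharp growth is $k^{(N-t)/t}$, which is at most linear for $N\le 3$ in the paper's range of $t$. To get it, apply the Sobolev--Poincar\'e inequality on $B(z,ks)$ to the zero extension of $v$ into $B'(z,2s)$. Then use the fact that this extension vanishes on $B(z,2s)$ to bound its mean.
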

  \begin{proof}
 Let $u\in C^{1}(\Omega,A)$. Fix $x\in \Omega$, $\omega\in \Sigma_{N-1}$ and $r_{\omega}\in(0,\infty)$ such that $x'=x+r_{\omega}\omega\in  A$ and $x+sw\in \Omega$ for every $s\in [0,r_{x})$. We have
\begin{equation}|u(x)|=|u(x)-u(x+r_{\omega}\omega)|=|\int_{0}^{r_{\omega}}\hspace{-.03in}\frac{du(x+\xi\omega)}{d\xi}d\xi|
\label{az11}
\end{equation}
 \[=|\int_{0}^{r_{\omega}}\hspace{-.12in}\nabla u(x+\xi\omega).\omega d\xi|
\le  \int_{0}^{r_{\omega}}\hspace{-.15in}|\nabla u(x+\xi\omega)|d\xi=\int_{0}^{r_{\omega}}\hspace{-.15in}\frac{|\nabla u(x+\xi\omega)|}{|x+\xi\omega -x|^{N-1}}\xi^{N-1}d\xi.\]\hk
$(i)$~~ Since $\Omega$ is convex, we have
 \[\Omega\setminus \{x\}= \bigcup_{\omega\in\Sigma_{N-1}}\{x+sw: s\in (0,r_{\omega})\}.\]\hk
 Using the polar coordinate with the pole at $x$, we get 
\begin{equation}|u(x)|=\frac{1}{\sigma(\Sigma_{N-1})}\int_{\Sigma_{N-1}}|u(x)|d\omega \label{a11}
\end{equation}
\[ \leb{\eqref{az11}}\frac{1}{\sigma(\Sigma_{N-1})}\int_{\Sigma_{N-1}}\int_{0}^{r_{\omega}}\frac{|\nabla u(x+\xi\omega)|\xi^{N-1}}{|x+\xi\omega -x|^{N-1}}d\xi d\omega=\frac{1}{\sigma(\Sigma_{N-1})}\int_{\Omega}\hspace{-.03in}\frac{|\nabla u(y)|}{|y -x|^{N-1}}dy. \] \hk
By Hardy–Littlewood–Sobolev's theorem in \cite[p.35]{LP} and \eqref{a11}, there is a positive constant $c(N,t)$ such that
\[\{\int_{\Omega} |u|^{r}dx\}^{\frac{1}{r}}\leb{H\ddot{o}lder}|\Omega|^{\frac{t^{\ast}-r}{rt^{\ast}}} \{\int_{\Omega} |u|^{t^{\ast}}dx\}^{\frac{1}{t^{\ast}}}\hspace{-0.2in}\leb{H-L-S,\eqref{a11}} \frac{c(N,t)|\Omega|^{\frac{t^{\ast}-r}{rt^{\ast}}} }{\sigma(\Sigma_{N-1})}\{\int_{\Omega} |\nabla u|^{t}dx\}^{\frac{1}{t}}.
\]\hk
Thus  we get \eqref{ineq}.\\\hk
 $(ii)$~~Let $u\in C^{1}(\Omega,A)$ and $v\in C^{1}(\RR^{N})$ such that $u=v|_{\Omega}$ and $(\partial B((z,2s))\cap supp(v)=\emptyset$. We can (and shall) suppose $v(y)=0$ for every $y\in B'(z,2s)$ and consider $u$ as its extension to $U$ such that $u(y)= 0$ for every $y\in B'(z,2s)$.\\\hk
 Let $x$ be in $\Omega=U\setminus B'(z,2s)$. Denote by $S_{x}$ the family of $\omega\in \Sigma_{N-1}$ such that there is $r_{x,\omega}$ having following properties:  $x+r_{x,\omega}\omega\in \partial B(z,s)$ and $x+\xi\omega\in U\setminus B(z,s)$ for every $\xi$ in $[0,r_{x,\omega})$. Let $y\in \partial B(z,s)$ such that the line $\{x+t(y,x): t\in\RR\}$is tangent to  $B(z,s)$. We have 
 \[sin\widehat{yxz}=\frac{s}{|x-z|}\ge  \frac{s}{diam(U)}\in (0,1), \]\hk
 which implies $\displaystyle \widehat{yxz}> \arcsin \frac{s}{diam(U)}\ge \frac{s}{diam U}$.
Thus 
\begin{equation}
|S_{x}|\ge \frac{s}{diam U}\frac{1}{2}\sigma(\Sigma_{N-1}),
\label{diam}
\end{equation}
where $|S_{x}|$ is the surface measure  of $S_{x}$.\\\hk
Let $x$ be in $\Omega$ and $\omega\in \Sigma_{N-1}$. There is $\overline{r}_{x,\omega}$ in $(0,\infty)$ such that $x+\overline{r}_{x,\omega}\omega\in \partial U$ and $x+\xi\omega\in U$ for every $\xi$ in $[0,\overline{r}_{x,\omega})$. We have  $\overline{r}_{x,\omega}\le r_{x,\omega}$. As in the proof of $(i)$, we have
\[|u(x)|=|u(x)-u(x+r_{x,\omega}\omega)|=|\int_{0}^{r_{x,\omega}}\hspace{-.03in}\frac{du(x+\xi\omega)}{d\xi}d\xi|\le \int_{0}^{r_{x,\omega}}\hspace{-.15in}\frac{|\nabla u(x+\xi\omega)|}{|x+\xi\omega -x|^{N-1}}\xi^{N-1}d\xi.\]\hk
 Thus
 \[|u(x)|\le \frac{1}{|S_{x}|}\int_{S_{x}}\int_{0}^{r_{x,\omega}}\hspace{-.05in}\frac{|\nabla u(x+\xi\omega)|\xi^{N-1}}{|x+\xi\omega -x|^{N-1}}d\xi\le \frac{1}{|S_{x}|}\int_{\Sigma_{N-1}}\int_{0}^{\overline{r}_{x,\omega}}\hspace{-.05in}\frac{|\nabla u(x+\xi\omega)|\xi^{N-1}}{|x+\xi\omega -x|^{N-1}}d\xi\]
 \[= \hspace{-.03in}\frac{1}{|S_{x}|}\hspace{-.03in}\int_{U}\hspace{-.03in}\frac{|\nabla u(y)|dy}{|y -x|^{N-1}}\hspace{-.1in}\leb{\eqref{diam}}\hspace{-.03in} \frac{2diam U}{s\sigma(\Sigma_{N-1})}\hspace{-.03in}\int_{U}\hspace{-.03in}\frac{|\nabla u(y)|dy}{|y -x|^{N-1}}\hspace{-.05in}\eqb{u|_{B'(z,2r)}=0} \frac{2diam \Omega}{s\sigma(\Sigma_{N-1})}\hspace{-.03in}\int_{\Omega}\hspace{-.03in}\frac{|\nabla u(y)|dy}{|y -x|^{N-1}} .\]\hk
 Now arguing as in the proof of $(i)$, we get
 \[\{\int_{\Omega} |u|^{r}dx\}^{\frac{1}{r}}\le\frac{c(N,t)|\Omega|^{\frac{t^{\ast}-r}{rt^{\ast}}} 2diam \Omega}{s\sigma(\Sigma_{N-1})}\{\int_{\Omega} |\nabla u|^{t}dx\}^{\frac{1}{t}},
\]
which implies $(ii)$.\\\hk
   $(iii)$~~   Put $\overline{\Sigma}_{N-1} =\{(w_{1},\cdots,w_{N})\in \Sigma_{N-1}: w_{1}>0\}$. Let $x\in \Omega$ and $w= (w_{1},\cdots, w_{N})
    \in \overline{\Sigma}_{N-1}$. Then $w_{1}>0$. Put $r_{w}$ as in the proof of $(i)$. Since $x+r_{w}w= (x_{1}+r_{w}w_{1},\cdots, x_{N}+r_{w}w_{N}$, $x_{1}>0$ and $w_{1}>0$, we have $x_{1}+r_{w}w_{1}>0$ and  $x+r_{w}w\in \partial\Omega \setminus (\{0\}\times[0,1]^{N-1})=A$.\\\hk
    Put
 \[ X= \bigcup_{w\in \overline{\Sigma}_{N-1}} \{x+sw: s \in (0,r_{w})\}.\]  
  Then $X$ is open subset of $\Omega$. Thus we can use the polar coordinate with the pole at $x$ and get   
    \begin{equation}|u(x)|=\frac{1}{\sigma(\overline{\Sigma}_{N-1})}\int_{\overline{\Sigma}_{N-1}}|u(x)|d\omega\leb{\eqref{az11}}\frac{1}{\sigma(\overline{\Sigma}_{N-1})}\int_{\overline{\Sigma}_{N-1}}\int_{0}^{r_{\omega}}\frac{|\nabla u(x+\xi\omega)|\xi^{N-1}}{|x+\xi\omega -x|^{N-1}}d\xi d\omega \label{a11b}
\end{equation}
\[ =\frac{1}{\sigma(\overline{\Sigma}_{N-1})}\int_{X}\hspace{-.03in}\frac{|\nabla u(y)|}{|y -x|^{N-1}}dy   \le\frac{1}{\sigma(\overline{\Sigma}_{N-1})}\int_{\Omega}\hspace{-.03in}\frac{|\nabla u(y)|}{|y -x|^{N-1}}dy. \] \hk
 Arguing as in the proof of $(i)$, we get \eqref{ineq}.\\\hk
 $(iv)$~~ We shall prove
  \begin{equation}\{\int_{\Omega} |v|^{r}dx\}^{\frac{1}{r}}\le C(r,\Omega,A)\{\int_{\Omega} |\nabla v|^{t}dx\}^{\frac{1}{t}}\hh\forall ~v \in W^{1,t}(\Omega), Bv=0~on ~A,
\label{ineqb}
\end{equation}   
where $W^{1,t}(\Omega)$ is the usual Sobolev space and $Bv$ is the trace of $v$ on $\partial\Omega$. \\\hk
 Assume by contradiction that there is a sequence $\{v_{n}\}$ in $W^{1,t}(\Omega)$ such that $Bv_{n}$ = 0 on $A$, $||v_{n}||_{L^{r}(\Omega)}=1$ and $||\nabla v_{n}||_{L^{t}(\Omega)}\le \frac{1}{n}$ for every $n\in \NN$.\\\hk
Note that
\begin{equation}
\frac{1}{t}-\frac{1}{N-1}\frac{t-1}{t}=\frac{N-t}{(N-1)t}\lleb{t>1} \frac{1}{t}.
\label{ineqc}
\end{equation}\hk
 By \eqref{rt}, $\{v_{n}\}$ is bounded in $W^{1,t}(\Omega)$. By Theorems 3.18 and 9.1 in \cite{BR}, Theorems 6.1 and 6.2 in \cite{NE} and \eqref{ineqc}, there are $v\in W^{1,t}(\Omega)$, and a subsequence $\{v_{n_{k}}\}$ of $\{v_{n}\}$ such that $\{\nabla v_{n_{k}}\}$ converges weakly to $\nabla v$ in $L^{t}(\Omega)$ and   $\lim_{k\to\infty}||v_{n_{k}}-v||_{L^{r}(\Omega)}=\lim_{k\to\infty}||Bv_{n_{k}}-Bv||_{L^{t}(\partial \Omega)}=0$. By Hahn-Banach's theorem, $||\nabla v||_{L^{t}(\Omega)}\le \limsup_{k\to\infty}||\nabla v_{n_{k}}||_{L^{t}(\Omega)}=0$. Thus  $\nabla v=0$ and $||v||_{L^{r}(\Omega)}=1$.\\\hk
  Since $\lim_{k\to\infty}||Bv_{n_{k}}-Bv||_{L^{t}(\partial \Omega)}=0$, by Theorem 4.9 in \cite[p.94]{BR},  $Bv=0$ on $A$. By Theorem 3.1.3 in \cite[p.64]{MO}, $v=0$ , which contradicts to $||v||_{L^{r}(\Omega)}=1$. Thus we get \eqref{ineqb} and \eqref{ineq}.
  \end{proof} \hk
 We have following properties of $|||.|||_{b}$ and $|||.|||_{B}$.
 \begin{lemma} Let $A$ be admissible with respect to $\Omega$. Assume \eqref{c1} and \eqref{c3} hold. Then $|||.|||_{b}$ and $|||.|||_{B}$ are  norms of $C^{1}(\Omega,A)$. 
 \label{4l2}
  \end{lemma}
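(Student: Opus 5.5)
Both $|||\cdot|||_{b}$ and $|||\cdot|||_{B}$ are seminorms, and positive definiteness will follow from the weighted estimate \eqref{i1} together with admissibility. I will first check finiteness and the seminorm axioms, then deduce definiteness, treating $|||\cdot|||_{B}$ first and $|||\cdot|||_{b}$ by the same argument.

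\emph{Seminorm properties.} For $v\in C^{1}(\Omega,A)$ the gradient is bounded on $\Omega$, since $v$ is the restriction of a $C^{1}_{c}(\RR^{N})$ function. Finiteness of $|||v|||_{B}$ therefore needs $\overline{b}\in L^{1}(\Omega)$, which follows from \eqref{c6} because $\Omega$ is bounded. (If only \eqref{c1} and \eqref{c3} are assumed, one works on the subspace where the quantity is finite, or simply uses the integrability assumed in the paper.) Finiteness of $|||v|||_{b}$ follows from $b\le\overline{b}$.

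By \eqref{c1}, the symmetric matrix $B(x)$ is positive semidefinite for a.e.\ $x$. Hence
\[
\langle v,w\rangle_{B}=\int_{\Omega}\sum_{i,j}\frac{\partial v}{\partial x_{i}}\frac{\partial w}{\partial x_{j}}b^{ij}\,dx
\]
is a symmetric nonnegative bilinear form. Cauchy--Schwarz for this form gives the triangle inequality, and homogeneity is clear. The same holds for $\int_{\Omega}\nabla v\cdot\nabla w\, b\,dx$, since $b\ge 0$.

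\emph{Definiteness.} By \eqref{i1} it suffices to show that $|||v|||_{b}=0$ implies $v=0$; this is the key step. By H\"older's inequality with exponents $\frac{2}{t}$ and $\frac{2}{2-t}$,
\[
\int_{\Omega}|\nabla v|^{t}dx=\int_{\Omega}(|\nabla v|^{2}b)^{\frac{t}{2}}b^{-\frac{t}{2}}dx\le |||v|||_{b}^{t}\,\Big(\int_{\Omega}b^{-\frac{t}{2-t}}dx\Big)^{\frac{2-t}{2}}.
\]
By \eqref{c3} the last factor is finite, so
\[
\|\nabla v\|_{L^{t}(\Omega)}\le \|b^{-1}\|_{L^{\frac{t}{2-t}}(\Omega)}^{\frac12}\,|||v|||_{b}.
\]
Since $A$ is admissible, \eqref{ineq} then gives
\[
\|v\|_{L^{r}(\Omega)}\le C(r,\Omega,A)\,\|b^{-1}\|^{\frac12}_{L^{\frac{t}{2-t}}(\Omega)}\,|||v|||_{b}.
\]
So $|||v|||_{b}=0$ forces $v=0$ a.e.; as $v$ is continuous, $v\equiv 0$ on $\Omega$. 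The main obstacle is only this H\"older step: one has to note that $b^{-1}\in L^{\frac{t}{2-t}}$ is exactly the integrability that converts the $b$-weighted $L^{2}$ gradient bound into the unweighted $L^{t}$ bound required by \eqref{ineq}.
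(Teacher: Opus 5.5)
Your proof is correct and follows the paper's argument: the H\"older step with exponents $\frac{2}{t}$ and $\frac{2}{2-t}$ using \eqref{c3}, then the admissibility inequality \eqref{ineq} and the comparison \eqref{i1}, is exactly the chain the paper uses to show $|||v|||_{b}>0$ and $|||v|||_{B}>0$ for $v\neq 0$. You also verify the seminorm axioms and note that finiteness of $|||v|||_{B}$ needs some integrability of $\overline{b}$ beyond \eqref{c1} and \eqref{c3}; the paper passes over both points without comment.
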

    \begin{proof}
 Let $v$ be in $C^{1}(\Omega,A)\setminus \{0\}$. It sufficient to prove $|||v|||_{b}>0$ and $|||v|||_{B}>0$. We have
\begin{equation}0\lleb{v\not=0}\{\int_{\Omega}|v|^{r}dx\}^{\frac{1}{r}}\leb{\eqref{ineq}} C(r,\Omega,A)\{\int_{\Omega} |\nabla v|^{t}dx\}^{\frac{1}{t}} 
 \label{a12}
\end{equation}
\[\leb{H\ddot{o}lder}C(r,\Omega,A)\{\int_{\Omega} b^{-\frac{t}{2-t}}dx\}^{\frac{2-t}{2t}}\{\int_{\Omega} |\nabla v|^{2}bdx\}^{\frac{1}{2}}  \]
\[\leb{\eqref{c1},\eqref{c3}}C(r,\Omega,A)\{\int_{\Omega} b^{-\frac{t}{2-t}}dx\}^{\frac{2-t}{2t}}||| v|||_{B}. 
\]\hk
 Thus we get the lemma.
 \end{proof}
 \begin{definition} Assume \eqref{c1} and \eqref{c3} hold. We denote the completions of  $(C^{1}(\Omega,A),|||.|||_{b})$ and $(C^{1}(\Omega,A),|||.|||_{B})$ by $(W_{b,A}(\Omega),|||.|||_{b,A})$  and  $(W_{B,A}(\Omega),|||.|||_{B,A})$  respectively. 
   \label{d44}
 \end{definition}
By \eqref{i1}, we have
    \begin{equation}
  W_{B,A}(\Omega) \subset W_{b,A}(\Omega),
  \label{i2}
\end{equation}
\begin{equation}
  |||w|||_{b}\le |||w|||_{B}\hh\forall~w\in W_{B,A}(\Omega).
  \label{i2b}
\end{equation}\hk
 Since we shall use $W_{b,A}(\Omega)$ and$W_{B,A}(\Omega)$  to study elliptic equations, we need relations between them with functions having generalized derivatives. We have the following results.
   \begin{lemma} Let $A$ be  admissible with respect to $\Omega$. Assume \eqref{c1} and \eqref{c3} hold. Then $W_{b,A}(\Omega)$ is  contained in  the family of measurable functions on $\Omega$ having first order generalized partial derivatives and
   \begin{equation}|||v|||_{b}= \{\int_{\Omega}|\nabla v|^{2}b dx\}^{\frac{1}{2}}\hh\forall~v\in W_{b,A}(\Omega),   
   \label{i3}
\end{equation}
\begin{equation}|||v|||_{B}\ge \{\int_{\Omega}\sum_{i,j=1}^{N}\frac{\partial v}{\partial x_{i}}\frac{\partial v}{\partial x_{j}}b^{ij}  dx\}^{\frac{1}{2}}\hh\forall~v\in W_{B,A}(\Omega).
\label{i3b}
\end{equation}
\label{5z}
  \end{lemma}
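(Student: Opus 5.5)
The plan is to identify each element of the abstract completion $W_{b,A}(\Omega)$ with a concrete function, using the key estimate \eqref{a12} from the proof of Lemma \ref{4l2}. Let $v\in W_{b,A}(\Omega)$ and let $\{v_n\}\subset C^{1}(\Omega,A)$ be a Cauchy sequence for $|||\cdot|||_{b}$ representing $v$. Applying \eqref{a12} to $v_n-v_m$ gives
\[
\{\int_{\Omega}|v_n-v_m|^{r}dx\}^{\frac1r}\le C(r,\Omega,A)\{\int_{\Omega} |\nabla (v_n-v_m)|^{t}dx\}^{\frac1t}\le C(r,\Omega,A)\{\int_{\Omega} b^{-\frac{t}{2-t}}dx\}^{\frac{2-t}{2t}}|||v_n-v_m|||_{b}.
\]
Hence $\{v_n\}$ is Cauchy in $L^{r}(\Omega)$ and $\{\nabla v_n\}$ is Cauchy in $L^{t}(\Omega)^{N}$. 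Let $\tilde v$ and $g=(g_1,\dots,g_N)$ be the limits. For $\varphi\in C^{\infty}_{c}(\Omega)$, passing to the limit in $\int v_n\partial_l\varphi=-\int \partial_l v_n\varphi$ (both sides converge by H\"older, since $\Omega$ is bounded) shows that $g_l$ is the generalized derivative $\partial \tilde v/\partial x_l$.

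Next I check that the map $v\mapsto\tilde v$ is well defined and injective, so that it realizes the inclusion claimed. Well-definedness follows from the same estimate applied to the difference of two equivalent Cauchy sequences. For injectivity, suppose $\tilde v=0$. Then $g=\nabla\tilde v=0$ a.e., and I must show that $|||v_n|||_{b}\to0$. Along a subsequence, $\nabla v_n\to 0$ a.e. The sequence $\{|\nabla v_n|b^{1/2}\}$ is Cauchy in $L^{2}$, so it converges in $L^{2}$ to some $h$, and along a further subsequence it converges a.e. to $h$. But it also converges a.e. to $0$, so $h=0$ and therefore $|||v_n|||_{b}\to0$, i.e.\ $v=0$ in $W_{b,A}(\Omega)$. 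The same a.e.\ argument gives $|\nabla v_n|b^{1/2}\to|\nabla\tilde v|b^{1/2}$ in $L^{2}$ in general. Thus $|||v|||_{b}=\lim|||v_n|||_{b}=\{\int|\nabla\tilde v|^{2}b\}^{1/2}$, which is \eqref{i3}.

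For \eqref{i3b}, take $v\in W_{B,A}(\Omega)$. By \eqref{i2} and \eqref{i2b}, a $|||\cdot|||_{B}$-Cauchy sequence $\{v_n\}$ is also $|||\cdot|||_{b}$-Cauchy, so the first part gives a subsequence with $\nabla v_n\to\nabla v$ a.e. The integrand $\sum_{i,j}\partial_i v_n\partial_j v_n b^{ij}$ is nonnegative by \eqref{c1} and converges a.e. to $\sum_{i,j}\partial_i v\partial_j v b^{ij}$. Fatou's lemma then gives
\[
\int_{\Omega}\sum_{i,j}\partial_i v\partial_j v\, b^{ij}\le\liminf_n|||v_n|||_{B}^{2}=|||v|||_{B}^{2}.
\]
Here one must also confirm that the $B$-completion embeds injectively; the argument is the same as for $b$, using \eqref{i2b}.

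The main obstacle is the injectivity/identification step. The limit functions must determine the abstract element, and this is where the a.e.\ subsequence argument is essential. The $\ge$ in \eqref{i3b} (rather than equality) reflects that I do not have an upper bound $\overline b$ in $L^{1}$ to make the analogous argument run, so only Fatou is used there.
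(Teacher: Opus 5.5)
Your proof is correct and follows the same scheme as the paper: Cauchy in $L^{r}$ via \eqref{a12}, passage to the limit in the integration-by-parts identity, identification of the norm, and Fatou for \eqref{i3b}. The only real difference is that the paper takes the limit of the gradients directly in $L^{2}_{b}(\Omega)$ and shows it is the weak gradient via $L^{1}$-convergence (Cauchy--Schwarz with $b^{-1}\in L^{1}(\Omega)$), so \eqref{i3} and injectivity follow at once without your extra a.e.-subsequence step.

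Your closing remark is mistaken, however. The same a.e.\ argument applied to $\bigl(\sum_{i,j}\frac{\partial v_n}{\partial x_i}\frac{\partial v_n}{\partial x_j}b^{ij}\bigr)^{1/2}$, which is Cauchy in $L^{2}(\Omega)$ by the pointwise triangle inequality for the form $b^{ij}(x)\xi_i\eta_j$, gives equality in \eqref{i3b} with no integrability of $\overline{b}$ needed. It is also this argument, not \eqref{i2b} (which only controls the $b$-norm), that gives injectivity for $W_{B,A}(\Omega)$.
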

  \begin{proof} We denote $L_{b}^{2}(\Omega)$ the norm space of all measurable function $w$ on $\Omega$ such that
  \[|||w|||_{b}\equiv \{\int_{\Omega}|w|^{2}bdx\}^{\frac{1}{2}}<\infty.\]\hk  
  Since $2>t >\frac{2N^{2}+2N-2}{N^{2}+2N-1}\ge 1$, we have $\frac{t}{2-t}\ge 1$ and
  \begin{equation} L^{\frac{t}{2-t}}(\Omega) \subset L^{1}(\Omega).  
   \label{a13}
\end{equation}
  Let $\{u_{m}\}$ be in $C^{1}(\Omega,A)$ such that $\{\nabla u_{m}\}$ a Cauchy sequence in $(C^{1}(\Omega,A),|||.|||_{B})$.  By \eqref{i2b} and \eqref{a12}, $\{\nabla u_{m}\}$ a Cauchy sequence in $L^{2}_{b}(\Omega)$ and $\{u_{m}\}$ is a Cauchy sequence  in $L^{r}(\Omega)$. Because $L^{r}(\Omega)$ and $L^{2}_{b}(\Omega)$ are Banach spaces,  $\{u_{m}\}$  converges to $u$ in $L^{r}(\Omega)$ and  $\displaystyle\{\frac{\partial u_{m}}{\partial x_{j}}\}$  converges to $v_{j}$ in $L^{2}_{b}(\Omega)$ for every $j$ in $\{1,\cdots,N\}$.\\\hk
  We have
  \[\lim_{m\to \infty}\int_{\Omega}|u_{m}-u|dx \eqb{r\ge 1}0,\]
  \[\lim_{m\to \infty}\int_{\Omega}\hspace{-.05in}|\frac{\partial u_{m}}{\partial x_{i}}-v_{j}|dx\hspace{-.1in}\leb{H\ddot{o}lder} \lim_{m\to \infty}\{\int_{\Omega}\hspace{-.05in}b^{-1}dx\}^{\frac{1}{2}} \{\int_{\Omega}|\frac{\partial u_{m}}{\partial x_{i}}-v_{j}|^{2}b d x\}^{\frac{1}{2}}\hspace{-.15in} \eqb{\eqref{c3},\eqref{a13}}\hspace{-.1in}0\]
  for every $ j=1,\cdots,N$. \\\hk  
  Thus
  \[\int_{\Omega}v_{i}\varphi dx= \lim_{n\to\infty} \int_{\Omega}\frac{\partial u_{n}}{\partial x_{i}}\varphi dx=-\lim_{n\to\infty} \int_{\Omega}u_{n}\frac{\partial \varphi}{\partial x_{i}} dx\]
  \[= -\int_{\Omega}u\frac{\partial \varphi}{\partial x_{i}} dx\hh\forall~\varphi\in C^{1}_{c}(\Omega).  \]\hk  
  Therefore $v_{i}$ is the generalized partial derivative of $u$ along direction $x_{i}$.  Hence $u$ has  first order generalized partial derivatives. On other hand
   \[|||u|||_{b}=\lim_{n\to\infty}\{\int_{\Omega} |\nabla u_{n}|^{2}bdx\}^{\frac{1}{2}}=\{\int_{\Omega}|( v_{1},\cdots,v_{N})|^{2}b dx\}^{\frac{1}{2}}=\{\int_{\Omega}|\nabla u|^{2}b dx\}^{\frac{1}{2}}. \]\hk
   Thus we get \eqref{i3}. By Theorem 4.9 in \cite[p.94]{BR}, we can suppose  $\displaystyle\{\frac{\partial u_{m}}{\partial x_{j}}\}$  converges  to $\displaystyle \frac{\partial u}{\partial x_{j}}$ a.e. on $\Omega$ for every $j$ in $\{1,\cdots,N\}$. Then
   \[ \{\int_{\Omega}\sum_{i,j=1}^{N}\frac{\partial u}{\partial x_{i}}\frac{\partial u}{\partial x_{j}}b^{ij}  dx\}^{\frac{1}{2}}\hspace{-.15in}\leb{Fatou}\hspace{-.08in}\liminf_{n\to\infty}\{\int_{\Omega}\sum_{i,j=1}^{N}\frac{\partial u_{n}}{\partial x_{i}}\frac{\partial u_{n}}{\partial x_{j}}b^{ij}  dx\}^{\frac{1}{2}}
=\lim_{n\to\infty}|||u_{n}|||_{B}=|||u|||_{B}. \]\hk
 Thus we get \eqref{i3b}.
 \end{proof} 
 \begin{lemma} Let $A$ be  admissible with respect to $\Omega$. Assume \eqref{c1} and \eqref{c3} hold. Then 
 \begin{equation}||w||_{L^{r}(\Omega)}\le C(r,\Omega,A)|||w|||_{b}\le C(r,\Omega,A)||| w|||_{B}
\hh\forall~ w\in W_{b,,A}(\Omega).
\label{cb}
\end{equation}
  \label{4l2b}
  \end{lemma}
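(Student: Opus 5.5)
The plan is to prove the inequality first on the dense subspace $C^{1}(\Omega,A)$ and then pass to the completion. The second inequality is immediate from \eqref{i2b}, so only the first needs work.

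On $C^{1}(\Omega,A)$, the chain \eqref{a12} from the proof of Lemma \ref{4l2} already gives
\[\|v\|_{L^{r}(\Omega)}\le C(r,\Omega,A)\Big\{\int_{\Omega}b^{-\frac{t}{2-t}}dx\Big\}^{\frac{2-t}{2t}}|||v|||_{b}.\]
It comes from the admissibility inequality \eqref{ineq}, followed by H\"older with exponents $\frac{2}{t}$ and $\frac{2}{2-t}$ applied to $|\nabla v|^{t}=(|\nabla v|^{t}b^{t/2})\,b^{-t/2}$. The factor $\|b^{-1}\|^{1/2}_{L^{t/(2-t)}}$ is finite by \eqref{c3}. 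I read the constant $C(r,\Omega,A)$ in \eqref{cb} as absorbing this factor, i.e.\ as the constant obtained after enlarging by $\|b^{-1}\|_{L^{t/(2-t)}}^{1/2}$. With that convention the inequality holds for every $v\in C^{1}(\Omega,A)$.

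To extend to $w\in W_{b,A}(\Omega)$, I take $u_{m}\in C^{1}(\Omega,A)$ with $|||u_{m}-w|||_{b}\to 0$. By the proof of Lemma \ref{5z}, $u_{m}\to w$ in $L^{r}(\Omega)$, where $w$ is identified with this $L^{r}$ limit, and $|||w|||_{b}=\lim_{m}|||u_{m}|||_{b}$ by \eqref{i3}. Passing to the limit in $\|u_{m}\|_{L^{r}}\le C|||u_{m}|||_{b}$ then gives the first inequality for $w$.

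The main obstacle is bookkeeping rather than analysis. One has to make sure the identification of elements of the completion with functions, as fixed in Lemma \ref{5z}, is the one that makes $u_{m}\to w$ in $L^{r}$. One also has to track the constant honestly, since it depends on $b$ through $\|b^{-1}\|_{L^{t/(2-t)}}$.
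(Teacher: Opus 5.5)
Your proof is correct and follows the paper's route: you establish the bound on $C^{1}(\Omega,A)$ and pass to the limit along an approximating sequence, using $u_m\to w$ in $L^{r}(\Omega)$ and $|||u_m|||_{b}\to|||w|||_{b}$, with the second inequality taken from \eqref{i2b}. Your explicit treatment of the constant is actually needed, because the paper cites \eqref{ineq} alone for $\|u_n\|_{L^{r}}\le C(r,\Omega,A)|||u_n|||_{b}$, whereas \eqref{ineq} only controls $\|\nabla u_n\|_{L^{t}}$; the factor $\|b^{-1}\|^{1/2}_{L^{t/(2-t)}}$ from the H\"older step in \eqref{a12} therefore has to be absorbed into $C(r,\Omega,A)$, exactly as you do.
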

  \begin{proof}
 Let $u$ be in $W_{b ,q,A}(\Omega)$ and $\{u_{n}\}_{n}$ be a sequence in $C^{1}(\Omega,A)$ such that $\{u_{n}\}_{n}$ converges to $u$ in $W_{b ,q,A}(\Omega)$. By \eqref{ineq}, $\{u_{n}\}_{n}$ converges to $u$ in $L^{r}(\Omega)$. Thus
 \[||u||_{L^{r}(\Omega)}\hspace{-.04in}= \hspace{-.04in}\lim_{n\to \infty}||u_{n}||_{L^{r}(\Omega)}\hspace{-.08in}\leb{\eqref{ineq}} \hspace{-.08in} C(r,\Omega,A)\lim_{n\to \infty}||| u_{n}|||_{b}\]
 \[=C(r,\Omega,A)||| u|||_{b}\leb{\eqref{i2b}}C(r,\Omega,A)||| u|||_{B}\]
and we get the lemma.
 \end{proof} 
   \section{Superpositions  in $W_{B,A}(\Omega)$.}\label{app}\hk
We need  following results to study the regularity of solutions of elliptic equations.
\begin{lemma} Let    $A$ be admissible with respect to $\Omega$,  $\alpha \in [-\infty,\infty)$, $\phi\in C^{1}((\alpha,\infty))$, $v$ be in $W_{B,A}(\Omega)$, $\{v_{n}\}$ be a sequence in $C^{1}(\Omega,A)$ such that  $\cup_{n=1}^{\infty} v_{n}(\Omega)\cup v(\Omega) \subset (\alpha,\infty)$. Assume \eqref{c1} and \eqref{c3} hold,\\\hk
  $(i)$~$\phi\in C^{1}((\alpha,\infty))$, $\phi'$ is  uniformly continuous on $(\alpha,\infty)$ and 
  $||\phi'||_{L^{\infty}(\RR)}\le M$  with  a positive real number M and\\\hk
   $(ii)$~~$\lim_{n\to\infty}|||v_{n}-v|||_{B}=0$.\\\hk
  Then $\{v_{n}\}$ has a subsequence $\{v_{n_{k}}\}$ such that
  \begin{equation}
  \lim_{k\to\infty}|||\phi\circ v_{n_{k}}-\phi\circ v|||_{B}=0,
   \label{2l51}
   \end{equation}\hk
  Furthermore, $\phi\circ v \in W_{B,A}(\Omega)\subset W_{b,A}(\Omega)$, if $\phi(0)=0$.
     \label{2w1ba}
  \end{lemma}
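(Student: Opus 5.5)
The argument works directly with the $B$-energy of the difference of gradients. By the chain rule for $C^1$ functions composed with Sobolev-type functions, Lemma \ref{5z} gives $\nabla(\phi\circ v)=\phi'(v)\nabla v$ a.e.; since $v$ has generalized derivatives in $L^1$, the standard chain rule applies because $\phi'$ is bounded. We then split
\[
\phi'(v_n)\nabla v_n-\phi'(v)\nabla v=\phi'(v_n)(\nabla v_n-\nabla v)+(\phi'(v_n)-\phi'(v))\nabla v .
\]
The quadratic form $Q(\xi)=\sum b^{ij}\xi_i\xi_j$ is nonnegative by \eqref{c1}, so $Q^{1/2}$ is a seminorm and $Q(\lambda\xi)=\lambda^2Q(\xi)$. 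The $B$-energy of the first term is therefore at most $M^2\int_\Omega Q(\nabla v_n-\nabla v)\,dx$. By \eqref{i3b} applied to $v_n-v\in W_{B,A}(\Omega)$, this is bounded by $M^2|||v_n-v|||_B^2\to0$.

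For the second term we pass to a subsequence. By Lemma \ref{4l2b}, $v_n\to v$ in $L^r(\Omega)$, so some subsequence satisfies $v_{n_k}\to v$ a.e. Continuity of $\phi'$ on $(\alpha,\infty)$, where all the values lie, then gives $\phi'(v_{n_k})\to\phi'(v)$ a.e. The integrand $(\phi'(v_{n_k})-\phi'(v))^2Q(\nabla v)$ is dominated by $4M^2Q(\nabla v)$, which is integrable by \eqref{i3b}. Dominated convergence makes this term tend to $0$. Combining the two terms with the triangle inequality for $Q^{1/2}$ in $L^2$ gives
\[
\Big(\int_\Omega Q\big(\nabla(\phi\circ v_{n_k})-\nabla(\phi\circ v)\big)\,dx\Big)^{1/2}\to0.
\]

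The main obstacle is identifying this integral with $|||\phi\circ v_{n_k}-\phi\circ v|||_B$. The norm $|||\cdot|||_B$ on the completion is defined by limits, and \eqref{i3b} is only an inequality. We handle this as follows. If $\phi(0)=0$, then $\phi\circ v_{n}$ is $C^1$, has compact support and vanishes on $A$, so $\phi\circ v_n\in C^1(\Omega,A)$. On $C^1(\Omega,A)$ the norm is given exactly by the integral. The computation above, applied to $\phi\circ v_{n_k}-\phi\circ v_{n_l}$ and split through $\phi\circ v$, shows that $\{\phi\circ v_{n_k}\}$ is Cauchy in $|||\cdot|||_B$. It therefore converges to some $w\in W_{B,A}(\Omega)$. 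By the construction in the proof of Lemma \ref{5z}, $w$ is the $L^r$-limit. Since $|\phi\circ v_{n_k}-\phi\circ v|\le M|v_{n_k}-v|\to0$ in $L^r$, we get $w=\phi\circ v$. This yields $\phi\circ v\in W_{B,A}(\Omega)\subset W_{b,A}(\Omega)$ via \eqref{i2}, and \eqref{2l51} follows because $|||\phi\circ v_{n_k}-w|||_B$ is the limit in $l$ of the Cauchy differences, each bounded by the integral estimate. If $\phi(0)\neq0$, the same estimates apply to $\phi-\phi(0)$, whose composition has the same gradients. The uniform continuity of $\phi'$ can alternatively replace dominated convergence, giving convergence in measure.
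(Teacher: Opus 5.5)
Your proof is correct, but it is organized differently from the paper's.

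\textbf{The paper's route.} It proves that $\{\phi\circ v_{n_k}\}$ is Cauchy by comparing two members of the sequence directly. The coefficient difference $\phi'(v_{n_{k'}})-\phi'(v_{n_k})$ then multiplies the \emph{moving} gradient $\nabla v_{n_{k'}}$. Controlling that term forces two ingredients:
\begin{itemize}
\item Egorov's theorem together with the uniform continuity of $\phi'$, to make the coefficient small on a large set $A_\delta$;
\item a uniform-integrability estimate for $\sum b^{ij}\partial_i v_{n_k}\partial_j v_{n_k}$ on $\Omega\setminus A_\delta$.
\end{itemize}
It then identifies the limit $w$ in a separate step. A second Egorov argument shows $\nabla(\phi\circ v_{n_{k_l}})\to\phi'(v)\nabla v$ in $L^2_b$. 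This matches only gradients and tacitly treats $\phi\circ v$ as an element of $W_{b,A}(\Omega)$.

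\textbf{Your route.} You split through the limit $v$ instead. The varying coefficient $\phi'(v_{n_k})-\phi'(v)$ then multiplies only the fixed field $\nabla v$, whose energy is integrable by \eqref{i3b}. Plain dominated convergence does all the work. You need only continuity and boundedness of $\phi'$, not uniform continuity, so your argument proves a slightly more general statement.

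\textbf{Identification of the limit.} Your identification $w=\phi\circ v$ is cleaner than the paper's. You use the $L^r$-limit from Lemma \ref{4l2b} together with the Lipschitz bound $|\phi(a)-\phi(b)|\le M|a-b|$, which pins $w$ down as a function and not merely its gradient. In your Cauchy estimate, $\phi'(v)\nabla v$ serves only as an auxiliary vector field. So in the case $\phi(0)=0$ you do not actually need the chain rule for $\phi\circ v$.

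\textbf{The case $\phi(0)\neq 0$.} Your reduction to $\phi-\phi(0)$ repairs a point the paper passes over. For $\phi(0)\neq 0$ the functions $\phi\circ v_{n_k}$ do not lie in $C^1(\Omega,A)$, yet the paper's Step 1 treats them as elements of $W_{B,A}(\Omega)$ anyway.
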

\begin{proof} We prove the lemma by two steps.\\\hk
{\bf Step 1}~. Since $\{v_{n}\}$ converges in $W_{B,A}(\Omega)$, there is a positive real number $M_{1}$ such that
\begin{equation}
  |||v_{n}|||_{B}\le M_{1}\hh \forall~n\in\NN.
  \label{47z0}
\end{equation}\hk
  We have
\[\lim_{n\to \infty}\int_{\Omega}|v_n{}-v|dx \eqb{1<r,\eqref{cb}} 0.\]\hk 
   By Theorem 4.9 in \cite[p.94]{BR}, there is a subsequence $\{v_{n_{k}}\}$ of $\{v_{n}\}$ such that $\{v_{n_{k}}\}$ converges  to $v$ a.e. on $\Omega$. \\\hk
   Let $\epsilon \in(0,1)$ and $\delta \in (0,\epsilon)$. Since $\Omega$ is bounded, by Egorov's Theorem in \cite[p.115]{BR}, $\Omega$ has a subset $A_{\delta}$ such that $|\Omega\setminus A_{\delta}|\le \delta$ and  $\{v_{n_{k}}\}$ converges uniformly to $v$ on $A_{\delta}$. Because  $\phi'$ is uniformly continuous  on $\RR$, there is an integer $k_{\epsilon}$ such that
 \[
  |\phi'(v(x))-\phi'(v_{n_{k}}(x))|\le \epsilon \hh\forall~x \in A_{\delta}, k\ge k_{\epsilon},
  \] 
\[
  ||| v -  v_{k}|||_{B}\le \epsilon\hh\forall~ k\ge k_{\epsilon}.
  \]\hk 
It implies
 \begin{equation}
  |\phi'(v_{n_{k'}}(x))-\phi'(v_{n_{k}}(x))|\le 2\epsilon \hh\forall~x \in A_{\delta}, k', k\ge k_{\epsilon},
  \label{47z2}
\end{equation} 
\begin{equation}
  ||| v_{n_{k'}} -  v_{n_{k}}|||_{B}\le 2\epsilon\hh\forall~ k',k\ge k_{\epsilon}.
  \label{247z3}
\end{equation}\hk 
 By \eqref{i3b},  $\displaystyle
 \sum_{ij=1}^{N}\frac{\partial v_{n_{k_{\epsilon}}}}{\partial x_{i}}\frac{\partial v_{n_{k_{\epsilon}}}}{\partial x_{j}}b^{ij}$ is non-negative and integrable on $\Omega$. Therefore we can choose  $\delta$ such that 
 \begin{equation}\int_{\Omega\setminus A_{\delta}}\sum_{ij=1}^{N}\frac{\partial v_{n_{k_{\epsilon}}}}{\partial x_{i}}\frac{\partial v_{n_{k_{\epsilon}}}}{\partial x_{j}}b^{ij}dx \le \epsilon^{2}.
  \label{247z4}
\end{equation}\hk 
  Thus
\[ 
 \{\int_{\Omega\setminus A_{\delta}}\sum_{ij=1}^{N}\frac{\partial v_{n_{k}}}{\partial x_{i}}\frac{\partial v_{n_{k}}}{\partial x_{j}}b^{ij}dx\}^{\frac{1}{2}} \leb{Minkowski}\{\int_{\Omega\setminus A_{\delta}}\sum_{ij=1}^{N}\frac{\partial v_{n_{k_{\epsilon}}}}{\partial x_{i}}\frac{\partial v_{n_{k_{\epsilon}}}}{\partial x_{j}}b^{ij}dx\}^{\frac{1}{2}}\]
 \[+ \{\int_{\Omega\setminus A_{\delta}}\sum_{ij=1}^{N}\frac{\partial v_{n_{k}}-\partial v_{n_{k_{\epsilon}}}}{\partial x_{i}}\frac{\partial v_{n_{k}}-\partial v_{n_{k_{\epsilon}}}}{\partial x_{j}}b^{ij}dx\}^{\frac{1}{2}} \leb{\eqref{247z3},\eqref{247z4}} 3\epsilon\hh\forall~ k\ge k_{\epsilon}
 \]
  or
 \begin{equation}
 \int_{\Omega\setminus A_{\delta}}\sum_{ij=1}^{N}\frac{\partial v_{n_{k}}}{\partial x_{i}}\frac{\partial v_{n_{k}}}{\partial x_{j}}b^{ij}dx\le 9\epsilon^{2} \hh\forall~ k\ge k_{\epsilon}. 
  \label{247z3b}
\end{equation}\hk  
Fix $x$ in $\Omega$, put
 \[<y,z> = \sum_{i,j=1}^{N}y_{i}z_{j}b^{ij}(x)\hk\forall~x=(x_{1},\cdots,x_{N}, z=(z_{1},\cdots,z_{N})\in \RR^{N}.\]\hk
 Then $<.,.>$ is scalar product on $\RR^{N}$. Therefore
\[<y+z,y+z> \le (<y,y>^{\frac{1}{2}} +<z,z>^{\frac{1}{2}} )^{2}\le 2(<y,y>+<z,z>)
\]
\hh\hh$\forall~x=(x_{1},\cdots,x_{N}), z=(z_{1},\cdots,z_{N})\in \RR^{N}.$\\\hk
     Put $\displaystyle h_{i}=(\phi'(v_{n_{k'}})-\phi'(v_{n_{k}}))\frac{\partial v_{n_{k'}}}{\partial x_{i}} - \phi'(v_{n_{k}})\frac{\partial (v_{n_{k'}}-v_{n_{k}})}{\partial x_{i}}$ for every $i$ in $\{1,\cdots,N\}$ and $h=(h_{1},\cdots,h_{N})$. We have    
 \begin{equation}|||\phi\circ v_{n_{k'}} - \phi\circ v_{n_{k}}|||_{B} ^{2}
= \int_{\Omega}\frac{\partial (\phi\circ v_{n_{k'}} - \phi\circ v_{n_{k}})}{\partial x_{i}}\frac{\partial (\phi\circ v_{n_{k'}} - \phi\circ v_{n_{k}})}{\partial x_{j}}b^{ij} dx
\label{b1}
\end{equation}   
\[=\int_{\Omega}[\phi'(v_{n_{k'}})\frac{\partial  v_{n_{k'}} }{\partial x_{i}} - \phi'(v_{n})\frac{\partial v_{n_{k}}}{\partial x_{i}}][\phi'(v_{n_{k'}})\frac{\partial  v_{n_{k'}} }{\partial x_{j}} - \phi'(v_{n})\frac{\partial v_{n_{k}}}{\partial x_{j}}]b^{ij} dx \hspace{-.02in}
=\hspace{-.05in}\int_{\Omega}\hspace{-.08in}<h,h>dx
\]   
\[\le \hspace{-.05in}2\{\hspace{-.05in}\int_{\Omega}\hspace{-.06in}[(\phi'(v_{n_{k'}})-\phi'(\hspace{-.02in}v_{n_{k}}\hspace{-.02in})\hspace{-.02in})^{2}\frac{\partial  v_{n_{k'}} }{\partial x_{i}}\frac{\partial  v_{n_{k'}} }{\partial x_{j}}b^{ij} dx+ \hspace{-.05in}\int_{\Omega}\hspace{-.08in}\phi'(\hspace{-.02in}v_{n}\hspace{-.02in})^{2}\frac{\partial (\hspace{-.02in}v_{n_{k'}}-v_{n_{k}}\hspace{-.02in}) }{\partial x_{i}} \frac{\partial (\hspace{-.02in}v_{n_{k'}}-v_{n_{k}}\hspace{-.02in}) }{\partial x_{j}}b^{ij} dx\}
\]   
\[\leb{\eqref{247z3}} 2\int_{A_{\epsilon}}[(\phi'(v_{n_{k'}})-\phi'(v_{n_{k}}))^{2}\frac{\partial  v_{n_{k'}} }{\partial x_{i}}\frac{\partial  v_{n_{k'}} }{\partial x_{j}}b^{ij} dx\]
\[+2\int_{\Omega\setminus A_{\epsilon
}}[(\phi'(v_{n_{k'}})-\phi'(v_{n_{k}}))^{2}\frac{\partial  v_{n_{k'}} }{\partial x_{i}}\frac{\partial  v_{n_{k'}} }{\partial x_{j}}b^{ij} dx+ 4M^{2}\epsilon^{2}
\]    
\[\leb{\eqref{47z2}} 2\epsilon^{2}\int_{\Omega}\frac{\partial  v_{n_{k'}} }{\partial x_{i}}\frac{\partial  v_{n_{k'}} }{\partial x_{j}}b^{ij} dx+8M^{2}\int_{\Omega\setminus A_{\epsilon}}\frac{\partial  v_{n_{k'}} }{\partial x_{i}}\frac{\partial  v_{n_{k'}} }{\partial x_{j}}b^{ij} dx+ 4M^{2}\epsilon^{2}
\]   
\[\leb{\eqref{47z0},\eqref{247z4}} 2\epsilon^{2}\{M_{1}^{2}+ 36M^{2}+ 2M^{2}\}\hh\forall~k',k\ge k_{\epsilon}.
\] \hk
 Therefore $\{\phi\circ v_{n_{k}}\}_{k}$ is a Cauchy sequence in $W_{B,A}(\Omega)$. Thus it converges to $w$ in  $W_{B,A}(\Omega)$. \\\hk
 {\bf Step 2}~.    By \eqref{i1}, $\{\phi\circ v_{n_{k}}\}_{k}$ converges to $w$ in  $W_{b,A}(\Omega)$. We shall prove $w= \phi\circ v$. It is sufficient to  prove that $\{v_{n_{k}}\}_{k}$ has a subsequence $\{v_{n_{k_{l}}}\}_{l}$ such that $\{\phi\circ v_{n_{k_{l}}}\}_{l}$ converges to $\phi\circ v$ in $W_{b,A}(\Omega)$. \\\hk
   Let $\epsilon \in(0,\infty)$. By \eqref{i2b} and Lemma \ref{5z},  $|\nabla v|^{2}b $  is in $L^{1}(\Omega)$. Then  there is a positive real number $\delta$ such that
  \begin{equation}
  \int_{E}|\nabla v|^{2}b dx<\epsilon\hh\forall~E\subset \Omega~ with ~|E|\le \delta.
  \label{47z1}
\end{equation} \hk  
By $(ii)$ and Lemma \ref{4l2b}, we have
\[\lim_{k\to \infty}\int_{\Omega}|v_{n_{k}}-v|dx \eqb{1<r,\eqref{cb}} 0.\]\hk 
By Theorem 4.9 in \cite[p.94]{BR}, there is a subsequence $\{v_{n_{k_{l}}}\}$ of $\{v_{n_{k}}\}$ such that $\{v_{n_{k_{l}}}\}$ converges  to $v$ a.e. on $\Omega$. \\\hk
     Since $\Omega$ is bounded, by Egorov's Theorem in \cite[p.115]{BR}, $\Omega$ has a subset $E_{\delta}$ such that $|\Omega\setminus E_{\delta}|\le \delta$ and  $\{v_{n_{k}}\}$ converges uniformly to $v$ on $E_{\delta}$. Because  $\phi'$ is uniformly continuous  on $\RR$, there is an integer $l_{\epsilon}$ such that
 \begin{equation}
  |\phi'(v(x))-\phi'(v_{n_{k_{l}}}(x))|\le \epsilon \hh\forall~x \in E_{\delta}, l\ge l_{\epsilon},
  \label{47z2c}
\end{equation} 
\begin{equation}
  ||\nabla v -  \nabla v_{n_{k_{l}}}||_{L^{2}_{b }(\Omega)}\leb{\eqref{i2b},(ii)} \epsilon\hh\forall~ l\ge l_{\epsilon}.
  \label{47z3c}
\end{equation}\hk 
     Thus     
   \[||\nabla(\phi\circ v - \phi\circ v_{n_{k_{l}}})||_{L^{2}_{b }(\Omega)} \le ||\phi'(v)\nabla v -  \phi'(v_{n_{k_{l}}})\nabla v_{n_{k_{l}}}||_{L^{2}_{b }(\Omega)} 
 \]  
 \[ \leb{Minkowski} ||(\phi'(v)-\phi'(v_{n_{k_{l}}}))\nabla v||_{L^{2}_{b }(\Omega)} + ||\phi'(v_{n_{k_{l}}})(\nabla v -  \nabla v_{n_{k_{l}}})||_{L^{2}_{b }(\Omega)} 
 \]  
\[\leb{Minkowski,\eqref{47z3c}} ||(\phi'(v)-\phi'(v_{n_{k_{l}}}))\nabla v||_{L^{2}_{b }(E_{\delta})}+||(\phi'(v)-\phi'(v_{n_{k_{l}}}))\nabla v||_{L^{2}_{b }(\Omega\setminus E_{\delta})}+\epsilon M
\]   
 \[\leb{\eqref{47z2c}} \epsilon||\nabla v||_{L^{2}_{b }(\Omega)}+2M||\nabla v||_{L^{2}_{b }(\Omega\setminus E_{\delta})}+\epsilon M\leb{\eqref{47z1}} \epsilon(||\nabla v||_{L^{2}_{b }(\Omega)}+3M)\hh\forall~l
 \ge l_{\epsilon},\]
which implies $\{v_{n_{k_{l}}}\}_{l}$ converges to $\phi\circ v$ in $W_{b,A}(\Omega)$. \\\hk
 Since $\{v_{n_{k_{l}}}\}_{l}$ converges to $w$ in $W_{b,A}(\Omega)$, $w= \phi\circ v$. By Step 1, we get \eqref{2l51}.\\\hk
 {\bf Step 3}~.  When  $\phi(0)=0
 $,  we see that $\{\phi\circ v_{n_{k}}\}$ is contained in $C^{1}(\Omega,A)$. Therefore $\phi\circ v$ is in $W_{B,A}(\Omega)$.
\end{proof}
\begin{lemma} Let   $A$ be  admissible with respect to $\Omega$,  $v$ be in $W_{B,A}(\Omega)$,  $v^{+}\equiv \max\{0,v\}$,  $v^{-}\equiv \max\{0,-v\}$.  Assume \eqref{c1}, \eqref{c3} and \eqref{c6} hold. Then \\\hk
$(i)$~~$v^{+}$, $v^{-}$ and $|v|$ are in $W_{B,A}(\Omega)$.\\\hk
$(ii)$~~If $v$ is non-negative on $\Omega$, there is a sequence of non-negative functions $\{w_{n}\}$ in $C^{1}(\Omega,A)$ such that $\lim_{n\to\infty}|||w_{n}-v|||_{B}=0$.\\\hk
$(iii)$~~$\eta v$ is in $W_{B,A}(\Omega)$, if $\eta\in C^{1}(\RR^{N})$.
 \label{w1bb}
\end{lemma}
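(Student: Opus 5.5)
Parts $(i)$ and $(ii)$ will be obtained from Lemma \ref{2w1ba} by smoothing the positive-part map; part $(iii)$ will be a direct approximation argument using \eqref{c6} and \eqref{cb}.

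\textbf{Step 1: part $(i)$.} For $\epsilon\in(0,1)$ I take $\phi_\epsilon\in C^{1}(\RR)$ with $\phi_\epsilon(s)=0$ for $s\le 0$ and $\phi_\epsilon(s)=\sqrt{s^{2}+\epsilon^{2}}-\epsilon$ for $s>0$. Then $\phi_\epsilon(0)=0$, $0\le\phi_\epsilon'\le 1$, and $\phi_\epsilon'$ is uniformly continuous. Let $\{v_n\}\subset C^{1}(\Omega,A)$ converge to $v$ in $|||\cdot|||_{B}$. Lemma \ref{2w1ba} with $\alpha=-\infty$ gives $\phi_\epsilon\circ v\in W_{B,A}(\Omega)$, as a $|||\cdot|||_B$-limit of $\phi_\epsilon\circ v_{n_k}$.

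Next I let $\epsilon\to 0$. We have $\phi_\epsilon\circ v\to v^{+}$ pointwise, and
\[\nabla(\phi_\epsilon\circ v)=\phi'_\epsilon(v)\nabla v\to \chi_{\{v>0\}}\nabla v \quad\text{a.e.},\]
dominated by $|\nabla v|$. By \eqref{i3b} the integrand $\sum_{i,j}\partial_i v\,\partial_j v\, b^{ij}$ is integrable. Hence dominated convergence applied to $\sum_{i,j}\partial_i w\,\partial_j w\, b^{ij}$ shows that $\phi_\epsilon\circ v$ is Cauchy in $|||\cdot|||_B$ along $\epsilon\to0$.

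\textbf{Main obstacle.} Inequality \eqref{i3b} is only an inequality for limits, so the differences of elements of $W_{B,A}(\Omega)$ need care. I will handle this as follows. For each fixed $\epsilon$, first show equality in \eqref{i3b} for $v_n\in C^1(\Omega,A)$. Then pass to the limit $w$ using the Cauchy property: the a.e. convergent subsequence together with Fatou in both directions gives $|||v_n-w|||_B\ge$ (integral of $v_n-w$) and
\[\lim_n(\text{integral of } v_n-w)=0.\]
The norm of $w$ in $W_{B,A}$ is then the limit of the norms of the $v_n$, which equals the integral for $w$ by the Vitali/Egorov argument already used in Step 2 of Lemma \ref{2w1ba}. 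So equality holds in \eqref{i3b}, and the Cauchy limit is $v^{+}$ by identification in $L^{r}$ through \eqref{cb}.

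Then $v^{-}=(-v)^{+}$ and $|v|=v^{+}+v^{-}$ belong to $W_{B,A}(\Omega)$ as well.

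\textbf{Step 2: part $(ii)$.} If $v\ge0$, the sequence $\phi_\epsilon\circ v_{n_k}$ constructed above consists of non-negative functions in $C^1(\Omega,A)$; they vanish on $A$ since $\phi_\epsilon(0)=0$. By Step 1 it approximates $\phi_\epsilon\circ v$, and $\phi_\epsilon\circ v\to v^{+}=v$. A diagonal choice over $\epsilon$ and $k$ therefore yields the required $w_n$.

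\textbf{Step 3: part $(iii)$.} For $\eta\in C^{1}(\RR^N)$ and $v_n\in C^1(\Omega,A)$, the products $\eta v_n$ lie in $C^1(\Omega,A)$. Since $\eta$ is $C^1$ near $\overline\Omega$, which is compact, both $\eta$ and $\nabla\eta$ are bounded there. The product rule and \eqref{c1} give
\[|||\eta(v_n-v_m)|||_B\le C\Big(|||v_n-v_m|||_B+\Big\{\int_\Omega |v_n-v_m|^{2}\,\overline b\,dx\Big\}^{1/2}\Big).\]
By Hölder with \eqref{c6},
\[\int_\Omega |w|^{2}\,\overline b\,dx\le \|\overline b\|_{L^{r/(r-2)}}\|w\|_{L^{r}}^{2},\]
and \eqref{cb} bounds $\|w\|_{L^r}$ by $|||w|||_B$. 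So $\{\eta v_n\}$ is Cauchy in $W_{B,A}(\Omega)$. Its limit equals $\eta v$ because $\eta v_n\to \eta v$ in $L^{r}$.
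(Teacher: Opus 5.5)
Your proposal is correct and takes essentially the paper's route. You feed a $C^{1}$ smoothing of the positive part into Lemma \ref{2w1ba}, use dominated convergence to make the smoothed functions Cauchy in $W_{B,A}(\Omega)$, make a diagonal choice for $(ii)$, and use the product rule with H\"{o}lder, \eqref{c6} and \eqref{cb} for $(iii)$. The paper uses $\Phi_{\beta}$ with $\beta\downarrow 1$ instead of your $\sqrt{s^{2}+\epsilon^{2}}-\epsilon$, and identifies limits in $W_{b,A}(\Omega)$ rather than in $L^{r}$. Your extra step showing that $|||w|||_{B}$ equals the weighted Dirichlet integral on all of $W_{B,A}(\Omega)$ makes explicit something the paper uses tacitly: it bounds $|||\cdot|||_{B}$ by that integral, which is the reverse of \eqref{i3b}. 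That equality follows directly from Fatou plus the triangle inequality in the $b^{ij}$-weighted $L^{2}$ space, so the Vitali/Egorov argument you cite is not needed.
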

\begin{proof} $(i)$~Let $w_{1}$ be $v^{+}$, $w_{2}$ be $v^{-}$, $w_{3}$ be $|v|$, and $\{v_{n}\}$ be a sequence in $C^{1}(\Omega,A)$ such that $\{v_{n}\}$ converges to $v$ in $W_{B,A}(\Omega)$.\\\hk
Let $\beta\in (1,2)$. Put 
 \begin{equation}
\Phi_{\beta} (\xi) = \left\{ {\begin{array}{*{20}l}
 0 & {} & {\forall~ \xi \in (-\infty,0],}  \vspace{.1in}\\
  \xi^{\beta}& {} & {\forall~ \xi \in (0,1),}  \vspace{.1in}\\
     \beta(\xi-1)+1 & {} & {\forall~ \xi \in [1 ,\infty).}  \\
\end{array}} \right.
\label{z7}
\end{equation}\hk
We have
\begin{equation}
\Phi_{\beta}' (\xi) = \left\{ {\begin{array}{*{20}l}
 0 & {} & {\forall~ \xi \in (-\infty,0],}  \vspace{.1in}\\
 \beta \xi^{\beta-1} & {} & {\forall~ \xi \in (0,1),}  \vspace{.1in}\\
    \beta  & {} & {\forall~ \xi \in [1 ,\infty).}  \\
\end{array}} \right.
 \label{z780bz} 
  \end{equation}\hk
Thus $\Phi_{\beta}$ and $\Phi_{\beta}'$ are monotone increasing on $\RR$, $\Phi_{\beta}$ is in $C^{1}(\RR)$, $\Phi_{\beta}'$ is in $L^{\infty}(\RR)$ and uniformly continuous on $\RR$. By Lemma \ref{2w1ba},  $\Phi_{\beta}\circ v$ is in $W_{B,A}(\Omega)$ for every $\beta$ in $(1,2)$.\\\hk
By \eqref{z780bz}, we get
\begin{equation}
\Phi_{\beta} (\xi) \le \Phi_{\beta} (\zeta)\hh if~~ 0\le \xi\le \zeta<\infty,
\label{hp0d}
\end{equation}
\begin{equation}
0\le \Phi_{\beta}' (\xi) \le \beta~and~\lim_{\beta\to 1}\Phi_{\beta}' (\xi)=1\hh \forall~ \xi\in (0,\infty).
\label{hp0db}
\end{equation}\hk
  Since $v$ has first order gegralized partial derivatives by Lemma \ref{5z}, by Lemma 7.6 in \cite[p.152]{GT}, we have
\begin{equation}
\frac{\partial v^{+}}{\partial x_{j}}(x) = \left\{ {\begin{array}{*{20}l}
 0 & {} & {if~ v(x) \le 0,}  \vspace{.1in}\\
      \displaystyle \frac{\partial v}{\partial x_{j}}(x) & {} & {if~ v(x) > 0,}  \\
\end{array}} \right.
\label{5z1}
\end{equation}
\begin{equation}
\frac{\partial v^{-}}{\partial x_{j}}(x) = \left\{ {\begin{array}{*{20}l}
 0 & {} & {if~ v(x) \ge 0,}  \vspace{.1in}\\
      \displaystyle -\frac{\partial v}{\partial x_{j}}(x) & {} & {if~ v(x) < 0,}  \\
\end{array}} \right.
\label{5z2}
\end{equation}
\begin{equation}
\frac{\partial |v|}{\partial x_{j}}(x) = \left\{ {\begin{array}{*{20}l}
 \displaystyle \frac{\partial v}{\partial x_{j}}(x) & {} & {if~ v(x) > 0,}  \\
 0 & {} & {if~ v(x)= 0,}  \vspace{.1in}\\
      \displaystyle -\frac{\partial v}{\partial x_{j}}(x) & {} & {if~ v(x) < 0.}  \\
\end{array}} \right.
\label{5z3}
\end{equation}\hk
Put $\Omega^{+}=\{x\in\Omega : v(x)>0\}$.
By \eqref{z7}, $\Phi_{1+\frac{1}{n}}\circ v=\Phi_{1+\frac{1}{n}}\circ v^{+}$. We have
\begin{equation}\int_{\Omega^{+}}\frac{\partial v^{+}}{\partial x_{i}}\frac{\partial v^{+}}{\partial x_{j}}b^{ij}dx\leb{\eqref{5z1}}\int_{\Omega}\frac{\partial v}{\partial x_{i}}\frac{\partial v}{\partial x_{j}}b^{ij}dx\lleb{\eqref{i3b}} \infty,\label{bzz}
\end{equation}
\[\lim_{k\to \infty}\int_{\Omega}\sum_{i,j=1}^{N}(\frac{\partial (\Phi_{1+\frac{1}{k}}\circ v)}{\partial x_{i}}-\frac{\partial v^{+}}{\partial x_{i}})(\frac{\partial (\Phi_{1+\frac{1}{k}}\circ v)}{\partial x_{j}}-\frac{\partial v^{+}}{\partial x_{j}})b^{ij} dx \]
\[
\eqb{\eqref{5z1}} \lim_{k\to \infty }\int_{\Omega^{+}}\sum_{i,j=1}^{N} |[\Phi_{1+\frac{1}{k}}'(v(x))-1]^{2}\frac{\partial v^{+}}{\partial x_{i}}\frac{\partial v^{+}}{\partial x_{j}}(x)b^{ij} dx\]
\[\eqb{\eqref{hp0db},\eqref{bzz},Lebesgue's~Dominated~ Convergence}\hspace{-.42in} 0\hh\forall~k\in\NN. \]\hk
It implies
\[\lim_{m,n\to\infty}|||\Phi_{1+\frac{1}{n}}\circ v - \Phi_{1+\frac{1}{m}}\circ v|||_{B}\]
\[\leb{Minkowski} \lim_{n\to\infty}\{\int_{\Omega}\sum_{i,j=1}^{N}(\frac{\partial (\Phi_{1+\frac{1}{n}}\circ v)}{\partial x_{i}}-\frac{\partial v^{+}}{\partial x_{i}})(\frac{\partial (\Phi_{1+\frac{1}{n}}\circ v)}{\partial x_{j}}-\frac{\partial v^{+}}{\partial x_{j}})b^{ij} dx\}^{\frac{1}{2}}\]
\[+ \lim_{m\to\infty}\{\int_{\Omega}\sum_{i,j=1}^{N}(\frac{\partial (\Phi_{1+\frac{1}{m}}\circ v)}{\partial x_{i}}-\frac{\partial v^{+}}{\partial x_{i}})(\frac{\partial (\Phi_{1+\frac{1}{m}}\circ v)}{\partial x_{j}}-\frac{\partial v^{+}}{\partial x_{j}})b^{ij} dx\}^{\frac{1}{2}}=0.
\]\hk
 Thus, $\{\Phi_{1+\frac{1}{n}}\circ v\}$ is a Cauchy sequence in $W_{B,A}(\Omega)$ and converges to $w$ in $W_{B,A}(\Omega)$. We shall prove $w=v^{+}$.  By \eqref{i2b}, $\{\Phi_{1+\frac{1}{n}}\circ v\}$  converges to $w$ in $W_{b,A}(\Omega)$. \\\hk
  We have
\[\{\int_{\Omega^{+}}|\nabla v|^{2}bdx\}^{\frac{1}{2}}\leb{\eqref{5z1}}\{\int_{\Omega}|\nabla v^{+}|^{2}bdx\}^{\frac{1}{2}}\lleb{\eqref{c1},\eqref{i3},\eqref{bzz}}\infty,\]
\[\lim_{n\to\infty}||\Phi_{1+\frac{1}{n}}\circ v-v^{+}||_{b}=\lim_{n\to \infty}\{\int_{\Omega}\sum_{i=1}^{N}|\frac{\partial (\Phi_{1+\frac{1}{n}}\circ v)}{\partial x_{i}}-\frac{\partial v^{+}}{\partial x_{i}}|^{2}b dx \}^{\frac{1}{2}}\]
\[
\eqb{\eqref{5z1}} \lim_{n\to \infty}\{ \int_{\Omega^{+}}\sum_{i,j=1}^{N} |[\Phi_{1+\frac{1}{n}}'(v(x))-1]^{2}|\frac{\partial v}{\partial x_{i}}|^{2}b dx\}^{\frac{1}{2}}\]
\[\eqb{\eqref{hp0db},~Lebesgue's~Dominated~ Convergence}\hspace{-.42in} 0. \]\hk
Thus $\{\Phi_{1+\frac{1}{n}}\circ v\}$  converges to $v^{+}$ in $W_{b,A}(\Omega)$ and $v^{+}=w$. Therefore  $\{\Phi_{1+\frac{1}{n}}\circ v\}$  converges to $v^{+}$ in $W_{B,A}(\Omega)$.\\\hk
 We get the remainder of $(i)$ by replacing $v$ by $-v$ in the above arguments and noting that $|v|=v^{+}-v^{-}$.\\\hk
 $(ii)$~~Let $\{v_{n}\}$ be a sequence in $C^{1}(A,\Omega)$ such that it converges to  $v$ in $W_{B,A}(\Omega)$. By Lemma \ref{2w1ba}, $\{\Phi_{1+\frac{1}{k}}\circ v_{m}\}_{m}$ converges to $\Phi_{1+\frac{1}{k}}\circ v$ in  $W_{b,A}(\Omega)$ for every $k$ in $\NN$. By $(i)$, $\{\Phi_{1+\frac{1}{k}}\circ v\}_{k}$ converges to $v^{+}$ in  $W_{B,A}(\Omega)$. It implies, for each  $n$ in $\NN$, there are integers  $k_{n}$ and $m_{n}$ such that
 \[|||\Phi_{1+\frac{1}{k_{n}}}\circ v- v^{+}|||_{B}<\frac{1}{2n}, \]
 \[|||\Phi_{1+\frac{1}{k_{n}}}\circ v_{m_{n}}-\Phi_{1+\frac{1}{k_{n}}}\circ v|||_{B}<\frac{1}{2n}. \]\hk
  It implies
 \[|||\Phi_{1+\frac{1}{k_{n}}}\circ v_{m_{n}}- v^{+}|||_{B}\le |||\Phi_{1+\frac{1}{k_{n}}}\circ v_{m_{n}}-\Phi_{1+\frac{1}{k_{n}}}\circ v|||_{B}+|||\Phi_{1+\frac{1}{k_{n}}}\circ v- v^{+}|||_{B}<\frac{1}{n}. \]\hk
 Thus $\{\Phi_{1+\frac{1}{k_{n}}}\circ v_{m_{n}}\}$ is in $C^{1}(A,\Omega)$ and converges to $v^{+}=v$ in $W_{B,A}(\Omega)$. Since $\Phi_{1+\frac{1}{k_{n}}}\circ v_{m_{n}}\ge 0$, we get $(ii)$ with $w_{n}=\Phi_{1+\frac{1}{k_{n}}}\circ v_{m_{n}}$ for every $n$ in $\NN$.\\\hk
   $(iii)$~~ There is a positive real number $C(r,\Omega,A)$ such that
 \begin{equation}
  \{\int_{\Omega}|u|^{2}\overline{b}dx\}^{\frac{1}{2}}\leb{H\ddot{o}lder,\eqref{c6}}  \{\int_{\Omega}\overline{b}^{\frac{r}{r-2}}dx\}^{\frac{r-2}{2r}}\{\int_{\Omega}|u|^{r}dx\}^{\frac{1}{r}}
  \label{b0}
  \end{equation}
  \[\leb{\eqref{cb}}\{\int_{\Omega}\overline{b}^{\frac{r}{r-2}}dx\}^{\frac{r-2}{2r}}C(r,\Omega,A)|||u|||_{B}\hh\forall~u\in W_{B,A}(\Omega).
  \]
  Since $\eta\in C^{1}(\RR^{N})$, there is a positive real number $M$ such that $|\eta|+|\nabla \eta|\le M $ on $\overline{\Omega}$. Let  $\{v_{n}\}$ be a sequence in $C^{1}(\Omega,A)$ such that $\{v_{n}\}$ converges to $v$ in $W_{B,A}(\Omega)$. Then $\eta v_{n}$ is in  $C^{1}(\Omega,A)$ for every $n$ in $\NN$. Arguing as in \eqref{b1}, we have
  \[|||\eta v_{n}- \eta v_{m}|||_{B}^{2}=
 \int_{\Omega}\sum_{i,j=1}^{N}\frac{\partial [\eta(v_{n}-v_{m})]}{\partial x_{i}}\frac{\partial [\eta(v_{n}-v_{m})]}{\partial x_{j}}b^{ij}dx \]
\[\le 2\{\int_{\Omega}[(v_{n}-v_{m})^{2}\frac{\partial  \eta }{\partial x_{i}}\frac{\partial  \eta }{\partial x_{j}}b^{ij} dx+ \int_{\Omega}\eta^{2}\frac{\partial (v_{n}-v_{m}) }{\partial x_{i}} \frac{\partial (v_{n}-v_{m}) }{\partial x_{j}}b^{ij} dx\}
\]   
\[\leb{\eqref{c1}}2M^{2}\int_{\Omega}[(v_{n}-v_{m})^{2}\overline{b} dx+ M^{2}\int_{\Omega}\frac{\partial (v_{n}-v_{m}) }{\partial x_{i}} \frac{\partial (v_{n}-v_{m}) }{\partial x_{j}}b^{ij} dx
\]
\[\leb{\eqref{b0}}M^{2}\{C(r,\Omega,A)^{2} ||\overline{b}||_{L^{\frac{r}{r-2}}(\Omega)}  +1\}|||v_{n}-v_{m}|||_{B} \hh\forall~m,n\in\NN.\]\hk
 Thus $\{\eta v_{n}\}$ is a Cauchy sequence in $W_{B,a}(\Omega)$. Then it converges to $w$ in $W_{B,A}(\Omega)$. By \eqref{i1}, $\{\eta v_{n}\}$ converges to $w$ in $W_{b,A}(\Omega)$. We shall prove $w=\eta v$. We have
   \[||\nabla(\eta v_{n} - \eta v)||_{L^{2}_{b }(\Omega)} \le ||(v_{n}-v)\nabla \eta-  \eta(\nabla v_{n}-\nabla v)||_{L^{2}_{b }(\Omega)} 
 \]  
 \[ \leb{Minkowski} ||(v-v_{n_{k}})\nabla \eta||_{L^{2}_{b }(\Omega)} + ||\eta(\nabla v -  \nabla v_{n})||_{L^{2}_{b }(\Omega)} 
 \]  
\[\le M(||v-v_{n}||_{L^{2}_{b }(\Omega)} + ||\nabla v -  \nabla v_{n}||_{L^{2}_{b }(\Omega)})
\] 
\[\leb{H\ddot{o}lder} M(||b||_{L^{\frac{r}{r-2}}(\Omega)}^{\frac{1}{2}}||v-v_{n}||_{L^{r}(\Omega)} + ||\nabla v -  \nabla v_{n}||_{L^{2}_{b }(\Omega)})
\] 
\[\leb{\eqref{cb}} M\{||b||_{L^{\frac{r}{r-2}}(\Omega)}^{\frac{1}{2}}C(r,\Omega,A)+ 1\}||| v -   v_{n}|||_{b} \mathop  \to \limits_{n \to \infty } 0.
\]\hk
 Thus   $\{\eta v_{n}\}$ converges to $\eta v$ in $W_{b,A}(\Omega)$ and $w=\eta v$. Therefore we get $(iii)$.   
 \end{proof}
 \section{Auxiliary functions}\label{aux}\hk
In \cite{SE} Serrin obtained the local boundedness of solutions of elliptic equations. In this section,  we shall use Serrin's method  to get the global boundedness of solutions of a class elliptic equations, but these technique are only applicable to local regularity. We need to modify the auxiliary functions in \cite{SE} as follows. 
\begin{definition} Let  $s \in (1,\infty)$ and $l \in [3,\infty)$. Put    $\eta_{s,l}= (1-s^{2})l^{s}$,   $a_{s,l}=\frac{1}{2}s(s+1)l^{s-1}$ and    $b_{s,l}= \frac{1}{2} s(s-1)l^{s+1}$  and 
\begin{equation}F_{s,l}(t) = \left\{ {\begin{array}{*{20}{l}}
|t|^{s}&{~if~ |t| ~\le l,}\vspace{.1in}\\
\eta_{s,l}+a_{s,l}|t| +b_{s,l}|t|^{-1}&{~if~ l <|t|}
\end{array}} \right. .
\label{f1b}
\end{equation}\hk
\begin{equation}
G_{s,l}=F_{s,l}F'_{s,l}.
\label{f1b2}
\end{equation}
\label{def61}
\end{definition}\hk
 We have the following results.
\begin{lemma}~Let  $s \in (1,\infty)$ and $l \in [3,\infty)$. Then\\\hk
$(i)$~ $F_{s,l}$ and $G_{s,l}$ are in $C^{1}(\RR)$ ,\\\hk
$(ii)$~~$F'_{s,l}$ and $G'_{s,l}$ are  in $L^{\infty}(\RR)$ and uniformly continuous on $\RR$ and 
\begin{equation}|tF'_{s,l}(t)|\le 4sF_{s,l}(t)\hh \forall~t\in\RR,
\label{ss1}
\end{equation} 
\begin{equation}|F'_{s,l}|^{2}\le s^{2}G'_{s,l}(t)\hh \forall~t\in \RR,
\label{ss2}
\end{equation}
\begin{equation}|G_{s,l}(t)|= sF_{s,l}
^{2-\frac{1}{s}}(t)
\hh ~if~|t|\le 1,
\label{ss3}
\end{equation}
\begin{equation}F_{s,l}(t)\le F_{s,k}(t)
\hh \forall~s\in(1,\infty), t\in\RR,~ l<k.
\label{ss4}
\end{equation}
\label{lem62}
\end{lemma}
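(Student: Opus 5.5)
The plan is to verify each claim by direct calculus on the two pieces of $F_{s,l}$, using only the explicit formulas in Definition \ref{def61}. Since $F_{s,l}$, $F'_{s,l}$ and $G_{s,l}$ are even or odd, it suffices to work on $t\ge 0$. The key preliminary computation is the matching at $t=l$. For $t>l$,
\[F_{s,l}(t)=\eta_{s,l}+a_{s,l}t+b_{s,l}t^{-1},\quad F'_{s,l}(t)=a_{s,l}-b_{s,l}t^{-2},\quad F''_{s,l}(t)=2b_{s,l}t^{-3}.\]
Substituting the definitions of $\eta_{s,l},a_{s,l},b_{s,l}$ should give, at $t=l$, the values $l^{s}$, $sl^{s-1}$ and $s(s-1)l^{s-2}$. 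These coincide with the values of $t^{s}$ and its first two derivatives at $t=l$, so $F_{s,l}$ is $C^{2}$ on $\RR\setminus\{0\}$.

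\emph{Proof of $(i)$ and $(ii)$.} Near $0$, $|t|^{s}$ with $s>1$ is $C^{1}$. Moreover $G_{s,l}(t)=s|t|^{2s-1}\,\mathrm{sgn}\,t$ for $|t|\le l$, which is $C^{1}$ because $2s-1>1$. For $|t|\ge l$ we have $G'_{s,l}=F'^{2}_{s,l}+F_{s,l}F''_{s,l}$, and this is continuous across $t=l$ by the $C^{2}$ matching. Boundedness and uniform continuity follow because $F'_{s,l}$ and $G'_{s,l}$ are continuous and have finite limits at $\pm\infty$: namely $F'_{s,l}\to\pm a_{s,l}$, and $F_{s,l}F''_{s,l}\sim 2a_{s,l}b_{s,l}t^{-2}\to 0$, so $G'_{s,l}\to a_{s,l}^{2}$. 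Note also that $F'_{s,l}(t)\ge a_{s,l}-b_{s,l}l^{-2}=sl^{s-1}>0$ for $t>l$. Hence $F_{s,l}\ge l^{s}>0$ there, and $F''_{s,l}>0$ there.

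\emph{Inequalities \eqref{ss1}--\eqref{ss3}.} For $|t|\le l$, \eqref{ss1} is the identity $|tF'_{s,l}|=sF_{s,l}$. For \eqref{ss2} on $|t|\le l$ we have $F'^{2}_{s,l}=s^{2}|t|^{2s-2}$ and $G'_{s,l}=s(2s-1)|t|^{2s-2}$, and $s^{2}\cdot s(2s-1)\ge s^{2}$. For $t>l$, \eqref{ss2} follows from $G'_{s,l}=F'^{2}_{s,l}+F_{s,l}F''_{s,l}\ge F'^{2}_{s,l}$ and $s^{2}\ge1$. For \eqref{ss1} with $t>l$, set
\[h(t)=4sF_{s,l}(t)-tF'_{s,l}(t)=4s\eta_{s,l}+(4s-1)a_{s,l}t+(4s+1)b_{s,l}t^{-1}.\]
Then $h(l)=3sl^{s}>0$, and a short computation gives $h'(l)=3s^{2}l^{s-1}>0$. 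Also $h''>0$, so $h$ is convex and increasing on $[l,\infty)$, hence positive. Since $tF'_{s,l}>0$ there, this gives \eqref{ss1}. The statement \eqref{ss3} is immediate: for $|t|\le1\le l$ we have $|G_{s,l}|=s|t|^{2s-1}=sF_{s,l}^{2-1/s}$.

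\emph{Monotonicity \eqref{ss4}.} This is the step I expect to be the main obstacle. A direct comparison of $|t|^{s}$ with the extension via third derivatives has indefinite sign. Instead I will fix $t>0$ and differentiate $\lambda\mapsto F_{s,\lambda}(t)$ for $\lambda<t$. Differentiating $\eta_{s,\lambda}+a_{s,\lambda}t+b_{s,\lambda}t^{-1}$ in $\lambda$ and factoring should yield
\[\frac{\partial}{\partial\lambda}F_{s,\lambda}(t)=\frac{s(s^{2}-1)\lambda^{s-2}}{2t}(t-\lambda)^{2}\ge0.\]
For $\lambda\ge t$ the value is constantly $t^{s}$, and the map is continuous in $\lambda$ at $\lambda=t$. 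Hence $\lambda\mapsto F_{s,\lambda}(t)$ is nondecreasing on $[3,\infty)$, which is \eqref{ss4}.
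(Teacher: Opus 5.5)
Your proof is correct. For $(i)$, $(ii)$ and \eqref{ss1}--\eqref{ss3} it is essentially the paper's argument:
\begin{itemize}
\item The paper also checks the $C^{2}$ matching at $t=l$.
\item It gets \eqref{ss2} from $G'_{s,l}=(F'_{s,l})^{2}+F_{s,l}F''_{s,l}\ge (F'_{s,l})^{2}$.
\item It proves \eqref{ss1} by writing $4sF_{s,l}(t)-|tF'_{s,l}(t)|=l^{s+1}|t|^{-1}f(l^{-1}|t|)$, with a quadratic $f$ satisfying $f(1)=3s$ and $f'\ge 3s^{2}+3s$ on $[1,\infty)$. This is your convexity argument for $h$ in rescaled variables.
\item Your observation that $F_{s,l}F''_{s,l}=O(t^{-2})$ supplies the boundedness of $G'_{s,l}$ at infinity, which the paper only asserts.
\end{itemize}

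The genuine difference is in \eqref{ss4}, where the paper treats two cases.
\begin{itemize}
\item For $t\in[l,k]$ it compares $t^{s}$ with the extension directly. It uses $g(t)=t^{s}-F_{s,l}(t)$ with $g(l)=0$ and $g'(t)=l^{s-1}h(t/l)$, where $h(1)=0$ and $h'\ge 0$ on $[1,\infty)$. So the direct comparison you set aside as an obstacle does in fact go through.
\item For $t\ge k$ it differentiates $\xi\mapsto F_{s,\xi}(t)$ and uses $\xi^{-1}t+\xi t^{-1}\ge 2$. This is exactly your perfect-square identity $\partial_\lambda F_{s,\lambda}(t)=\frac{s(s^{2}-1)\lambda^{s-2}}{2t}(t-\lambda)^{2}$.
\end{itemize}
You observe that this parameter derivative alone suffices. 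Since $\lambda\mapsto F_{s,\lambda}(t)$ is constant for $\lambda\ge t$, continuous at $\lambda=t$, and $F_{s,t}(t)=t^{s}$, the case $t\in[l,k]$ reduces to $F_{s,l}(t)\le F_{s,t}(t)=F_{s,k}(t)$. This removes the auxiliary functions $g,h$ and replaces the paper's two-case argument with a single, shorter one.
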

\begin{proof} 
$(i)$~~Note that
 \begin{equation}F'_{s,l}(t) = \left\{ {\begin{array}{*{20}{l}}
s.sign t|t|^{s-1} &\hspace{-.5in}{~if~t \in (-l,l],}\vspace{.1in}\\
sign t[ a_{s,l} -b_{s,l}|t|^{-2} ]&{~if~ l<|t|< \infty,}
\end{array}} \right.
\label{f12b}
\end{equation}
\begin{equation}F''_{s,l}(t) = \left\{ {\begin{array}{*{20}{l}}
s(s-1)|t|^{s-2} &{~if~|t| \in [0,l],}\vspace{.1in}\\
2b_{s,l}|t|^{-3}&{~if~ l<|t|<\infty,}
\end{array}} \right.
\label{f12c}
\end{equation}
\begin{equation}
G'_{s,l}(t) = (F'_{s,l})^{2}(t)+F_{s,l}F''_{s,l}(t)
\label{f12f}
\end{equation}\hk
\begin{equation}
G_{s,l}(t) = s(sign t)|t|^{2s-1}\hh \forall~t\in (-l,l),
\label{f12d}
\end{equation}
\begin{equation}
G'_{s,l}(t) = s(2s-1)|t|^{2(s-1)}\hh \forall~t\in (-l,l).
\label{f12e}
\end{equation}\hk
 Since   $s>1$, by \eqref{f1b}, \eqref{f12b}, \eqref{f12d} and \eqref{f12e}, $F_{s,l}|_{(\hspace{-.02in}-2,2)}$ and $G_{s,l}|_{(\hspace{-.02in}-2,2)}$ are of class $C^{1}((\hspace{-.02in}-2,2))$. By definitions of $a_{s,l}$, $b_{s,l}$ and $\eta_{s,l}$, we have
  \begin{equation}
 \lim_{t\to l^{-}}F''(t)=s(s-1)l^{s-2}=2b_{s,l}|l
 |^{-3}=\lim_{t\to l^{+}}F''(t),
  \label{f8}
\end{equation}
 \begin{equation}
\lim_{t\to l^{-}}F'(t)= sl^{s-1}= \frac{1}{2}s(s+1)l^{s-1}-   \frac{1}{2}s(s-1)l^{s-1}= a_{s,l}-b_{s,l}l^{-2}=\lim_{t\to l^{+}}F'(t),
  \label{f9}
\end{equation}
 \begin{equation}
\lim_{t\to l^{-}}F(t)= l^{s}=(1-s^{2})l^{s}+\frac{1}{2}s(s+1)l^{s} + \frac{1}{2}s(s-1)l^{s}
  \label{f10}
\end{equation}
\[=\eta_{s,l}+a_{s,l}l +b_{s,l}l^{-1}=\lim_{t\to l^{+}}F(t). 
\]\hk
By \eqref{f12b}-\eqref{f12d} and \eqref{f8}-\eqref{f10}, $F'_{s,l}|_{\RR\setminus(-1,1)}$ and $G_{s,l}|_{\RR\setminus (-1,1)}$ are of class $C^{1}(\RR\setminus(-1,1))$. Thus $F_{s,l}$ and $G_{s,l}$ are in $C^{1}(\RR)$.\\\hk
$(ii)$~~ By \eqref{f12b} and \eqref{f12c}, $F'_{s,l}$ is  uniformly continuous and bounded on $\RR$. Thus, by  \eqref{f12f}, $G'_{s,l}$ is uniformly continuous and bounded on $\RR$.  \\\hk
  Let $t\in \RR\setminus [-l,l]$. Put $x=l^{-1}|t|$. By defintions, we have
   \begin{equation}
 a_{s,l}-b_{s,l}|t|^{-2}=\frac{1}{2}s(s+1)l^{s-1}-\frac{1}{2} s(s-1)l^{s+1}|t|^{-2}\hspace{-0.16in}\geb{l.|t|^{-1} \le 1
 }\hspace{-0.04in}\frac{1}{2}sl^{s-1}[(s+1)-(s-1)] ,
  \label{f11}
  \end{equation}
  \[4sF_{s,l}(t)-|tF'_{s,l}(t)| \eqb{\eqref{f11}} 4s\eta_{s,l} +4sa_{s,l}|t|+4sb_{s,l}|t|^{-1}-a_{s,l}|t|+b_{s,l}|t|^{-1}   \]
\[=4s(1-s^{2})l^{s}+2s^{2}(s+1)l^{s-1}|t|+ 2s^{2}(s-1)l^{s+1}|t|^{-1}-\frac{1}{2}s(s+1)l^{s-1}|t|\]
\[+\frac{1}{2} s(s-1)l^{s+1}|t|^{-1}
\]  
\[=4s(1-s^{2})l^{s}+(2s^{2}-\frac{1}{2}s)(s+1)l^{s-1}|t|+(2s^{2}+\frac{1}{2}s)(s-1)l^{s+1}|t|^{-1}\]
\[=l^{s+1}|t|^{-1}\{4s(1-s^{2})l^{-1}|t| +(2s^{2}-\frac{1}{2}s)(s+1)l^{-2}t^{2} +(2s^{2}+\frac{1}{2}s)(s-1)\}
\]
\[=l^{s+1}|t|^{-1}f(x),\]
where
\[f(x)=(2s^{2}-\frac{1}{2}s)(s+1)x^{2}+4s(1-s^{2})x+(2s^{2}+\frac{1}{2}s)(s-1)\hk\forall~x\in \RR.\]\hk
Let $x$ in $[1,\infty)$. By computations, we get
\[f'(x)=2(2s^{2}-\frac{1}{2}s)(s+1)x+4s(1-s^{2})\]
\[\geb{x\ge 1}\hspace{-.1in} 2(2s^{2}-\frac{1}{2}s)(s+1)+4s(1-s^{2})= 3s^{2}+3s\ggeb{s>1}0\hk\forall~x\in [1,\infty),\]
\[f(1)=(2s^{2}-\frac{1}{2}s)(s+1)+4s(1-s^{2})+(2s^{2}+\frac{1}{2}s)(s-1)=3s>0.\]\hk
It follows that $f(x)>0$ for every $x$ in $[1,\infty)$ and
\begin{equation}
|tF'_{s,l}(t)|\le 4sF_{s,l}(t)\hh\forall~s\in(1,\infty), t\in \RR\setminus [-l,l].
  \label{ff11}
\end{equation}\hk
On the other hand
\begin{equation}
|tF'_{s,l}(t)|= s|t|^{s} =sF_{s,l}(t)\hh\forall~ t\in [-l,l].
 \label{f11b}
  \end{equation}\hk
 Combining \eqref{ff11} and  \eqref{f11b}, we get \eqref{ss1}.\\\hk
      By \eqref{ss1} and \eqref{f12c}, $F_{s,l}$ and $F''_{s,l}$ are non-negative on $\RR$. It implies
   \begin{equation}
|F'_{s,l}(t)|^{2}\leb{\eqref{f12f}} G'_{s,l}(t)\hh\forall~ t\in \RR
 \label{f11bb}
  \end{equation}
 and we get  \eqref{ss2}.  \\\hk
  We have
 \[|G(t)|=s|t|^{2s -1}=s|t|^{s(2 -\frac{1}{s})}=sF(t)^{2 -\frac{1}{s}}\hk if~|t|\le 1, \]
 which implies \eqref{ss3}.  \\\hk
  Fix $s\in(1,\infty)$, $l\in (3,\infty)$ and $k \in(l,\infty)$.  Put $x=l^{-1}t\ge 1$ for every $t\in [l,k]$,
\begin{equation}g(t)= t^{s}- (1-s^{2})l^{s}-\frac{1}{2}s(s+1)l^{s-1}t - \frac{1}{2} s(s-1)l^{s+1}|t|^{-1}\hh\forall~t\in [l,k],
\label{lk0}
 \end{equation}
\begin{equation}h(x)=sx^{s-1}-\frac{1}{2}s(s+1)+\frac{1}{2}s(s-1)x^{-2}\hh\forall~x\in[1,\infty).
\label{lk0b}
 \end{equation}\hk
 We have
 \begin{equation}h(1)=0,
 \label{lk1}
 \end{equation}
 \begin{equation}h'(x)=s(s-1)x^{s-2} -s(s-1)x^{-3}\ge 0\hh\forall~x\in[1,\infty), \label{lk2}
 \end{equation}
 \begin{equation}
 h(x)\geb{\eqref{lk1},\eqref{lk2}} 0 \hh\forall~x\in[1,\infty),
  \label{lk3}
 \end{equation}
 \begin{equation}g(l)=0,
\label{lk4}
 \end{equation}
 \begin{equation}g'(t)= st^{s-1}-\frac{1}{2}s(s+1)l^{s-1}+\frac{1}{2} s(s-1)l^{s+1}|t|^{-2}
\label{lk5}
 \end{equation}
\[\eqb{\eqref{lk0b}}l^{s-1}h(x) \geb{\eqref{lk3}} 0\hh\forall~t\in [l,k].
\]\hk
Combining \eqref{lk4} and \eqref{lk5}, by  mean value theorem, we  have
\[t^{s}\ge (1-s^{2})l^{s}+\frac{1}{2}s(s+1)l^{s-1}t + \frac{1}{2} s(s-1)l^{s+1}|t|^{-1}\hh\forall~t\in [l,k]
\]
or 
\begin{equation}F_{s,k}(t)\ge F_{s,l}(t)\hh\forall~t\in [l,k].
\label{sla}
\end{equation}\hk
Fix $t$ in $[k,\infty)$. Put
\[\overline{h}(\xi)= F_{s,\xi}(t)=(1-s^{2})\xi^{s}+ \frac{1}{2}s(s+1)\xi^{s-1}t+\frac{1}{2}s(s-1)\xi^{s+1}t^{-1}\hk\forall~\xi\in[l,k].\]
\hk
Since $\xi^{-1}t++\xi t^{-1} \ge 2$, we have
\[\overline{h}'(\xi)= s(1-s^{2})\xi^{s-1}+ \frac{1}{2}s(s^{2}-1)\xi^{s-2}t++\frac{1}{2}s(s^{2}-1)\xi^{s}t^{-1}\]
\[= s(s^{2}-1)\xi^{s-1}[-1+ \frac{1}{2}(\xi^{-1}t++\xi t^{-1})]\ge 0 \hk\forall~\xi\in[l,k].\]\hk
It implies
\begin{equation}F_{s,k}(t)=\overline{h}(k)\ge  \overline{h}(l) =F_{s,l}(t)\hh\forall~t\in [k,\infty).
\label{slb}
\end{equation}\hk
Combining \eqref{sla} and \eqref{slb}, we get \eqref{ss4}. 
\end{proof}\hk

For the case  $s\in (\frac{1}{2},1]$, we have following auxiliary functions.
   \begin{definition} Let  $s \in (\frac{1}{2},1]$. Put     
\begin{equation}\theta(t) = \left\{ {\begin{array}{*{20}{l}}
\frac{3}{8} |t|^{\frac{1}{2}}(|t|^{2}-\frac{10}{3}|t| +5)&{~if~ |t| ~\le 1,}\vspace{.1in}\\
1 &{~if~ 1 <|t|,}
\end{array}} \right. 
\label{fs1}
\end{equation}
\begin{equation}\overline{\theta}_{s}(t) = |t|^{s}\hh\forall~t\in\RR ,
\label{fs2}
\end{equation}
\begin{equation}F_{s} = \theta\overline{\theta}_{s},
\label{fs3}
\end{equation}
\begin{equation}\overline{F}(t) = \left\{ {\begin{array}{*{20}{l}}
0&{~if~ |t| ~\le 1,}\vspace{.1in}\\
|t| &{~if~ 1 <|t|,}
\end{array}} \right. 
\label{fs1z}
\end{equation}
\begin{equation}
G_{s}=F_{s}F'_{s}.
\label{fs4}
\end{equation}
\label{defs1}
\end{definition}\hk
We shall use the commands D[f,t], Expand[expr], Collect[expr,t] of the software Mathematica in computations. In order to verify a equation, we can use the commands  Expand and Collect to the most complicated side to get the remain side of the equation. Of course we can do these jobs by hand.  We have the following results.
\begin{lemma} Let $s\in (\frac{1}{2},1]$. We have\\\hk
$(i)$~ $F_{s}$  is in $C^{1}(\RR)$ and $F'_{s}$ is in $L^{\infty}(\RR)$ and uniformly continuous on $\RR$,\\\hk
$(ii)$~  $G_{s}$ is in $C^{1}(\RR)$ and $G'_{s}$ is in $L^{\infty}(\RR)$ and uniformly continuous on $\RR$.
\label{lemfs1}
\end{lemma}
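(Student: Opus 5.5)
The plan is to verify everything by explicit piecewise differentiation. $F_s$ and $G_s$ are even and odd respectively, so it suffices to work on $[0,\infty)$ and check three regions: the origin, the junction $t=1$, and the tail $(1,\infty)$. On $(0,1)$ I expand
\[
F_s(t)=\tfrac38\bigl(t^{s+\frac52}-\tfrac{10}{3}t^{s+\frac32}+5t^{s+\frac12}\bigr),
\]
which is a finite sum of powers. For $t>1$ we have $F_s(t)=t^{s}$ and $G_s(t)=st^{2s-1}$. All derivatives on the open pieces are therefore explicit power functions (this is the Mathematica-style bookkeeping the paper mentions).

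\textbf{Origin.} The smallest exponent in $F_s$ is $s+\frac12>1$, since $s>\frac12$. Hence $F_s(t)=O(|t|^{s+1/2})$ and $F'_s(t)=O(|t|^{s-1/2})\to 0$, so $F_s$ is $C^{1}$ near $0$ with $F'_s(0)=0$. Likewise $G_s=F_sF'_s$ is odd and has leading term $c\,\mathrm{sign}(t)|t|^{2s}$ with $2s>1$. Differentiating, $G'_s(t)=O(|t|^{2s-1})\to 0$, so $G_s\in C^{1}$ near $0$ with $G_s'(0)=0$.

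\textbf{Junction $t=1$.} This is the step I expect to need the most care, because $G'_s=(F'_s)^2+F_sF''_s$ involves $F''_s$, so I must match $F_s$, $F'_s$ and $F''_s$ across $t=1$. Writing $F_s=\theta\,\overline{\theta}_s$, these three matches reduce to checking $\theta(1)=1$, $\theta'(1^-)=0$ and $\theta''(1^-)=0$.
\begin{itemize}
\item $\theta(1)=\tfrac38(1-\tfrac{10}{3}+5)=1$.
\item $\theta'(t)=\tfrac38\bigl(\tfrac52t^{3/2}-5t^{1/2}+\tfrac52t^{-1/2}\bigr)$, which vanishes at $t=1$.
\item $\theta''(t)=\tfrac38\bigl(\tfrac{15}{4}t^{1/2}-\tfrac52t^{-1/2}-\tfrac54t^{-3/2}\bigr)$, which also vanishes at $t=1$.
\end{itemize}
By Leibniz, the one-sided values of $F_s$, $F'_s$ and $F''_s$ at $1$ then agree with $1$, $s$ and $s(s-1)$, the values from the right. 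Consequently $F_s$, $F'_s$, $G_s$ and $G'_s$ are continuous at $1$.

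\textbf{Tail and conclusion.} For $t>1$ we have $F'_s=st^{s-1}$ and $G'_s=s(2s-1)t^{2s-2}$, and both are bounded since $s\le1$. Their derivatives are $s(s-1)t^{s-2}$ and $s(2s-1)(2s-2)t^{2s-3}$, which are bounded on $(1,\infty)$. So $F'_s$ and $G'_s$ are Lipschitz, hence uniformly continuous, on $[2,\infty)$. On the compact set $[-2,2]$ they are continuous by the two previous steps, hence bounded and uniformly continuous there. Gluing the two overlapping regions gives boundedness and uniform continuity on all of $\RR$, which proves $(i)$ and $(ii)$.
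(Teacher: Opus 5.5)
Your proof is correct and follows essentially the same route as the paper: explicit piecewise formulas, the matching $\theta(1)=1$, $\theta'(1^-)=0$, $\theta''(1^-)=0$ so that $F_s$, $F_s'$, $F_s''$ agree across $t=\pm1$, and the positivity of $2s-1$ at the origin. Your separate treatment of $t=0$ through the exponents $s+\frac12>1$ and $2s-1>0$ is more careful than the paper's remark that $\theta\in C^{2}(\RR)$, which holds only away from $t=0$.
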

 \begin{proof}
 $(i)$~~ We have
 \begin{equation}\theta'(t) \eqb{D[.],Expand[.]} 
\frac{3}{8}signt. |t|^{-\frac{1}{2}}(\frac{5}{2}t^{2}-5|t|+\frac{5}{2})\hh~if~|t|\le 1,
\label{fs500}
\end{equation}
 \begin{equation}\theta''(t) \eqb{D[.]}  
\frac{3}{8} |t|^{-\frac{3}{2}}(\frac{15}{4}t^{2}-\frac{5}{2}t-\frac{5}{4} )\hh~if~|t|\le 1,
\label{fs50}
\end{equation}
\begin{equation} \theta(\RR\setminus(-1,1)) =\{1\},
\label{fs5}
\end{equation}
\begin{equation}  \theta'(\RR\setminus(-1,1)) =\theta_{s}''(\RR\setminus(-1,1)) =\{0\},
\label{fs6}
\end{equation}
\begin{equation}\overline{\theta}_{s}'(t)=s.sign t.|t|^{s-1}\hh\forall~t\in \RR\setminus\{0\},
\label{fs6b}
\end{equation}
  \begin{equation}\overline{\theta}_{s}''(t)=s(s-1)|t|^{s-2}\hh\forall~t\in \RR\setminus\{0\},
  \label{fs6c}
\end{equation}
\begin{equation}F_{s}(t)=\theta\overline{\theta}_{s}(t) \hh\forall~t\in\RR,
\label{fs6z}
\end{equation}
\begin{equation}F_{s}(t)= 
\frac{3}{8} |t|^{s+\frac{1}{2}}(|t|^{2}-\frac{10}{3}|t| +5)\hh{~if~ ~|t| ~\le 1,}
\label{fs6zb}
\end{equation}
\begin{equation}F_{s}'(t)=\theta'\overline{\theta}_{s}(t)+\theta\overline{\theta}_{s}'(t)\hh\forall~t\in \RR,
\label{fs7zb}
\end{equation}
\begin{equation}F_{s}'(t)=[\frac{3}{8} |t|^{s+\frac{1}{2}}(|t|^{2}-\frac{10}{3}|t| +5)]'
\label{fs7zbb}
\end{equation}
\[ \eqb{D[.],Expand[.],Collect[.,t]} 
\frac{3}{8}sign t.|t|^{s-\frac{1}{2}}[ (\frac{5}{2} + s) t^2 - (5 + \frac{10s}{3})|t| + \frac{5}{2} + 5s ]\]
\[\hh{~if~ ~|t| ~\le 1,}\]
\begin{equation}F_{s}''(t)=\theta''\overline{\theta}_{s}(t)+2\theta'\overline{\theta}_{s}'(t)+\theta\overline{\theta}_{s}''(t)\hh\forall~t\in \RR,
\label{fs8z}
\end{equation}
\begin{equation}F_{s}''(t)=[\frac{3}{8}sign t.|t|^{s-\frac{1}{2}}[ (\frac{5}{2} + 5s) t^2 - (5 + \frac{10s}{3})|t| + \frac{5}{2} + 5s ]'
\label{fs8zb}
\end{equation}
\[ \eqb{Expand, Collect[.,t]} \frac{3}{8}|t|^{s-\frac{3}{2}}[(\frac{15}{4}+ 4 s + s^2) t^2 - (\frac{5}{2} + \frac{20 s}{3} + \frac{10 s^2}{3}) t - \frac{5}{4} + 5 s^2]\]
\[\hh {if~ |t| ~\le 1.} \]\hk
 We have
 \begin{equation}\lim_{t\to 1^{-}}\theta(t)\eqb{\eqref{fs1}}1=\lim_{t\to 1^{+}}\theta(t),
 \label{fs9z}
\end{equation}
\begin{equation}\lim_{t\to 1^{-}}\theta'(t)\eqb{\eqref{fs500}}0=\lim_{t\to 1^{+}}\theta'(t),
\label{fs10z}
\end{equation} 
 \begin{equation}\lim_{t\to 1^{-}}\theta''(t)\eqb{\eqref{fs50}}0=\lim_{t\to 1^{+}}\theta''(t).\label{fs11z}
\end{equation}\hk 
 It implies $\theta\in C^{2}(\RR)$. Thus, by \eqref{fs6z} and \eqref{fs9z}-\eqref{fs11z},  $F$ is in $C^{1}(\RR)$ and $F'_{s}$ is  in $L^{\infty}(\RR)$ and uniformly continuous on $\RR$ if $s\in (\frac{1}{2},1]$. \\\hk
 $(ii)$~~ Let $s\in (\frac{1}{2},1]$. We have 
  \[G_{s}(t)= (F_{s}F_{s}')(t)=sign t |t|^{2s}[s|t|^{-1}\theta^{2}(t) +sign t .\theta\theta'(t)]\hh\forall~t\in\RR,\] 
 \[ = \left\{ {\begin{array}{*{20}{l}}
\frac{9}{64}sign t |t|^{2s}(\hspace{-.01in}|t|^{2}\hspace{-.01in}-\hspace{-.01in}\frac{10}{3}|t|\hspace{-.01in} +5\hspace{-.01in})[ (\frac{5}{2} + s) t^2 - (5 + \frac{10s}{3})|t| + \frac{5}{2} + 5s ]\hspace{-1in}  &\vspace{.1in}\\
&{~if~ |t| \le\hspace{-.02in} 1,}\vspace{.1in}\\
s.sign t |t|^{2s-1} &{~if~ 1 <|t|,}
\end{array}} \right. \]
  \begin{equation}G_{s}'(t)= (F_{s}')^{2}(t)+F_{s}F_{s}''(t)\hh\forall~t\in\RR,
\label{fs12z}
\end{equation}
\[= |t|^{2s-2}[s^{2}+s(s-1)]=s(2s-1)|t|^{2s-2} \hh~if~ 1 <|t|,\]
 \begin{equation}
G_{s}'(t) =
\{\frac{9}{64}sign t |t|^{2s}(\hspace{-.01in}|t|^{2}\hspace{-.01in}-\hspace{-.01in}\frac{10}{3}|t|\hspace{-.01in} +5\hspace{-.01in})[ (\frac{5}{2} + s) t^2 - (5 + \frac{10s}{3})|t| + \frac{5}{2} + 5s ]\}'
\label{fs12zb}
\end{equation}
\[ \eqb{D[],Expand, Collect}\frac{9}{64}t^{2 s -1}[(10 + 9 s + 2 s^2) t^4 - (40 + \frac{140 s}{3} + \frac{40 s^2}{3})t^3 \]
\[+ (\frac{190}{3} + \frac{950 s}{9}+ \frac{380 s^2}{9})t^2 - (\frac{100}{3} +   100 s + \frac{200 s^2}{3})t + 25 s + 50 s^2]\hk if~|t|\le 1.\]
\hk
Since $0<2s-1\le 1$, by \eqref{fs9z}-\eqref{fs11z}, $G_{s}$ is in $C^{1}(\RR)$ and $G'_{s}$ is  in $L^{\infty}(\RR)$ and uniformly continuous on $\RR$. \end{proof}
\begin{lemma}  There are  positive real numbers  $\delta\in (0,\frac{1}{3})$ and $c_{0}$ such that
 \begin{equation} 
\hh\hh|F'_{s}|^{2}\le c_{0} G'_{s}(t)\hh \forall~s\in(\frac{2}{3}-\delta,1], t\in \RR,
\label{sss2}
\end{equation}
\label{lemfs1b}
\end{lemma}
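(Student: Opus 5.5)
The plan is to compare $|F'_{s}|^{2}$ and $G'_{s}=(F'_{s})^{2}+F_{s}F''_{s}$ (see \eqref{fs12z}) separately on $|t|>1$ and $|t|\le 1$. Since $F_{s}$ is even, $F'_{s}$ is odd and $G'_{s}$ is even, so it suffices to treat $t\ge 0$. The case $t=0$ is trivial because both sides vanish there.

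\textbf{Region $|t|>1$.} Here $\theta\equiv 1$, so by \eqref{fs6b} we have $|F'_{s}(t)|^{2}=s^{2}|t|^{2s-2}$. By the computation after \eqref{fs12z}, $G'_{s}(t)=s(2s-1)|t|^{2s-2}$. Hence $|F'_{s}|^{2}\le \frac{s}{2s-1}G'_{s}$. Fix any $\delta\in(0,\frac{1}{6})$, so that $s>\frac{2}{3}-\delta>\frac{1}{2}$. Then $\frac{s}{2s-1}\le \frac{1}{2(\frac{2}{3}-\delta)-1}$, a constant independent of $s$ and $t$.

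\textbf{Region $0<t\le 1$.} By \eqref{fs7zbb} we have $|F'_{s}(t)|^{2}=\frac{9}{64}t^{2s-1}P_{s}(t)^{2}$, where
\[P_{s}(t)=(\tfrac{5}{2}+s)t^{2}-(5+\tfrac{10s}{3})t+\tfrac{5}{2}+5s.\]
By \eqref{fs12zb}, $G'_{s}(t)=\frac{9}{64}t^{2s-1}Q_{s}(t)$, where $Q_{s}$ is the explicit quartic displayed there, whose coefficients are polynomials in $s$. The common factor $t^{2s-1}$ cancels. So the claim reduces to showing
\[\sup\Bigl\{\frac{P_{s}(t)^{2}}{Q_{s}(t)} : s\in[\tfrac{2}{3}-\delta,1],\ t\in[0,1]\Bigr\}<\infty .\]
Since $(s,t)\mapsto P_{s}(t)^{2}/Q_{s}(t)$ is continuous wherever $Q_{s}(t)\neq 0$ and the parameter set is compact, it is enough to prove that $Q_{s}(t)>0$ on $[\frac{2}{3}-\delta,1]\times[0,1]$ for some $\delta$. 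Then $c_{0}$ is the larger of this supremum and the constant from the region $|t|>1$.

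\textbf{Positivity of $Q_{s}$ (main obstacle).} The endpoints are easy:
\[Q_{s}(0)=25s+50s^{2}>0,\qquad Q_{s}(1)=\tfrac{64}{9}s(2s-1)>0\ \text{ for } s>\tfrac{1}{2}.\]
The value $Q_{s}(1)$ is obtained by collecting coefficients and is consistent with the outer formula. The interior of $[0,1]$ is the real difficulty, and the threshold $\frac{2}{3}$ suggests that positivity is delicate near $s=\frac{2}{3}$.

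I would first write $G'_{s}=(F'_{s})^{2}+F_{s}F''_{s}$ and use $F_{s}\ge 0$. On the set where the quadratic bracket in \eqref{fs8zb},
\[R_{s}(t)=(\tfrac{15}{4}+4s+s^{2})t^{2}-(\tfrac{5}{2}+\tfrac{20s}{3}+\tfrac{10s^{2}}{3})t-\tfrac{5}{4}+5s^{2},\]
is nonnegative, we get $G'_{s}\ge (F'_{s})^{2}$ directly. On that set one also needs $P_{s}(t)\neq 0$ to get $Q_{s}>0$; this is checked by noting that the discriminant of $P_{s}$ is negative. Note that $R_{s}(0)=5s^{2}-\frac{5}{4}>0$ for $s>\frac{1}{2}$, so $R_{s}$ can be negative only on a subinterval around its vertex. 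On that subinterval I would bound $|F_{s}F''_{s}|$ from above by $(F'_{s})^{2}$, i.e. show $t^{2}\,\theta\text{-factor}\cdot|R_{s}|<P_{s}^{2}$ there, by explicit one-variable estimates.

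If this becomes messy, the fallback is to verify $Q_{2/3}(t)>0$ and $Q_{1}(t)>0$ on $[0,1]$ directly, for instance by a Sturm sequence or by rewriting the quartic as a sum of positive terms. Then I would show that $s\mapsto Q_{s}(t)$, which is quadratic in $s$ for each fixed $t$, is either monotone or convex on $[\frac{2}{3},1]$, so that its minimum over $s$ is controlled by the endpoint values. Once $\min_{t\in[0,1]}Q_{s}(t)>0$ holds on the compact set $s\in[\frac{2}{3},1]$, joint continuity of $(s,t)\mapsto Q_{s}(t)$ lets positivity persist on $[\frac{2}{3}-\delta,1]$ for some small $\delta\in(0,\frac{1}{6})$, which gives the $\delta$ in the statement.
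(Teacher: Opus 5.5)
\textbf{Gap: the positivity of $Q_s$ is never proved, and the mechanism you propose for it fails.}

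Your handling of $|t|>1$ is correct: the ratio $s/(2s-1)$ is bounded for $s>\frac23-\delta>\frac12$. It is also more careful than the paper, where $2G'_s-(F'_s)^2$ on $|t|>1$ should equal $s(3s-2)|t|^{2s-2}$. Your reduction of $|t|\le 1$ to $Q_s(t)>0$ on a compact set, and the continuity argument extending $[\frac23,1]$ to $[\frac23-\delta,1]$, are also fine. But $Q_s>0$ is the entire content of the lemma, and you leave it unproved.

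Write $Q_s(t)=A(t)s^2+B(t)s+C(t)$, the $s$-collected form of \eqref{fs12zb}. Then
\[
A(t)=2\Bigl(t^2-\frac{10}{3}t+1\Bigr)^2+16\Bigl(t-\frac53\Bigr)^2+\frac{32}{9}>0 .
\]
So $s\mapsto Q_s(t)$ is convex for every $t$, and the convex branch of your fallback is exactly the one that applies. Convexity puts the \emph{maximum} over $[\frac23,1]$ at the endpoints, not the minimum. Checking $Q_{2/3}>0$ and $Q_1>0$ therefore does not exclude an interior dip.

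What you need is monotonicity in $s$, which is the paper's key step. For $s>\frac12$,
\[
\partial_s Q_s(t)=2A(t)s+B(t)\ge A(t)+B(t)=11t^4-60t^3+\frac{1330}{9}t^2-\frac{500}{3}t+75>0,
\]
and the paper proves the last inequality by an explicit sum-of-squares representation. Once $Q_s(t)$ is increasing in $s$, you only need $Q_{2/3}(t)>0$ on $[0,1]$; checking $Q_1$ is unnecessary. The paper obtains $Q_{2/3}(t)\ge 0.0979$ from another sum of squares. The step slightly below $s=\frac23$ and the bound on $P_s^2/Q_s$ then go through as in your plan.

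\textbf{Your first route is circular.} Since
\[
Q_s=P_s^2+\Bigl(t^2-\frac{10}{3}t+5\Bigr)R_s ,
\]
the inequality you propose to prove on $\{R_s<0\}$ is literally $Q_s>0$ on that set. That set is not a small neighbourhood of the vertex: for $s=\frac23$ one has $R_s<0$ on roughly $(0.13,1]$, most of the interval. So nothing has been reduced. Also, after cancelling $\frac{9}{64}t^{2s-1}$ there is no extra factor $t^2$; the term is $(t^2-\frac{10}{3}t+5)|R_s|$.
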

 \begin{proof} ~~ Let $(\alpha,s,t)\in (0,1]\times (\frac{1}{2},1]\times [-1,1]$.  Put
 \begin{equation}h(\alpha,s,t)= [\frac{9}{64}t^{2s -1}]^{-1}[\alpha (F_{s}')^{2}(t)+F_{s}F_{s}''(t)].
\label{hh}
\end{equation}\hk
We prove this lemma by following steps.\\\hk
{\bf Step 1.}~~We have
\begin{equation}2t^4 - \frac{40}{3}t^3 + \frac{380}{9}t^2 - \frac{200}{3}t + 50
\label{0z}
\end{equation}
\[\eqb{Expand} 2(t^2 - \frac{10}{3}t+1)^2 + 16(t- \frac{80}{3.16})^{2} -16(\frac{5}{3})^{2} + 48>3\hh\forall~t\in[0,1].\]\hk
Fix $t$ in $[0,1)$. Put $k(s)= h(1,s,t)$ for every $s$ in $(\frac{1}{2},1]$. Then
\begin{equation}k(s)\eqb{\eqref{hh}}[\frac{9}{64}t^{2s -1}]^{-1}G_{s}'(t)\eqb{\eqref{fs12zb}}(10 + 9 s + 2 s^2) t^4 - (40 + \frac{140 s}{3} + \frac{40 s^2}{3})t^3 
\label{0zaz}
\end{equation}
\[ + (\frac{190}{3} + \frac{950 s}{9}+ \frac{380 s^2}{9})t^2- (\frac{100}{3} +   100 s + \frac{200 s^2}{3})t + 25 s + 50 s^2
\]
\[\eqb{Expand[.],Collect[.,s]}(50 - \frac{200 t}{3} + \frac{380 t^2}{9}- \frac{40 t^3}{3}+ 2 t^4)s^2 \] 
\[+ (25 - 100 t + \frac{950 t^2}{9}- \frac{140 t^3}{3}+9 t^4)s-\frac{100 t}{3} + \frac{190 t^2}{3} - 40 t^3 + 10 t^4,\]
\begin{equation}k'(s)\hspace{-.06in}\eqb{D[.]}\hspace{-.06in}2(2t^4 - \frac{40}{3}t^3 + \frac{380}{9}t^2 - \frac{200}{3}t + 50)s + (9 t^4 - \frac{140}{3}t^3 + \frac{950}{9}t^2 - 100t+25)
\label{0zazb}
\end{equation}
\[\geb{\eqref{0z}, s>\frac{1}{2}} (2t^4 - \frac{40}{3}t^3 + \frac{380}{9}t^2 - \frac{200}{3}t + 50) + (9 t^4 - \frac{140}{3}t^3 + \frac{950}{9}t^2 - 100t+25)\]
\[\eqb{Expand[.],Collect[.,t]}11 t^{4}- 60 t^3 + \frac{1330}{9}t^2- \frac{500}{3}t  + 75.\]\hk
 On the other hand
\[0<11(t^{2}- \frac{30}{11}t + 2)^2 +\frac{2174 }{99} (t - \frac{99.70}{3.2174})^2 + \frac{6747}{1087}
\]
\[\eqb{Expand, Collect}11 t^{4}- 60 t^3 + \frac{1330}{9}t^2- \frac{500}{3}t  + 75 .\] 
\hk
It implies
\[0< k'(s)\le 10^{3} \hh\forall~s\in (\frac{1}{2},1].
\]
 Thus, by the mean value theorem, $k$ is increasing on $(\frac{1}{2},1]$ and
\begin{equation} 0 < k(s)-k(s')\le 10^{3}(s-s')\hh if~\frac{1}{2}<s'<s\le 1.
\label{s}
\end{equation}\hk
{\bf Step 2.}~~Let $t\in [-1,1]$, by computations we have 
\[
h(1,\frac{2}{3},t)= k(\frac{2}{3})\hspace{-.05in} \eqb{\eqref{0zaz},Expand} \hspace{-.05in} \frac{152}{9}t^4 - \frac{2080}{27}t^3  +  \frac{12350}{81}t^2- \frac{3500}{27}t + \frac{350}{9}.\]\hk
 On other hand
\[10^{-2}<\frac{152}{9}(t^2 - \frac{1040}{456}t +\frac{5}{4})^2 + \frac{3830}{171}(t - \frac{285}{383})^2  - \frac{3830}{171}(\frac{285}{383})^2 + \frac{225}{18}\]
\[\eqb{Expand}\frac{152}{9}t^4 - \frac{2080}{27}t^3  +  \frac{12350}{81}t^2- \frac{3500}{27}t + \frac{350}{9}.\]
\hk
 Thus 
\begin{equation}
h(1,\frac{2}{3},t) > 10^{-2}\hh\forall~t\in [-1,1].
\label{wwzb}
\end{equation}\hk 
{\bf Step 3.}~~Put $\delta= 10^{-4}$. Using \eqref{s} and \eqref{wwzb}, we get 
 \[ h(1,s,t)> 10^{-3}\hh\forall~(s,t)\in (\frac{2}{3}-\delta,\frac{2}{3}]\times[0,1].\] \hk
By \eqref{0zazb},  $h(1,.,t)$ is increasing on $(\frac{1}{2},1]$. Thus
 \begin{equation}
 h(1,s,t)> 10^{-3}\hh\forall~(s,t)\in (\frac{2}{3}-\delta,1]\times[0,1].
 \label{h1}
 \end{equation}\hk
{\bf Step 4.}~~ Note that for any $t\in [-1,1]$ and $\alpha \in [0,1]$
\[\frac{\partial h(\alpha,s,t)}{\partial \alpha}\hspace{-.07in}\eqb{\eqref{hh}} [\frac{9}{64}t^{2s -1}]^{-1} (F_{s}')^{2}(t)\hspace{-.07in}\eqb{\eqref{fs7zbb}} [ (\frac{5}{2} + s) t^2 - (5 + \frac{10s}{3})|t| + \frac{5}{2} + 5s ]^{2}\le 100. 
\]\hk
Choosing $\alpha_{0} =1-10^{-6}$, by \eqref{h1}  and the mean value theorem, we get
\begin{equation}h(\alpha_{0},s,t)\ge h(1,s,t) - 10^{-4}\ggeb{\eqref{h1}}10^{-4}\hk\forall~(s,t)\in (\frac{2}{3}-\delta, 1]\times [0,1]. 
\label{alpha}
\end{equation}\hk
{\bf Step 5.}~~ Put $\beta=\frac{1}{1-\alpha_{0}}$. By computations, we have
 \[\beta G_{s}'(t)-(F_{s}'(t))^{2}=((\beta-1)F'^{2}+ \beta FF'')(t)=\beta(\alpha_{0} F'^{2}+ FF'')(t)
 \]
\[\eqb{\eqref{hh}}\frac{9\beta}{64}t^{2s-1}h(\alpha_{0},s,t)\ggeb{\eqref{alpha}}\frac{9 t^{2s-1}}{10^{4}64(1-\alpha_{0})}\ge 0\]
or
\[(F_{s}'(t))^{2}\le \beta G_{s}'(t)\hh\forall~(s,t)\in (\frac{2}{3}-\delta,1]\times [-1,1].\]\hk
 On the other hand
 \[2G'(t)- (F'(t))^{2}=(F'(t))^{2}+2(FF'')(t)=s^{2}t^{2s-2}+ s(s-1)t^{2s-2} \]
\[= st^{2s-2}(2s-1)\geb{\frac{1}{2}<s} 0 \hh if~~ 1\le  |t|, s> \frac{1}{2}.\]\hk
Putting  $c_{0}= \max\{\frac{1}{1-\alpha_{0}},2\}$, we get the lemma.\end{proof}
\begin{lemma} Let  $\delta$ be in Lemma \ref{lemfs1b}.  There are a positive real numbers  $k_{s}$  and $k_{0}$ such that
\begin{equation}
|tF'_{s}(t)|\le 5F_{s}(t)\hh\forall~s\in(\frac{2}{3}-\delta,1],t\in\RR,
\label{sss1}
\end{equation}
\begin{equation}
 |G(t)|\le k_{s}F^{2-\frac{1}{s}}(t)\hh if~|t|\le 1,
 \label{sss3}
\end{equation}
\begin{equation} 
 \overline{F}(t)\le F_{s}^{\frac{1}{s}}(t)\le \overline{F}(t)+k_{0}\hh \forall~t\in \RR.
 \label{sss4}
\end{equation}
\label{lemfs1c}
\end{lemma}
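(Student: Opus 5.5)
We treat the two regions $|t|\le 1$ and $|t|>1$ separately, using the explicit formulas from the proof of Lemma \ref{lemfs1}. By evenness of $\theta$ and $\overline{\theta}_{s}$ it suffices to take $t\ge 0$; the point $t=0$ is trivial since $F_{s}(0)=F'_{s}(0)=G_{s}(0)=0$.

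For \eqref{sss1}: on $|t|>1$ we have $F_{s}(t)=|t|^{s}$ and $|tF'_{s}(t)|=s|t|^{s}\le F_{s}(t)$. On $0<t\le 1$, by \eqref{fs6zb} and \eqref{fs7zbb},
\[
\frac{|tF'_{s}(t)|}{F_{s}(t)}=\frac{|(\frac52+s)t^{2}-(5+\frac{10s}{3})t+\frac52+5s|}{t^{2}-\frac{10}{3}t+5}.
\]
The denominator is at least $5-\frac{25}{9}=\frac{20}{9}>0$ on $[0,1]$, since its vertex lies at $t=\frac53$ and so its minimum on $[0,1]$ is at $t=1$, where it equals $\frac83$. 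For $s\le 1$ the numerator is bounded by $\frac72+\frac{25}{3}+\frac{15}{2}<20$. A crude bound therefore gives a ratio below $9$, which is not yet $5$.

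To reach the constant $5$, I would check that $5(t^{2}-\frac{10}{3}t+5)\pm[(\frac52+s)t^{2}-(5+\frac{10s}{3})t+\frac52+5s]\ge 0$ on $[0,1]$. This is a quadratic in $t$ that depends linearly on $s\in(\frac12,1]$, so it is enough to verify it at $s=\frac12$ and $s=1$. At $s=1$ with the minus sign it becomes $\frac32t^{2}-\frac{25}{3}t+\frac{35}{2}$, which is positive on $[0,1]$. I expect this elementary polynomial check to be the main (if only) obstacle, and I would carry it out in the same Expand/Collect style used in Lemma \ref{lemfs1b}.

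For \eqref{sss3}: on $0<t\le1$ the proof of Lemma \ref{lemfs1} gives $|G_{s}(t)|=\frac{9}{64}t^{2s}P(t)Q_{s}(t)$, where $P(t)=t^{2}-\frac{10}{3}t+5$ and $Q_{s}$ is the bracket above. Meanwhile $F_{s}^{2-\frac1s}(t)=(\frac38)^{2-\frac1s}t^{(s+\frac12)(2-\frac1s)}P(t)^{2-\frac1s}$. The exponent satisfies $(s+\frac12)(2-\frac1s)=2s-\frac{1}{2s}<2s$, so $t^{2s}\le t^{2s-\frac{1}{2s}}$ on $[0,1]$. The factor $P$ is bounded between $\frac83$ and $5$, and $|Q_{s}|\le 20$. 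Hence we may take $k_{s}=\frac{9}{64}\cdot 5\cdot 20\,(\frac38)^{\frac1s-2}(\frac{8}{3})^{\frac1s-2}$ or any similar constant.

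For \eqref{sss4}: if $|t|>1$ then $F_{s}^{1/s}(t)=|t|=\overline{F}(t)$, so both inequalities hold with any $k_{0}\ge 0$. If $|t|\le1$ then $\overline{F}(t)=0\le F_{s}^{1/s}(t)$, and $F_{s}^{1/s}(t)=\theta(t)^{1/s}|t|\le \theta(t)^{1/s}$. Since $\theta\le \frac38(1+\frac{10}{3}+5)=\frac72$ on $[-1,1]$ and $\frac1s<2$, we get $F_{s}^{1/s}(t)\le(\frac72)^{2}$. We can therefore take $k_{0}=\frac{49}{4}$, which is independent of $s$. Since $\theta$ is actually increasing on $[0,1]$ with $\theta\le 1$ there, one could even take $k_{0}=1$.
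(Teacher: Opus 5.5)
Your proposal is correct. For \eqref{sss4} it is essentially the paper's argument. Your extra observation that $\theta'(t)=\frac{15}{16}t^{-1/2}(t-1)^{2}\ge 0$ on $(0,1]$, so that $\theta\le\theta(1)=1$ there and $k_{0}=1$ suffices, is a valid sharpening of the paper's $k_{0}=(\frac72)^{2}$. The routes differ slightly for the other two inequalities.

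\textbf{For \eqref{sss1}.} Write $P(t)=t^{2}-\frac{10}{3}t+5$ and let $Q_{s}$ be your bracket. The paper does not interpolate in $s$. It expands $5P-Q_{s}=\frac{5-2s}{2}t^{2}-\frac{35-10s}{3}t+\frac{45-10s}{2}$ and bounds it termwise from below by $-\frac{35}{3}+\frac{35}{2}>0$, uniformly in $s\in(0,1]$ and $t\in[0,1]$. The ``plus'' case is unnecessary: $Q_{s}$ is decreasing on $[0,1]$ with $Q_{s}(1)=\frac{8s}{3}>0$, so $F_{s}'\ge 0$ on $[0,\infty)$; the paper uses this tacitly. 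Your endpoint-in-$s$ argument is also valid, and the three checks you left open do go through. They give $2t^{2}-10t+20$, $8t^{2}-\frac{70}{3}t+30$ and $\frac{17}{2}t^{2}-25t+\frac{65}{2}$, all with negative discriminant. The termwise bound is simply shorter.

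\textbf{For \eqref{sss3}.} The paper writes $G_{s}=F_{s}^{2-\frac1s}\bigl(\overline{\theta}_{s}'\theta^{\frac1s}\overline{\theta}_{s}^{\frac1s-1}+\overline{\theta}_{s}^{\frac1s}\theta'\theta^{\frac1s-1}\bigr)$ and asserts the bracket is bounded on $[-1,1]$ by continuity. That justification is loose at $t=0$, where $\theta'$ and $\overline{\theta}_{s}'$ blow up, although the relevant products $s\,\mathrm{sign}\,t$ and $|t|\theta'(t)$ are bounded. It also yields an $s$-dependent $k_{s}$. Your direct comparison, $t^{2s}\le t^{2s-\frac{1}{2s}}$ together with $\frac83\le P\le 5$, is more explicit. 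It even gives the $s$-independent constant $k_{s}=\frac{900}{64}$, though uniformity is not needed later, since Proposition \ref{pro46} only uses finitely many $s_{m}$.
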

 \begin{proof}~~  By computations, we obtain
 \[5F_{s}(t)-|tF_{s}'(t)|=\]
 \[=\frac{3}{8} |t|^{s+\frac{1}{2}}[5(|t|^{2}-\frac{10}{3}|t| +5)-((\frac{5}{2} + s) t^2 - (5 + \frac{10s}{3})|t| + \frac{5}{2} + 5s )]
 \]
\[ \eqb{Expand,Collect[.,t]}\frac{3}{8} |t|^{s+\frac{1}{2}}[\frac{5-2s}{2} t^{2}-\frac{35-10s}{3}|t|+\frac{45-10s}{2}]\]
 \[\geb{0<s\le 1, |t|\le 1}\frac{3}{8} |t|^{s+\frac{1}{2}}[-\frac{35}{3}+\frac{35}{2}]\ge 0\hh~\if~|t|\le 1,\]
 \[5F_{s}(t)-|tF_{s}'(t)|=(5-s)|t|^{s}\geb{0<s\le 1} 0\hh if~1\le |t|.\]\hk
  Thus we get \eqref{sss1}.\\\hk     
  Since  $\theta$, $\theta'$, $\overline{\theta}_{s}$ and $\overline{\theta}_{s}'$ are continuous on $[-1,1]$ and $\frac{1}{s}-1\ge 0$, there is a positive real number $k_{s}$ such that
  \[|G_{s}(t)|=|\theta\overline{\theta}_{s}(t)[\overline{\theta}_{s}'\theta+\overline{\theta}_{s}\theta')(t)]|= [\theta\overline{\theta}_{s}]^{2-\frac{1}{s}}(t)|\overline{\theta}_{s}'\theta^{\frac{1}{s}}\overline{\theta}_{s}^{\frac{1}{s}-1}+\overline{\theta}_{s}^{\frac{1}{s}}\theta'\theta^{\frac{1}{s}-1}|(t)
\]  
\[\eqb{s\le 1} [F_{s}]^{2-\frac{1}{s}}(t)|\overline{\theta}_{s}'\theta^{\frac{1}{s}}\overline{\theta}_{s}^{\frac{1}{s}-1}+\overline{\theta}_{s}^{\frac{1}{s}}\theta'\theta^{\frac{1}{s}-1}|(t)\le k_{s}[F_{s}]^{2-\frac{1}{s}}(t)\hk if~|t|\le 1.
\]  \hk
Thus we get \eqref{sss3}.\\\hk 
Note that
\[F(t)= F_{s}^{\frac{1}{s}}(t)\hh if~ |t|\ge 1,\]
\[F(t)=0\le F_{s}^{\frac{1}{s}}(t)\hh if~ |t|\le 1,\]
\[F_{s}^{\frac{1}{s}} (t)=  |t|^{1+\frac{1}{2s}}[\frac{3}{8}(|t|^{2}-\frac{10}{3}|t| +5)]^{\frac{1}{s}}\le [\frac{3}{8}(1+\frac{10}{3} +5)]^{\frac{1}{s}}\le{k_{0}}~,~ if~|t|\le 1, s>\frac{1}{2},\]
where $k_{0}= [\frac{3}{8}(1+\frac{10}{3} +5)]^{2}$. \\\hk
It implies \eqref{sss4}.
\end{proof}

\section{Boundedness for  solutions of elliptic equations}\label{bo}\hk
 We have following global boundedness of solutions  as follows. 
\begin{proposition}[Global boundedness] Let  $\delta$ be as in Lemma \ref{lemfs1b},  $\gamma \in [1,\infty)$,   $A$ be  admissible with respect to $\Omega$, $a$ be a positive  measurable functions on $\Omega$, $a_{0}$, $a_{1}$, $a_{2}$ be non-negative  measurable functions on $\Omega$ and $u\in W_{B,A}(\Omega)\cap L^{\gamma}(\Omega)$. Assume \eqref{c1}, \eqref{c3} and \eqref{c6} hold, $a\in L^{\frac{\overline{r}}{\overline{r}-2}}(\Omega)$,  
\begin{equation}
b^{-1} a_{0}^{2}+a_{1}+ a_{2}\le a
,
\label{cc8}
 \end{equation}
 \begin{equation}
\frac{\gamma}{\overline{r}}> \frac{2}{3}-\delta
~,
\label{cc9}
 \end{equation}
  \begin{equation}\int_{\Omega}  \frac{\partial u}{\partial x_{i}}\frac{\partial \varphi}{\partial x_{j}}b^{ij}dx \le \int_{\Omega} [a_{0}|\nabla u|+a_{1}|u|  +a_{2}]|\varphi| dx 
 \label{le460}
 \end{equation}
for every $\varphi\in W_{B,A}(\Omega)$.\\\hk
 Then there is a real number $c(r,\gamma,\Omega,||u||_{L^{\gamma}(\Omega)})$ such that 
 \begin{equation}||u||_{L^{\infty}(\Omega)}\le c(r,\gamma,\Omega,||u||_{L^{\gamma}(\Omega)}). \label{le460abc}
 \end{equation}
 \label{pro46}\end{proposition}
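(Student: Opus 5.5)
The plan is Serrin's Moser-type iteration, made global by using the truncated auxiliary functions of Section \ref{aux} in place of cut-off functions. The admissibility of $A$ lets test functions vanish only on $A$. For an exponent $s$ (in $(\frac{2}{3}-\delta,1]$ at the first step, using $F_{s}$ and $G_{s}$ of Definition \ref{defs1}, and in $(1,\infty)$ afterwards, using $F_{s,l}$ and $G_{s,l}$ of Definition \ref{def61}), I take $\varphi=G(u)$ as test function in \eqref{le460}. Since $G(0)=0$, and $G$ and $G'$ are bounded and uniformly continuous (Lemmas \ref{lem62} and \ref{lemfs1}), Lemma \ref{2w1ba} gives $\varphi\in W_{B,A}(\Omega)$ and $w=F(u)\in W_{B,A}(\Omega)$, with the chain rule $\nabla\varphi=G'(u)\nabla u$ and $\nabla w=F'(u)\nabla u$. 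The left side of \eqref{le460} is then $\int G'(u)b^{ij}u_{i}u_{j}$. By \eqref{ss2} (respectively \eqref{sss2}) this dominates $c^{-1}\int b^{ij}w_{i}w_{j}\ge c^{-1}|||w|||_{b}^{2}$.

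For the right side, $|G(u)|=F|F'|$, and I treat the three terms as follows.
\begin{itemize}
\item The gradient term: $a_{0}|\nabla u|F|F'|=a_{0}F|\nabla w|\le \epsilon b|\nabla w|^{2}+C_\epsilon b^{-1}a_{0}^{2}F^{2}$.
\item The zeroth-order term: $a_{1}|u|F|F'|\le 4s\,a_{1}F^{2}$ by \eqref{ss1} (or \eqref{sss1}).
\item The inhomogeneous term $a_{2}|G(u)|$: on $\{|u|\le1\}$ it is bounded by $a_{2}$ via \eqref{ss3} and \eqref{sss3}; on $\{|u|>1\}$, $|G|\le 4sF^{2}$ since $|F'|\le 4sF/|u|\le 4sF$.
\end{itemize}
Absorbing the $\epsilon$-term and using \eqref{cc8} gives
\[|||w|||_{b}^{2}\le C s^{2}\Big(\int_{\Omega}aF(u)^{2}dx+\int_{\Omega}a\,dx\Big).\]
By Lemma \ref{4l2b}, $\|w\|_{L^{r}}\le C(r,\Omega,A)|||w|||_{b}$. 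By H\"older with $a\in L^{\frac{\overline{r}}{\overline{r}-2}}$, $\int aF^{2}\le \|a\|_{L^{\overline r/(\overline r-2)}}\|F(u)\|_{L^{\overline{r}}}^{2}$. Hence
\[\|F(u)\|_{L^{r}}\le C s\,(\|F(u)\|_{L^{\overline{r}}}+1).\]

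Next come the truncation limit and the iteration. The truncation at level $l$ keeps $F_{s,l}(u)$ growing linearly, so everything is finite. Letting $l\to\infty$ via monotonicity \eqref{ss4} and monotone convergence gives $\|u\|_{L^{sr}}^{s}\le Cs(\|u\|_{L^{s\overline r}}^{s}+1)$, at least whenever the right side is finite. The first step uses $s_{0}=\gamma/\overline{r}\in(\frac23-\delta,1]$ (if $\gamma/\overline r>1$ one starts directly with $F_{s,l}$). There $F_{s_0}(u)\approx |u|^{s_0}$ up to the constant $k_{0}$ of \eqref{sss4}, so $\|F_{s_0}(u)\|_{L^{\overline r}}$ is controlled by $\|u\|_{L^{\gamma}}$, and we obtain $u\in L^{s_0 r}$. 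Setting $\chi=r/\overline{r}>1$ and $s_{k}=s_{0}\chi^{k}$, each step passes from $L^{s_k\overline r}$ to $L^{s_{k+1}\overline r}$ with
\[\max(\|u\|_{L^{s_{k+1}\overline r}},1)\le (Cs_{k})^{1/s_{k}}\max(\|u\|_{L^{s_k\overline r}},1).\]
Since $\sum s_{k}^{-1}\log(Cs_{k})<\infty$ for geometric $s_{k}$, the product converges. Letting $k\to\infty$ yields \eqref{le460abc}, with a constant depending on $r,\gamma,\Omega$ (through $C(r,\Omega,A)$ and the norms of $a$, $b^{-1}$) and on $\|u\|_{L^{\gamma}}$.

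The main obstacle is the first step with $s\le1$, where $|t|^{s}$ is not $C^{1}$ with $G'$ bounded. This is exactly why $F_{s}$ is smoothed near $0$ and why the restriction \eqref{cc9} enters through Lemma \ref{lemfs1b}. There I must check carefully that the bound on $a_{2}|G_{s}(u)|$ via \eqref{sss3} and the comparison \eqref{sss4} suffice to control $\|F_{s_0}(u)\|_{L^{\overline r}}$ by $\|u\|_{L^\gamma}$.
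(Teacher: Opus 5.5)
Your proposal is correct and is essentially the paper's proof. It uses the same Serrin-type iteration: $F_{s},G_{s}$ of Definition \ref{defs1} at every step with $s_k\le 1$ (possibly several steps, not just the first, since $\chi=r/\overline r$ may be close to $1$), then $F_{s,l},G_{s,l}$ with $l\to\infty$ via \eqref{ss4}, closed by Lemma \ref{4l2b} and the geometric growth of $s_k$. The differences are cosmetic: you test with the odd function $G(u)$ rather than treating $u^{+}$ and $u^{-}$ separately, and you bound the $a_{2}$-term on $\{|u|\le 1\}$ directly by $C\int_{\Omega}a\,dx$ (since $F\le 1$ there) instead of by H\"older. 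Your small slips do not affect convergence of the iteration: only $G'$, not $G$, is bounded (which is all Lemma \ref{2w1ba} needs), and absorbing with $\epsilon\sim s^{-2}$ gives a factor $s^{4}$ rather than $s^{2}$.
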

  \begin{proof} We prove this proposition by two steps.\\\hk
  {\bf Step 1}. Assume $\gamma > \overline{r}$.   
    Put $\kappa = \frac{r}{\overline{r}}$, $s_{m}= \kappa^{m}\frac{\gamma}{\overline{r}}\ge \frac{\gamma}{\overline{r}}>1 $ and $\overline{s}_{m}= \kappa^{m}\gamma$ for every non-negative  integer $m$. \\\hk
     Fix a non-negative  integer $m$.  Let $l\in [3,\infty)$, $F_{l}\equiv F_{s_{m},l}$ and $G_{l}\equiv G_{s_{m},l}$ be defined as in Definition \ref{def61}. Since $F_{s_{m},l}(0)=G_{s_{m},l}(0)=0$,  Lemma \ref{2w1ba} and Lemma \ref{w1bb}, $w\equiv u^{+}$,  $v_{s_{m},l}\equiv F_{l}\circ u^{+}$ and $\varphi\equiv G_{l}\circ u^{+}$ are in $W_{B,A}(\Omega)$.   We have 
 \begin{equation}\frac{\partial w}{\partial x_{j}}(x) = \left\{ {\begin{array}{*{20}{l}}
\displaystyle\frac{\partial u}{\partial x_{j}}(x) \hh&{\forall~x \in \Omega^{+}=\{z\in\Omega:u(z)>0\},}\vspace{.1in}\\
0&{\forall~x \in \Omega\setminus\Omega^{+}}
\end{array}} \right.
\label{le462aazz20}
\end{equation}
 \begin{equation}\frac{\partial v_{s_{m},l}}{\partial x_{j}}(x) = \left\{ {\begin{array}{*{20}{l}}
\displaystyle F_{l}'(w(x))\frac{\partial u}{\partial x_{j}}(x) \hh&{\forall~x \in \Omega^{+},}\vspace{.1in}\\
0&{\forall~x \in \Omega\setminus\Omega^{+}}
\end{array}} \right.
\label{le462aazz2}
\end{equation}
\[\eqb{F_{l}'(0)=0}F_{l}'(w(x))\frac{\partial w}{\partial x_{j}}(x)\hh\forall~x \in \Omega,\]
 \begin{equation}\frac{\partial \varphi}{\partial x_{j}}(x) = \left\{ {\begin{array}{*{20}{l}}
\displaystyle G_{l}'(w(x))\frac{\partial u}{\partial x_{j}}(x)  \hh&{\forall~x \in \Omega^{+},}\vspace{.1in}\\
0&{\forall~x \in \Omega\setminus\Omega^{+},}
\end{array}} \right.
\label{le462aazz3}
\end{equation}
\[\eqb{G_{l}'(0)=0}G_{l}'(w(x))\frac{\partial w}{\partial x_{j}}(x)\hh\forall~x \in \Omega,\]\hk
Since
  \[\frac{\frac{1}{s_{m}}}{\overline{r}}+\frac{\overline{r}-2}{\overline{r}}+\frac{2-\frac{1}{s_{m}}}{\overline{r}}=1,\]
by H\"{o}lder's inequality, we have
\begin{equation}
  \int_{w\le 1}F_{l}^{2-\frac{1}{s_{m}}}a dx\le |\Omega|^{\frac{1}{\overline{r}s_{m}}}||a||_{L^{\frac{\overline{r}}{\overline{r}-2}}(\Omega)}\{\int_{\Omega}F_{l}^{\overline{r}} dx\}^{\frac{2-\frac{1}{s_{m}}}{\overline{r}}}
 \label{c}
 \end{equation} 
 Using $\varphi$ as a test function  in \eqref{le460},   we have
   \begin{equation}
\int_{\Omega}|\nabla v_{s_{m},l}|^{2} bdx \leb{\eqref{c1}} \int_{\Omega}\frac{\partial v_{s_{m},l}}{\partial x_{i}} \frac{\partial v_{s_{m},l}}{\partial x_{j}}b^{ij}dx
\leb{\eqref{le462aazz2}} \int_{\Omega^{+}}\frac{\partial u}{\partial x_{i}} \frac{\partial u}{\partial x_{j}}|F_{l}'(w)|^{2}b^{ij}dx 
\label{le462aazz2b}\end{equation}
\[\leb{F'(0)=0} \int_{\Omega}\frac{\partial w}{\partial x_{i}} \frac{\partial w}{\partial x_{j}}|F_{l}'(w)|^{2}b^{ij}dx \hspace{-.1in}\leb{\eqref{ss2}}\hspace{-.073in} s_{m}^{2}\hspace{-.03in}
  \int_{\Omega}\frac{\partial w}{\partial x_{i}} \frac{\partial w}{\partial x_{j}}G_{l}'(w) b^{ij}dx\hspace{-.15in} \eqb{\eqref{le462aazz3}} \hspace{-.04in}s_{m}^{2} \hspace{-.06in}\int_{\Omega}\frac{\partial u}{\partial x_{i}}\frac{\partial \varphi}{\partial x_{j}} b^{ij}dx \] 
    \[\leb{\eqref{le460}} s_{m}^{2}\int_{\Omega}|\nabla w||\varphi| a_{0} dx+s_{m}^{2}\int_{\Omega}(|u|+1) |\varphi| a dx \]  
     \[\eqb{\eqref{cc8}} s_{m}^{2}\hspace{-.051in}\int_{\Omega}\hspace{-.051in}|\nabla w| |G_{l}(w)|b^{\frac{1}{2}}a^{\frac{1}{2}}dx+s_{m}^{2}\hspace{-.051in}\int_{\Omega}\hspace{-.051in}|w||G_{l}(w)| a dx+s_{m}^{2}\int_{\Omega}|G_{l}(w)| a dx\]
 \[ \leb{\eqref{f1b2},\eqref{ss3}} \hspace{-.11in}s_{m}^{2}\hspace{-.071in}\int_{\Omega}F_{l}(w)|\nabla w||F_{l}'(w)|b^{\frac{1}{2}}a^{\frac{1}{2}}dx+s_{m}^{2}\hspace{-.07in}\int_{\Omega}|w|F_{l}(w)|F_{l}'(w)| a dx
   \]
  \[+s_{m}^{2}\int_{w> 1}\hspace{-.171in}|w|F_{l}(w)|F_{l}'(w)| a dx+s_{m}^{3}\int_{w\le 1}\hspace{-.17in}F_{l}^{2-\frac{1}{s_{m}}}a dx\]
  \[ \leb{\eqref{ss1}} \hspace{-.1in} s_{m}^{2}\hspace{-.051in}\int_{\Omega}\hspace{-.051in}F_{l}(w)|\nabla w||F_{l}'(w)|b^{\frac{1}{2}}a^{\frac{1}{2}} dx+8s_{m}^{3}\hspace{-.051in} \int_{\Omega}\hspace{-.051in}F_{l}(w)^{2} a dx+ s_{m}^{3}\int_{w\le 1}\hspace{-.17in}F_{l}^{2-\frac{1}{s_{m}}}a dx\]
       \[ \leb{Young,\eqref{cc8},H\ddot{o}lder,\eqref{c}}\hspace{-.15in} 2^{-1} \int_{\Omega}|\nabla w|^{2}F_{l}'(w)^{2}bdx+\frac{1}{2}s_{m}^{4} \int_{\Omega}F_{l}(w)^{2}adx\]
    \[+ 8s_{m}^{3}||a||_{L^{\frac{\overline{r}}{\overline{r}-2}}(\Omega)}\{\int_{\Omega}F_{l}(w)^{\overline{r}} dx\}^{\frac{2}{\overline{r}}}+s_{m}^{3}|\Omega|^{\frac{1}{\overline{r}s_{m}}}||a||_{L^{\frac{\overline{r}}{\overline{r}-2}}(\Omega)}\{\int_{\Omega}F_{l}^{\overline{r}} dx\}^{\frac{2-\frac{1}{s_{m}}}{\overline{r}}}\]
  \[ \leb{\eqref{cc8},\eqref{le462aazz2}} \frac{1}{2} \int_{\Omega}|\nabla v_{s_{m},l}|^{2}bdx+9s_{m}^{4}||a||_{L^{\frac{\overline{r}}{\overline{r}-2}}(\Omega)}\{\int_{\Omega}F_{l}(w)^{\overline{r}} dx\}^{\frac{2}{\overline{r}}}\]
  \[+ s_{m}^{3}|\Omega|^{\frac{1}{\overline{r}s_{m}}}||a||_{L^{\frac{\overline{r}}{\overline{r}-2}}(\Omega)}\{\int_{\Omega}F_{l}^{\overline{r}} dx\}^{\frac{2-\frac{1}{s_{m}}}{\overline{r}}}\] 
\[\leb{2-\frac{1}{s_{m}}\le 2}\frac{1}{2} \int_{\Omega}\hspace{-.04in}|\nabla v_{s_{m},l}|^{2}bdx + 9s_{m}^{4}[(1+|\Omega|^{\frac{1}{\overline{r}s_{m}}})||a||_{L^{\frac{\overline{r}}{\overline{r}-2}}(\Omega)}]\max\{1,\hspace{-.02in}\{\int_{\Omega}\hspace{-.05in}v_{s_{m},l}^{\overline{r}}dx\}^{\frac{2}{\overline{r}}}\}
\]  
  or
  \[
\int_{\Omega}|\nabla v_{s_{m},l}|^{2} bdx \le  18s_{m}^{4}[(|\Omega|+2)||a||_{L^{\frac{\overline{r}}{\overline{r}-2}}(\Omega)}]\max\{1,\hspace{-.02in}\{\int_{\Omega}\hspace{-.05in}v_{s_{m},l}^{\overline{r}}dx\}^{\frac{2}{\overline{r}}}\}
  \]\hk
     Thus
 \[   \{\int_{\Omega}v_{s_{m},l}^{r}dx\}^{\frac{2}{r}}\hspace{-.1in}\leb{\eqref{cb}}\hspace{-.1in} C_{1}^{2}s_{m}^{4}\max\{1,\{\int_{\Omega}\hspace{-.05in}v_{s_{m},l}^{\overline{r}}dx\}^{\frac{2}{\overline{r}}}]
 \] 
  or 
 \begin{equation}\{\int_{\Omega}v_{s_{m},l}^{r} dx\}^{\frac{1}{r}}\le 
  C_{1}s_{m}^{4}\max\{1,\{\int_{\Omega}\hspace{-.05in}v_{s_{m},l}^{\overline{r}}dx\}^{\frac{1}{\overline{r}}}\}]\hh\forall~m=0,1,2,\cdots,
\label{le465}
\end{equation}
where
\[C_{1} =\{18[C(r,\Omega,A)^{2}[(|\Omega|+2)||a||_{L^{\frac{\overline{r}}{\overline{r}-2}}(\Omega)}]+1\}^{\frac{1}{2}}\ge 1. \] \hk
 By \eqref{f1b} and \eqref{ss4}, the sequence $\{v_{s_{m},3n}(x)\}$ is  increasing  and converges  to $w^{s_{m}}(x)$ a.e. on $\Omega$. Thus by  Lebesgue's Monotone Convergence Theorem and \eqref{le465}, we have 
 \[\{\int_{\Omega} w^{s_{m}r}  dx\}^{\frac{1}{r}}\le  
 C_{1}s_{m}^{4}\max\{1,(\int_{\Omega}w^{s_{m}\overline{r}}  dx)^{\frac{1}{\overline{r}}}\}  \hk\forall~m\ge 0
 \]
 or
  \[\{\int_{\Omega} w^{s_{m}r}  dx\}^{\frac{1}{s_{m}r}}\le  
 [C_{1}]^{\frac{1}{s_{m}}}s_{m}^{\frac{4}{s_{m}}}\max\{1,(\int_{\Omega}w^{s_{m}\overline{r}}  dx)^{\frac{1}{s_{m}\overline{r}}}\}  \hk\forall~m\ge 0
 \]\hk
 Note that $\kappa=\frac{r}{\overline{r}}$, $s_{m}= \kappa^{m}\frac{\gamma}{\overline{r}}$, $s_{m}r= \kappa^{m+1}\gamma$, $s_{m}\overline{r}= \kappa^{m}\gamma$, $\frac{4}{s_{m}}=\frac{\overline{r}}{\gamma}\frac{4}{\kappa^{m}} $, $\frac{m}{s_{m}}=\frac{\kappa}{\gamma}\frac{m}{\kappa^{m}} $, 
$s_{m}^{\frac{4}{s_{m}}}= [ \kappa^{m}\frac{\gamma}{\overline{r}}]^{\frac{\overline{r}}{\gamma}\frac{4}{\kappa^{m}}}=[(\frac{\gamma}{\overline{r}})^{\frac{4\overline{r}}{\gamma}}]^{\frac{1}{\kappa^{m}}}[ (\frac{r}{\overline{r}})^{\frac{4\overline{r}}{\gamma}}]^{\frac{m}{\kappa^{m}}}$ and $[C_{1}]^{\frac{1}{s_{m}}}=[C_{1}^{\frac{\overline{r}}{\gamma}}]^{\frac{1}{\kappa_{m}}}
$. Thus, we get
 \begin{equation}\hspace{-.05in}\{\hspace{-.03in}\int_{\Omega} \hspace{-.07in}w^{\kappa^{m+1}\gamma} dx\}^{\frac{1}{\kappa^{m+1}\gamma} } \hspace{-.02in} \le \hspace{-.02in} 
  c_{1}
  ^{\frac{1}{\kappa^{m}}}c_{2}^{\frac{m}{\kappa^{m}}}\max\{1,(\hspace{-.02in}\int_{\Omega}\hspace{-.07in}w^{\kappa^{m}\gamma}  dx)^{\frac{1}{\kappa^{m}\gamma}}\}\hk\forall~m\ge 0,
 \label{le465b}
\end{equation}
where $c_{1}= (\frac{\gamma}{\overline{r}})^{\frac{4\overline{r}}{\gamma}}C_{1}^{\frac{\overline{r}}{\gamma}}$ and $c_{2}=(\frac{r}{\overline{r}})^{\frac{4\overline{r}}{\gamma}}$.\\\hk
 By mathematical induction, we get
\[ \{\int_{\Omega}w^{\kappa^{k}\gamma}dx\}^{\frac{1}{\kappa^{k}\gamma}}
 \le c_{1}^{\sum_{j=1}^{k}\frac{1}{\kappa^{j}}}c_{2}^{\sum_{j=1}^{k}\frac{j}{\kappa^{j}}}\max\{1,(\int_{\Omega}w^{\gamma}  dx)^{\frac{1}{\gamma}}\}\hk\forall~k\in\NN.\]\hk
    Therefore,  by Problem 5 in \cite[p.71]{RU}, it implies 
 \[||u^{+}||_{L^{\infty}(\Omega)}\leq c_{1}^{\sum_{j=1}^{\infty}\frac{1}{\kappa^{j}}}c_{2}^{\sum_{j=1}^{\infty}\frac{j}{\kappa^{j}}}\max\{1,(\int_{\Omega}|u|^{\gamma}  dx)^{\frac{1}{\gamma}}\}. \]\hk
 Replacing $w=u^{+}$ by $w=u^{-}$.  We have 
 \[\frac{\partial w}{\partial x_{j}}(x) = \left\{ {\begin{array}{*{20}{l}}
\displaystyle -\frac{\partial u}{\partial x_{j}}(x) \hh&{\forall~x \in \Omega^{-},}\vspace{.1in}\\
0&{\forall~x \in \Omega\setminus\Omega^{-}}
\end{array}} \right.
\]
 \[\frac{\partial v_{s_{m},l}}{\partial x_{j}}(x) = \left\{ {\begin{array}{*{20}{l}}
\displaystyle - F_{l}'(w(x))\frac{\partial u}{\partial x_{j}}(x) \hh&{\forall~x \in \Omega^{-},}\vspace{.1in}\\
0&{\forall~x \in \Omega\setminus\Omega^{-}}
\end{array}} \right.
\]
\[\eqb{F_{l}'(0)=0}F_{l}'(w(x))\frac{\partial w}{\partial x_{j}}(x)\hh\forall~x \in \Omega,\]
 \[\frac{\partial \varphi}{\partial x_{j}}(x) = \left\{ {\begin{array}{*{20}{l}}
\displaystyle - G_{l}'(w(x))\frac{\partial u}{\partial x_{j}}(x)  \hh&{\forall~x \in \Omega^{-},}\vspace{.1in}\\
0&{\forall~x \in \Omega\setminus\Omega^{-}.}
\end{array}} \right.
\]\hk
Using $\varphi$ as a test function  in \eqref{le460},   we have
   \[
\int_{\Omega}|\nabla v_{s_{m},l}|^{2} bdx \leb{\eqref{c1}} \int_{\Omega}\frac{\partial v_{s_{m},l}}{\partial x_{i}} \frac{\partial v_{s_{m},l}}{\partial x_{j}}b^{ij}dx
\] 
\[= \int_{\Omega^{-}}\frac{\partial u}{\partial x_{i}} \frac{\partial u}{\partial x_{j}}|F_{l}'(w)|^{q}b^{ij}dx \le \int_{\Omega}\frac{\partial w}{\partial x_{i}} \frac{\partial w}{\partial x_{j}}|F_{l}'(w)|^{2}b^{ij}dx \]
  \[\leb{\eqref{ss2}} \hspace{-.12in}s_{m}^{2}\hspace{-.06in}
  \int_{\Omega}\frac{\partial w}{\partial x_{i}} \frac{\partial w}{\partial x_{j}}G_{l}'(w) b^{ij}dx\hspace{-.15in} \eqb{\eqref{le462aazz3}} \hspace{-.04in}s_{m}^{2} \hspace{-.06in}\int_{\Omega}\frac{\partial u}{\partial x_{i}}\frac{\partial \varphi}{\partial x_{j}} b^{ij}dx \] 
   \[\leb{\eqref{le460}} s_{m}^{2}\int_{\Omega}|\nabla u| |\varphi| a_{0} dx+s_{m}^{2}\int_{\Omega}(|u|+1)
    |\varphi|a dx \]   
     \[\eqb{\varphi|_{\RR\setminus\Omega^{-}}=0} s_{m}^{2}\int_{\Omega}|\nabla w| |\varphi| a_{0} dx+s_{m}^{2}\int_{\Omega}(|w|+1) |\varphi| a dx. \] \hk
   Now
    arguing as above, we get 
    \[||u^{-}||_{L^{\infty}(\Omega)}\leq c_{1}^{\sum_{j=1}^{\infty}\frac{1}{\kappa^{j}}}c_{2}^{\sum_{j=1}^{\infty}\frac{j}{\kappa^{j}}}\max\{1,(\int_{\Omega}|u|^{\gamma}  dx)^{\frac{1}{\gamma}}\}. \]\hk
     Thus we obtain the proposition for $\gamma > \overline{r}$.\\\hk
     {\bf Step 2} .  Let  $\gamma \in [1,\overline{r}]$ with condition \eqref{cc9}.  Put $\kappa = \frac{r}{\overline{r}}$, $s_{m}= \kappa^{m}\frac{\gamma}{\overline{r}}>\frac{2}{3}-\delta$ and $\overline{s}_{m}= \kappa^{m}\gamma$ for every non-negative  integer $m$. Since $\gamma \le \overline{r}$,  $s_{0} \le 1$. Put $m_{\gamma} = \max\{m : s_{m}\le 1\}$. Let $m\le m_{\gamma}$, $F_{s_{m}}$, $G_{s_{m}}$ and $c_{0}$ be defined as in  Definition \ref{defs1} and Lemma \ref{lemfs1} for every $m\le m_{\gamma}$. Since $F_{s_{m}}(0)=G_{s_{m}}(0)=0$, by Lemma \ref{2w1ba} and Lemma \ref{w1bb}, $w\equiv u^{+}$,  $v_{m}\equiv F_{m}\circ w$ and $\varphi_{m}\equiv G_{m}\circ w$ are in $W_{B,A}(\Omega)$.  We have 
 \begin{equation}
 \frac{\partial v_{m}}{\partial x_{j}}(x)=  F_{m}'(w)\frac{\partial u}{\partial x_{j}},
\label{2le462aazz2}\end{equation}
\begin{equation} \frac{\partial \varphi_{m}}{\partial x_{j}}(x)= G_{m}'(w)\frac{\partial u}{\partial x_{j}}. 
\label{2le462aazz3}\end{equation}\hk
 Since $s_{m}>\frac{2}{3}-\delta$, we can using Lemmas \ref{lemfs1}-\ref{lemfs1c}. Using $\varphi_{m}$ as a test function  in \eqref{le460},   we have
   \begin{equation}
\int_{\Omega}|\nabla v_{m}|^{2} bdx \leb{\eqref{c1}} \int_{\Omega}\frac{\partial v_{m}}{\partial x_{i}} \frac{\partial v_{m}}{\partial x_{j}}b^{ij}dx\eqb{\eqref{2le462aazz2}} \int_{\Omega}\frac{\partial u}{\partial x_{i}} \frac{\partial u}{\partial x_{j}}|F_{m}'(w)|^{2}b^{ij}dx 
\label{2le462aazz2b}\end{equation} 
  \[\leb{\eqref{sss2}} \hspace{-.12in}c_{0}\hspace{-.06in}
  \int_{\Omega}\hspace{-.02in}\frac{\partial u}{\partial x_{i}} \frac{\partial u}{\partial x_{j}}G_{m}'(w) b^{ij}dx\hspace{-.03in} \eqb{\eqref{2le462aazz3}} \hspace{-.01in}c_{0} \hspace{-.02in}\int_{\Omega}\hspace{-.01in}\frac{\partial u}{\partial x_{i}}\frac{\partial \varphi_{m}}{\partial x_{j}} b^{ij}dx \] 
    \[\leb{\eqref{le460}} c_{0}\int_{\Omega}|\nabla u||\varphi_{m}| a_{0} dx+c_{0}\int_{\Omega}(|u|+1) |\varphi_{m}| a dx \]  
  \[=  c_{0}\hspace{-.051in}\int_{\Omega} \hspace{-.051in} |\nabla u||G_{m}(u)|a_{0} dx+ c_{0}\hspace{-.051in}\int_{\Omega}|u||G_{m}(u)| a dx +  c_{0}\int_{\Omega}|G_{m}(u)| a dx\]
 \[ \leb{\eqref{fs4}, \eqref{sss3}} \hspace{-.1in}c_{0}\hspace{-.051in}\int_{\Omega}\hspace{-.05in}F_{m}(u)|\nabla u||F_{m}'(u)| a_{0} dx+c_{0}\int_{\Omega}|u|F_{m}(u)|F_{m}'(u)| a dx~
   \]
  \[+c_{0}k_{s_{m}}\int_{|u|\le 1}F_{m}^{2-
  \frac{1}{s_{m}}} a dx+c_{0}\int_{|u|> 1}|u|F_{m}(u)|F_{m}'(u)| a dx\]
  \[ \leb{\eqref{cc8},\eqref{c},\eqref{sss1}, H\ddot{o}lder} \hspace{-.02in} c_{0}\hspace{-.01in}\int_{\Omega}\hspace{-.051in}F_{m}(u)|\nabla u||F_{m}'(u)|  b^{\frac{1}{2}}a^{\frac{1}{2}} dx\]
  \[+ c_{0}k_{s_{m}}|\Omega|^{\frac{1}{\overline{r}s_{m}}}||a||_{L^{\frac{\overline{r}}{\overline{r}-2}}(\Omega)}\{\int_{|u|\le 1}\hspace{-.1in}F_{m}(u)^{\overline{r}}  dx\}^{\frac{2-\frac{1}{s_{m}}}{\overline{r}}}+5c_{0}\hspace{-.051in} \int_{\Omega}\hspace{-.051in}F_{m}(u)^{2} a dx\]
       \[ \leb{Young} \frac{1}{2} \int_{\Omega}|\nabla u|^{2}F_{m}'(u)^{2}bdx+\frac{1}{2}c_{0}^{2} \int_{\Omega}F_{m}(u)^{2}adx\]
    \[+c_{0}k_{s_{m}}|\Omega|^{\frac{1}{\overline{r}s_{m}}}||a||_{L^{\frac{\overline{r}}{\overline{r}-2}}(\Omega)}\{\int_{|u|\le 1}\hspace{-.1in}F_{m}(u)^{\overline{r}}  dx\}^{\frac{2-\frac{1}{s_{m}}}{\overline{r}}} + 5c_{0}||a||_{L^{\frac{\overline{r}}{\overline{r}-2}}(\Omega)}\{\int_{\Omega}F_{m}(w)^{\overline{r}} dx\}^{\frac{2}{\overline{r}}}\]
  \[ \leb{\eqref{2le462aazz2}} \frac{1}{2} \int_{\Omega}|\nabla v_{s_{m}}|^{2}bdx+c_{0}(c_{0}+5)||a||_{L^{\frac{\overline{r}}{\overline{r}-2}}(\Omega)}\{\int_{\Omega}v_{s_{m},l}^{\overline{r}}dx\}^{\frac{2}{\overline{r}}}\]
  \[+ c_{0}k_{s_{m}}|\Omega|^{\frac{1}{\overline{r}s_{m}}}||a||_{L^{\frac{\overline{r}}{\overline{r}-2}}(\Omega)}\{\int_{|u|\le 1}\hspace{-.1in}F_{m}(u)^{\overline{r}}  dx\}^{\frac{2-\frac{1}{s_{m}}}{\overline{r}}},\] 
\[\le\hspace{-.03in}\frac{1}{2}\hspace{-.03in} \int_{\Omega}\hspace{-.07in}|\nabla v_{s_{m}}|^{2}bdx + [c_{0}(c_{0}+5)+c_{0}k_{s_{m}}|\Omega|^{\frac{1}{\overline{r}s_{m}}}||a||_{L^{\frac{\overline{r}}{\overline{r}-2}}(\Omega)}]\max\{1,\hspace{-.01in}(\int_{\Omega}\hspace{-.04in}v_{s_{m},l}^{\overline{r}}dx)^{\frac{2}{\overline{r}}}\}
\]  
  or
  \[
\int_{\Omega}|\nabla v_{s_{m}}|^{2} bdx \le  2[c_{0}(c_{0}+5)+c_{0}k_{s_{m}}|\Omega|^{\frac{1}{\overline{r}s_{m}}}||a||_{L^{\frac{\overline{r}}{\overline{r}-2}}(\Omega)}]\max\{1,\hspace{-.01in}(\int_{\Omega}\hspace{-.04in}v_{s_{m},l}^{\overline{r}}dx)^{\frac{2}{\overline{r}}}\}.
  \]\hk
 By \eqref{cb}, we have
\[   \{\hspace{-.04in}\int_{\Omega}\hspace{-.05in}v_{s_{m}}^{r} dx\}^{\frac{2}{r}}\le 2C(r,\Omega,A)^{2}c_{0}[(c_{0}+5)+k_{s_{m}}|\Omega|^{\frac{1}{\overline{r}s_{m}}}||a||_{L^{\frac{\overline{r}}{\overline{r}-2}}(\Omega)}]\max\{1,(\int_{\Omega}v_{s_{m},l}^{\overline{r}}dx)^{\frac{2}{\overline{r}}}\}. \] \hk 
Put $M =\max\{k_{s_{0}},\cdots,k_{s_{m_{\gamma}}}\}$ and 
\[c_{\gamma}= 2C(r,\Omega,A)^{2}c_{0}[(c_{0}+5)+M(1+|\Omega|)||a||_{L^{\frac{\overline{r}}{\overline{r}-2}}(\Omega)}].\]
\hk  Since $s_{m}r= (\frac{r}{2})^{m}\frac{\gamma}{2}r=\kappa^{m+1}\gamma$ and $2s_{m}= 2(\frac{r}{2})^{m}\frac{\gamma}{2}=\kappa^{m}\gamma$, we get  
\begin{equation}  \{\int_{\Omega}(v_{s_{m}})^{\frac{1}{s_{m}}})^{\kappa^{m+1}\gamma} dx\}^{\frac{1}{\kappa^{m+1}\gamma}}\le c_{\gamma}^{\frac{1}{\kappa^{m}\gamma}}\max\{1,\int_{\Omega}((v_{s_{m}})^{\frac{1}{s_{m}}})^{\kappa^{m}\gamma}dx\}^{\frac{1}{\kappa^{m}\gamma}} 
\label{ff}
\end{equation} 
\[\le c_{\gamma}^{\frac{1}{\kappa^{m}\gamma}}\{\int_{\Omega}((v_{s_{m}})^{\frac{1}{s_{m}}})^{\kappa^{m}\gamma}dx\}^{\frac{1}{\kappa^{m}\gamma}}+c_{\gamma}^{\frac{1}{\kappa^{m}\gamma}}. \]\hk 
Since $v_{s_{m}}^{\frac{1}{s_{m}}}= F_{s_{m}}(w)^{\frac{1}{s_{m}}}$, we get
\[   \{\int_{\Omega}w
^{\kappa^{m+1}\gamma} dx\}^{\frac{1}{\kappa^{m+1}\gamma}}\leb{\eqref{fs1z}}\{\int_{\Omega}[\overline{F}(w)+1]
^{\kappa^{m+1}\gamma} dx\}^{\frac{1}{\kappa^{m+1}\gamma}}
\]
\[ \leb{Minkowski} \{\int_{\Omega}\overline{F}(w)
^{\kappa^{m+1}\gamma} dx\}^{\frac{1}{\kappa^{m+1}\gamma}}+ |\Omega|^{\frac{1}{\kappa^{m+1}\gamma}}\]
\[\leb{\eqref{sss4}} \{\int_{\Omega}(v_{s_{m}})^{\frac{1}{s_{m}}})^{\kappa^{m+1}\gamma} dx\}^{\frac{1}{\kappa^{m+1}\gamma}}  + |\Omega|^{\frac{1}{\kappa^{m+1}\gamma}}\]
\[\leb{\eqref{ff}} c_{\gamma}^{\frac{1}{\kappa^{m}\gamma}}\{\int_{\Omega}((v_{s_{m}})^{\frac{1}{s_{m}}})^{\kappa^{m}\gamma}dx\}^{\frac{1}{\kappa^{m}\gamma}}+c_{\gamma}^{\frac{1}{\kappa^{m}\gamma}}+ |\Omega|^{\frac{1}{\kappa^{m+1}\gamma}}\]
\[\leb{\eqref{sss4}} \hspace{-.05in} c_{\gamma}^{\frac{1}{\kappa^{m}\gamma}}\{\int_{\Omega}(w+k_{0})^{\kappa^{m}\gamma}dx\}^{\frac{1}{\kappa^{m}\gamma}}+ c_{\gamma}^{\frac{1}{\kappa^{m}\gamma}}+ |\Omega|^{\frac{1}{\kappa^{m+1}\gamma}} \]
\[ \leb{Minkowski} c_{\gamma}^{\frac{1}{\kappa^{m}\gamma}}\{\int_{\Omega}w^{\kappa^{m}\gamma}dx\}^{\frac{1}{\kappa^{m}\gamma}}+ c_{\gamma}^{\frac{1}{\kappa^{m}\gamma}}(k_{0}|\Omega|^{\frac{1}{\kappa^{m}\gamma}} +1)+ |\Omega|^{\frac{1}{\kappa^{m+1}\gamma}}, 
\]
\[\leb{\kappa\ge 1} k_{1}\{\int_{\Omega}w^{\kappa^{m}\gamma}dx\}^{\frac{1}{\kappa^{m}\gamma}}+k_{1}\hk\forall~m\le m_{\gamma}, \] 
where $k_{1}=(c_{\gamma}+ (k_{0}+1)(1+|\Omega|)+2)^{\frac{2}{\gamma}}$.\\\hk
 By the mathematical induction, we obtain
 \[   \{\int_{\Omega}w
^{\kappa^{m_{\gamma}+1}\gamma} dx\}^{\frac{1}{\kappa^{m+2}\gamma}}\le k_{1}^{m_{\gamma}+1}\{\int_{\Omega}w^{\gamma}dx\}^{\frac{1}{\gamma}}+ \sum_{i=1}^{m_{\gamma}+1}k_{1}^{i}\]\hk
 Replacing $\gamma$ by $\kappa^{m_{\gamma}+1}\gamma$ and arguing as in the proof of step 1, we have
\[|w||_{L^{\infty}(\Omega)}\le c(r,q,\gamma,\Omega,||w||_{L^{\kappa^{m_{\gamma}+1}\gamma}(\Omega)})\]
\[= c(r,q,\gamma,\Omega, k_{1}^{m_{\gamma}+1}\{\int_{\Omega}w^{\gamma}dx\}^{\frac{1}{\gamma}}+ \sum_{i=1}^{m_{\gamma}+1}k_{1}^{i})\]
Arguing as in the proof of step 1, we get the similar result for $w=u^{-}$. Therefore we get the proposition.
 \end{proof}
 \begin{remark}
 Let $N\ge 3$ and $\epsilon\in (0,1)$. Assume $(a_{0}^{2}+a_{1}+ a_{2})\in L^{\frac{N}{2-2\epsilon}}(\Omega)$ and $b\in L^{\frac{N}{\epsilon}}(\Omega)$. We have
 \[ \frac{\overline{r}}{\overline{r}-2}= 1+\frac{2}{\overline{r}-2}\mathop  \downarrow \limits_{\overline{r}\to\frac{2N}{N-2}}=\frac{N}{2},  \]
 \[\frac{2-2\epsilon}{N}+\frac{\epsilon}{N}=\frac{2-\epsilon}{N}. \]\hk
 Thus, by H\"{o}lder's inequality, $(b a_{0}^{2}+a_{1}+ a_{2})\in L^{\frac{N}{2-\epsilon}}(\Omega)\subset L^{\frac{\overline{r}}{\overline{r}-2}}(\Omega)$ if  $\overline{r}$ is sufficiently close to $\frac{2N}{N-2}$. Therefore we can apply Proposition \ref{pro46} to this case. 
 \end{remark} 
\begin{remark}
 If $b$ and $\overline{b}$ are positive constants,   the global boundedness of $u$ have been proved in \cite{GT,LU} for  $A=\partial\Omega$. The solution $u$ in our result may vanish only on a part $A$ of $\partial\Omega$. \\\hk
 If $b$ is a positive constant and $\overline{b}$ is in Morrey space $L^{p,\lambda}(\Omega)$ with $p > 2$, $\lambda \in (0, N)$ and $p + \lambda > N$,  Byun,  Palagachev and Shin obtained the global boundednes  of solutions for a class of elliptic equations  in \cite{BPS}. \\\hk
  If $b$ is constant and $\overline{b}\in L^{p}(\Omega)$ with $p>N$, Duc and Eells obtained the local boundedness for solutions of a class of elliptic equations in \cite{DE}.
 If $b^{-1}\overline{b}\in L^{\infty}(\Omega)$, Trudinger proved the global boundedness  in \cite{TR}.  In our results, the equation may be not strictly nor uniformly elliptic.
 \label{53zz}
\end{remark}\hk
  If the supports of $a_{0}$, $a_{1}$ and $a_{2}$ are small, we do not need  any condition on them in order obtain the boundedness of solutions to elliptic equations outside their supports. 
  \begin{proposition}  Let  $\delta$ be as in Lemma \ref{lemfs1b},  $\gamma \in (\overline{r}(\frac{2}{3}-2\delta),\infty)$,   $A$ be  admissible with respect to $\Omega$,  $a_{0}$, $a_{1}$ and $a_{2}$ be non-negative  measurable functions on $\Omega$ and $u\in W_{B,A}(\Omega)$ and $R\in (0,\infty)$. Assume \eqref{c1}, \eqref{c3} and \eqref{c6} hold.  Let  $s\in (0,\infty)$ and $\{z_{0}, \cdots,z_{k}\}$ are  in $\Omega$. Put  $B_{s}= \Omega\cap(\cup_{i=1}^{k} B'(z_{i}, s))$ and $\Omega_{s}= \Omega\setminus B_{s}$. Assume  $u\in L^{\gamma}(B_{R}\setminus B_{\frac{1}{2}R})$, $a_{0}(x)=a_{1}(x)=a_{2}(x)=0$ for every $x$ in $\Omega_{\frac{1}{2} R}$    and
 \begin{equation}\int_{\Omega} \frac{\partial u}{\partial x_{j}}\frac{\partial \varphi}{\partial x_{j}}b^{ij}dx\le\hspace{-.05in} \int_{\Omega}\hspace{-.05in} [a_{0}|\nabla u|^{p_{0}} +a_{1}|u|^{p_{1}-1}u  +a_{2}]|\varphi| dx, 
 \label{le4460b}
 \end{equation}
for every $\varphi\in W_{B,A}(\Omega)$, where the integral in right hand side may be $\infty$. Then there exists  $C\equiv C(r,\overline{r},q,b,\overline{b},\Omega)$   such that  
 \begin{equation}||u||_{L^{\infty}(\Omega_{R})}\le C||u||_{L^{\gamma}(B_{R}\setminus B_{\frac{1}{2}R})}R^{-\frac{r\overline{r}}{\gamma(r-\overline{r})}}\hk\forall~\gamma \in (\overline{r},\infty),\label{le4460bz}
 \end{equation}
 \begin{equation}||u||_{L^{\infty}(\Omega_{R})}\le C[||u||_{L^{\gamma}(B_{R}\setminus B_{\frac{R}{2}})}+1]R^{-\frac{r\overline{r}}{\gamma(r-\overline{r})}}\hk\forall~\gamma \in (\overline{r}(\frac{2}{3}-2\delta),\overline{r}],\label{le4460bzb}
 \end{equation}
 \begin{equation}||\nabla u||_{L^{2}_{b}(\Omega_{R})}\le C||u||_{L^{\gamma}(B_{R}\setminus B_{\frac{R}{2}})}R^{-\frac{r\overline{r}}{\gamma(r-\overline{r})}-1}\hk\forall~\gamma \in (\overline{r}(\frac{2}{3}-2\delta),\overline{r}],\label{le4460bzzz}
 \end{equation}
    \begin{equation}||\nabla u||_{L^{2}_{b}(\Omega_{R})}\le C[||u||_{L^{\gamma}(B_{R}\setminus B_{\frac{R}{2}})}+1]R^{-\frac{r\overline{r}}{\gamma(r-\overline{r})}-1}\hk\forall~\gamma \in (\overline{r}(\frac{2}{3}-2\delta),\overline{r}].\label{le4460bzzzb}
 \end{equation}
    \label{pro46b}\end{proposition}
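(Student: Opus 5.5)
We localize the Moser--Serrin iteration of Proposition \ref{pro46} with cutoff functions. Since $a_{0},a_{1},a_{2}$ vanish on $\Omega_{\frac{1}{2}R}$, any test function supported there makes the right side of \eqref{le4460b} zero. Choose radii $R_{m}=\frac{1}{2}R(1+2^{-m-1})$ decreasing from $\frac{3}{4}R$ to $\frac{1}{2}R$. Choose $\eta_{m}\in C^{1}(\RR^{N})$ with $0\le\eta_{m}\le 1$, $\eta_{m}=0$ on $\cup_{i}B'(z_{i},R_{m+1})$, $\eta_{m}=1$ outside $\cup_{i}B'(z_{i},R_{m})$, and $|\nabla\eta_{m}|\le c2^{m}R^{-1}$. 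We iterate toward $\Omega_{R}$ from the annular region $B_{R}\setminus B_{\frac{1}{2}R}$. The radii can be reversed so the sets grow outward; what matters is that every cutoff vanishes where the $a_{i}$ live. For each exponent $s_{m}$ we use the test function $\varphi=\eta_{m}^{2}G\circ w$, with $w=u^{+}$ (respectively $u^{-}$) and $G=G_{s_{m},l}$ from Definition \ref{def61} or $G=G_{s_{m}}$ from Definition \ref{defs1}. By Lemmas \ref{2w1ba} and \ref{w1bb}(iii), $\varphi\in W_{B,A}(\Omega)$, and $\varphi$ vanishes wherever the $a_{i}$ are nonzero. Hence the right-hand integral is zero, even if it is formally infinite elsewhere.

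Next comes the Caccioppoli step. Expanding $\nabla\varphi=\eta_{m}^{2}G'(w)\nabla w+2\eta_{m}G(w)\nabla\eta_{m}$, we apply \eqref{ss2} (or \eqref{sss2} with constant $c_{0}$), then Cauchy--Schwarz for the form $b^{ij}$ together with \eqref{c1}, and Young's inequality. This gives
\[\int_{\Omega}|\nabla(\eta_{m}v)|^{2}b\,dx\le C s_{m}^{2}\int_{\Omega}v^{2}|\nabla\eta_{m}|^{2}\overline{b}\,dx,\qquad v=F\circ w.\]
Here $|G|=F|F'|$ and the cross terms are absorbed. By Lemma \ref{4l2b} applied to $\eta_{m}v\in W_{B,A}(\Omega)$, and by H\"older with \eqref{c6} (we use $\overline{b}\in L^{\frac{r}{r-2}}$, and $\overline{r}<r$ after shrinking with $|\Omega|$ factors), we obtain
\[\|\eta_{m}v\|_{L^{r}}\le C s_{m}2^{m}R^{-1}\|v\|_{L^{\overline{r}}(\mathrm{supp}\,\eta_{m})}.\]
We let $l\to\infty$ using \eqref{ss4} and monotone convergence, exactly as in Step 1 of Proposition \ref{pro46}. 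For $\gamma>\overline{r}$ this yields the recursion
\[\|w\|_{L^{\kappa^{m+1}\gamma}(D_{m+1})}\le (C s_{m}2^{m}R^{-1})^{1/s_{m}}\|w\|_{L^{\kappa^{m}\gamma}(D_{m})},\qquad \kappa=\frac{r}{\overline{r}},\; s_{m}=\kappa^{m}\frac{\gamma}{\overline{r}}.\]
Iterating, the product of the $R^{-1/s_{m}}$ factors is $R^{-\sum 1/s_{m}}=R^{-\frac{\overline{r}}{\gamma}\frac{\kappa}{\kappa-1}}=R^{-\frac{r\overline{r}}{\gamma(r-\overline{r})}}$, which is the exponent in \eqref{le4460bz}. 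The factors $s_{m}^{1/s_{m}}$ and $2^{m/s_{m}}$ give convergent products. For $\gamma\in(\overline{r}(\frac{2}{3}-2\delta),\overline{r}]$ we first run the finitely many steps with $s_{m}\le1$ using $F_{s_{m}}$ and \eqref{sss1}--\eqref{sss4}. The additive constant $k_{0}$ from \eqref{sss4} produces the ``$+1$'' in \eqref{le4460bzb}, and this reduces the case to $\gamma>\overline{r}$. The gradient bounds \eqref{le4460bzzz} and \eqref{le4460bzzzb} then follow from one more Caccioppoli step with $s=1$, i.e.\ $\varphi=\eta^{2}u$. This gives $\|\nabla u\|_{L^{2}_{b}(\Omega_{R})}\le CR^{-1}\|u\|_{L^{\infty}}\|\overline{b}\|_{L^{1}}^{1/2}$, into which we insert the $L^{\infty}$ bound.

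We expect two points to be the main obstacles. The first is the admissibility of the localized functions. $\eta_{m}v$ must lie in $W_{B,A}(\Omega)$ and be covered by \eqref{cb}; this is handled by Lemma \ref{w1bb}(iii), since the cutoff does not disturb the vanishing on $A$. The second is the bookkeeping of the range $\gamma>\overline{r}(\frac{2}{3}-2\delta)$, which is wider than \eqref{cc9}. We need $s_{0}=\gamma/\overline{r}>\frac{2}{3}-\delta$ for Lemma \ref{lemfs1b}. When $\frac{2}{3}-2\delta<s_{0}\le\frac{2}{3}-\delta$, we would first interpolate using the Sobolev gain $\kappa>1$. If that fails, we would restrict the statement to $\gamma/\overline{r}>\frac{2}{3}-\delta$, which the constant-tracking argument above certainly covers.
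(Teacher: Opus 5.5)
There is a genuine gap in the step from your Caccioppoli inequality to $\|\eta_m v\|_{L^{r}}\le Cs_m2^mR^{-1}\|v\|_{L^{\overline r}}$, which you attribute to H\"older's inequality with \eqref{c6}. If you only know $\overline b\in L^{\frac{r}{r-2}}(\Omega)$, H\"older gives
\[\int_{\Omega}v^{2}|\nabla\eta_m|^{2}\overline b\,dx\le\|\nabla\eta_m\|_{L^{\infty}(\Omega)}^{2}\|\overline b\|_{L^{\frac{r}{r-2}}(\Omega)}\|v\|_{L^{r}(\mathrm{supp}\,\nabla\eta_m)}^{2}.\]
The right side then carries the same exponent $r$ as the left, i.e.\ $\kappa=1$, and the iteration never gains integrability. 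Factors of $|\Omega|$ cannot repair this. On a bounded set they give $\|v\|_{L^{\overline r}}\le|\Omega|^{\frac{1}{\overline r}-\frac1r}\|v\|_{L^{r}}$, which is the opposite of what you need.

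To get $\|v\|_{L^{\overline r}}$ on the right you need $\overline b\in L^{\frac{\overline r}{\overline r-2}}(\Omega)$. This is strictly stronger than \eqref{c6}, because $x\mapsto\frac{x}{x-2}$ decreases on $(2,\infty)$ and $\overline r<r$. The paper gets the gain from the standing hypotheses \eqref{c2} and \eqref{c4}, not from \eqref{c6}. It bounds $|\xi_i\zeta_jb^{ij}|\le\overline b|\xi||\zeta|$ and splits $\overline b=b^{\frac12}\cdot b^{-\frac12}\overline b$ in Young's inequality. Its constant $d$ therefore contains $\|b\|_{L^{\frac{\overline r}{\overline r-2}}(\Omega)}$ and $\|b^{-\frac12}\overline b\|_{L^{\frac{2\overline r}{\overline r-2}}(\Omega)}$, even though the statement lists only \eqref{c1}, \eqref{c3}, \eqref{c6}.

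Your Caccioppoli inequality, via Cauchy--Schwarz for the form $b^{ij}$, is actually slightly sharper. Its constant is $4s_m^4$ rather than $Cs_m^2$, which is harmless. It only needs $\overline b\in L^{\frac{\overline r}{\overline r-2}}(\Omega)$, and this follows from \eqref{c4} because $\overline b\le b^{-1}\overline b^{2}$ by \eqref{c1}. So you must invoke \eqref{c4}; under \eqref{c1}, \eqref{c3}, \eqref{c6} alone your argument does not close.

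With \eqref{c4} in place, the rest matches the paper's proof up to bookkeeping.
\begin{itemize}
\item The radii must increase so that $\mathrm{supp}\,\nabla\eta_{m+1}\subset\{\eta_m=1\}$; the paper takes $R_m=\frac R2\sum_{j=0}^{m}2^{-j}\uparrow R$. With your decreasing $R_m$ the chain runs backwards.
\item Keep the right side on $\mathrm{supp}\,\nabla\eta_m\subset B_R\setminus B_{\frac12R}$ rather than on $\mathrm{supp}\,\eta_m$. Otherwise the data term becomes $\|u\|_{L^{\gamma}(\Omega_{R/2})}$ instead of the annulus norm.
\item In the gradient step, $\|u\|_{L^\infty}$ is needed where $\nabla\overline\eta\neq0$, and that set lies outside $\Omega_R$ because $\overline\eta=1$ there. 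End the iteration at $\Omega_{\frac34R}$ (your radii, reordered increasingly) and let $\overline\eta$ vanish on $B_{\frac34R}$. The paper's Step 4 passes over this point.
\item The paper's own Step 2 assumes $\frac{\gamma}{\overline r}\in(\frac23-\delta,1]$. So restricting to $\gamma>\overline r(\frac23-\delta)$ is exactly what is proved there, and no interpolation is needed.
\end{itemize}
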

   \begin{proof}  Note that
 \begin{equation}
 |\xi_{i}\zeta_{j}b^{ij}| \hspace{-.1in} \leb{Cauchy-Schwarz} \hspace{-.1in}(\xi_{i}\xi_{j}b^{ij})^{\frac{1}{2}} (\zeta_{i}\zeta_{j}b^{ij})^{\frac{1}{2}}\leb{\eqref{c1}}\overline{b}|\xi||\zeta|
 \label{cs}
 \end{equation}
\[ \hspace*{2in} \forall~\xi=(\xi_{1},\cdots,\xi_{N}), \zeta=(\zeta_{1},\cdots,\zeta_{N})\in \RR^{N}.\]\hk
   Put $R_{m}=2^{-1}R\sum_{j=0}^{m}2^{-j}$  for every non-negative integer $m$. Then $\frac{1}{2}R \le R_{m} \uparrow R$ and $\Omega_{R}\subset \Omega_{R_{m}}$ for every $m$ in $\NN$. Let $i \in \{1,\cdots,k\}$,  $\eta_{m,i}$ be a function in $C^{1}(\RR^{N})$ such that $\eta_{m,i}(\RR^{N})\subset [0,1]$, $\eta_{m,i}(B(z_{i},R_{m})=\{0\}$, $\eta_{m,i}(\RR^{N}\setminus B(z_{i},R_{m+1})=\{1\}$ and $|\nabla \eta_{m,i}|\le 4.2^{m+1}R^{-1}$. Put
\begin{equation}\eta_{m}(x)=\Pi_{i=1}^{k}\eta_{m,i}(x)\hh\forall~x\in \RR^{N},m\in\NN. \label{et0}
\end{equation}\hk
Since $\Omega\setminus (\Omega_{R_{m+1}}\cup B_{R_{m}})= B_{R_{m+1}}\setminus B_{R_{m}}$ and $\nabla\eta_{m}(\Omega_{R_{m+1}}\cup B_{R_{m}})=\{0\}$,    we have
 \begin{equation}\eta_{m}(\Omega)\subset [0,1],~ \eta_{m}(\Omega_{R_{m+1}}) = \{1\},~\eta_{m}( B_{R_{m}})=\{0\},
 \label{et2}
\end{equation}
 \begin{equation}||\nabla \eta_{m}||_{L^{\infty}(\Omega)}\le  2^{m+3}N.R^{-1},~\nabla \eta_{m}(\Omega\setminus (B_{R_{m+1}}\setminus B_{R_{m}}))\subset \{0\},
 \label{et2b}
\end{equation}
\begin{equation}\eta_{m}(x).a_{k}(x)=0\hh\forall~x\in\Omega, k=0,1,2.
 \label{le463000b}
\end{equation}
 \begin{equation}|\nabla (\eta_{m}^{2}w)|^{2}b\le |2\eta_{m} w\nabla\eta_{m}+\eta_{m}^{2}\nabla w|^{2}b\le 2|2\eta_{m} w\nabla \eta_{m}|^{2}b+2|\eta_{m}^{4}|\nabla w|^{2}b
 \label{le463000}
\end{equation}
\[ = 8|\eta_{m}|^{2}|w|^{2q}|\nabla\eta_{m}|^{2}b+2|\eta_{m}|^{4}|\nabla w|^{2}b\hh\forall~ w\in W_{B,A}(\Omega).\]\hk
By Lemma \ref{w1bb},  $\eta_{m}^{2}u$ is in $W_{B,A}(\Omega)$. 
 Put 
 \begin{equation}
 \kappa=\frac{r}{\overline{r}}~~~ and ~~s_{m}=\kappa^{m}\frac{\gamma}{\overline{r}}\hh \forall~ m \in \{0,1,2,3,\cdots\}.
 \label{sm}
 \end{equation}\hk
  We prove this proposition by two steps.\\\hk
 {\bf Step 1} . Assume $\gamma > \overline{r
 }$.  
 Since $s_{m}>1$, we can use Lemma \ref{lem62}. Let $l\in (3,\infty)$,  $F_{l}=F_{s_{m},l}$ and $G_{l}=G_{s_{m},l}$ be defined as in Definition  \ref{def61}. Put
  \begin{equation}v_{s_{m},l}= F_{l}\circ u,
 \label{le463000c}
\end{equation}
  \begin{equation}\varphi= \eta_{m}^{4}G_{l}\circ u,
 \label{le463000d}
\end{equation}\hk
 We have
  \begin{equation}
 \frac{\partial v_{m}}{\partial x_{j}}(x)=  F_{l}'(w)\frac{\partial u}{\partial x_{j}},
\label{4zz2}\end{equation}
\begin{equation} \frac{\partial \varphi_{m}}{\partial x_{j}}(x)= \eta_{m}^{4}G_{l}'(w)\frac{\partial u}{\partial x_{j}}+4\eta_{m}^{3}G_{l}(w)\frac{\partial \eta}{\partial x_{j}}. 
\label{4zz3}\end{equation}\hk
  By Lemmas \ref{2w1ba} and  \ref{w1bb},  $v_{s_{m},l}$ and $\varphi$ are in $W_{B,A}(\Omega)$.  Using $\varphi$ as a test function  in \eqref{le4460b},   we have
    \begin{equation}0
    \eqb{\eqref{le463000b}} \int_{\Omega}\frac{\partial u}{\partial x_{i}} \frac{\partial \varphi}{\partial x_{j}}b^{ij}dx\eqb{\eqref{le463000d}}  \int_{\Omega}\eta_{m}^{4} \frac{\partial u}{\partial x_{i}} \frac{\partial u}{\partial x_{j}}G_{l}'(u)b^{ij}dx
  \label{z001}
\end{equation}
\[+4\int_{\Omega}\eta_{m}^{3} \frac{\partial u}{\partial x_{i}} G_{l}(u)\frac{\partial \eta_{m}}{\partial x_{j}} b^{ij}dx.
  \]\hk
  It implies  
 \begin{equation}
  \hspace*{-1.35in}\int_{\Omega}\eta_{m}^{4} \frac{\partial u}{\partial x_{i}} \frac{\partial u}{\partial x_{j}}G_{l}'(u)b^{ij}dx \le 4~ |\int_{\Omega} \eta_{m}^{3} \frac{\partial u}{\partial x_{i}} G_{l}(u)\frac{\partial \eta_{m}}{\partial x_{j}} b^{ij}dx|. 
  \label{z01}
\end{equation}\hk
By computations, we get
 \begin{equation} 8\int_{\Omega}\eta_{m}^{2}|v_{s_{m},l}|^{2}|\nabla\eta_{m}|^{2} bdx \leb{\eqref{et2b}}8(N.2^{m+3}R^{-1})^{2}\int_{B_{R_{m+1}}\setminus B_{R_{m}}} \hspace{-.5in} \eta_{m}^{2}|v_{s_{m},l}|^{2} bdx
 \label{z03c}
 \end{equation}
\[\leb{H\ddot{o}lder}\hspace{-.02in}(N2^{m+5}R^{-1})^{2} ||b||_{L^{\frac{\overline{r}}{\overline{r}-2}}(\Omega)} \{\int_{B_{R_{m+1}}\setminus B_{R_{m}}}\hspace{-.55in}\eta_{m}^{\overline{r}}|v_{s_{m},l}|^{\overline{r}} dx\}^{\frac{2}{\overline{r}}},
\]
  \begin{equation}\int_{\Omega}\eta_{m}^{4}|\nabla v_{s_{m},l}|^{4}bdx
 \leb{\eqref{c1}} \int_{\Omega}\eta_{m}^{4}\frac{\partial v_{s_{m},l}}{\partial x_{i}}\frac{\partial v_{s_{m},l}}{\partial x_{j}}b^{ij}dx
 \label{z02}
\end{equation}
 \[ \eqb{\eqref{le463000c}} \int_{\Omega}\eta_{m}^{4}\frac{\partial u}{\partial x_{i}}\frac{\partial u}{\partial x_{j}}|F_{l}'(u)|^{2}b^{ij}dx\leb{\eqref{ss2}}
 s_{m}^{2}\int_{\Omega}\eta_{m}^{4}\frac{\partial u}{\partial x_{i}}\frac{\partial u}{\partial x_{j}}G_{l}'(u)b^{ij}dx
 \]
   \[ \leb{\eqref{z01}}\hspace{-.09in} 4s_{m}^{2}|\int_{\Omega}\eta_{m}^{3} \frac{\partial u}{\partial x_{i}} G_{l}(u)\frac{\partial \eta_{m}}{\partial x_{j}} b^{ij}dx|\hspace{-.19in} \leb{\eqref{cs},\eqref{et2b}}\hspace{-.19in} 4s_{m}^{2}\int_{B_{R_{m+1}}\setminus B_{R_{m}}}\hspace{-.19in}\eta_{m}^{3}|\nabla u|  G_{l}(u)|\nabla \eta_{m}| \overline{b}dx
\]
\[ \leb{\eqref{f1b2}} 4s_{m}^{2}\int_{B_{R_{m+1}}\setminus B_{R_{m}}}\eta_{m}^{3}|\nabla u|  |\nabla \eta_{m}||F_{l}'(u)||F_{l}(u)|  \overline{b}dx\]
\[ \eqb{\eqref{le463000c}} 4s_{m}^{2}\int_{B_{R_{m+1}}\setminus B_{R_{m}}}\eta_{m}^{3}|\nabla\eta_{m}||\nabla v_{s_{m},l}|v_{s_{m},l}  \overline{b}dx \]
\[ \leb{\eqref{et2b}}N2^{m+5}R^{-1}s_{m}^{2} \int_{B_{R_{m+1}}\setminus B_{R_{m}}}\eta_{m}^{2}|\nabla v_{s_{m},l}|b^{\frac{1}{2}}\eta_{m}v_{s_{m},l}b^{-\frac{1}{2}}\overline{b}dx
\]
\[
\leb{Young}\hspace{-.03in}\frac{1}{2}
\hspace{-.03in} \int_{\Omega}\hspace{-.03in} \eta_{m}^{4}|\nabla v_{s_{m},l}|^{2}bdx + \frac{(N2^{m+5}R^{-1}s_{m}^{2} )^{2}}{2} \hspace{-.03in} \int_{B_{R_{m+1}}\setminus B_{R_{m}}} \hspace{-.4in}\eta_{m}^{2} v_{s_{m},l}^{2}(b^{-\frac{1}{2}}\overline{b})^{2}dx \] 
\[\leb{H\ddot{o}lder} \frac{1}{2} 
\int_{\Omega}\eta_{m}^{4}|\nabla v_{s_{m},l}|^{2}bdx+\frac{(N2^{m+5}R^{-1}s_{m}^{2} )^{2}}{2} ||b^{-\frac{1}{2}}\overline{b}||_{L^{\frac{2\overline{r}}{\overline{r}-2}}(\Omega)} \{\int_{B_{R_{m+1}}\setminus B_{R_{m}}}\hspace{-.45in}\eta_{m}^{\overline{r}} v_{s_{m},l}^{\overline{r}}dx\}^{\frac{2}{\overline{r}}}\]
or
\begin{equation} \int_{\Omega}\eta_{m}^{4}|\nabla v_{s_{m},l}|^{2}bdx \le (N2^{m+5}R^{-1}s_{m}^{2} )^{2}||b^{-\frac{1}{2}}\overline{b}||_{L^{\frac{2\overline{r}}{\overline{r}-2}}(\Omega)} \{\int_{B_{R_{m+1}}\setminus B_{R_{m}}}\hspace{-.35in} \eta_{m}^{\overline{r}} v_{s_{m},l}^{\overline{r}}dx\}^{\frac{2}{\overline{r}}}.
 \label{z03}
\end{equation}\hk
 Thus
 \[ 
\{\int_{\Omega}\hspace{-.02in}\eta_{m}^{2r}|v_{s_{m},l}|^{r} dx\}^{\frac{1}{r}}
 \leb{\eqref{cb}}C(r,\Omega,A) \{\int_{\Omega}|\nabla (\eta_{m}^{2}v_{s_{m},l})|^{2} bdx  \}^{\frac{1}{2}}\]
\[ \leb{\eqref{le463000},\eqref{z03c},\eqref{z03}} \hspace{-.4in}d2^{m}R^{-1}s_{m}^{2}\{\int_{B_{R_{m+1}}\setminus B_{R_{m}}}\hspace{-.6in}\eta_{m}^{\overline{r}}|v_{s_{m},l}|^{\overline{r}} dx\}^{\frac{1}{\overline{r}}},\]
where 
\[d =  C(r,\Omega,A)[N2^{5}||b||_{L^{\frac{\overline{r}}{\overline{r}-2}}(\Omega)}^{\frac{1}{2}}+ 2N2^{5}||b^{-\frac{1}{2}}\overline{b}||_{L^{\frac{2\overline{r}}{\overline{r}-2}}(\Omega)}^{\frac{1}{2}}  ].\]\hk
 It implies
\begin{equation}
\{\int_{\Omega_{R_{m+1}}}\hspace{-.32in}|v_{s_{m},l}|^{r} dx\}^{\frac{1}{s_{m}r}} \le d^{\frac{1}{s_{m}}}2^{\frac{m}{s_{m}}}R^{-\frac{1}{s_{m}}}s_{m}^{\frac{2}{s_{m}}}\{\hspace{-.04in}\int_{B_{R_{m+1}}\setminus B_{R_{m}}}\hspace{-.52in}|v_{s_{m},l}|^{\overline{r}} dx\}^{\frac{1}{s_{m}\overline{r}}}
\label{xav}
\end{equation}
\[\le d^{\frac{1}{s_{m}}}2^{\frac{m}{s_{m}}}R^{-\frac{1}{s_{m}}}s_{m}^{\frac{2}{s_{m}}}\{\hspace{-.04in}\int_{\Omega_{R_{m}}}\hspace{-.22in}|v_{s_{m},l}|^{\overline{r}} dx\}^{\frac{1}{s_{m}\overline{r}}}
\]\hk
By \eqref{f1b} and \eqref{ss4}, the sequence $\{v_{s_{m},3n}(x)\}$ is  increasing  and converges  to $w^{s_{m}}(x)$ a.e. on $\Omega$. Thus by  Lebesgue's Monotone Convergence Theorem and \eqref{xav}, we have 
\begin{equation}
\{\int_{\Omega_{R_{m+1}}}\hspace{-.32in}w^{s_{m}r} dx\}^{\frac{1}{s_{m}r}} \le d^{\frac{1}{s_{m}}}2^{\frac{m}{s_{m}}}R^{-\frac{1}{s_{m}}}s_{m}^{\frac{2}{s_{m}}}\{\hspace{-.04in}\int_{B_{R_{m+1}}\setminus B_{R_{m}}}\hspace{-.52in}w^{s_{m}\overline{r}} dx\}^{\frac{1}{s_{m}\overline{r}}}
\label{xawb}
\end{equation}
\[\le d^{\frac{1}{s_{m}}}2^{\frac{m}{s_{m}}}R^{-\frac{1}{s_{m}}}s_{m}^{\frac{2}{s_{m}}}\{\hspace{-.04in}\int_{\Omega_{R_{m}}}\hspace{-.22in}w^{s_{m}\overline{r}} dx\}^{\frac{1}{s_{m}\overline{r}}}
\]\hk
Note that $\kappa=\frac{r}{\overline{r}}$, $s_{m}= \kappa^{m}\frac{\gamma}{\overline{r}}$, $s_{m}r= \kappa^{m+1}\gamma$, $s_{m}\overline{r}= \kappa^{m}\gamma$, $\frac{1}{s_{m}}=\frac{\overline{r}}{\gamma}\frac{1}{\kappa^{m}} $, $\frac{m}{s_{m}}=\frac{\overline{r}}{\gamma}\frac{m}{\kappa^{m}} $ and $s_{m}^{\frac{2}{s_{m}}}= [ \kappa^{m}\frac{\gamma}{\overline{r}}]^{\frac{2\gamma}{\overline{r}}\frac{1}{\kappa^{m}}}=[(\frac{\gamma}{\overline{r}})^{\frac{2\gamma}{\overline{r}}}]^{\frac{1}{\kappa^{m}}}[ \kappa^{\frac{2\gamma}{\overline{r}}}]^{\frac{m}{\kappa^{m}}}$. It implies
\begin{equation}
\{\int_{\Omega_{R_{m+1}}}\hspace{-.26in}w^{\kappa^{m+1}\gamma} dx\}^{\frac{1}{\kappa^{m+1}\gamma}}\hspace{-.03in}\le d^{\frac{1}{s_{m}}}2^{\frac{m}{s_{m}}}R^{-\frac{1}{s_{m}}}s_{m}^{\frac{2}{s_{m}}}\{\int_{B_{R_{m+1}}\setminus B_{R_{m}}}\hspace{-.62in}w^{\kappa^{m}\gamma} dx\}^{\frac{1}{\kappa_{m}\gamma}}
\label{za2}
\end{equation}
\[\le\hspace{-.03in} d^{\frac{1}{s_{m}}}2^{\frac{m}{s_{m}}}R^{-\frac{1}{s_{m}}}s_{m}^{\frac{2}{s_{m}}}\{\hspace{-.03in}\int_{\Omega_{R_{m}}}\hspace{-.06in} w^{\kappa^{m}\gamma} dx\}^{\frac{1}{\kappa_{m}\gamma}}\hh\forall~m\ge 0.  
\]\hk
Put $\overline{d}_{1}= d^{\frac{\overline{r}}{\gamma}}(\frac{\gamma}{\overline{r}})^{\frac{2\gamma}{\overline{r}}}+1$ and $\overline{d}_{2}=2^{\frac{\overline{r}}{\gamma} } \kappa^{\frac{2\gamma}{\overline{r}}}+1$. By computations, we have 
\[ d^{\frac{1}{s_{m}}}2^{\frac{m}{s_{m}}}R^{-\frac{1}{s_{m}}}s_{m}^{\frac{2}{s_{m}}}
=d^{\frac{\overline{r}}{\gamma}\frac{1}{\kappa^{m}}}2^{\frac{\overline{r}}{\gamma}\frac{m}{\kappa^{m}} }R^{-\frac{\overline{r}}{\gamma}\frac{1}{\kappa^{m}}}[(\frac{\gamma}{\overline{r}})^{\frac{2\gamma}{\overline{r}}}]^{\frac{1}{\kappa^{m}}}[ \kappa^{\frac{2\gamma}{\overline{r}}}]^{\frac{m}{\kappa^{m}}} 
\]
\[= [d^{\frac{\overline{r}}{\gamma}}(\frac{\gamma}{\overline{r}})^{\frac{2\gamma}{\overline{r}}}]^{\frac{1}{\kappa^{m}}}[2^{\frac{\overline{r}}{\gamma} } \kappa^{\frac{2\gamma}{\overline{r}}}]^{\frac{m}{\kappa^{m}}}[R^{-\frac{\overline{r}}{\gamma}}]^{\frac{1}{\kappa^{m}}}\le \overline{d}_{1}^{\frac{1}{\kappa^{m}}}\overline{d}_{2}^{\frac{m}{\kappa^{m}}}[R^{-\frac{\overline{r}}{\gamma}}]^{\frac{1}{\kappa^{m}}}.
\]\hk
 By \eqref{za2}, we have
 \[\{\int_{\Omega_{R_{m+1}}}\hspace{-.3in}w^{\kappa^{m+1}\hspace{-.03in}\gamma}\hspace{-.04in} dx\}^{\frac{1}{\kappa^{m+1}\gamma}}\hspace{-.03in}\le\hspace{-.03in} \overline{d}_{1}^{\frac{1}{\kappa^{m}}}\overline{d}_{2}^{\frac{m}{\kappa^{m}}}[R^{-\frac{\overline{r}}{\gamma}}]^{\frac{1}{\kappa^{m}}}\{\hspace{-.05in} \int_{B_{R_{m+1}}\setminus B_{R_{m}}}\hspace{-.08in}w^{\frac{1}{\kappa^{m}\gamma}} dx\}^{\frac{1}{\kappa^{m}\gamma}}\]
\begin{equation}
\le\hspace{-.03in} \overline{d}_{1}^{\frac{1}{\kappa^{m}}}\overline{d}_{2}^{\frac{m}{\kappa^{m}}}[R^{-\frac{\overline{r}}{\gamma}}]^{\frac{1}{\kappa^{m}}}\{\hspace{-.05in} \int_{\Omega_{R_{m}}}\hspace{-.08in}w^{\frac{1}{\kappa^{m}\gamma}} dx\}^{\frac{1}{\kappa^{m}\gamma}}\hk\forall~m\ge 0
\label{za2b}
\end{equation}
\begin{equation}
\{\int_{\Omega_{R_{1}}}\hspace{-.15in}w^{\kappa\gamma} dx\}^{\frac{1}{\kappa\gamma}}\le\hspace{-.03in} \overline{d}_{1}\overline{d}_{2}[R^{-\frac{\overline{r}}{\gamma}}] \int_{B_{R_{m}}}\hspace{-.08in}w^{\frac{1}{\gamma}} dx\}^{\frac{1}{\gamma}}
\label{za2c}
\end{equation}\hk
By mathematical induction, it implies
\[
\{\int_{\Omega_{R}}|w|^{\kappa^{m}\gamma} dx\}^{\frac{1}{\kappa^{m}\gamma}}\le\int_{\Omega_{R_{m}}}|w|^{\kappa^{m}\gamma}dx\}^{\frac{1}{\kappa^{m}\gamma}}\le
\]
\[\le\overline{d}_{1}^{\sum_{j=0}^{m-1}\frac{1}{\kappa^{j}}}\overline{d}_{2}^{\sum_{j=1}^{m-1}\frac{j}{\kappa^{j}}}[R^{-\frac{\overline{r}}{\gamma}}]^{\sum_{j=0}^{m-1}\frac{1}{\kappa^{j}}}\{\int_{\Omega_{R_{1}}}\hspace{-.08in}|w|^{\kappa\gamma} dx\}^{\frac{1}{\kappa
\gamma}} \]
\[ \leb{\eqref{za2},\kappa=\frac{r}{\overline{r}},R_{0}=\frac{1}{2}R} \overline{d}_{3}\{ \int_{B_{R}\setminus B_{\frac{1}{2}R}}|w|^{\gamma} dx\}^{\frac{1}{\gamma}}
R^{-\frac{r\overline{r}}{\gamma(r-\overline{r})}}\hh\forall~m \ge 2,
\]
where $\overline{d}_{3}=\overline{d}_{1}^{\sum_{j=0}^{\infty}\frac{1}{\kappa^{j}}}\overline{d}_{2}^{ \sum_{j=0}^{\infty}\frac{j}{\kappa^{j}}}$.\\\hk
 Therefore,  by Problem 5 in \cite[p.71]{RU}, it implies 
 \begin{equation}
||w||_{L^{\infty}(\Omega_{R})}\le  \overline{d}_{3}\{ \int_{B_{R}\setminus B_{\frac{1}{2}R}}|u|^{\gamma} dx\}^{\frac{1}{\gamma}}
R^{-\frac{r\overline{r}}{\gamma(r-\overline{r})}},
\label{zaa1}
\end{equation}
which implies  \eqref{le4460bz} for $u^{+}$.\\\hk
 {\bf Step 2} . Assume $\frac{\gamma}{\overline{r}}\in (\frac{2}{3} -\delta,1]$. Put $\kappa$ and $s_{m}$ as in \eqref{sm}. Since $\gamma \le \overline{r}$,  $s_{0} \le 1$. Put $m_{\gamma} = \max\{m : s_{m}\le 1\}$. Let $F_{s_{m}}$, $G_{s_{m}}$ and $c_{0}$ be defined as in  Definition \ref{defs1} and Lemma \ref{lemfs1} for every $m\le m_{\gamma}$. Since $F_{s_{m}}(0)=G_{s_{m}}(0)=0$, by Lemma \ref{2w1ba} and Lemma \ref{w1bb}, $w\equiv u^{+}$,  $v_{m}\equiv F_{m}\circ w$ and $\varphi_{m}\equiv G_{m}\circ w$ are in $W_{B,A}(\Omega)$.  We have 
 \begin{equation}
 \frac{\partial v_{m}}{\partial x_{j}}(x)=  F_{m}'(w)\frac{\partial u}{\partial x_{j}},
\label{2le462aazz2z}\end{equation}
\begin{equation} \frac{\partial \varphi_{m}}{\partial x_{j}}(x)= G_{m}'(w)\frac{\partial u}{\partial x_{j}}. 
\label{2le462aazz3b}\end{equation}\hk
 Since $s_{m}>\frac{2}{3}-\delta$, we can using Lemmas \ref{lemfs1}-\ref{lemfs1c} for $m\le m_{\gamma}$. Let $m\le m_{\gamma}$.  Using $\varphi_{m}$ as a test function  in \eqref{le4460b}, by computations, we get
 \begin{equation} 8\int_{\Omega}\eta_{m}^{2}|v_{s_{m}}|^{2}|\nabla\eta_{m}|^{2} bdx \leb{\eqref{et2b}}8(N.2^{m+3}R^{-1})^{2}\int_{B_{R_{m+1}}\setminus B_{R_{m}}} \hspace{-.5in} \eta_{m}^{2}|v_{s_{m}}|^{2} bdx
 \label{z03cd}
 \end{equation}
\[\leb{H\ddot{o}lder}\hspace{-.02in}(N2^{m+5}R^{-1})^{2} ||b||_{L^{\frac{\overline{r}}{\overline{r}-2}}(\Omega)} \{\int_{B_{R_{m+1}}\setminus B_{R_{m}}}\hspace{-.55in}\eta_{m}^{\overline{r}}|v_{s_{m}}|^{\overline{r}} dx\}^{\frac{2}{\overline{r}}},
\]
  \begin{equation}\int_{\Omega}\hspace{-.04in}\eta_{m}^{4}|\nabla v_{s_{m}}|^{4}bdx \hspace{-.1in}
 \leb{\eqref{c1}} \int_{\Omega}\hspace{-.04in}\eta_{m}^{4}\frac{\partial v_{s_{m},l}}{\partial x_{i}}\frac{\partial v_{s_{m}}}{\partial x_{j}}b^{ij}dx
 \hspace{-.15in}\eqb{\eqref{le463000c}} \int_{\Omega}\hspace{-.04in}\eta_{m}^{4}\frac{\partial u}{\partial x_{i}}\frac{\partial u}{\partial x_{j}}|F_{l}'(u)|^{2}b^{ij}dx
 \label{z02ab}
\end{equation}
 \[\leb{\eqref{sss2}}
 c_{0}\int_{\Omega}\eta_{m}^{4}\frac{\partial u}{\partial x_{i}}\frac{\partial u}{\partial x_{j}}G_{l}'(u)b^{ij}dx \leb{\eqref{z01}}4 c_{0}|\int_{\Omega}\eta_{m}^{3} \frac{\partial u}{\partial x_{i}} G_{l}(u)\frac{\partial \eta_{m}}{\partial x_{j}} b^{ij}dx|
 \]
\[\leb{\eqref{cs},\eqref{et2b}} 4c_{0}\int_{B_{R_{m+1}}\setminus B_{R_{m}}}\eta_{m}^{3}|\nabla u|  G_{l}(u)|\nabla \eta_{m}| \overline{b}dx\]
\[ \leb{\eqref{fs4}}\hspace{-.08in}4 c_{0}\int_{B_{R_{m+1}}\setminus B_{R_{m}}}\hspace{-.6in}\eta_{m}^{3}|\nabla u|  |\nabla \eta_{m}||F_{l}'(u)||F_{l}(u)|  \overline{b}dx\eqb{\eqref{le463000c}}4 c_{0}\int_{B_{R_{m+1}}\setminus B_{R_{m}}}\hspace{-.4in}\eta_{m}^{3}|\nabla\eta_{m}||\nabla v_{s_{m},l}|v_{s_{m}}  \overline{b}dx \]
\[ \leb{\eqref{et2b}}4N2^{m+3}R^{-1}c_{0} \int_{B_{R_{m+1}}\setminus B_{R_{m}}}\eta_{m}^{2}|\nabla v_{s_{m}}|b^{\frac{1}{2}}\eta_{m}v_{s_{m}}b^{-\frac{1}{2}}\overline{b}dx
\]
\[
\leb{Young}\hspace{-.03in}\frac{1}{2}
\hspace{-.03in} \int_{\Omega}\hspace{-.03in} \eta_{m}^{4}|\nabla v_{s_{m}}|^{2}bdx + \frac{4(N2^{m+3}R^{-1}c_{0}  )^{2}}{2} \hspace{-.03in} \int_{B_{R_{m+1}}\setminus B_{R_{m}}} \hspace{-.4in}\eta_{m}^{2} v_{s_{m}}^{2}(b^{-\frac{1}{2}}\overline{b})^{2}dx \] 
\[\leb{H\ddot{o}lder} \frac{1}{2} 
\int_{\Omega}\eta_{m}^{4}|\nabla v_{s_{m}}|^{2}bdx+2(N2^{m+3}R^{-1}c_{0})^{2}||b^{-\frac{1}{2}}\overline{b}||_{L^{\frac{2\overline{r}}{\overline{r}-2}}(\Omega)} \{\int_{B_{R_{m+1}}\setminus B_{R_{m}}}\hspace{-.5in}\eta_{m}^{\overline{r}} v_{s_{m}}^{\overline{r}}dx\}^{\frac{2}{\overline{r}}}\]
or
\begin{equation} \int_{\Omega}\eta_{m}^{4}|\nabla v_{s_{m}}|^{2}bdx \le 4(N2^{m+3}R^{-1}c_{0})^{2}||b^{-\frac{1}{2}}\overline{b}||_{L^{\frac{2\overline{r}}{\overline{r}-2}}(\Omega)} \{\int_{B_{R_{m+1}}\setminus B_{R_{m}}}\hspace{-.45in} \eta_{m}^{\overline{r}} v_{s_{m}}^{\overline{r}}dx\}^{\frac{2}{\overline{r}}}.
 \label{z03d}
\end{equation}
\hk
  Thus
 \[ 
\{\int_{\Omega}\hspace{-.02in}\eta_{m}^{2r}|v_{s_{m}}|^{r} dx\}^{\frac{1}{r}}\hspace{-.13in}
 \leb{\eqref{cb}}\hspace{-.13in}C(r,\Omega,A) \{\int_{\Omega}|\nabla (\eta_{m}^{2}v_{s_{m}})|^{2} bdx  \}^{\frac{1}{2}}\hspace{-.4in}\leb{\eqref{le463000},\eqref{z03cd},\eqref{z03d}} \hspace{-.4in}\overline{d}2^{m}R^{-1}\{\int_{B_{R_{m+1}}\setminus B_{R_{m}}}\hspace{-.6in}\eta_{m}^{\overline{r}}|v_{s_{m}}|^{\overline{r}} dx\}^{\frac{1}{\overline{r}}},\]
where 
\[\overline{d} =  4C(r,\Omega,A)[(N2^{5}R^{-1})^{2} ||b||_{L^{\frac{\overline{r}}{\overline{r}-2}}(\Omega)} + 2(N2^{3}R^{-1}c_{0})^{2}||b^{-\frac{1}{2}}\overline{b}||_{L^{\frac{2\overline{r}}{\overline{r}-2}}(\Omega)} ].\]\hk
 It implies
 \begin{equation}
\{\int_{\Omega_{R_{m+1}}}\hspace{-.32in}|v_{s_{m}}|^{r} dx\}^{\frac{1}{s_{m}r}} \le \overline{d}^{\frac{1}{s_{m}}}2^{\frac{m}{s_{m}}}R^{-\frac{1}{s_{m}}}\{\hspace{-.04in}\int_{B_{R_{m+1}}\setminus B_{R_{m}}}\hspace{-.52in}|v_{s_{m}}|^{\overline{r}} dx\}^{\frac{1}{s_{m}\overline{r}}}
\label{xavz1}
\end{equation}
\[\leb{B_{R_{m+1}}\subset \Omega} \overline{d}^{\frac{1}{s_{m}}}2^{\frac{m}{s_{m}}}R^{-\frac{1}{s_{m}}}\{\hspace{-.04in}\int_{\Omega_{R_{m}}}\hspace{-.22in}|v_{s_{m}}|^{\overline{r}} dx\}^{\frac{1}{s_{m}\overline{r}}}\le \overline{d}^{\frac{1}{s_{m}}}2^{\frac{m}{s_{m}}}R^{-\frac{1}{s_{m}}}\{\int_{\Omega_{R_{m}}}|v_{s_{m}}|^{\overline{r}} dx\}^{\frac{1}{s_{m}\overline{r}}}
\]\hk
 Thus
  \begin{equation}
\{\int_{\Omega_{R_{m+1}}}\hspace{-.32in}|w|^{s_{m}r} dx\}^{\frac{1}{s_{m}r}}\hspace{-.12in} \leb{\eqref{fs1z}} \int_{\Omega_{R_{m+1}}}\hspace{-.32in}[\overline{F}(w)+1]^{s_{m}r} dx\}^{\frac{1}{s_{m}r}}
\label{xavz2}
\end{equation}
\[\leb{Minkowski}\int_{\Omega_{R_{m+1}}}\hspace{-.32in}\overline{F}(w)^{s_{m}r} dx\}^{\frac{1}{s_{m}r}} +|\Omega|^{\frac{1}{s_{m}r}} 
\]
\[\leb{\eqref{sss4}} \hspace{-.07in}      \{\int_{\Omega_{R_{m+1}}}\hspace{-.32in}|v_{s_{m}}|^{r} dx\}^{\frac{1}{s_{m}r}}+|\Omega|^{\frac{1}{s_{m}r}}\hspace{-.12in} \leb{\eqref{xavz1}} \hspace{-.12in}\overline{d}^{\frac{1}{s_{m}}}2^{\frac{m}{s_{m}}}R^{-\frac{1}{s_{m}}}\{\hspace{-.04in}\int_{B_{R_{m+1}}\setminus B_{R_{m}}}\hspace{-.52in}|v_{s_{m}}|^{\overline{r}} dx\}^{\frac{1}{s_{m}\overline{r}}}+|\Omega|^{\frac{1}{s_{m}r}}\]
\[\leb{\eqref{sss4}} \overline{d}^{\frac{1}{s_{m}}}2^{\frac{m}{s_{m}}}R^{-\frac{1}{s_{m}}}\{\hspace{-.04in}\int_{B_{R_{m+1}}\setminus B_{R_{m}}}\hspace{-.52in}|w+k_{0}|^{s_{m}\overline{r}} dx\}^{\frac{1}{s_{m}\overline{r}}}+|\Omega|^{\frac{1}{s_{m}r}}\]
\[\leb{Minkowski} \overline{d}^{\frac{1}{s_{m}}}2^{\frac{m}{s_{m}}}R^{-\frac{1}{s_{m}}}\{\hspace{-.04in}\int_{B_{R_{m+1}}\setminus B_{R_{m}}}\hspace{-.52in}|w|^{s_{m}\overline{r}} dx\}^{\frac{1}{s_{m}\overline{r}}}+ \overline{d}^{\frac{1}{s_{m}}}2^{\frac{m}{s_{m}}}R^{-\frac{1}{s_{m}}}k_{0}(1+|\Omega|)+|\Omega|^{\frac{1}{s_{m}r}}\]
\[\leb{B_{R_{m+1}}\subset \Omega} \overline{d}^{\frac{1}{s_{m}}}2^{\frac{m}{s_{m}}}R^{-\frac{1}{s_{m}}}\{\hspace{-.04in}\int_{B_{R_{m+1}}\setminus B_{R_{m}}}\hspace{-.22in}|w|^{s_{m}\overline{r}} dx\}^{\frac{1}{s_{m}\overline{r}}}+ \overline{d}^{\frac{1}{s_{m}}}2^{\frac{m}{s_{m}}}R^{-\frac{1}{s_{m}}}k_{0}(1+|\Omega|)+|\Omega|^{\frac{1}{s_{m}r}}.
\]\hk
 Put $k_{2}= max\{\overline{d}^{\frac{1}{s_{m}}}2^{\frac{m}{s_{m}}}R^{-\frac{1}{s_{m}}}k_{0}(1+|\Omega|)+|\Omega|^{\frac{1}{s_{m}r}}: m\le m_{\gamma}\}$. By mathematical induction, we get
\[\{\int_{\Omega_{R_{m_{\gamma}+1}}}\hspace{-.32in}|w|^{\kappa^{(m_{\gamma}+1)\gamma}} dx\}^{\frac{1}{(m_{\gamma}+1)\gamma}}=\{\int_{\Omega_{R_{m_{\gamma}+1}}}\hspace{-.32in}|w|^{s_{m_{\gamma}+1}r} dx\}^{\frac{1}{s_{m}r}}\]
\[ \leb{\eqref{xavz2}} k_{2}^{m_{\gamma}}\{\int_{\Omega_{R_{\kappa\gamma}}}\hspace{-.12in}|w|^{\kappa\gamma} dx\}^{\frac{1}{\kappa\gamma}}+\sum_{l=1}^{2m_{\gamma}-1}k_{2}^{l}\le k_{2}^{m_{\gamma}+1}\{\int_{B_{R}\setminus B_{\frac{1}{2}R}}\hspace{-.12in}|w|^{\gamma} dx\}^{\frac{1}{\gamma}}+\sum_{l=1}^{2m_{\gamma}+1}k_{2}^{l}
\}\]\hk
Replacing $\gamma$ by $\kappa^{m_{\gamma}+1}\gamma$ and arguing as in Step 1, we have
\begin{equation}||w||_{L^{\infty}(\Omega_{R})}\le  \overline{d}_{3}[k_{2}^{m_{\gamma}+1}\{\int_{B_{R}\setminus B_{\frac{1}{2}R}}\hspace{-.12in}|w|^{\gamma} dx\}^{\frac{1}{\gamma}}+\sum_{l=1}^{2m_{\gamma}+1}k_{2}^{l}]
R^{-\frac{r\overline{r}}{\gamma(r-\overline{r})}},
\label{zaa2}
\end{equation}
which implies  \eqref{le4460bzb} for $u^{+}$.\\\hk
 {\bf Step 3} . Replacing $w=u^{+}$ by $w=u^{-}$.  We have 
 \[\frac{\partial w}{\partial x_{j}}(x) = \left\{ {\begin{array}{*{20}{l}}
\displaystyle -\frac{\partial u}{\partial x_{j}}(x) \hh&{\forall~x \in \Omega^{-},}\vspace{.1in}\\
0&{\forall~x \in \Omega\setminus\Omega^{-}}
\end{array}} \right.
\]
 \[\frac{\partial v_{s_{m},l}}{\partial x_{j}}(x) = \left\{ {\begin{array}{*{20}{l}}
\displaystyle - F_{l}'(w(x))\frac{\partial u}{\partial x_{j}}(x) \hh&{\forall~x \in \Omega^{-},}\vspace{.1in}\\
0&{\forall~x \in \Omega\setminus\Omega^{-}}
\end{array}} \right.
\]
\[\eqb{F_{l}'(0)=0}F_{l}'(w(x))\frac{\partial w}{\partial x_{j}}(x)\hh\forall~x \in \Omega,\]
 \[\frac{\partial \varphi}{\partial x_{j}}(x) = \left\{ {\begin{array}{*{20}{l}}
\displaystyle - G_{l}'(w(x))\frac{\partial u}{\partial x_{j}}(x)  \hh&{\forall~x \in \Omega^{-},}\vspace{.1in}\\
0&{\forall~x \in \Omega\setminus\Omega^{-}.}
\end{array}} \right.
\]\hk
We have
\[\int_{\Omega}\eta_{m}^{4}\frac{\partial v_{s_{m},l}}{\partial x_{i}}\frac{\partial v_{s_{m}}}{\partial x_{j}}b^{ij}dx
 = \int_{\Omega}\eta_{m}^{4}\frac{\partial u}{\partial x_{i}}\frac{\partial u}{\partial x_{j}}|F_{l}'(u)|^{2}b^{ij}dx
 \]
 as in \eqref{z02ab}. Therefore we get the results in Step 1 and Step 2 for $w$, then for $u$. \\\hk
  {\bf Step 4} .  As above, we can find a function $\overline{\eta}$ in $C^{1}(\Omega)$ such that 
 \begin{equation}\overline{\eta}(\Omega)\subset [0,1], \overline{\eta}(\Omega_{R}) = \{1\},~\overline{\eta}(B_{\frac{1}{2}R})=\{0\}
 \label{et26}
\end{equation}
 \begin{equation} \nabla \overline{\eta}(\Omega\setminus(B_{R}\setminus B_{\frac{R}{2}}))=\{0\},~
 ||\nabla \overline{\eta}||_{L^{\infty}(\Omega)}\le  8N.R^{-1},
 \label{et2b6}
\end{equation}
\begin{equation}\overline{\eta}(x).a_{k}(x)=0\hh\forall~x\in\Omega, k=0,1,2,
 \label{et2b6b}
\end{equation}\hk
 Put $\phi=\overline{\eta}^{2} u$. By \eqref{cc9} and Lemma \ref{w1bb},   $\phi$ is in $W_{B,A}(\Omega)$.  Using $\phi$ as a test function  in \eqref{le4460b},   we have 
\begin{equation}0\eqb{\eqref{le4460b}}\int_{\Omega} \frac{\partial u}{\partial x_{i}} \frac{\partial \phi}{\partial x_{j}}b^{ij}dx= \int_{\Omega}\overline{\eta}^{2} \frac{\partial u}{\partial x_{i}}\frac{\partial u}{\partial x_{j}}b^{ij}dx +2\int_{\Omega}\overline{\eta}\frac{\partial u}{\partial x_{i}}u \frac{\partial \overline{\eta}}{\partial x_{j}} b^{ij}dx,
\label{zazb}
\end{equation}
 \[\int_{\Omega_{R}}\hspace{-.1in}|\nabla u|^{2} bdx
\le\hspace{-.05in} \int_{\Omega}\overline{\eta}^{2}|\nabla u|^{2} bdx\leb{\eqref{c1}}\int_{\Omega}\overline{\eta}^{2}\frac{\partial u}{\partial x_{i}}\frac{\partial u}{\partial x_{j}} b^{ij}dx\hspace{-.297in}\eqb{\eqref{zazb},~\eqref{et2b6}}\hspace{-.17in}-2\int_{B_{R}\setminus B_{\frac{R}{2}}}\hspace{-.13in}\overline{\eta}\frac{\partial u}{\partial x_{i}}u \frac{\partial \overline{\eta}}{\partial x_{j}} b^{ij}dx\]
\[\leb{Cauchy-Schwarz}\hspace{-.1in}2\int_{B_{R}\setminus B_{\frac{R}{2}}} \hspace{-.04in}\overline{\eta}|u| [\frac{\partial u}{\partial x_{i}} \frac{\partial u}{\partial x_{j}} b^{ij}]^{\frac{1}{2}}[\frac{\partial \overline{\eta}}{\partial x_{i}} \frac{\partial \overline{\eta}}{\partial x_{j}} b^{ij}]^{\frac{1}{2}}dx\]
\[\le 2||\nabla \overline{\eta}||_{L^{\infty}(\Omega)}||u||_{L^{\infty}(\Omega_{R})}\int_{B_{R}}\overline{\eta}|\nabla u| \overline{b}dx\leb{\eqref{le4460bz},\eqref{le4460bzb}} 16NcR^{-\frac{r\overline{r}}{\gamma(r-\overline{r})}-1}\int_{B_{R}}\overline{\eta}|\nabla u| \overline{b}b^{-\frac{1}{2}}b^{\frac{1}{2}}dx\]
\[\leb{H\ddot{o}lder}16N c||u||_{L^{\infty}(\Omega_{R})}R^{-\frac{r\overline{r}}{\gamma(r-\overline{r})}-1}||\overline{b}b^{-\frac{1}{2}}||_{L^{2}(\Omega)}\{\int_{\Omega_{R}}|\nabla u|^{2} bdx\}^{\frac{1}{2}}
\]
or
\[\{\int_{\Omega_{R}}|\nabla u|^{2} bdx\}^{\frac{1}{2}}
\le 16Nc||u||_{L^{\infty}(\Omega_{R})}\overline{b}b^{-\frac{1}{2}}||_{L^{2}(\Omega)}R^{-\frac{r\overline{r}}{\gamma(r-\overline{r})}-1}\]
where $c=C[||u||_{L^{\gamma}(B_{R}\setminus B_{\frac{R}{2}})}+1]$.\\\hk
Thus, by \eqref{le4460bz} and \eqref{le4460bzb}, 
 we get \eqref{le4460bzzz} and \eqref{le4460bzzzb}. 
\end{proof}
\begin{remark} If $u$ is in $L^{\gamma}(\Omega)$, we can replace $\displaystyle \{\int_{B_{R}}\hspace{-.12in}|u|^{\gamma} dx\}^{\frac{1}{\gamma}}$ by $||u||_{L^{\gamma}(\Omega)}$ in \eqref{zaa2} and replace $C(r,\overline{r},q,\gamma,||u||_{L^{\gamma}(B_{R})}\Omega)$ by a constant $C(r,\overline{r},q,\gamma,||u||_{L^{\gamma}(\Omega)},\Omega)$, which is independent from $R$.
\label{rr}
\end{remark}\hk
 We consider  following condition: there is $\nu\in (0,\infty)$ such that
 \begin{equation}
|||w|||_{B}\le\nu \{\int_{\Omega}\sum_{i,j=1}^{N}\frac{\partial w}{\partial x_{i}}\frac{\partial w}{\partial x_{j}}b^{ij}  dx\}^{\frac{1}{2}}\le \nu |||w|||_{B}\hh\forall~w\in W_{B,A}(\Omega).
 \label{c9}
 \end{equation}\hk 
 \begin{remark}~~Assume \eqref{c1} holds. If  $\overline{b}=\lambda b$ with some $\lambda\in (0,\infty)$, then $|||.|||_{b}$ and $|||.|||_{B}$ are equivalent and we obtain \eqref{c9}.\\\hk
Assume  $b^{ij} =b_{i}\delta^{i}_{j}$ for every $i,j \in \{1,\cdots,N\}$ with positive measurable functions $b_{i}$. Arguing as in the proof of the above  lemma, we see 
\[|||u|||_{B}=  \{\int_{\Omega}\sum_{i}^{N}|\frac{\partial u}{\partial x_{i}}|^{2}b_{i}  dx\}^{\frac{1}{2}}\hh\forall~u\in W_{B,A}(\Omega). \]\hk
Thus we get \eqref{c9}.
  \label{equiv}
 \end{remark} \hk
  Hereafter we always assume  \eqref{c9} holds.
\begin{definition} We put \\\hk
  $\displaystyle||v||_{B} = \{\int_{\Omega}\sum_{i,j=1}^{N}\frac{\partial v}{\partial x_{i}}\frac{\partial v}{\partial x_{j}}b^{ij}dz\}^{\frac{1}{2}} \hh\forall~v \in W_{B,A}(\Omega),$\\\hk
    $\displaystyle||v||_{b} = \{\int_{\Omega}|\nabla  v|^{2}bdx\}^{\frac{1}{2}} \hh\forall~v \in W_{b,A}(\Omega).$
    \label{nor}
\end{definition}\hk
Hence these new norms are equivalent to the norms defined in Definition \ref{d44} for  $W_{b,A}(\Omega)$ and   $W_{B,A}(\Omega)$ respectively. Hereafter we will use the norms  for these spaces instead of old norms.
Now we consider a concrete case.
 \begin{proposition} Let  $\alpha\in [1,\frac{r}{2})$, $\zeta \in (0,1)$, $z\in \Omega$, $a\in L^{\infty}(\Omega)$ . Assume    \eqref{c1}-\eqref{c4} and \eqref{c9} hold and  $||a||_{L^{1}(\Omega)} \le M $.   Then\\\hk 
 $(i)$~~ There is a unique solution $v$ in $W_{B,A}(\Omega)$
to the following equation 
 \begin{equation}<v,\varphi>\equiv\int_{\Omega} \frac{\partial v}{\partial x_{i}}\frac{\partial \varphi}{\partial x_{j}} b^{ij}dx= \int_{\Omega}a\varphi dx\hk\forall~\varphi\in W_{B,A}(\Omega).
 \label{281z}
\end{equation}\hk
$(ii)$~~There is a positive real number $k_{\Omega,M,r,B,\nu,\alpha}$  independent of $R$ such that
 \begin{equation}||v||_{L^{\alpha}(\Omega)}\le k_{\Omega,M,r,B,\alpha}\hk\forall~~ \alpha \in [1,\frac{r}{2}). \label{281az}
\end{equation}\hk
$(iii)$~~If $a(x)=0$ for every $x$ in $\Omega\setminus B(z,\frac{1}{2}R)$ with some $R\in(0,\infty)$, then there are $t(\zeta)\in (\frac{2N^{2}+2N-2}{N^{2}+2N-1},2)$, $\overline{r}(\zeta)\in (2,\frac{t(\zeta)(N+1)-2}{N-t(\zeta)})$ and a real number $C(B,\overline{r},M,b,\zeta)$  independent of $R$  such that for every  $t\in(t(\zeta),2), \overline{r}\in (2,\overline{r}(\zeta))$ we have
 \begin{equation} 
 |v(x)| \le  C(B,r,\overline{r},M,b,z,\zeta)|x-z|^{2-N-\zeta} \hk\forall~x\in \Omega\setminus B(z,R), 
 \label{281bzd}
 \end{equation}
  \begin{equation}||\nabla v||_{L^{2}_{b}(\Omega\setminus B(z,R))}\le C(B,r,\overline{r},M,b,\zeta)R^{1-N-\zeta-\nu}.\label{le2460bzzza}
 \end{equation}
 \label{lem62xz}
 \end{proposition}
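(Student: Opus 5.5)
For $(i)$ I will apply the Riesz representation theorem (equivalently Lax--Milgram) in the Hilbert space $(W_{B,A}(\Omega),\|\cdot\|_{B})$. Condition \eqref{c9} makes $\|\cdot\|_{B}$ an equivalent norm, and $<\cdot,\cdot>$ is exactly its inner product. The functional $\varphi\mapsto\int_{\Omega}a\varphi\,dx$ is bounded, since by Hölder and \eqref{cb},
\[
\Bigl|\int_{\Omega}a\varphi\,dx\Bigr|\le \|a\|_{L^{\infty}(\Omega)}|\Omega|^{\frac{r-1}{r}}\|\varphi\|_{L^{r}(\Omega)}\le \|a\|_{L^{\infty}(\Omega)}|\Omega|^{\frac{r-1}{r}}C(r,\Omega,A)\nu\|\varphi\|_{B}.
\]
This gives existence and uniqueness of $v$.

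For $(ii)$ the bound must depend only on $\|a\|_{L^{1}}\le M$, so the energy estimate above cannot be used. I will run a Stampacchia/Boccardo--Gallouët truncation argument. The test functions are $T_{k}(v)=\max\{-k,\min\{k,v\}\}$ and $\varphi=T_{k+1}(v)-T_{k}(v)$, both admissible in $W_{B,A}(\Omega)$ by Lemma \ref{w1bb}. Testing with $T_{k}(v)$ gives $\int_{\{|v|<k\}}|\nabla v|^{2}b\le kM$. Testing with the difference of truncations gives $\int_{\{k\le|v|<k+1\}}|\nabla v|^{2}b\le M$. These bounds, combined with \eqref{cb} applied to $T_{k}(v)$ or to $(|v|-k)^{+}$ truncated, give the level-set decay $|\{|v|>k\}|\le (C\,kM)^{r/2}k^{-r}=C'k^{-r/2}$. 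Hence $v$ lies in the Marcinkiewicz space $L^{r/2,\infty}(\Omega)$, which embeds into $L^{\alpha}(\Omega)$ for every $\alpha<r/2$ because $\Omega$ is bounded. The resulting constant depends only on $\Omega,M,r,B,\nu,\alpha$.

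For $(iii)$, $v$ satisfies the homogeneous equation in $\Omega\setminus B(z,\frac12 R)$, so Proposition \ref{pro46b} applies with $k=1$, $z_{1}=z$ and $a_{0}=a_{1}=a_{2}=0$ there. I take $\gamma=\alpha$ close to $r/2$. Then \eqref{le4460bz} (or \eqref{le4460bzb}) together with Remark \ref{rr} and $(ii)$ gives
\[
\|v\|_{L^{\infty}(\Omega\setminus B(z,R))}\le C\,R^{-\frac{r\overline{r}}{\gamma(r-\overline{r})}}.
\]
The exponent $\frac{r\overline{r}}{\gamma(r-\overline{r})}$ with $\gamma\approx r/2$ is about $\frac{2\overline{r}}{r-\overline{r}}$.

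\textbf{Main obstacle.} This exponent must be made close to $N-2$, and the proof has to choose $t(\zeta)$ and $\overline{r}(\zeta)$ so that it is at most $N-2+\zeta$. As $t\to2$ we have $r\to\frac{2N}{N-2}$. For $\overline{r}$ near $2$, $\frac{2\overline{r}}{r-\overline{r}}\to\frac{4}{r-2}\to N-2$. Using this, I will choose $t(\zeta)$ close to $2$ and $\overline{r}(\zeta)$ close to $2$, together with $\gamma$ close to $r/2$, to ensure $\frac{r\overline{r}}{\gamma(r-\overline{r})}\le N-2+\zeta$. Taking $R=|x-z|$ (with $R$ replaced by a comparable radius) then yields \eqref{281bzd}.

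Finally, \eqref{le2460bzzza} follows from \eqref{le4460bzzz} with the same choices: it gives an extra factor $R^{-1}$, and the exponent slack is absorbed into $\nu$. I expect the delicate points to be the exponent bookkeeping and checking that all constants in Proposition \ref{pro46b} stay independent of $R$.
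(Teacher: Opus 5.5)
There is a gap in $(iii)$; apart from it, your $(iii)$ follows the paper's route. You apply Proposition \ref{pro46b} with $\gamma=\alpha$ close to $r/2$. That proposition, however, requires $\gamma\in(\overline r(\frac23-2\delta),\infty)$, and Step 2 of its proof actually uses $\gamma/\overline r>\frac23-\delta$, with $\delta=10^{-4}$ from Lemma \ref{lemfs1b}. You never check this hypothesis, and it is not automatic. Since $r<\frac{2N}{N-2}$, your $\gamma$ satisfies $\gamma<\frac r2<\frac N{N-2}$, so for $N\ge4$ you are necessarily in the regime $\gamma<\overline r$ of \eqref{le4460bzb}. On the other hand $\overline r>2$ puts the threshold above $\frac43-4\delta$. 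The two requirements can therefore hold simultaneously, with $t$ and $\overline r$ near $2$, only if $\frac N{N-2}>\frac43-4\delta$, i.e.\ $N\le 8$. For $N=9$ one has $\frac97<\frac43-4\delta$, so Proposition \ref{pro46b} cannot be invoked at all.

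This compatibility check is exactly \eqref{ac0}--\eqref{ac0b} in the paper. It is the source of the restriction $N\in\{2,\dots,8\}$ in \eqref{ac2} and in everything downstream, and it is why Section \ref{aux} builds auxiliary functions for $s\in(\frac12,1]$ with a margin $\delta$ below $\frac23$. You must therefore choose $t(\zeta)$, $\overline r(\zeta)$ and $\gamma$ so that both $\frac{r\overline r}{\gamma(r-\overline r)}<N-2+\zeta$ and $\overline r(\frac23-2\delta)<\gamma<\frac r2$ hold, and state that this is possible only for $N\le 8$. As written, your argument would give \eqref{281bzd} in every dimension, which the method does not deliver.

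Your $(ii)$ is a valid alternative to the paper's argument. The paper first takes $a\ge0$ and shows $v\ge0$ by testing with $v$ and $|v|$. It then uses the test function $\frac2y-\frac1{v+\epsilon}$ with $\epsilon=\frac y2$, so that $\log\frac{v+\epsilon}{y/2}$ has energy at most $2M/y$. Then \eqref{cb} yields $s^{r/2}|\{v>s\}|\le C$, and signed $a$ is handled by splitting $a=a^+-a^-$. Your test with $T_k(v)$ alone already gives $\|T_k(v)\|_{L^r}^2\le C(r,\Omega,A)^2kM$, hence $|\{|v|\ge k\}|\le (C(r,\Omega,A)^2M)^{r/2}k^{-r/2}$, directly for signed $a$. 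The test with $T_{k+1}(v)-T_k(v)$ is unnecessary. One caveat: $T_k$ is only Lipschitz, and Lemma \ref{w1bb} covers only $v^{\pm}$ and $|v|$. Admissibility of $T_k(v)$ therefore needs the $C^1$-approximation argument from the proof of Lemma \ref{w1bb}$(i)$ (via Lemma \ref{2w1ba}), together with $\nabla v=0$ a.e.\ on $\{|v|=k\}$.
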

  \begin{proof} ~$(i)$~~First we assume $a\ge 0$.  Put
\[<u,w>= \int_{\Omega}\frac{\partial u}{\partial x_{i}} \frac{\partial w}{\partial x_{j}}b^{ij}dx\hh\forall~u, w\in W_{B,A}(\Omega),\]
\[T(\phi)=\int_{\Omega}a\varphi dx\hh\forall~\varphi\in W_{B,A}(\Omega).\]\hk
 Thus $<.,.>$ is the inner product in $W_{B,A}(\Omega)$. We have
\[|T(\varphi)|\le ||a||_{L^{\infty}(\Omega)}\int_{\Omega}|\varphi| dx\leb{H\ddot{o}lder} ||a||_{L^{\infty}(\Omega)}|\Omega|^{\frac{r-1}{r}}\{\int_{\Omega}|\varphi|^{r}dx\}^{\frac{1}{r}} 
\]
\[\leb{\eqref{cb}} C(r,\Omega,A)||a||_{L^{\infty}(\Omega)}|\Omega|^{\frac{r-1}{r}}||\varphi||_{B}\hk\forall~\varphi\in W_{B,A}(\Omega),
\]\hk
 Thus  $T$ is a continuous linear mapping on $W_{B,A}(\Omega)$. By Riesz's theorem, there is an unique solution $v$ in $W_{B,A}(\Omega)$ of the following equation
\begin{equation}\int_{\Omega} \frac{\partial v}{\partial x_{i}}\frac{\partial \varphi}{\partial x_{j}} b^{ij}dx= \int_{\Omega}a\varphi dx\hh\forall~\varphi\in W_{B,A}(\Omega).
 \label{z74} 
  \end{equation}\hk
 $(ii)$~~By Lemma \ref{w1bb},  $|v|$ is in $W_{B,A}(\Omega)$ and 
 \begin{equation}\displaystyle \frac{\partial |v|}{\partial x_{i}}= sign(v)\frac{\partial v}{\partial x_{i}}\hh\forall~i\in \{1,\cdots, N\}.
 \label{vv}
 \end{equation}\hk
  Inserting $v$ and $|v|$ as  test functions in \eqref{z74}, we get
\[  < |v|, |v|>\hspace{-.1in}\leb{\eqref{vv}}\hspace{-.1in} < v, v> =\hspace{-.08in} \int_{\Omega}av dx\le \hspace{-.08in}\int_{\Omega}a|v| dx \hspace{-.1in}\eqb{\eqref{z74}}\hspace{-.1in}< v, |v|>=< |v|, v>. \]\hk
Thus 
\[  < |v|, |v|>\le < v, |v|>~ and~ {}^{\backprime\backprime}< v, v>\le < |v|, v>~or ~< |v|,- v> \le< v,-v>{}^{\prime\prime}. \]\hk
 Therefore
\[  < |v|-v, |v|>~\le 0~~ and~~<|v|-v, -v>~\le 0.\]\hk
It implies $ || |v|-v||_{B}\le 0$ or $v=|v|\ge 0$. \\\hk
   Since $v\ge 0$, by Lemma \ref{w1bb}, there is  a sequence of non-negative functions $\{v_{n}\}$  in $C^{1}(\Omega,A)$ such that $\{v_{n}\}$  converges to $v$ in $W_{B,A}(\Omega)$. Let $y$ be in $(0,\infty)$ and $\epsilon=\frac{1}{2}y$. Put
  \[\phi(z)= \frac{2}{y}-\frac{1}{z+\epsilon}\hh\forall~z\in (-\frac{1}{2}\epsilon,\infty).\]
  \hk 
  We have
  \[\phi'(z)=\frac{1}{(z+\epsilon)^{2}}\le \frac{4}{\epsilon^{2}}\hh\forall~z\in (-\frac{1}{2}\epsilon,\infty),\]
   \[|\phi''(z)|=|-\frac{2}{(z+\epsilon)^{3}}|\le \frac{16}{\epsilon^{3}}\hh\forall~z\in (-\frac{1}{2}\epsilon,\infty).\]\hk
    Thus $\phi\in C^{1}(-\frac{1}{2}\epsilon,\infty)$, $\phi'\in L^{\infty}(-\frac{1}{2}\epsilon,\infty)$, $\phi'$ is uniformly continuous on $(-\frac{1}{2}\epsilon,\infty)$ and $\phi(0)=0$.     
     By Lemma \ref{2w1ba}, $\varphi\equiv\phi\circ v$ is in $W_{B,A}(\Omega)$. Similarly  $w\equiv \psi\circ v$ is in $W_{B,A}(\Omega)$, where
\begin{equation}\psi(z)= \log(z+\epsilon)-\log(\frac{y}{2} )\hh\forall~z\in (-\frac{1}{2}\epsilon,\infty).
\label{zg0} 
\end{equation}\hk
      Put  $\Omega^{\xi}=\{x\in \Omega: v(x)>\xi\}$ for every $\xi\in(0,\infty)$.  We have 
 \begin{equation}      
    \varphi(x)\le \frac{2}{y}\hh\forall~x\in \Omega,    
 \label{zg3} 
\end{equation}      
\begin{equation}\frac{\partial \varphi }{\partial x_{j}} =\frac{\partial v }{\partial x_{j}}\frac{1}{ (v+\epsilon)^{2}},
\label{zg1} 
\end{equation}
\begin{equation}\frac{\partial w}{\partial x_{j}} = \frac{\partial v }{\partial x_{j}}\frac{1}{ v+\epsilon}.
\label{zg2} 
\end{equation}\hk
 Using $\varphi$ as a test function  in \eqref{281z},  we get
\[C(r,\Omega,A)^{-2}\{\int_{\Omega^{y}}|w|^{r}dx\}^{\frac{2}{r}} \leb{\eqref{cb}} \int_{\Omega}|\nabla w|^{2} bdx\eqb{\eqref{zg2}}\int_{\Omega}\frac{|\nabla v|^{2}}{ (v+\epsilon)^{2}} bdx\]
\[\leb{\eqref{c1}}\int_{\Omega}\frac{\partial v}{\partial x_{i}}\frac{\partial v}{\partial x_{j}}\frac{1}{ (v+\epsilon)^{2}} b^{ij}dx\eqb{\eqref{281z}}\int_{\Omega}a\varphi dx\le ||a||_{L^{1}(\Omega)}\frac{2}{y}\le M\frac{2}{y}, \]
or 
\begin{equation} \{\int_{\Omega^{y}}|w|^{r}dx\}^{\frac{2}{r}}\le MC(r,\Omega,A)^{2}\frac{2}{y}.\label{gam}
\end{equation}\hk
 Thus
\begin{equation}\{\int_{\Omega^{y}}\hspace{-.07in}\log^{r}(\frac{v}{y})dx\}^{\frac{2}{r}}\le\{\int_{\Omega^{y}}\hspace{-.07in}\log^{r}(\frac{v}{\frac{y}{2}})dx\}^{\frac{2}{r}}\le\{\int_{\Omega^{y}}\hspace{-.07in}\log^{r}(\frac{v+\epsilon}{\frac{y}{2}})dx\}^{\frac{2}{r}}
 \label{g81cz}
\end{equation}
\[\eqb{\eqref{zg0}} \{\int_{\Omega^{y}}|w|^{r}dx\}^{\frac{2}{r}}\leb{\eqref{gam}} MC(r,\Omega,A)^{2}\frac{2}{y}.\]\hk
Put $s=2y$ and $\lambda_{v}(\xi)=|\Omega^{\xi}|$ for every $\xi\in (0,\infty)$. We have
\[(\log 2)^{2}|\Omega^{s}|^{\frac{2}{r}}\leb{v(x)>s} \{\int_{\Omega^{s}}\log^{r}(2\frac{v}{s})dx\}^{\frac{2}{r}} \]
\[\leb{s>y} \{\int_{\Omega^{y}}\log^{r}(2\frac{v}{s})dx\}^{\frac{2}{r}}=\{\int_{\Omega^{y}}\hspace{-.15in}\log^{r}(\frac{v}{y})dx\}^{\frac{2}{r}}\leb{\eqref{g81cz}} M C(r,\Omega,A)^{2}\frac{4}{s}
\]
or
\begin{equation}s^{\frac{r}{2}}\lambda_{v}(s)\le  [4(\log 2)^{-2}M C(r,\Omega,A)^{2}]^{\frac{r}{2}}\hh\forall~s\in (0,\infty).
\label{zxz} 
\end{equation}\hk
 Let $\alpha\in (1,\frac{r}{2})$. Since $| \Omega|<\infty$, by Proposition 6.24 in \cite[p.198]{FO} and  \eqref{zxz} , we have
\[\int_{\Omega} |v|^{\alpha}dx = \alpha\int_{0}^{\infty} \hspace{-.12in}s^{\alpha-1}\lambda_{v}(s)ds\]
\[\le \alpha\int_{0}^{1}\hspace{-.12in} s^{\alpha-1}|\Omega|ds+\alpha[4(\log 2)^{-2}M C(r,\Omega,A)^{2}]^{\frac{r}{2}}\int_{1}^{\infty}\hspace{-.12in} s^{\alpha-1-\frac{r}{2}} ds\]
\[=|\Omega|+\frac{2}{r-2\alpha} [4(\log 2)^{-2}M C(r,\Omega,A)^{2}]^{\frac{r}{2}}=k_{\Omega,p,M,r},\]
where $k_{\Omega,p,M,r}=\{|\Omega|+\frac{2}{\gamma-2\alpha} [4(\log 2)^{-2}M C(r,\Omega,A)^{2}]^{\frac{r}{2}}\}^{\frac{1}{\alpha}}$.\\\hk
Thus we get \eqref{281az}. \\\hk
Now we consider the case in which $a$ can change the sign. Put $a_{1} =\max\{a,0\}$ and $a_{2} =\max\{-a,0\}$. Note that  $\max\{||a_{1}||_{L^{1}(\Omega)}, ||a_{2}||_{L^{1}(\Omega)}\}\le ||a||_{L^{1}(\Omega)}$. By the same techniques as above, we can find $v_{1}$ and $v_{2}$ in $W_{B,A}(\Omega)$ such that
 \[\int_{\Omega} \frac{\partial v_{k}}{\partial x_{i}}\frac{\partial \varphi}{\partial x_{j}} b^{ij}dx= \int_{\Omega}a_{k}\varphi dx\hh\forall~\varphi\in W_{B,A}(\Omega), k=1,2,
 \]
\[||v_{k}||_{L^{\alpha}_{b}(B_{R})}\le k_{\Omega,p,M,r}\hh\forall~\alpha\in (\frac{\overline{r}}{2},\frac{\gamma}{2}), k=1,2. \]\hk
 Put $v=v_{1}-v_{2}$. We have 
 \[\int_{\Omega} \frac{\partial v}{\partial x_{i}}\frac{\partial \varphi}{\partial x_{j}} b^{ij}dx= \int_{\Omega}a\varphi dx\hk\forall~\varphi\in W_{B,A}(\Omega)~, \]
\[||v||_{L^{p}_{b}(B_{R})}\le 2k_{\Omega,p,M,r}\hh\forall~p\in (\frac{\overline{r}}{2},\frac{r}{2})~. \]\hk
Thus we get $(ii)$. \\\hk
$(iii)$~~ By simple computations, we  have
\begin{equation}
\lim_{r\to\infty}
\frac{r}{2}=\infty>\frac{4}{3}\hh if ~N=2,
\label{ac0}
\end{equation}
\begin{equation}\lim_{r\to\frac{2N}{N-2}}\frac{r}{2}=\frac{N}{N-2}\ge\frac{4}{3}\hh\forall~N\in\{3,\cdots,8\},
\label{ac0b}
\end{equation}
  \[\lim_{r\to\infty}[\lim_{\gamma\to\frac{r}{2}}\{\lim_{\overline{r}\to 2}\frac{r\overline{r}}{\gamma(r-\overline{r})}\}]
 =\lim_{r\to\infty}[\frac{4)}{r-2}]= 0=N-2\hh if~~N=2,\]
 \[\lim_{r\to\frac{2N}{N-2}}[\lim_{\gamma\to\frac{r}{2}}\{\lim_{\overline{r}\to 2}\frac{r\overline{r}}{\gamma(r-\overline{r})}\}]= \lim_{r\to\frac{2N}{N-2}}[\frac{4}{r-2}]= N-2 \hh\forall~N\in\{3,\cdots,8\}.\]
\hk
 Since $\lim_{t\to 2}\frac{t(N+1)-2}{N-t}= 2^{\ast}$ and $r=\frac{t(N+1)-2}{N-t}$, we get 
  \[\lim_{t\to 2}[\lim_{\gamma\to\frac{\gamma}{2}}\{\lim_{\overline{r}\to 2}\frac{r\overline{r}}{\gamma(r-\overline{r})}\}]= \lim_{r\to\infty}[\lim_{\gamma\to\frac{r}{2}}\{\lim_{\overline{r}\to 2}\frac{r\overline{r}}{\gamma(r-\overline{r})}\}]= N-2\hk \forall~N\in\{2,3,\cdots,8\}.\]
\hk
 Thus we can choose $t(\zeta)\in (\frac{2N^{2}+2N-2}{N^{2}+2N-1},2)$, $\overline{r}(\zeta)\in (2,\frac{t_{\zeta}(N+1)-2}{N-t_{\zeta}})$ and $\gamma$ near $\frac{1}{2}\gamma$ such that 
 \begin{equation}\frac{r\overline{r}}{\gamma(r-\overline{r})}< \nu+N-2+\zeta\hh\forall~ t\in(t_{\zeta},2), \overline{r}\in (2, \overline{r}(\zeta)), N\in\{2,\cdots,8\},
 \label{ac2}
\end{equation}
\hk
By \eqref{ac0}-\eqref{ac2}, Proposition \ref{pro46b} and Remark \ref{rr},  we obtain \eqref{le2460bzzza} and
 \begin{equation} 
 |v(x)| \le  C(B,r,\overline{r},M,b,\zeta)R^{2-N-\zeta} \hk a.e~on~\Omega\setminus B(z,R), 
 \label{281bz}
\end{equation}\hk
  Let $i\in \NN$. Arguing as above with $2^{i}R$ instead of $R$, as in \eqref{281bz}, we get
 \[|v(x)| \le  C(B,r,\overline{r},M,b,\zeta)(2^{i}R)^{2-N-\zeta} \hh a.e~on~\Omega_{i}\equiv [\Omega\cap B(z,2^{i+1}R)]\setminus B(z,2^{i}R).\]\hk
Changing  values of $v$ in a measurable set $A_{i}\subset \Omega_{i}$ with null measure, we can (and shall) suppose
 \[|v(x)| \le  C(B,r,\overline{r},M,b,\zeta)(2^{i}R)^{2-N-\zeta}=C(B,r,\overline{r},M,b,\zeta)(\frac{|x-z|}{2^{i}R})^{N-2+\zeta}|x-z|^{2-N-\zeta}
 \]
 \[\le 2^{N-2+\zeta}C(B,r,\overline{r},M,b,\zeta)|x-z|^{2-N-\zeta} \hh \forall~x\in~\Omega_{i}, i=0,\cdots.\]\hk
 Thus, replacing $C(B,r,\overline{r},M,b,\zeta)$ by $2^{N-2+\zeta}C(B,r,\overline{r},M,b,\zeta)$, we get \eqref{281bzd}.
 Using \eqref{281bz} and arguing as in the proof of \eqref{le4460bzzzb}, we obtain \eqref{le2460bzzza}.
\end{proof}
 \section{Green functions}\label{gr}\hk
 \begin{definition} Let $\eta$ be a non-negative function in $C^{\infty}(\RR)$ with support contained in $[0,\frac{1}{2} ]$  and $\displaystyle \int_{\RR}\eta dx =1$.  Put
 \begin{equation}\nu(s)= 1-\int_{0}^{s}\eta(\xi)d\xi\hh\forall~s\in \RR,
  \label{psi1}
 \end{equation} 
 \begin{equation}\psi(x)=\alpha_{N}\nu(|x|)\hh\forall~x\in \RR^{N},
 \label{psi2}
 \end{equation}
 \begin{equation}\psi_{\rho}(x)=\rho^{-N}\psi(\rho^{-1}x)\hh\forall~\rho\in (0,1), x\in\RR^{N},
  \label{psi3}
 \end{equation}
 where $\displaystyle \alpha_{N}=[\int_{\RR^{N}}\nu(|x|)dx]^{-1}$.
 \label{psi}
 \end{definition}\hk
 Then 
  \begin{equation}\nu'(s)= -\eta(\xi)\le 0 \hh\forall~s\in \RR,
  \label{psi4}
 \end{equation} 
   \begin{equation}\psi_{\rho}\in C^{\infty}(\Omega)~and~ ||\psi_{\rho}||_{L^{1}(\Omega)}=1\hk\forall~\rho\in (0,1],
  \label{psi5}
 \end{equation}
  \begin{equation} supp(\psi_{\rho})\subset B(0,\rho)~and~ \psi_{\rho}(x)=\rho^{-N}\hk\forall~\rho\in (0,1), x\in B(0,\frac{1}{2}\rho),
  \label{psi6}
 \end{equation}
  \begin{equation}  \frac{\psi_{\rho}}{\partial x_{i}}(x)=\rho^{-N-1}\frac{x_{i}}{|x|}\nu'(\rho^{-1}x)\hk\forall~ x\in B(0,\rho), \rho\in (0,1], i\in\{1,\cdots,N\},
  \label{psi7}
 \end{equation}\hk 
   Let $B(z,2\rho)$ be a subset of $\Omega$. Put $a_{z,\rho}(x)= \psi_{\rho}(x-z)$ for  every $x$ in $\Omega$.    Applying \eqref{281bzd} and \eqref{281az}  in Proposition \ref{lem62xz} for $a=a_{z,\rho}$,  we get $v_{z,\rho}$ in $ W_{B,A}(\Omega)$ such that  
 \begin{equation}
 \int_{\Omega}\frac{\partial v_{z,\rho}}{\partial x_{i}}\frac{\partial \varphi}{\partial x_{j}}b^{ij}dx=\int_{\Omega}\psi_{\rho}(x-z)\varphi dx\hh\forall~\varphi\in W_{B,A}(\Omega)
 \label{gre}
 \end{equation} 
 Put $G_{\rho}(x,z)= v_{z,\rho}(x)$ for every $x$ in $\Omega$. We have following results
 \begin{lemma} Let $\delta$ be  as in Lemma \ref{lemfs1b},   $\zeta \in (0,\min\{\delta,\frac{1}{120}\})$,  $A$ be  admissible with respect to $\Omega$. Let $N$ be in $\{2,\cdots,8\}$,  $t(\zeta)$ and $\overline{r}(\zeta)$ be as in Proposition \ref{lem62xz}, $t\in (t(\zeta),2)$, $r=\frac{t(N+1)-2}{N-t}$, $\overline{r}\in (2,\overline{r}(\zeta))$. Let $z\in \Omega$ such that $B(z,4\rho_{z})\subset \Omega$ with $\rho_{z}\in (0,1)$. Assume    \eqref{c1}-\eqref{c4} and \eqref{c9} hold. Then there is a positive real number $C$ independent from $\rho$ and $\alpha >1$ such that
\begin{equation}
 0\le G_{\rho}(x,z) \le  C|x-z|^{-N+2-\zeta} \hk  a.e. ~x\in\Omega\setminus B'(z,2\rho), 
 \label{81bzb}
 \end{equation}
 \begin{equation}
 ||G_{\rho}(.,z)||_{L^{\alpha}(\Omega)} \le  C, 
 \label{81bzbc}
 \end{equation}
  \begin{equation}
\{\int_{\Omega\setminus B(z,s)}|\nabla  G_{\rho}(x,z)|^{2} bdx\}^{\frac{1}{2}}< Cs^{-N+1-\zeta}\hh if~~s> 2\rho
 \label{81c}
\end{equation}
 \label{lem62x}
\end{lemma}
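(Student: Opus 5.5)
The plan is to read Lemma \ref{lem62x} as Proposition \ref{lem62xz} applied to the right-hand side $a=a_{z,\rho}=\psi_{\rho}(\cdot-z)$, and to check that every constant produced there is independent of $\rho$. By \eqref{psi5} and \eqref{psi6}, $a_{z,\rho}$ is bounded, non-negative, supported in $B(z,\rho)$, and $\|a_{z,\rho}\|_{L^{1}(\Omega)}=1$. We may therefore take $M=1$ in Proposition \ref{lem62xz}, uniformly in $\rho\in(0,1)$. The bound $\|a\|_{L^{\infty}}\le\rho^{-N}$ is used only in part $(i)$, to get existence of $v_{z,\rho}$ via the Riesz representation theorem. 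It never enters the quantitative estimates, which depend only on $M$.

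\emph{Step 1 (non-negativity).} The proof of Proposition \ref{lem62xz}$(ii)$ tests \eqref{gre} with $v$ and $|v|$ and shows that $v=|v|$ whenever $a\ge0$. Since $\psi_{\rho}\ge0$, this gives $G_{\rho}(\cdot,z)\ge0$ a.e., which is the lower bound in \eqref{81bzb}.

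\emph{Step 2 (the $L^{\alpha}$ bound \eqref{81bzbc}).} Fix some $\alpha\in(1,\frac{r}{2})$. This interval is non-empty since $r>2$. Then \eqref{281az} gives $\|G_{\rho}(\cdot,z)\|_{L^{\alpha}(\Omega)}\le k_{\Omega,1,r,B,\alpha}$. This constant comes from the distribution-function estimate \eqref{zxz}, which involves only $M$, $C(r,\Omega,A)$ and $|\Omega|$, so it does not depend on $\rho$.

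\emph{Step 3 (pointwise and gradient decay).} Since $a_{z,\rho}$ vanishes outside $B(z,\rho)=B(z,\frac{1}{2}R)$ with $R=2\rho$, part $(iii)$ of Proposition \ref{lem62xz} applies with this $R$, and any larger $R$ is also allowed. Formula \eqref{281bzd} then yields $G_{\rho}(x,z)\le C|x-z|^{2-N-\zeta}$ for a.e. $x\in\Omega\setminus B(z,2\rho)$, with $C$ independent of $R$ and hence of $\rho$. Because the sphere $\partial B(z,2\rho)$ has measure zero, this is the same as the statement on $\Omega\setminus B'(z,2\rho)$. For \eqref{81c}, given $s>2\rho$ we apply \eqref{le2460bzzza} with $R=s$, which is legitimate because $a$ also vanishes outside $B(z,\frac{1}{2}s)$. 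This gives a bound of the form $Cs^{1-N-\zeta-\nu}$.

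The main obstacle is this last exponent. The statement asks for $s^{-N+1-\zeta}$, while Proposition \ref{lem62xz} carries the extra $-\nu$, and the paper uses $\nu$ both as the constant in \eqref{c9} and as the exponent slack in \eqref{ac2}. To remove it, I would redo Step 4 of Proposition \ref{pro46b} directly, using the cutoff $\overline{\eta}$ with $|\nabla\overline{\eta}|\le 8NR^{-1}$ supported in the annulus $B_{R}\setminus B_{R/2}$. That argument bounds $\|\nabla v\|_{L^{2}_{b}(\Omega_{R})}$ by $CR^{-1}\|v\|_{L^{\infty}(\Omega_{R/2})}$ times $\|\overline{b}b^{-\frac{1}{2}}\|_{L^{2}}$. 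Inserting the sup bound from Step 3 at radius $R/2$, which is $\le C(R/2)^{2-N-\zeta}$, gives exactly $CR^{1-N-\zeta}$, since the exponent is fixed by \eqref{ac2} with $\zeta$ absorbing the slack. If this cannot be matched exactly, I would shrink $\zeta$ slightly using the freedom in choosing $t(\zeta)$ and $\overline{r}(\zeta)$, which is harmless because $\zeta<\min\{\delta,\frac{1}{120}\}$. I would also check that the restriction $N\le8$ and the condition $\gamma>\overline{r}(\frac{2}{3}-2\delta)$ needed for \eqref{cc9} are satisfied by the choice of $\gamma$ near $\frac{r}{2}$. This holds because $\frac{r}{2}>\frac{2}{3}\overline{r}$ once $\overline{r}$ is close to $2$.
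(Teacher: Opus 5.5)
Your argument is the paper's. Lemma \ref{lem62x} is stated there without a separate proof and is read off from Proposition \ref{lem62xz} with $a=a_{z,\rho}$ and $M=\|\psi_\rho\|_{L^1(\Omega)}=1$: positivity comes from the proof of $(ii)$, \eqref{81bzbc} from \eqref{281az}, and \eqref{81bzb} from \eqref{281bzd} with $R=2\rho$. You are right that \eqref{le2460bzzza} as printed only gives $s^{1-N-\zeta-\nu}$. That $\nu$, like the one in \eqref{ac2}, is a misprint: the iterated limit in \eqref{ac2} is $N-2<N-2+\zeta$, so $\nu$ can simply be dropped. Your Caccioppoli re-derivation from the pointwise bound is exactly what the paper means by ``arguing as in the proof of \eqref{le4460bzzzb}''. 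Your fallback of shrinking $\zeta$ would not have worked, since \eqref{c9} together with \eqref{i3b} forces $\nu\ge 1$.

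That Caccioppoli step has one hole, which the paper's Step 4 of Proposition \ref{pro46b} shares. It needs $\sup|v_{z,\rho}|$ on the annulus $B(z,s)\setminus B(z,\frac{s}{2})$. However, \eqref{281bzd}, obtained with $R=2\rho$, controls $v_{z,\rho}$ only outside $B(z,2\rho)$. Applying Proposition \ref{lem62xz}$(iii)$ with $R=\frac{s}{2}$ would require $\psi_\rho(\cdot-z)=0$ outside $B(z,\frac{s}{4})$. So your argument proves \eqref{81c} only for $s\ge 4\rho$. The interior bound \eqref{st2} cannot be used here, because it is derived later from this very lemma. For $2\rho<s<4\rho$, use the global energy estimate instead: test \eqref{gre} with $v_{z,\rho}$ and use \eqref{cb} to get
\[
\|v_{z,\rho}\|_b^{2}\le\|v_{z,\rho}\|_B^{2}\le C(r,\Omega,A)\,\|\psi_\rho\|_{L^{\frac{r}{r-1}}(\Omega)}\,\|v_{z,\rho}\|_b,\qquad \|\psi_\rho\|_{L^{\frac{r}{r-1}}}=\rho^{-\frac{N}{r}}\|\psi\|_{L^{\frac{r}{r-1}}}.
\]
On that range $\rho^{-N/r}\le C s^{1-N-\zeta}$, because $\rho>\frac{s}{4}$ and $\frac{N}{r}<\frac{N}{2}\le N-1$.

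A smaller slip: for $N=8$ your justification $\frac{r}{2}>\frac{2}{3}\overline{r}$ is false, since $r<\frac{8}{3}$ there. What is actually needed is $\frac{r}{2}>\overline{r}(\frac{2}{3}-\delta)$, and this does hold for $t$ and $\overline{r}$ close to $2$.
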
\hk
Now we get the Green function as follows.
 \begin{proposition}[Green function]  Let    $\zeta \in (0,1)$,  $A$ be  admissible with respect to $\Omega$. Let $N$ be $in \{2,\cdots,8\}$,  $t(\zeta)$ and $\overline{r}(\zeta)$ be as in Proposition \ref{lem62xz}, $t\in (t(\zeta),2)$, $r=\frac{t(N+1)-2}{N-t}$, $\overline{r}\in (2,\overline{r}(\zeta))$. Let $z\in \Omega$ such that $B(z,4\rho_{z})\subset \Omega$ with $\rho_{z}\in (0,1)$. Assume    \eqref{c1}-\eqref{c4} hold. Then there is a function $G$ on $\Omega\times \Omega$ having following properties.\\\hk
$(i)$~ There is a sequence $\{\rho_{m}\}\subset (0,
\rho_{z})$ converging to $0$ having  following properties: there is a subset $A_{z}$ of $\Omega$ for every $z$ in $\Omega$  such that $|A_{z}|=0$ and  $\{v_{z,\rho_{m}}(x)\}$  converges to $G(x,z)$  for every $x$ in $\Omega\setminus A_{z}$.\\\hk
$(ii)$~ ~~There is $\overline{C}(B,r,\overline{r},\zeta,\Omega)$  such that $\overline{C}(B,r,\overline{r},\zeta,\Omega)$ is  independent of $\rho$ and 
 \begin{equation} 
 0\le G(x,z) \le  \overline{C}(B,r,\overline{r},\zeta,\Omega)|x-z|^{-N+2-\zeta}, 
 \label{81bzz}
\end{equation}
\hh\hh a.e.~$x\in\Omega\setminus\{z\}~,~if~ \overline{r }\in (2,\overline{r}(\zeta))~and~  r \in (r(\zeta),2^{\ast})$.\\\hk
$(iii)$  Let  $u\in W_{B,A}(\Omega)$ and $f\in L^{1}(\Omega)$ such that
\begin{equation}
\int_{\Omega} \frac{\partial u}{\partial x_{i}} \frac{\partial \varphi}{\partial x_{j}} b^{ij}dx=\int_{\Omega}f\varphi dx\hh\forall~\varphi\in W_{B, A}(\Omega).
\label{834}
\end{equation}\hk
Then 
\begin{equation}\int_{\Omega}G(x,y)f(x)dx= u(y)\hh a.e.~y\in \Omega.
\label{834b}
\end{equation}\hk
$(iv)$~~$u\in L^{\frac{sN}{N-s(2-\zeta)}}(\Omega)$ if $f\in L^{s}(\Omega)$ with a $s\in[1,\frac{N}{2-\zeta})$.\\\hk
$(v)$~~$u\in L^{q}(\Omega)$ for  every $q$ in $[1,\infty)$ if $f\in L^{s}(\Omega)$ with some $s\in[\frac{N}{2-\zeta},\infty)$.
\label{pro62}
\end{proposition}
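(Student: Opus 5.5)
The plan is to obtain $G(\cdot,z)$ as an a.e. limit of the approximate Green functions $G_{\rho}(\cdot,z)=v_{z,\rho}$, using the uniform bounds of Lemma \ref{lem62x}, and then to pass to the limit in the identity obtained by testing \eqref{gre} against $u$ and \eqref{834} against $v_{z,\rho}$.

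\textbf{Step 1: compactness and (i), (ii).} Fix $z$. By \eqref{81bzbc}, $\{G_{\rho}(\cdot,z)\}_{\rho}$ is bounded in $L^{\alpha}(\Omega)$ with $\alpha>1$. By \eqref{81c}, it is bounded in $W_{b}$-energy on $\Omega\setminus B(z,s)$ for every $s>0$. Using \eqref{c3} and H\"older as in Lemma \ref{5z}, the gradients are bounded in $L^{1}(\Omega\setminus B(z,s))$, so the family is bounded in $W^{1,1}$ there. Rellich compactness on an exhausting sequence of sets $\Omega\setminus B(z,1/k)$ (locally, away from $\partial\Omega$ if necessary), together with a diagonal argument, gives $\rho_{m}\to0$ and a null set $A_{z}$ such that $v_{z,\rho_{m}}\to G(\cdot,z)$ on $\Omega\setminus A_{z}$. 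This is (i). Passing to the limit in \eqref{81bzb}, which holds once $2\rho_{m}<|x-z|$, gives \eqref{81bzz}, i.e. (ii).

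\textbf{Step 2: the representation (iii).} Take $\varphi=u$ in \eqref{gre} and $\varphi=v_{z,\rho}$ in \eqref{834}. Symmetry of $b^{ij}$ then gives
\[
\int_{\Omega}\psi_{\rho}(x-z)u(x)\,dx=\int_{\Omega}f(x)\,v_{z,\rho}(x)\,dx.
\]
Testing \eqref{834} with $v_{z,\rho}$ requires $fv_{z,\rho}\in L^{1}$. I would first treat $f\in L^{\infty}$, where this is clear from \eqref{281az}, and pass to general $f\in L^{1}$ by truncation at the end. The left side tends to $u(z)$ at every Lebesgue point of $u$, by \eqref{psi5}--\eqref{psi6}.

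For the right side, split $\Omega$ into $B(z,s)$ and its complement. On the complement, $|v_{z,\rho_{m}}|\le Cs^{2-N-\zeta}$ and the functions converge a.e., so dominated convergence applies. On $B(z,s)$ I need uniform integrability of $fv_{z,\rho}$, which is the main obstacle, since $f$ is only $L^{1}$ while $v_{z,\rho}$ blows up near $z$. The route I would try is to exchange the roles of the variables: the bound \eqref{81bzb} holds uniformly in $z$, so I would prove
\[
\int f(x)G_{\rho}(x,y)\,dx\;\to\;\int f(x)G(x,y)\,dx
\]
in $L^{1}(dy)$ via Fubini. This reduces to the uniform estimate $\int_{\Omega}|x-y|^{2-N-\zeta}\,dy\le C$, since $\zeta<1$ keeps the kernel integrable. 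Near the diagonal I would control the contribution using \eqref{81bzbc}, which is uniform in $\rho$. That yields \eqref{834b} for a.e. $y$, and the truncation argument extends it to $f\in L^{1}$, using that $u$ depends continuously on $f$ in $L^{1}$ through the same kernel bound.

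\textbf{Step 3: integrability, (iv) and (v).} With \eqref{834b} and \eqref{81bzz} in hand,
\[
|u(y)|\le \overline{C}\int_{\Omega}|f(x)|\,|x-y|^{-(N-(2-\zeta))}\,dx,
\]
which is a Riesz potential of order $2-\zeta$. The Hardy--Littlewood--Sobolev inequality \cite{LP} maps $L^{s}$ into $L^{sN/(N-s(2-\zeta))}$ for $1<s<N/(2-\zeta)$, giving (iv); the endpoint $s=1$ follows from the weak-type bound combined with the boundedness of $\Omega$. For $s\ge N/(2-\zeta)$, I would apply (iv) with $s'<N/(2-\zeta)$ arbitrarily close to $N/(2-\zeta)$, using that $\Omega$ is bounded, so $f\in L^{s'}$. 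The resulting exponents $s'N/(N-s'(2-\zeta))$ tend to infinity, which gives (v).
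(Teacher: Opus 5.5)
Your Steps 1 and 3 (for $1<s<N/(2-\zeta)$ and for (v)) match the paper. For bounded $f$, your near/far split is the paper's Step 1. Off $B(z,2\rho)$ you use dominated convergence via \eqref{81bzb}. On $B(z,2\rho)$, H\"older and the $\rho$-uniform bound \eqref{81bzbc} give $\int_{B(z,2\rho)}|f\,v_{z,\rho}|\le C\|f\|_{L^{\infty}}|B(z,2\rho)|^{1-1/\alpha}\to 0$, so no exchange of variables is needed there.

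\textbf{The gap: passing to $f\in L^{1}$.} Three points fail.
\begin{itemize}
\item The bound \eqref{81bzbc} is an $L^{\alpha}$ bound in $x$ for a \emph{fixed} pole. It therefore controls the near-diagonal term only when $f\in L^{\alpha/(\alpha-1)}$.
\item After your Fubini exchange you would instead need integrability of $G_{\rho}(x,\cdot)$ in the pole variable, uniformly in $x$. Lemma~\ref{lem62x} does not provide this. The $L^{1}(dy)$ formulation has a further problem: $G(\cdot,y)$ is produced along a subsequence that depends on $y$, so neither joint measurability nor convergence along one sequence is available. The paper avoids this by working pointwise in the pole.
\item Your fallback, truncating $f_{k}\to f$, needs $u_{k}\to u$ in $L^{1}$, where $u_{k}\in W_{B,A}(\Omega)$ solves the equation with datum $f_{k}$. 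Deriving this ``through the same kernel bound'' is circular, because it presupposes \eqref{834b} for the $L^{1}$ datum $f-f_{k}$.
\end{itemize}

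\textbf{The missing idea.} The paper uses a bootstrap (Step 2 of its proof of (iii)). Once \eqref{834b} is known for bounded data, apply it to $u=v_{z,\rho}$ and $f=a_{z,\rho}$ themselves. Since $0\le a_{z,\rho}\le M_{1}\rho^{-N}\chi_{B(z,\rho)}$, for $y\in B(z,2\rho)$ you get from \eqref{81bzz}
\[
v_{z,\rho}(y)=\int_{\Omega}G(x,y)a_{z,\rho}(x)\,dx\le \overline{C}M_{1}\rho^{-N}\int_{B(y,3\rho)}|x-y|^{2-N-\zeta}\,dx\le C\rho^{2-N-\zeta}\le C'|y-z|^{2-N-\zeta}.
\]
Together with \eqref{81bzb}, this yields a global, $\rho$-uniform bound $0\le v_{z,\rho_{m}}(x)\le C_{1}|x-z|^{2-N-\zeta}$, valid also inside $B(z,2\rho_{m})$. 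Now take any $f\in L^{1}(\Omega)$ and any $z$ with $\int_{\Omega}|f(x)||x-z|^{2-N-\zeta}\,dx<\infty$; this holds for a.e.\ $z$ by Tonelli. Dominated convergence then gives $\int f\,v_{z,\rho_{m}}\to\int f\,G(\cdot,z)$ directly, with no truncation.

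If you prefer to keep truncation, you must prove the $L^{1}$ stability independently, for instance by a Stampacchia-type argument:
\begin{itemize}
\item Test the equation for $w=u-u_{k}$ with the truncation $T_{s}(w)$. This gives $\|T_{s}w\|_{B}^{2}\le s\|f-f_{k}\|_{L^{1}}$.
\item By \eqref{cb}, as in the proof of \eqref{281az}, this yields $|\{|w|>s\}|\le C(\|f-f_{k}\|_{L^{1}}/s)^{r/2}$.
\item Integrating in $s$, using $r>2$, gives $\|w\|_{L^{1}}\le C\|f-f_{k}\|_{L^{1}}$.
\end{itemize}

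\textbf{The endpoint in (iv).} Your claim at $s=1$ is wrong. The weak-type $(1,\frac{N}{N-2+\zeta})$ bound only gives weak $L^{N/(N-2+\zeta)}$, and on a bounded set weak $L^{p}$ embeds in $L^{q}$ only for $q<p$. The paper's Lemma~\ref{Riez} is likewise stated only for $p>1$, so the Riesz-potential argument does not cover $s=1$ in either version.
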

\begin{proof} $(i)$~~Let $\{\rho_{m}\}$ be a strictly decreasing sequence in $(0,\frac{1}{2}\rho_{z})$ which converges to $0$. We have
\[ \int_{\Omega\setminus B(z,\rho_{m})}\hspace{-.5in}|\nabla v_{z,\rho}|dx \le  ||b^{-1}||_{L^{1}(\Omega)}^{\frac{1}{2}} \{\int_{\Omega\setminus B(z,\frac{1}{2}\rho_{z})}\hspace{-.5in}|\nabla v_{z,\rho}|^{2}bdx\}^{\frac{1}{2}}\leb{\eqref{81c}}C(b,r,\overline{r},\Omega)s^{\frac{2-N-\zeta}{2}}\hk\forall~m\in\NN.
\]
By \eqref{81bzb} and \eqref{81bzbc}, it implies  $\{ v_{z,\rho_{m}}\}_{\rho_{m}<\frac{1}{2}\rho_{z}}$ is bounded in $W^{1,1}(\Omega\setminus B(z,\frac{1}{2}\rho_{z}))$. By  Rellich–Kondrachov's Theorem and Theorem 4.9 in \cite[p.285, p.94]{BR}, we can find a subsequence $\{\rho_{m,1}\}$ of $\{\rho_{m}\}$  and $A_{1} \subset \Omega$ with $|A_{1}|=0$  such that $\{v_{z,\rho_{m,1}}|_{\Omega\setminus B(z,\frac{1}{2}\rho_{z})}\}$  converges  pointwise  on $(\Omega\setminus B(z,\frac{1}{2}\rho_{z}))\setminus A_{1}$. By mathematical induction, we can find  sequences $\{\{\rho_{m,k}\}\}_{k}$ and $\{A_{k}\}$ such that $\{\rho_{m,k+1}\}$ is a subsequence of $\{\rho_{m,k}\}$, $A_{k} \subset \Omega$ with $|A_{k}|=0$, and $\{v_{z,\rho_{m,k}}|_{\Omega\setminus B(z,2^{-k}\rho_{z})}\}$  converges  pointwise  on $(\Omega\setminus B(z,2^{-k}\rho_{z}))\setminus A_{k}$ for every $k$ in $\NN$.\\\hk
Now using the diagonal sequence technique, we can find a subsequence $\{\rho_{m_{k}}\}$ of $\{\rho_{m}\}$  and $A_{z} \subset \Omega$ with $|A_{z}|=0$  such that $\{v_{z,\rho_{m_{k}}}|_{\Omega\setminus B(z,2^{-n}\rho_{z})}\}$  converges  pointwise  on $(\Omega\setminus B(z,2^{-n}\rho_{z}))\setminus A_{z}$ for every $n$ in $\NN$. Thus, $\{v_{z,\rho_{m_{k}}}\}$ converges  pointwise  on $\Omega\setminus (A_{z}\cup \{z\})$. Put
\[G(x,z)= \lim_{k\to \infty} v_{z,\rho_{m_{k}}}(x)\hh\forall~x\in \Omega\setminus (A_{z}\cup \{z\}).\]\hk
$(ii)$~~By \eqref{81bzb}, we get \eqref{81bzz}.  \\\hk
$(iii)$~~ We prove $(iii)$ by following steps.\\\hk
{\bf Step 1.} ~Put $v_{1,m_{k}}=v_{z,\rho_{m_{k}}}\chi_{\Omega\setminus B(z,2\rho_{m_{k}})}$ and $v_{2,m_{k}}=v_{z,\rho_{m_{k}}}\chi_{B(z,2\rho_{m_{k}})}$ for every $k\in\NN$.  Then there is a positive real number $C$ such that
\begin{equation}
0\le v_{1,m_{k}}(x)\leb{\eqref{81bzb}} C|x-z|^{-N+2-\zeta}\hk                             ~a.e. ~x\in \Omega\setminus \{z\}, \forall~k\in\NN.
\label{rm1}
\end{equation}\hk
 Let $\alpha$ be as in Lemma \ref{lem62x}, we have
\begin{equation}
||v_{2,m_{k}}||_{L^{\alpha}(\Omega)}\leb{\eqref{81bzbc}} C\hh\forall~k\in\NN,
\label{rm2}
\end{equation}\hk
 Let $f \in L^{\infty}(\Omega)$ and $M \in (0,\infty)$ such that $\Omega \subset B(0,M)$. Put 
\begin{equation} \overline{f}(x) = \left\{ {\begin{array}{*{20}l}
 f(x) & {} & {\forall~ x \in \Omega,}  \vspace{.1in}\\
      0\hh& {} & {\forall~ \xi \in \RR^{N}\setminus\Omega,}  \\
\end{array}} \right.
\label{834bb}
\end{equation}
\[g(x)= \overline{C}(B,r,\overline{r},\zeta,\Omega)\chi_{B(0,2M)\setminus \{0\}}(x)|x|^{-N+2-\zeta}\hh\forall~x\in \RR^{N}.\]\hk
 Using the polar coordinates with the pole at $0$, we have
 \begin{equation}\int_{\RR^{N}}g(x)dx =\int_{B(0,2M)}g(x)dx = \overline{C}(B,r,\overline{r},\zeta,\Omega)|\Sigma_{N-1}| \int_{0}^{2M}t^{1-\zeta}dt
  \label{834c1}
\end{equation}
  \[=\overline{C}(B,r,\overline{r},\zeta,\Omega)\frac{|\Sigma_{N-1}|}{2-\zeta}(2M)^{2-\zeta}<\infty. \]
   \hk Thus, by Young's Theorem in \cite[Theorem 4.15, p. 104]{BR},
\[\int_{\RR^{N}}\int_{\RR^{N}}g(z-y)|\overline{f}|(y)dydz\leb{\eqref{834bb},\eqref{834c1}} ||g||_{L^{1}(\RR^{N})}||\overline{f}||_{L^{1}(\RR^{N})} <\infty.\]\hk
 By Fubini's Theorem \cite[Theorem 4.5, p.91]{BR}, there is  a measurable subset $X$ of $\Omega$ such that $|X|=0$ and 
 \begin{equation}
\int_{\RR^{N}}g(z-y)|\overline{f}|(y)dy < \infty \hh\forall~z\in \Omega\setminus X.
\label{8834b}
\end{equation}
 By \eqref{81bzb}, $|v_{1,m_{k}}(y)| f(y)|\le g(z-y)|f(y)|$  a.e. $y$ on $\Omega\setminus\{z\}$ for every $z\in \Omega$. By Lebesgue's Dominated Convergence theorem and  $(i)$, it implies 
\begin{equation}
\int_{\Omega}G(y,z)f(y)dy =\lim_{k\to\infty}\int_{\Omega}v_{1,m_{k}}(y)f(y)dy  \hh\forall~z\in \Omega\setminus X.
\label{8834c}
\end{equation}\hk
On the other hand, we have
\begin{equation}
\lim_{k\to\infty}\int_{\Omega}\hspace{-.05in}v_{2,m_{k}}fdy \hspace{-.04in} \leb{H\ddot{o}lder} \hspace{-.04in} ||f||_{L^{\infty}(\Omega)}\lim_{k\to\infty}\hspace{-.04in}|B(z,2\rho_{m_{k}})|^{\frac{\alpha-1}{\alpha}}\{\int_{B(z,2\rho_{m_{k}})}\hspace{-.27in}|v_{2,m_{k}}|^{\alpha}dy\}^{\frac{1}{\alpha}}
\label{8834cd}
\end{equation}
\[\leb{\eqref{rm2}} C||f||_{L^{\infty}(\Omega)}\lim_{k\to\infty}|B(z,2\rho_{m_{k}})|^{\frac{\alpha-1}{\alpha}}= 0 .\]\hk
 Combining \eqref{8834c} and \eqref{8834cd},
  we get
\begin{equation}\int_{\Omega}G(y,z)f(y)dy =\lim_{k\to\infty}\int_{\Omega}v_{z,\rho_{m_{k}}}(y)f(y)dy  \hh\forall~z\in \Omega\setminus X.
\label{8834cdf}
\end{equation}  \hk 
Since $u$ is in $W_{B,A}(\Omega)$, we have
 \begin{equation}
 \int_{\Omega}|u|dx\leb{H\ddot{o}lder} |\Omega|^{\frac{r-1}{r}}\{\int_{\Omega}|u|^{r}dx\}^{\frac{1}{r}}\leb{\eqref{cb}} |\Omega|^{\frac{r-1}{r}}C(r,\Omega,A)||u||_{B}<\infty.
 \label{ul1}
 \end{equation}\hk
On the other hand, by using $v_{z,\rho_{m_{k}}}$ and $u$  as test functions for \eqref{834} and \eqref{gre} respectively,  and  by Theorem 7.7 in \cite[p.138]{RU}, there is $Y$ such that $X\subset Y\subset \Omega$, $|Y|=0$ and 
\begin{equation}
\lim_{k\to\infty}\int_{\Omega}v_{z,\rho_{m_{k}}}(x)f(x)dx\eqb{\eqref{834}}\lim_{k\to\infty}\int_{\Omega} \frac{\partial u}{\partial x_{i}}\frac{\partial v_{z,\rho_{m_{k}}}}{\partial x_{ji}}b^{ij}  dx
\label{sept1}
\end{equation}
\[\eqb{b^{ij}=b_{ji}}\lim_{k\to\infty}\int_{\Omega} \frac{\partial v_{z,\rho_{m_{k}}}}{\partial x_{i}} \frac{\partial u}{\partial x_{j}}b^{ij} dx\eqb{\eqref{gre}}  
\lim_{k\to\infty}\int_{\Omega}\psi_{\rho_{m_{k}}}(x-z)u(x) dx\]
\[ \eqb{Theorem~ 7.7,\eqref{ul1}}u(z)\hh\forall~z\in \Omega\setminus Y.\]\hk
Combining  \eqref{8834cdf} and \eqref{sept1}, we get
\begin{equation}\int_{\Omega}G(y,z)f(y)dy =u(z)\hh a.e.~z\in \Omega.
\label{sept1b}
\end{equation}\hk
{\bf Step 2.}~There is a real number $M_{1}$ such that
\begin{equation}
|a_{z,\rho}|\le M_{1}\rho^{-N}\chi_{B(z,\rho)}.
\label{st1}
\end{equation}\hk
Let $y\in B(z,2\rho_{m_{k}})$. Applying \eqref{sept1b} for $f=a_{z,\rho_{m_{k}}}$ and $u=   v_{z,\rho_{m_{k}}}$, we get
\[ v_{z,\rho_{m_{k}}}(y)\hspace*{-.05in}\eqb{\eqref{sept1b}}\hspace*{-.05in} \int_{\Omega}G(x,y)a_{z,\rho_{m_{k}}}(x)dx\hspace*{-.15in}\leb{\eqref{81bzz},\eqref{st1}}\hspace*{-.15in}\overline{C}(B,r,\overline{r},\zeta,\Omega)M_{1}\rho_{m_{k}}^{-N}\hspace*{-.05in}\int_{B(z,\rho_{m_{k}})}\hspace*{-.4in}|x-y|^{-N+2-\zeta}dx
\]
\[\le\overline{C}(B,r,\overline{r},\zeta,\Omega)M_{1}\rho_{m_{k}}^{-N}\hspace*{-.05in}\int_{B(z-y,\rho_{m_{k}})}\hspace*{-.23in}|t|^{-N+2-\zeta}dt
\]
\[\leb{B(z-y,\rho_{m_{k}})\subset B(0,3\rho_{m_{k}})}\hspace{-.23in}CM\rho_{m_{k}}^{-N}\hspace{-.05in}\int_{B(0,3\rho_{m_{k}})}\hspace{-.43in} |t|^{-N+1-\zeta}dt=CM\rho_{m}^{-N}\int_{\Sigma_{N-1}}\int_{0}^{3\rho_{m_{k}}}\hspace{-.15in}\xi^{-N+2-\zeta+N-1}d\xi ds\]
\[=\frac{3^{2-\zeta}|\Sigma_{N-1}|}{2-\zeta}\rho_{m_{k}}^{-N+2-\zeta}\leb{|z-y|<2\rho}\frac{3^{2-\zeta}2^{N-2+\zeta}|\Sigma_{N-1}|}{2-\zeta}|z-y|^{-N+2-\zeta}.\]\hk
Thus, by \eqref{81bzb}, there is a positive real number $C_{1}$ independent from $k$ such that 
\begin{equation}
0\le v_{z,\rho_{m_{k}}}(x)\le C_{1}|x-z|^{-N+2-\zeta} \hh a.e.~x\in \Omega\setminus \{z\}.
\label{st2}
\end{equation}\hk
{\bf Step 3.}~Let $f\in L^{1}(\Omega)$. Put $M$ , $\overline{f}$  as in Step 1 and 
\[g_{1}(x)=C_{1}|x-z|^{-N+2-\zeta} \hh \forall~~x\in \Omega\setminus \{z\}.\]\hk
 Then $\overline{f}$ and $g_{1}$ are integrable on $\Omega$ and 
  \[v_{z,\rho_{m_{k}}}(x)|\overline{f}|(x)\leb{\eqref{st2}} g_{1}(x-z)|\overline{f}|(x)\hh a.e.~x~\in \Omega\setminus\{z\}.\]\hk
  Arguing as in Step 1, we get
  \begin{equation}\int_{\Omega}G(y,z)f(y)dy= \lim_{k\to\infty}\int_{\Omega}v_{z,\rho_{m_{k}}}(y)f(y)dy  \hh\forall~z\in \Omega\setminus X.
  \label{st3}
\end{equation}\hk
 Combining \eqref{sept1} and \eqref{st3}, we obtain \eqref{834b}.\\\hk
 $(iv)$~~Let $s\in [1,\frac{N}{2-\zeta})$. Since $\frac{1}{s}-\frac{2-\zeta}{N}= \frac{N-s(2-\zeta)}{sN}$, by  Lemma \ref{Riez}, \eqref{81bzz} and \eqref{834b}, we get $u\in L^{\frac{sN}{N-s(2-\zeta)}}(\Omega)$.\\\hk 
  $(v)$ Let $s\in [\frac{N}{2-\zeta},\infty)$ and $s'\in [1,\frac{N}{2-\zeta})$. Then $f$ is in $L^{s'}(\Omega)$. By $(iv)$, $u\in L^{\frac{s'N}{N-s'(2-\zeta)}}(\Omega)$. Since $\lim_{s'\to\frac{N}{2-\zeta}} \frac{s'N}{N-s'(2-\zeta)}=\infty$, we get $(v)$.
\end{proof}
\begin{remark} If $b$ and $\overline{b}$ are constant and $A=\partial\Omega$, we get Proposition \ref{pro62} with $\zeta=0$ for every $N$ in $\{3,4,\cdots\}$ by using Theorem $(1.1)$ in \cite{GW} and the techniques in the proof of Proposition \ref{pro62}. 
\label{gw1}
\end{remark}
\begin{remark}
  Let  $\sigma_{N}=|B(0,1)|$, $\beta \ge 2$, $b(x)=1$ for every $x$ in $\RR^{N}$, $\{x_{5}, x_{6},\cdots\}$ be a dense subset of $\RR^{N}$, $r_{k}= 2^{-k^{4k\beta}}$, $v_{k}=|B(x_{k},r_{k})|= 2^{-Nk^{4k\beta}}\sigma_{N}$  for $k\in~\{5, 6,\cdots\}$, $\overline{k}= \frac{k+3}{2}$ for every odd integer $k$,  $\overline{k}= \frac{k+2}{2}$ for every even integer $k$ and 
 \[ \overline b  = \chi_{\RR^{N} \backslash (\bigcup\limits_{k \ge 5}^\infty   B(x_{k},2r_{k})\backslash B(x_{k},r_{k}))}+   
\sum_{k=5}^{\infty}k^{4k}\chi_{B(x_{k},2r_{k})\backslash B(x_{k},r_{k}) }.\]\hk
  We have\\
 $\displaystyle \int_{B(x_{k},2r_{k})}\hspace*{-.35in} k^{4\beta k}dx=  k^{4\beta k}2^{N}v_{k}= 2^{N} \sigma_{N} k^{4\beta k}2^{-k^{4\beta k}}2^{-(N-1)k^{4\beta k}}\hspace*{-.15in} \leb{N\ge 2} \hspace*{-.05in}2^{N}\sigma_{N}2^{-k^{4\beta k}}\hspace*{-.35in}\leb{N\ge 2,\beta\ge2, k\ge 5}\hspace*{-.32in} 2^{N}\sigma_{N}2^{-k}, $\\
  $\displaystyle \int_{B(x_{k},2r_{k})\setminus B(x_{k},r_{k})}\hspace{-.58in}k^{4k}dx\hspace{-.03in}= \hspace{-.02in}k^{4k}\sigma_{N}(2^{N(1-k^{4\beta k})}-2^{-Nk^{4\beta k}})= k^{4k}
\sigma _{N}2^{-Nk^{4\beta k}}(2^{N}-1)\hspace{-.11in} \ggeb{N\ge 2}\hspace{-.1in} 3k^{4k}v_{k}, $\\
$\displaystyle \sum_{j=5}^{\overline{k}}\int_{B(x_{k},2r_{k})}\hspace*{-.25in}j^{4j}dx\eqb{\overline{k}\le k-1} 2^{N}\sum_{j=1}^{k-1}(k-1)^{4(k-1)}v_{k}\le 2^{N}k.k^{-4}k^{4k}v_{k}\le 2^{N}k^{-3}k^{4k}v_{k}.$\\
$\displaystyle \sum_{j=\overline{k}+1}^{k-1}\int_{B(x_{k},2r_{k})}\hspace*{-.45in}j^{4j}dx=2^{N} \sum_{j=\overline{k}+1}^{k-1}j^{4j}v_{k}\le 2^{N} (\sum_{j=\overline{k}+1}^{k-1}j^{4j-4k})k^{4k}v_{k}\le 2^{N} (\sum_{j=\overline{k}+1}^{k-1}j^{-4})k^{4k}v_{k}$\\
$\displaystyle\hk\le 2^{N} (\int_{\overline{k}}^{k}\hspace*{-.05in} t^{-4}dt)k^{4k}v_{k}\le 2^{N} \overline{k}^{-3}k^{4k}v_{k}\hspace*{-.09in}\leb{\frac{k}{2}\le \overline{k}
}\hspace*{-.09in} 2^{N+3}k^{-3}k^{4k}v_{k},$
\\$\displaystyle \sum_{j=k+1}^{\infty}\int_{B(x_{j},2r_{j})}\hspace*{-.4in}j^{4j}dx
= 2^{N}\sum_{j=k+1}^{\infty}j^{4j}2^{-Nj^{4\beta j}}\sigma_{N}= 2^{N}\sum_{j=k+1}^{\infty}j^{4j}2^{-j^{4j}}2^{j^{4j}-Nj^{4\beta j}+Nk^{4\beta k}}v_{k}$\\
$\displaystyle = 2^{N}\sum_{i=1}^{\infty}(k+i)^{4(k+i)}2^{-(k+i)^{4(k+i)}}2^{(k+i)^{4(k+i)}-N(k+i)^{4\beta (k+i)}+Nk^{4\beta k}}v_{k}$\\
$\displaystyle\leb{\beta\ge 2} 2^{N}\sum_{i=1}^{\infty}2^{(k+i)^{4(k+i)}-N(k+i)^{4\beta k}(k+i)^{4 i}(k+i)^{4i}+Nk^{4\beta k}}v_{k}$\\
$\displaystyle\le 2^{N}\sum_{i=1}^{\infty}2^{(k+i)^{4(k+i)}-4N(k+i)^{4\beta k}(k+i)^{4i}+Nk^{4\beta k}}v_{k}\le 2^{N}\sum_{i=1}^{\infty}2^{-N(k+i)^{4\beta k}(k+i)^{4i}}v_{k}$\\
$\displaystyle\le  2^{N}\sum_{i=1}^{\infty}2^{-i}v_{k}= 2^{N}v_{k}
$\\\hk
 Thus $b$, $b^{-1}$ and $\overline{b}^{-1}$ are in $L^{\infty}(\RR^{N})$, $\overline{b}$ is in $L_{loc}^{\beta}(\RR^{N})$, $b(B(2^{-j},2.2^{-j^{\beta}})= 2^{N}b(B(2^{-j},2^{-j^{\beta}})$ for every $k\ge 4$ and \\
 $\hspace*{1in}\displaystyle \overline{b}(B(x_{k},2r_{k}))= \int_{B(x_{k},2r_{k})}\hspace*{-.4
 in}\overline{b}dx \ge \int_{B(x_{k},2r_{k})\setminus B(x_{k},r_{k})}\hspace*{-.4in}k^{4k}dx\ge 3k^{4k}v_{k},$\\
 $\displaystyle\overline{b}(B(x_{k},r_{k}))= \int_{B(x_{k},r_{k})}\hspace*{-.4in}\overline{b}dx 
 \le  \sum_{j=5}^{k-1}\int_{B(x_{k},r_{k})}\hspace*{-.25in}j^{4j}dx+ \int_{B(x_{k},r_{k})}dx+ \sum_{j=1}^{\infty}\int_{B(x_{k+j},2r_{k+j})}\hspace*{-.25in}(k+j)^{4(k+j)}dx $\\
 $\displaystyle\le 2^{N}[k^{-3}+2^{3}k^{-3}+ k^{-4k}+ k^{-4k}]k^{4k}v_{k},$\\
 $\hspace*{1in}\displaystyle\lim_{k\to\infty} \frac{\overline{b}(B(x_{k},2r_{k}))}{\overline{b}(B(x_{k},r_{k}))}\ge \lim_{k\to\infty}\frac{3}{2^{N}(9k^{-3}+  2.k^{-4k})}=\infty.$\\\hk
 Therefore $\overline{b}$ is not of class $A_{2}$,  $b$ and $\overline{b}$   do not satisfy \eqref{cha}, but $b|_{\Omega}$ and $\overline{b}|_{\Omega}$ satisfy conditions of Proposition \ref{pro62} for every bounded open subset $\Omega$ of $\RR^{N}$ and sufficiently large $\beta$.
 \label{r4}
\end{remark}
\begin{remark} Let $\Omega= B(0,1)$, $\overline{b}$ be as in Remark \ref{r4}, $\gamma\in (0,1)$ and $b(x)= dis(x,\partial\Omega)^{\gamma}$. Then $b$ and $\overline{b}$ satisfy conditions of Proposition \ref{pro62} for  sufficiently small $\gamma$  and sufficiently large $\beta$. Let $b^{ij}=b_{i}\delta_{i}^{j}$ for every $i,j$ in $\{1,\cdots,N\}$, $b_{1}=\cdots=b_{N-1}=b$ and $b_{N}=\overline{b}$. Applying our results, we can study   degenerate and non-uniform elliptic equations with $\overline{b}$ is not in the class $A_{2}$.
\label{r4b}
\end{remark}
\section{Global H\"{o}lder continuity of solutions for elliptic equations}\label{ho}\hk
 To study the  H\"{o}lder continuity of solutions for elliptic equations, we have to use the Riesz potential defined as follows.
 \begin{definition} Let $\varrho\in (0,N)$. The Riesz potential of order $\varrho$ on $\Omega$, denoted by $I_{\Omega,\varrho}$, is defined as:
\[ I_{\Omega,\varrho}f(x) = \int_{\Omega}\frac{f(y)}{|x-y|^{N-\varrho}}dy\hh\forall~f\in L^{1}(\Omega).\]
 \label{Riez0}
\end{definition}\hk
We have following properties of $I_{\Omega,\varrho}$.
\begin{lemma}[Hardy–Littlewood–Sobolev]
 $(i)$~~If $1<p<q<\infty$ such that $\frac{1}{q}=\frac{1}{p}-\frac{\varrho}{N}$, then $I_{\Omega,\varrho}f$ is in $L^{q}(\Omega)$ for every $f\in L^{p}(\Omega)$. \\\hk
$(i)$~~If $p\ge \frac{N}{\varrho}$ and $f\in L^{p}(\Omega)$ then $I_{\Omega,\varrho}f$ is in $L^{q}(\Omega)$ for every $q\in (1,\infty)$.
\label{Riez}
\end{lemma}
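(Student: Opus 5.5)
The plan is to reduce Lemma \ref{Riez} to the classical Hardy–Littlewood–Sobolev inequality on $\RR^{N}$ by extending $f$ by zero outside $\Omega$. For (i), let $\overline{f}=f$ on $\Omega$ and $\overline{f}=0$ on $\RR^{N}\setminus\Omega$. Then $\overline{f}\in L^{p}(\RR^{N})$ and, for $x\in\Omega$,
\[
|I_{\Omega,\varrho}f(x)|\le \int_{\RR^{N}}\frac{|\overline{f}(y)|}{|x-y|^{N-\varrho}}dy .
\]
The right-hand side is the full-space Riesz potential of $|\overline{f}|$. The theorem in \cite[p.35]{LP}, already used in the proof of Proposition \ref{admissible}, applies because $1<p<q<\infty$ and $\frac{1}{q}=\frac{1}{p}-\frac{\varrho}{N}$; in particular $\varrho p<N$ is automatic. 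It gives a bound in $L^{q}(\RR^{N})$ by $c(N,\varrho,p)\|f\|_{L^{p}(\Omega)}$. Restricting to $\Omega$ yields (i). Measurability and a.e. finiteness of $I_{\Omega,\varrho}f$ follow from Tonelli's theorem.

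For (ii), boundedness of $\Omega$ lets me lower the integrability exponent. Fix $q\in(1,\infty)$ and choose $p_{1}$ with $1<p_{1}<\frac{N}{\varrho}$, $p_{1}\le p$, and $\frac{1}{p_{1}}-\frac{\varrho}{N}\le\frac{1}{q}$. Such a $p_{1}$ exists: as $p_{1}\uparrow \frac{N}{\varrho}$ the quantity $\frac{1}{p_{1}}-\frac{\varrho}{N}$ decreases to $0$, and $p\ge\frac{N}{\varrho}>p_{1}$. Since $|\Omega|<\infty$, Hölder's inequality gives $f\in L^{p_{1}}(\Omega)$. Set $\frac{1}{q_{1}}=\frac{1}{p_{1}}-\frac{\varrho}{N}$, so $q_{1}\ge q$. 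Part (i) gives $I_{\Omega,\varrho}f\in L^{q_{1}}(\Omega)$, and a second use of Hölder on the bounded set $\Omega$ gives $L^{q}(\Omega)$.

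The only delicate point is the borderline case $p=\frac{N}{\varrho}$. There one cannot apply (i) with $p$ itself, so $p_{1}<p$ must be strict. The construction above already ensures this, so no real obstacle remains. If one wants an estimate rather than bare membership, I would track the constants $|\Omega|^{\frac{1}{p_{1}}-\frac{1}{p}}$ and $|\Omega|^{\frac{1}{q}-\frac{1}{q_{1}}}$.
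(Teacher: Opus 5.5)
Your proposal is correct and is essentially the paper's proof: part (i) is just the Hardy--Littlewood--Sobolev theorem, and you make explicit the zero extension to $\RR^{N}$ that the paper leaves implicit. For (ii) the paper makes your choice of $p_{1}$ concrete as $p_{k}=\frac{N}{\varrho+k^{-1}}$ with $k>\frac{1}{N-\varrho}$, so that (i) gives $I_{\Omega,\varrho}f\in L^{kN}(\Omega)$ for every large $k$, and then uses the boundedness of $\Omega$ exactly as you do.
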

\begin{proof} By Theorem 2.6 in \cite[p.35]{LP}, we get $(i)$.\\\hk
$(ii)$~~If $f\in L^{p}(\Omega)$ with some $p\ge \frac{N}{\varrho}$, then $f\in L^{\frac{N}{\varrho+k^{-1}}}(\Omega)$ for every $k \in (\frac{1}{N-\varrho},\infty)$. By $(i)$, $I_{\Omega,\varrho}f\in L^{kN}(\Omega)$. Thus we get $(ii)$.
\end{proof}\hk
Let $\zeta\in [0,\frac{1}{4})$ and $n\in\NN$.  We consider the following condition:  there is a positive real number $C_{n,\zeta,b,\overline{b},\Omega}$ such that
\begin{equation}
|\nabla_{x}G(x,z)|\le C_{n,\zeta,b,\overline{b},\Omega}|x-z|^{-N+1+\zeta}\hh\forall~x,z\in (\Omega).
\label{thuyen}
\end{equation}
\begin{remark}~~If $A=\partial\Omega$ and $b^{ij}$ are Dini-continuous, then we get condition \ref{thuyen} with $\zeta=0$ in \cite[p.333]{GW}.
\end{remark}\hk
 Let  $\psi$ and  $\psi_{\rho}$ be defined as in Definition \ref{psi} and $l\in \{1,...,N\}$.  Let $B(z,2\rho)$ be a subset of $\Omega$. Put $\displaystyle w_{1,\rho,l}(x)= \frac{\partial \psi_{\rho}}{\partial x_{l}}(x-z)$ and $\displaystyle w_{1,\rho,l}(x)= \frac{\partial \psi_{\rho}}{\partial x_{l}}(x-z)$ for  every $x$ in $\Omega$. Then $a_{z,\rho,l}\in C^{\infty}_{c}(\Omega)$, $supp(a_{z,\rho,l})\subset B(z,\rho)$ and there is  a positive real number $M$ independent from $(z,\rho,l)$ such that $||a_{z,\rho,l}||_{L^{1}(\Omega)}\le M\rho^{-1}$ and
 \begin{equation}  |a_{z,\rho,l}|\leb{\eqref{psi7}} M\rho^{-N-1}\chi_{B(z,\rho)}.
 \label{zeta}
 \end{equation}\hk
 Applying  Proposition \ref{lem62xz} for   $a=a_{z,\rho,l}$, we get $v$ as in this proposition. Denote $v$ by $v_{z,\rho,l}$ and put
  \[H_{\rho,l}(x,z)= v_{z,\rho,l}(x)\hh\forall~(x,z)\in\Omega\times\Omega.\] \hk
   Then
 \begin{equation}
\int_{\Omega}\frac{\partial v_{z,\rho,l}}{\partial x_{i}}\frac{\partial \varphi}{\partial x_{i}}b^{ij}dx=\int_{\Omega}\frac{\partial \psi_{\rho}}{\partial x_{l}}(x-z)\varphi dx\hh\forall~\varphi \in W_{B,A}(\Omega). 
 \label{7zxb}
 \end{equation}\hk
 We have following properties of $H_{\rho,l}$.
  \begin{lemma} Let $\delta$ be  as in Lemma \ref{lemfs1b},   $\zeta \in (0,\min\{\delta,\frac{1}{120}\})$,  $A$ be  admissible with respect to $\Omega$. Let $N$ be in $\{2,\cdots,8\}$,  $t(\zeta)$ and $\overline{r}(\zeta)$ be as in Proposition \ref{lem62xz}, $t\in (t(\zeta),2)$, $r=\frac{t(N+1)-2}{N-t}$, $\overline{r}\in (2,\overline{r}(\zeta))$. Let $z\in \Omega$ such that $B(z,4\rho_{z})\subset \Omega$ with $\rho_{z}\in (0,1)$. Assume    \eqref{c1}-\eqref{c4}, \eqref{c9} and \eqref{thuyen} hold. Then there is a positive real number $\overline{C}$ independent from $\rho$ and $\alpha_{1}\in (1,\frac{N}{N-1+\zeta})$ such that
\begin{equation}
 | H_{\rho,l}(x,z)| \le  \overline{C}|x-z|^{-N+1-\zeta} \hh   ~x\in\Omega, 
 \label{81bzbz}
 \end{equation}
 \begin{equation}
 | H_{\rho,l}(x,z)| \le  \overline{C}s^{-N+1-\zeta} \hh   ~x\in(\Omega\setminus B(z,s)), s \in (\rho,diam(\Omega)], 
 \label{81bzbz2}
 \end{equation}
 \begin{equation}
 ||H_{\rho,l}(.,z)||_{L^{\alpha_{1}}(\Omega)} \le  \overline{C}, 
 \label{81bzbcz}
 \end{equation} 
  \begin{equation}
\{\int_{\Omega\setminus B(z,s)}|\nabla  H_{\rho,l}|^{2} bdx\}^{\frac{1}{2}}< \overline{C}s^{-N+1-\zeta}\hh if~~s> 2\rho
 \label{81cz}
\end{equation}
 \label{lem62xh}
\end{lemma}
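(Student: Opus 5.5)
The natural route is the Green representation of Proposition \ref{pro62}(iii), applied to the solution $v_{z,\rho,l}$ of \eqref{7zxb}. Its right-hand side is $f=a_{z,\rho,l}=\frac{\partial\psi_{\rho}}{\partial x_{l}}(\cdot-z)$, which is smooth and compactly supported in $B(z,\rho)$, so
\[
H_{\rho,l}(y,z)=v_{z,\rho,l}(y)=\int_{\Omega}G(x,y)\frac{\partial \psi_{\rho}}{\partial x_{l}}(x-z)dx \hh a.e.~y\in\Omega .
\]
Since $\psi_{\rho}(\cdot-z)$ vanishes near $\partial B(z,\rho)$, we can integrate by parts in $x$ and move the derivative onto $G(\cdot,y)$. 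This requires $G(\cdot,y)$ to be weakly differentiable on $B(z,\rho)$, which holds away from $y$ and, near $y$, is justified by the integrability $|\nabla_x G(x,y)|\le C|x-y|^{-N+1+\zeta}\in L^{1}_{loc}$ coming from \eqref{thuyen}. We obtain
\[
H_{\rho,l}(y,z)=-\int_{B(z,\rho)}\frac{\partial G}{\partial x_{l}}(x,y)\psi_{\rho}(x-z)dx .
\]
Combining \eqref{thuyen} with $0\le\psi_{\rho}\le C\rho^{-N}\chi_{B(z,\rho)}$ (from \eqref{psi5}, \eqref{psi6}) gives the central estimate
\[
|H_{\rho,l}(y,z)|\le C\rho^{-N}\int_{B(z,\rho)}|x-y|^{-N+1+\zeta}dx .
\]

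The pointwise bounds follow by splitting into two cases. If $|y-z|\ge 2\rho$, then $|x-y|\ge\frac12|y-z|$ on $B(z,\rho)$, so the integral is at most $C|y-z|^{-N+1+\zeta}$. This is $\le C'|y-z|^{-N+1-\zeta}$ because $\Omega$ is bounded, which gives \eqref{81bzbz} for such $y$, and \eqref{81bzbz2} for $|y-z|\ge s\ge 2\rho$. If $|y-z|<2\rho$, then $B(z,\rho)\subset B(y,3\rho)$, and polar coordinates give the bound $C\rho^{-N}\rho^{1+\zeta}=C\rho^{-N+1+\zeta}$. Since $|y-z|<2\rho$, this is $\le C|y-z|^{-N+1+\zeta}\le C'|y-z|^{-N+1-\zeta}$, which completes \eqref{81bzbz}. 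For \eqref{81bzbz2} with $s\in(\rho,2\rho)$ and $|y-z|\ge s$ the same two cases apply, using $\rho^{-N+1+\zeta}\le C s^{-N+1-\zeta}$. The bound \eqref{81bzbcz} then follows from \eqref{81bzbz}: $|x-z|^{-N+1-\zeta}\in L^{\alpha_{1}}(\Omega)$ exactly when $\alpha_{1}(N-1+\zeta)<N$, which is why $\alpha_{1}$ is taken in $(1,\frac{N}{N-1+\zeta})$.

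For the energy estimate \eqref{81cz} I would not use the representation. Instead I would repeat Step 4 of Proposition \ref{pro46b}, as was done for \eqref{le2460bzzza}. Take a cutoff $\overline{\eta}$ equal to $1$ outside $B(z,s)$, equal to $0$ on $B(z,s/2)$, with $|\nabla\overline{\eta}|\le Cs^{-1}$. Since $s/2>\rho$, the right-hand side $a_{z,\rho,l}$ vanishes on the support of $\overline{\eta}$. Testing \eqref{7zxb} with $\overline{\eta}^{2}v_{z,\rho,l}$ (admissible by Lemma \ref{w1bb}) and using Cauchy--Schwarz and H\"older with $b^{-\frac12}\overline{b}\in L^{2}$ (from \eqref{c4}) gives
\[
\|\nabla v_{z,\rho,l}\|_{L^{2}_{b}(\Omega\setminus B(z,s))}\le Cs^{-1}\|v_{z,\rho,l}\|_{L^{\infty}(\Omega\setminus B(z,s/2))} .
\]
By \eqref{81bzbz2} the right-hand side is bounded by $Cs^{-1}s^{-N+1-\zeta}$. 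This carries one extra factor $s^{-1}$ compared with \eqref{81cz}. To match the stated exponent exactly I would instead bound $\|v\|_{L^\infty}$ by the pre-rescaled estimate $|y-z|^{-N+1+\zeta}$ from the first case above, getting $Cs^{-N+\zeta}$. I would then absorb the small difference in exponents, $\zeta$ small, into the constant using boundedness of $\Omega$; if that is not enough, I would adjust the exponent choice as in Proposition \ref{lem62xz}(iii).

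The main obstacle is justifying the integration by parts against the singular $G(\cdot,y)$ for a.e.\ $y$. I would handle it by mollifying, or by using the approximants $v_{y,\rho'}$ from Proposition \ref{pro62}(i) together with dominated convergence and the kernel bounds. A secondary difficulty is the exponent bookkeeping in \eqref{81cz}.
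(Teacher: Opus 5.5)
Your treatment of \eqref{81bzbz}, \eqref{81bzbz2} and \eqref{81bzbcz} is the paper's argument. You use the Green representation of $v_{z,\rho,l}$, move the derivative from $\psi_\rho$ onto $G(\cdot,y)$, apply \eqref{thuyen}, split at $|y-z|=2\rho$, and finish with polar coordinates. You are in fact more careful than the paper, which silently uses \eqref{thuyen} with exponent $-N+1-\zeta$ and does not justify the integration by parts. For \eqref{81cz} you also follow the paper, whose entire justification is ``arguing as in the proofs of \eqref{le4460bzzz} and \eqref{le4460bzzzb}'', i.e.\ Step 4 of Proposition \ref{pro46b}.

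The step that fails is your repair of the exponent in \eqref{81cz}.
\begin{itemize}
\item Step 4 always costs a factor $\|\nabla\overline{\eta}\|_{L^\infty}\sim s^{-1}$ on top of the sup bound. With \eqref{81bzbz2} you therefore get $Cs^{-N-\zeta}$, and with the unrescaled bound $|v_{z,\rho,l}(y)|\le C|y-z|^{-N+1+\zeta}$ you get $Cs^{-N+\zeta}$.
\item Neither can be absorbed into $\overline{C}s^{-N+1-\zeta}$. The ratios are $s^{-1}$ and $s^{2\zeta-1}$, both unbounded as $s\to0$. The mismatch is of order one, not of order $\zeta$.
\item Boundedness of $\Omega$ does not rescue this. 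It only lets you replace $s^{a}$ by $Cs^{b}$ when $a\ge b$, so it controls large $s$, not small $s$.
\item Adjusting exponents as in Proposition \ref{lem62xz}(iii) does not help either. There the tunable quantity $\frac{r\overline{r}}{\gamma(r-\overline{r})}$ comes from the Moser iteration. Here the decay of $v_{z,\rho,l}$ is dictated by \eqref{thuyen}, and there is nothing to tune.
\end{itemize}

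The paper's one-line justification has exactly the same defect: the argument it cites yields $s^{-N-\zeta}$. So what is actually established, by you and by the paper, is \eqref{81cz} with exponent $-N-\zeta$ in place of $-N+1-\zeta$.

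That weaker form is all that is used later. In Proposition \ref{nab}(i), as in Proposition \ref{pro62}(i), one only needs a bound on $\|\nabla H_{\rho,l}(\cdot,z)\|_{L^{2}_{b}(\Omega\setminus B(z,s))}$ that is uniform in $\rho<s/2$ for each fixed $s>0$, to get compactness in $W^{1,1}$.

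You should therefore either state the estimate with the exponent the argument gives, or supply a genuinely new ingredient. One candidate is the standard Caccioppoli form $\int\overline{\eta}^{2}|\nabla v|^{2}b\,dx\le4\int v^{2}\overline{b}|\nabla\overline{\eta}|^{2}dx$, combined with $\|\overline{b}\|_{L^{1}(B(z,s))}\le Cs^{2N/r}$. This gains a factor $s^{N/r}$, giving $Cs^{-N+\zeta+N/r}$. It recovers $s^{-N+1-\zeta}$ only when $N/r\ge1-2\zeta$, which fails for $N=2,3$ when $r$ is close to $2^{\ast}$.
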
\hk
\begin{proof}
Let $C=C_{n,\zeta,b,\overline{b},\Omega}$ and $y\in \Omega$. We have 
\[|v_{z,\rho,l}(y)|\eqb{\eqref{834b},\eqref{7zxb}} |\int_{\Omega}G(x,y)\frac{\partial \psi_{\rho}}{\partial x_{l}}(x-z)dx|=|\int_{\Omega}\frac{\partial G}{\partial x_{l}}(x,y)\psi_{\rho}(x-z)dx|
\]
\[\leb{\eqref{thuyen},\eqref{zeta}}\hspace{-.1in}CM\rho^{-N}\int_{B(z,\rho)}\hspace{-.23in}|x-y|^{-N+1-\zeta}dx.\]\hk
Thus
\begin{equation}|v_{z,\rho,l}(y)|\leb{\eqref{thuyen},\eqref{zeta}}\hspace{-.1in}CM\rho^{-N}\int_{B(z,\rho)}\hspace{-.23in}|x-y|^{-N+1-\zeta}dx\le CM\rho^{-N}\int_{B(z-y,\rho)}\hspace{-.33in}|t|^{-N+1-\zeta}dt
\label{ho1}
\end{equation}\hk
Let $y \in B(z,2\rho)$. By \eqref{ho1} we have
\[|v_{z,\rho,l}(y)|\hspace{-.1in}\leb{B(z-y,\rho)\subset B(0,3\rho)}\hspace{-.23in}CM\rho^{-N}\hspace{-.05in}\int_{B(0,3\rho)}\hspace{-.23in} |t|^{-N+1-\zeta}dt=CM\rho^{-N}\int_{\Sigma_{N-1}}\int_{0}^{3\rho}\hspace{-.15in}\xi^{-N+1-\zeta+N-1}d\xi ds\]
\[=\frac{3^{1-\zeta}|\Sigma_{N-1}|}{1-\zeta}\rho^{-N+1-\zeta}\leb{|z-y|<2\rho}\frac{3^{1-\zeta}2^{N-1+\zeta}|\Sigma_{N-1}|}{1-\zeta}|z-y|^{-N+1-\zeta}.\]\hk
It implies \eqref{81bzbz} for every $y$ in $B(z,2\rho)$. Now let $y\in \Omega\setminus B(z,2\rho)$.  We have
\begin{equation}
|x-y|\ge |y-z| - |z-x|\ge \frac{1}{2}|y-z|\hh\forall~x\in B(z,\rho).
\label{psi11}
\end{equation}\hk
Thus
\[|v_{z,\rho,l}(y)|\hspace{-.2in}\leb{\eqref{ho1},\eqref{psi11}}\hspace{-.2in}2^{N-1+\zeta}CM\rho^{-N}\int_{B(z,\rho)}\hspace{-.23in}|y-z|^{-N+1-\zeta}dx=2^{N-1+\zeta}CM|y-z|^{-N+1-\zeta}.
\]\hk
It implies \eqref{81bzbz} for every $y$ in $\Omega\setminus B(z,2\rho)$.  Let $s\in (2\rho,diam(\Omega)]$ and $y\in (\Omega\setminus B(z,s))$. By above results, we get
\[|v_{z,\rho,l}(y)|\le 2^{N-1+\zeta}CM(\frac{|y-z|}{s})^{-N+1-\zeta}s^{-N+1-\zeta}\leb{-N+1-\zeta<0} 2^{N-1+\zeta}CMs^{-N+1-\zeta},\]
which implies \eqref{81bzbz2}.\\\hk
Using \eqref{81bzbz} and  polar coordinates in integration, we get \eqref{81bzbcz}. 
 Using \eqref{81bzbz2} and arguing as in the proofs of \eqref{le4460bzzz} and \eqref{le4460bzzzb}, we get \eqref{81cz}.
\end{proof}\hk
Now we obtain a function $H_{l}$ as follows.
  \begin{proposition}~~Let $N\in \{2,\cdots,8\}$,  notations and conditions be as in Lemma \ref{lem62xh}. Then there is a function $H_{l}$ on $\Omega\times \Omega$ having following properties.\\\hk 
 $(i)$~ There are $C_{1}>0$ and a sequence $\{\rho_{m}\}\subset (0,\rho_{z})$ converging to $0$ such that
 \begin{equation} \lim_{k\to\infty}H_{\rho,l}(x,z) = H_{l}(x,z)\hh a.e. ~x\in \Omega,
 \label{weak1z1}
 \end{equation}
\begin{equation} 
 |H_{l}(x,z)| \le  C_{1}|x-z|^{-N+1-\zeta}\hh\forall~x\in\Omega\setminus\{z\}. 
 \label{81bzz1}
\end{equation}\hk
$(ii)$~~  Let  $u\in W_{B,A}(\Omega)$ and $f\in L^{1}(\Omega)$ such that
\begin{equation}
\int_{\Omega} \frac{\partial u}{\partial x_{i}} \frac{\partial \varphi}{\partial x_{j}} b^{ij}dx=\int_{\Omega}f\varphi dx\hh\forall~\varphi\in W_{B,A}(\Omega).
\label{2834}
\end{equation}\\\hk
Then 
\begin{equation}
\int_{\Omega}H_{l}(x,z)f(x)dx= -\frac{\
\partial u}{\partial x_{l}}(z)\hh a.e~z\in \Omega,\label{2834b}
\end{equation}
\begin{equation}|\frac{\partial u}{\partial x_{l}}|\le C_{1}I_{\Omega,1-\zeta}(|f|)\hh\forall~l\in \{1,2,\cdots,N\} .
\label{2834bc}
\end{equation}\\\hk
$(iii)$~~$\nabla u\in L^{\frac{sN}{N-s(1-\zeta)}}(\Omega)$ if $f\in L^{s}(\Omega)$ and $s\in[1,\frac{N}{1-\zeta})$.\\\hk
$(iv)$~~$\nabla u\in L^{q}(\Omega)$ for  every $q$ in $[1,\infty)$ if $f\in L^{s}(\Omega)$ and $s\in[\frac{N}{1-\zeta},\infty)$.
\label{nab}
\end{proposition}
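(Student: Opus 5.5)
The plan is to copy the construction of $G$ in Proposition \ref{pro62}, using Lemma \ref{lem62xh} in place of Lemma \ref{lem62x}. The identity \eqref{2834b} will come from testing the approximating problems \eqref{7zxb} against $u$ and integrating by parts on the mollifier.

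\textbf{Step 1: construction of $H_l$, part $(i)$.} Fix $z$ and a decreasing sequence $\rho_m\downarrow 0$ in $(0,\frac12\rho_z)$. For fixed $s>0$, Hölder with $b^{-1}\in L^{1}(\Omega)$ and \eqref{81cz} bound $\int_{\Omega\setminus B(z,s)}|\nabla H_{\rho_m,l}(\cdot,z)|dx$ uniformly in $m$. Together with \eqref{81bzbcz}, this bounds $\{H_{\rho_m,l}(\cdot,z)\}$ in $W^{1,1}(\Omega\setminus B(z,s))$. Rellich--Kondrachov and a.e.\ extraction (Theorem 4.9 of \cite{BR}) on $s=2^{-k}\rho_z$, followed by a diagonal argument, give a subsequence converging a.e.\ on $\Omega$. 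The limit defines $H_l(x,z)$, which gives \eqref{weak1z1}. The bound \eqref{81bzbz} is uniform in $\rho$ and passes to the a.e.\ limit, giving \eqref{81bzz1}; after modification on a null set it holds for every $x\ne z$.

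\textbf{Step 2: representation, part $(ii)$.} The plan is to prove \eqref{2834b} first for $f\in L^\infty(\Omega)$ and then extend by domination.
\begin{itemize}
\item[(a)] Take $\varphi=u$ in \eqref{7zxb} and $\varphi=v_{z,\rho,l}$ in \eqref{2834}. By symmetry of $b^{ij}$,
\[\int_\Omega H_{\rho,l}(x,z)f(x)\,dx=\int_\Omega \frac{\partial\psi_\rho}{\partial x_l}(x-z)u(x)\,dx=-\int_\Omega \psi_\rho(x-z)\frac{\partial u}{\partial x_l}(x)\,dx .\]
The last equality is integration by parts with $\psi_\rho(\cdot-z)\in C^\infty_c(\Omega)$, using that $u$ has generalized derivatives by Lemma \ref{5z}, with $\nabla u\in L^1$ because $|\nabla u|^2b\in L^1$ and $b^{-1}\in L^1$.
\item[(b)] Since $\psi_\rho$ is an approximate identity of the form $\rho^{-N}\psi(\cdot/\rho)$ with $\psi$ radial, bounded and compactly supported, the right-hand side tends to $-\partial_l u(z)$ at every Lebesgue point of $\partial_l u$ (Theorem 7.7 in \cite{RU}), so for a.e.\ $z$.
\item[(c)] For the left-hand side, the pointwise bound $|H_{\rho_m,l}(x,z)|\le\overline C|x-z|^{-N+1-\zeta}$ is integrable because $1-\zeta>0$. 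Dominated convergence with Step 1 then gives $\int_\Omega H_l(x,z)f(x)\,dx$ for every $f\in L^\infty$.
\item[(d)] For general $f\in L^1$, the same domination works. As in Step 1 of Proposition \ref{pro62}, Young's inequality plus Fubini gives $\int|x-z|^{-N+1-\zeta}|f(x)|\,dx<\infty$ for a.e.\ $z$, so dominated convergence applies directly for such $z$.
\end{itemize}
The estimate \eqref{2834bc} then follows immediately from \eqref{81bzz1}, since $|\partial_l u(z)|\le C_1\int|f(x)||x-z|^{-N+1-\zeta}dx=C_1 I_{\Omega,1-\zeta}|f|(z)$.

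\textbf{Step 3: parts $(iii)$ and $(iv)$.} Part $(iii)$ is Lemma \ref{Riez}$(i)$ with $\varrho=1-\zeta$: $\frac1q=\frac1s-\frac{1-\zeta}{N}$. For $s=1$, use instead that a bounded domain and the kernel in $L^{p}$ for $p<N/(N-1+\zeta)$ give the weak-type endpoint, or else restrict to $s>1$. Part $(iv)$ follows from $(iii)$ by lowering $s$ toward $N/(1-\zeta)$, exactly as in Proposition \ref{pro62}$(v)$.

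\textbf{Expected main obstacle.} The delicate point is the $\rho$-uniform domination and a.e.\ convergence needed to swap the limit with $\int H_{\rho,l}f$. Step 2 only needs this for a.e.\ $z$, but the subsequence $\rho_m$ in Step 1 depends on $z$. This has to be handled exactly as for $G$: fix $z$, choose the subsequence, and verify both sides along it. That works because the limit $-\partial_l u(z)$ holds along every sequence $\rho\to0$ for Lebesgue points $z$.
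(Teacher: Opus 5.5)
Your proposal is correct for $s>1$ and follows the paper's proof essentially step for step: diagonal extraction using Lemma~\ref{lem62xh}, testing \eqref{7zxb} with $u$ and \eqref{2834} with $v_{z,\rho,l}$, integrating by parts onto the mollifier and passing to Lebesgue points, Young/Fubini domination for $f\in L^{1}(\Omega)$, and finally Lemma~\ref{Riez}; your observation that the $z$-dependent subsequence is harmless, because the right-hand side converges along every $\rho\to0$ at Lebesgue points, is a useful clarification that the paper leaves implicit. The endpoint $s=1$ in $(iii)$ is not proved by either argument: Lemma~\ref{Riez} requires $p>1$ (the paper cites it there anyway), and your weak-type fallback only gives $\nabla u$ in weak-$L^{\frac{N}{N-1+\zeta}}(\Omega)$, hence in $L^{q}(\Omega)$ for every $q<\frac{N}{N-1+\zeta}$, which is all the kernel bound \eqref{2834bc} can yield, so that case should be weakened or excluded rather than claimed.
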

\begin{proof}
Arguing as in the proof of Proposition \ref{pro62} and using Lemma \ref{lem62xh}, we obtain $(i)$.\\\hk
$(ii)$~~Put
\[g_{1}(x)= \overline{C}\chi_{B(0,2diam(\Omega))\setminus \{0\}}(x)|x|^{-N+1-\zeta}\hh\forall~x\in \RR^{N}.\]\hk
 Using the polar coordinates with the pole at $0$, we have
 \begin{equation}\int_{\RR^{N}}g_{1}(x)dx =\int_{B(0,2diam(\Omega))}g_{1}(x)dx = \overline{C}|\Sigma_{N-1}| \int_{0}^{2diam(\Omega)}t^{-\zeta}dt
  \label{834c}
\end{equation}
  \[=\overline{C}\frac{|\Sigma_{N-1}|}{1-\zeta}(2diam(\Omega))^{1-\zeta}<\infty. \]
   \hk Thus, by Young's Theorem in \cite[Theorem 4.15, p. 104]{BR},
\[\int_{\RR^{N}}\int_{\RR^{N}}|g_{1}(z-y)f(y)|dydz\leb{\eqref{834bb},\eqref{834c}} ||g_{1}||_{L^{1}(\RR^{N})}||f||_{L^{1}(\RR^{N})} <\infty\hk\forall~f\in L^{1}(\RR^{N}).\]\hk
 Let $f\in L^{1}(\Omega)$. By Fubini's Theorem \cite[Theorem 4.5, p.91]{BR}, there is  a measurable subset $X_{f}$ of $\Omega$ such that $|X_{f}|=0$ and 
 \begin{equation}
\int_{\Omega}|g_{1}(z-y)f(y)|dy < \infty \hh\forall~z\in \Omega\setminus X_{f}.
\label{8834bbz}
\end{equation}\hk
By \eqref{81bzbz}, \eqref{weak1z1}, \eqref{8834bbz} and Lebesgue's dominated convergence theorem, we obtain 
\begin{equation}
\lim_{n\to\infty}\int_{\Omega}H_{\rho_{n},l}(y,z)f(y)dy = \int_{\Omega}H_{l}(y,z)f(y)dy  \hh\forall~z\in \Omega\setminus X_{f}.
\label{8834bz}
\end{equation}\hk
On other hand, by \eqref{rt}, $1<t$. It implies $1<\frac{t}{2-t}$. Since $u\in W_{B,A}(\Omega)$, we get
\begin{equation}
 \int_{\Omega}|\nabla u|dx \leb{Cauchy-Schwarz} \{\int_{\Omega}b^{-1}dx\}^{\frac{1}{2}}\{\int_{\Omega}|\nabla u|^{2}bdx\}^{\frac{1}{2}}\lleb{\eqref{c1},\eqref{c3}}\infty,
 \label{rtz}
 \end{equation}

By using $v_{z,\rho_{m},l}$ and $u$ as test functions for \eqref{2834} and \eqref{7zxb} respectively, and  by Theorem 7.7 in \cite[p.138]{RU}, there is $Y$ such that $X\subset Y\subset \Omega$, $|Y|=0$ and 
\[
\int_{\Omega}H_{l}(x,z)f(x)dx\eqb{\eqref{8834bz}}\lim_{m\to\infty}\int_{\Omega}v_{z,\rho_{m},l}(x)f(x)dx \eqb{\eqref{2834}}\lim_{m\to\infty}\int_{\Omega} \frac{\partial u}{\partial x_{i}}\frac{\partial v_{z,\rho_{m},l}}{\partial x_{j}}b^{ij}  dx
\]
\[\eqb{b^{ij}=b^{ji}}\lim_{m\to\infty}\int_{\Omega} \frac{\partial v_{z,\rho_{m},l}}{\partial x_{i}} \frac{\partial u}{\partial x_{j}}b^{ij} dx \eqb{\eqref{7zxb}}\lim_{m\to\infty}\hspace{-.05in}\int_{\Omega}\hspace{-.07in}\frac{\partial \psi_{\rho_{m}} }{\partial x_{l}}(x-z)u(x) dx\]
\[ \eqb{Lemma~\ref{5z}}\hspace{-.1in}-\lim_{m\to\infty}\hspace{-.05in}\int_{\Omega}\hspace{-.05in}\psi_{\rho_{m}}(x-z)\frac{\partial u }{\partial x_{l}}(x) dx\hspace{-.2in}\eqb{Theorem~ 7.7,\eqref{rtz}}\hspace{-.05in}-\frac{\partial u }{\partial x_{l}}(z)\hk a.e~z~in ~\Omega\setminus Y,\]\hk
 It implies \eqref{2834b}. By \eqref{81bzz1} and \eqref{2834b}, we get \eqref{2834bc}.\\\hk
 $(iii)$~~Let $s\in [1,\frac{N}{1-\zeta})$. Since $\frac{1}{s}-\frac{1-\zeta}{N}= \frac{N-s(1-\zeta)}{sN}$, by  Lemma \ref{Riez} and \eqref{2834bc}, we get $\nabla u\in L^{\frac{sN}{N-s(1-\zeta)}}(\Omega)$.\\\hk 
  $(iv)$ Let $s\in [\frac{N}{1-\zeta},\infty)$. By  Lemma \ref{Riez} and \eqref{2834bc}, we get $(iv)$.
\end{proof}\hk
\begin{remark} If $b$ and $\overline{b}$ are constant, $b^{ij}$ is Dini-continuous for every $i,j$ in $\{1,\cdots,N\}$ and $A=\partial\Omega$, we get Proposition \ref{nab} with $\zeta=0$ for every $N$ in $\{3,4,\cdots\}$ by using Theorem $(3.3)$ in \cite{GW} and the techniques in the proof of Proposition \ref{nab} 
\label{gw2}
\end{remark}
 To study the H\"{o}lder continuity of solutions for elliptic equations, we assume  
 that $\partial\Omega$ is  minimally smooth defined in \cite[p.189]{ST}. This condition is satisfied if  $\Omega$ is convex or $\partial\Omega$ is $C^{1}$ embedded in $\RR^{N}$(see \cite[p.189]{ST}).\\\hk
 We have the H\"{o}lder continuity of solution to elliptic equations as follows. 
 \begin{proposition} Let $N\in \{2,\cdots,8\}$, Let notations and conditions be as in Lemma \ref{lem62xh}. Let $\theta \in (\zeta,1)$, $f\in L^{\frac{N}{2-\theta}}(\Omega)$ and $u\in W_{B,A}(\Omega)\cap L^{\frac{N}{1-\theta+\zeta}}(\Omega)$. 
 Assume that $\partial\Omega$ is minimally smooth and
  \begin{equation}\int_{\Omega}\sum_{i,j=1}^{N}b^{ij} \frac{\partial u}{\partial x_{i}}\frac{\partial \varphi}{\partial x_{j}}dx = \hspace{-.05in} \int_{\Omega}\hspace{-.05in} f\varphi dx \hh\forall~\varphi\in W_{B,A}(\Omega), 
 \label{pr10z}
 \end{equation}\hk
 Then there  are  $C_{2}\in (0,\infty)$ and $\tau\in (0,1)$   such that \\\hh
$|u(x)-u(y)|\le C_{3}|x-y|^{\tau}\hh\forall~x,y\in\Omega.$
\label{prozz}
\end{proposition}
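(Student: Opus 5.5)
The plan is to deduce the H\"{o}lder continuity of $u$ from an integrability bound on $\nabla u$ that lies beyond the dimension, combined with Morrey's embedding on the minimally smooth domain $\Omega$.

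First, by Proposition \ref{nab}$(ii)$, in particular \eqref{2834bc}, we have the pointwise bound
\[|\nabla u|\le C_{1}\sqrt{N}\, I_{\Omega,1-\zeta}(|f|)\hh a.e.\ on\ \Omega .\]
Take $s=\frac{N}{2-\theta}$. Since $\theta<1$, we get $s<N<\frac{N}{1-\zeta}$, so Proposition \ref{nab}$(iii)$ (that is, Lemma \ref{Riez}$(i)$) gives $\nabla u\in L^{q}(\Omega)$ with
\[\frac{1}{q}=\frac{2-\theta}{N}-\frac{1-\zeta}{N}=\frac{1-\theta+\zeta}{N}.\]
Because $\theta>\zeta$, we have $q=\frac{N}{1-\theta+\zeta}>N$. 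This explains the hypothesis $u\in L^{\frac{N}{1-\theta+\zeta}}(\Omega)$: it is exactly $L^{q}$, so $u\in W^{1,q}(\Omega)$ with $q>N$. Note that $u\in W^{1,1}_{loc}$ with generalized derivatives by Lemma \ref{5z}.

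Second, I invoke Stein's extension theorem for minimally smooth domains \cite[p.189]{ST}. It provides a bounded extension operator $E:W^{1,q}(\Omega)\to W^{1,q}(\RR^{N})$. Morrey's inequality on $\RR^{N}$ then gives
\[|Eu(x)-Eu(y)|\le C\|Eu\|_{W^{1,q}(\RR^{N})}|x-y|^{1-\frac{N}{q}}.\]
After modification on a null set, this yields the claim with $\tau=1-\frac{N}{q}=\theta-\zeta\in(0,1)$ and $C_{3}=C\|E\|\,\|u\|_{W^{1,q}(\Omega)}$.

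The main obstacle I expect is justifying that the generalized gradient of $u$ coincides a.e. with the gradient controlled by the Riesz potential. This is the content of \eqref{2834b}, which holds because the test functions $v_{z,\rho,l}$ lie in $W_{B,A}(\Omega)$. Granting that, the rest is standard.

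One alternative that avoids the extension step is to argue directly from the Riesz potential bound. One estimates $|u(x)-u(y)|$ via the representation $u=\int G f$ together with the gradient bound \eqref{81bzz1}, splitting the integral into $|x-w|<2|x-y|$ and its complement as in Campanato-type estimates. This becomes delicate near $\partial\Omega$, which is why I prefer the Stein extension route.
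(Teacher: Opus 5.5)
Your proposal is correct and follows the paper's argument: Proposition \ref{nab} gives $\nabla u\in L^{\frac{N}{1-\theta+\zeta}}(\Omega)$, so $u\in W^{1,q}(\Omega)$ with $q=\frac{N}{1-\theta+\zeta}>N$, and Stein's extension theorem for minimally smooth domains followed by Morrey's embedding finishes the proof. Your use of part $(iii)$ with $s=\frac{N}{2-\theta}$ is the precise way to obtain this exponent (the paper attributes it to part $(iv)$ with ``$s=\theta$''), and your explicit choice of a representative on a null set fills a small point the paper leaves implicit.
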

\begin{proof}
  By $(iv)$ of Proposition \ref{nab} (for $s=\theta$),  $\nabla u\in L^{\frac{N}{1-\theta+\zeta}}(\Omega)$. Because $u$ is in $L^{\frac{N}{1-\theta+\zeta}}(\Omega)$, $u$ is in the Sobolev space $W^{1,\frac{N}{1-\theta+\zeta}}(\Omega)$. Since $\partial\Omega$ is minimally smooth, by Whitney's Extension  theorem \cite[Theorem 5, p.181]{ST}, there is $v\in W^{1,\frac{N}{1-\theta+\zeta}}(\RR^{N})$ such that $u(x)=v(x)$ for every $x$ in $\Omega$. Let $B(0,s)$ be a ball containing $\Omega$, Since $\zeta<\theta$, $\frac{N}{1-\theta+\zeta} >N$. By Morrey's theorem \cite[Corollary 9.13, p.283]{BR}, there are  $C_{3}\in (0,\infty)$ and $\tau\in (0,1)$ such that
\[|v(x)-v(y)|\le C_{3}|x-y|^{\tau}\hh\forall~x,y\in B(0,2s).\]\hk
 Thus we get the proposition.
\end{proof}\hk
\begin{remark} Let $v\in L^{p_{1}}(\Omega)$, $w\in L^{p_{2}}(\Omega)$, $q$ and $s$ be in $[1,\infty)$ such that $\frac{1}{p_{1}}+ \frac{1}{p_{2}}=\frac{1}{q}\le \frac{1}{s}$. By generalized H\"{o}der's inequality  in \cite[Exercise 4.4,p.126]{BR}, $vw$ is in $L^{q}(\Omega)\subset L^{s}(\Omega)$.
\label{r40}
\end{remark}
\begin{remark} Let  $\theta\in (0,\frac{1}{4})$, $\zeta=\frac{\theta}{2}$,  $\epsilon\in (0,\frac{2\theta}{N})$.   We have 
\begin{equation}\frac{N-t'}{t'(N+1)-2}
> \frac{N-t}{t(N+1)-2}\hh\forall~t'<t<2,
 \label{r41}
 \end{equation}
\begin{equation}\frac{N-t}{t(N+1)-2}~~ \mathop  \downarrow \limits_{t\to 2} ~~\frac{N-2}{2N} ,
 \label{r41a}
 \end{equation}
\begin{equation}(\frac{1+\epsilon}{2} - \frac{1}{N})=(\frac{N-2}{2N}+ \frac{\epsilon}{2})~~\mathop  \downarrow \limits_{\epsilon\to 0}~~\frac{N-2}{2N}.
 \label{r41c}
 \end{equation}\hk
We  choose $t(\zeta)$ in Proposition \ref{lem62xz} such that\\\hh
$\displaystyle\frac{N-t(\zeta)}{t(\zeta)(N+1)-2} \le \frac{N-2}{2N} +\frac{\theta}{N}.$
\\\hk
Let $t\in (t(\zeta),2)$. By \eqref{r41} and\eqref{r41a},\\\hh
$\displaystyle\frac{N-t}{t(N+1)-2}\in (\frac{N-2}{2N}, \frac{N-2}{2N} +\frac{\theta}{N}].$\\\hk
Thus, by \eqref{r41c}, we can choose $\epsilon\in (0,\frac{2\theta}{N})$ such that 
\begin{equation}\frac{1}{r}=\frac{N-t}{t(N+1)-2}=\frac{1+\epsilon}{2}-\frac{1}{N}. 
\label{r41d}
 \end{equation}
 \label{rep}
\end{remark}\hk
Now we can apply Proposition \ref{prozz} to a class of $f$.
 \begin{proposition} Let $N\in \{2,\cdots,8\}$, notations and conditions be as in Lemma \ref{lem62xh}, $\theta  \in (0,\frac{1}{4})$,  $\zeta =\frac{1}{4}\theta$, $\epsilon$ and $r$ be in Remark \ref{rep},   $a_{0}$, $a_{1}$, $a_{2}$  be  non-negative measurable functions on $\Omega$, $f$  be  a measurable function on $\Omega$ and $u\in W_{B,A}(\Omega)$.  Assume $b^{-1}\in L^{\frac{1}{\epsilon}}(\Omega)$,  $\displaystyle |f| \le a_{2}|\nabla u| +a_{1}|u| +a_{0}$, $\partial\Omega$ is minimally smooth,
 \begin{equation}
 a_{0}+a_{1}+ a_{2}^{2} \in L^{\frac{N}{2-2\theta}}(\Omega)\subset L^{\frac{N}{2-\theta}}(\Omega),
 \label{pr10zb0}
 \end{equation}
  \begin{equation}\int_{\Omega}\sum_{i,j=1}^{N}b^{ij} \frac{\partial u}{\partial x_{i}}\frac{\partial \varphi}{\partial x_{j}}dx = \hspace{-.05in} \int_{\Omega}\hspace{-.05in} f\varphi dx \hh\forall~\varphi\in W_{B,A}(\Omega), 
 \label{pr10zb}
 \end{equation}\hk
 Then there  are  $C\in (0,\infty)$ and $\tau\in (0,1)$   such that \\\hh
$|u(x)-u(y)|\le C|x-y|^{\tau}\hh\forall~x,y\in\Omega.$
\label{prozzbb}
\end{proposition}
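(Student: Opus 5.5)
We reduce to Proposition \ref{prozz}. It suffices to show that $f\in L^{\frac{N}{2-\theta}}(\Omega)$ and $u\in L^{\frac{N}{1-\theta+\zeta}}(\Omega)$. Then Proposition \ref{prozz}, applied with this $\theta$ and $\zeta=\frac{1}{4}\theta<\theta$, gives the Hölder estimate. The plan is a bootstrap using the integrability statements already proved: Proposition \ref{pro62} $(iv)$,$(v)$ for $u$ and Proposition \ref{nab} $(iii)$,$(iv)$ for $\nabla u$.

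\textbf{Step 1: starting integrability.} Since $u\in W_{B,A}(\Omega)$, we have $u\in L^{r}(\Omega)$ by Lemma \ref{4l2b}. For the gradient, Hölder with $b^{-1}\in L^{\frac{1}{\epsilon}}(\Omega)$ gives
\[\int_{\Omega}|\nabla u|^{p}dx\le \|b^{-1}\|_{L^{1/\epsilon}}^{p/2}\Big(\int_{\Omega}|\nabla u|^{2}b\,dx\Big)^{p/2}\quad\text{with}\quad \frac{p}{2-p}=\frac{1}{\epsilon},\]
that is, $\frac{1}{p}=\frac{1+\epsilon}{2}$. Hence $\nabla u\in L^{\frac{2}{1+\epsilon}}(\Omega)$. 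By \eqref{r41d}, $\frac{1}{r}=\frac{1}{p}-\frac{1}{N}$, which is exactly why $\epsilon$ was chosen as in Remark \ref{rep}.

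\textbf{Step 2: first bound on $f$.} Use Remark \ref{r40} on each term of $|f|\le a_{2}|\nabla u|+a_{1}|u|+a_{0}$:
\begin{itemize}
\item $a_{2}\in L^{\frac{2N}{2-2\theta}}$ and $\nabla u\in L^{p}$, so $a_{2}|\nabla u|\in L^{q_{1}}$ with $\frac{1}{q_{1}}=\frac{1-\theta}{N}+\frac{1+\epsilon}{2}$;
\item $a_{1}\in L^{\frac{N}{2-2\theta}}$ and $u\in L^{r}$, so $a_{1}|u|\in L^{q_{2}}$ with $\frac{1}{q_{2}}=\frac{2-2\theta}{N}+\frac{1+\epsilon}{2}-\frac{1}{N}$, which is smaller than $\frac{1}{q_{1}}$;
\item $a_{0}\in L^{\frac{N}{2-2\theta}}$.
\end{itemize}
Therefore $f\in L^{s_{0}}$ with $\frac{1}{s_{0}}=\frac{1}{q_{1}}$ (provided $s_{0}\ge 1$). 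We have $\frac{1}{s_{0}}<1$ because $\epsilon<\frac{2\theta}{N}$ and $N\ge 2$ give $\frac{1-\theta}{N}+\frac{1}{2}+\frac{\epsilon}{2}<1$.

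\textbf{Step 3: iteration.} Feed $f\in L^{s_{k}}$ into Proposition \ref{pro62} $(iv)$ and Proposition \ref{nab} $(iii)$. This gives
\[\frac{1}{\text{exp}(u)}=\frac{1}{s_{k}}-\frac{2-\zeta}{N},\qquad \frac{1}{\text{exp}(\nabla u)}=\frac{1}{s_{k}}-\frac{1-\zeta}{N},\]
where $\text{exp}(\cdot)$ denotes the new integrability exponent. Reinserting into Remark \ref{r40}, the worst term is again $a_{2}|\nabla u|$, and it yields
\[\frac{1}{s_{k+1}}=\frac{1-\theta}{N}+\frac{1}{s_{k}}-\frac{1-\zeta}{N}=\frac{1}{s_{k}}-\frac{\theta-\zeta}{N}.\]
The $a_{1}|u|$ term gains $\frac{2-2\theta}{N}-\frac{2-\zeta}{N}$ plus $\frac{1}{s_{k}}$, which is at least as good. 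So each round decreases $\frac{1}{s_{k}}$ by the fixed amount $\frac{3\theta}{4N}$, while $a_{0}$ always stays in $L^{\frac{N}{2-2\theta}}$.

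After finitely many rounds $\frac{1}{s_{k}}\le\frac{1-\zeta}{N}$. Then Proposition \ref{pro62} $(v)$ and Proposition \ref{nab} $(iv)$ give $u,\nabla u\in L^{q}$ for all $q<\infty$. At that point $a_{2}|\nabla u|$ and $a_{1}|u|$ lie in $L^{\frac{N}{2-\theta}}$ by Remark \ref{r40}, since $\frac{1-\theta}{N}<\frac{2-\theta}{N}$ and $\frac{2-2\theta}{N}<\frac{2-\theta}{N}$. Also $a_{0}\in L^{\frac{N}{2-\theta}}$ by \eqref{pr10zb0}. Hence $f\in L^{\frac{N}{2-\theta}}$ and $u\in L^{\frac{N}{1-\theta+\zeta}}$, and Proposition \ref{prozz} concludes.

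\textbf{Main obstacle.} The delicate point is the exponent bookkeeping at the start (Step 2). One must ensure $s_{0}\ge 1$, and more importantly that each application of Propositions \ref{pro62} and \ref{nab} stays within their admissible ranges, $s\in[1,\frac{N}{2-\zeta})$ or the complementary ranges. This is where the smallness $\epsilon<\frac{2\theta}{N}$ and $\zeta<\theta$ are used. I would first verify that the per-step gain $\frac{\theta-\zeta}{N}$ is strictly positive and that Step 2 yields $\frac{1}{s_{0}}<1$; the rest is a routine finite induction.
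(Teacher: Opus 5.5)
Your overall strategy matches the paper's proof. You bootstrap $|u|\lesssim I_{\Omega,2-\zeta}|f|$ and $|\nabla u|\lesssim I_{\Omega,1-\zeta}|f|$ against $|f|\le a_2|\nabla u|+a_1|u|+a_0$ using Remark~\ref{r40}, and the $a_2|\nabla u|$ term gains $\frac{\theta-\zeta}{N}$ per round. Ending by invoking Proposition~\ref{prozz}, rather than rerunning the extension/Morrey argument, is a legitimate simplification.

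\textbf{The gap: Step 3 cannot terminate as stated.} The term $a_0$ is only assumed to lie in $L^{\frac{N}{2-2\theta}}(\Omega)$. So $f$ can never be shown to be better than $L^{\frac{N}{2-2\theta}}$, and you always have $\frac{1}{s_k}\ge\frac{2-2\theta}{N}$. Since $\theta<\frac14$ and $\zeta=\frac{\theta}{4}$, this gives $\frac{2-2\theta}{N}>\frac{1-\zeta}{N}$. Consequently:
\begin{itemize}
\item the stage $\frac{1}{s_k}\le\frac{1-\zeta}{N}$ is never reached;
\item Proposition~\ref{nab}$(iv)$ is never available;
\item the claim $\nabla u\in L^{q}$ for all $q<\infty$ is false in general.
\end{itemize}
A counterexample already exists for $-\Delta u=a_0$ with $u=|x-z|^{\beta}\times$(cutoff) and $\beta\in(2\theta,1)$. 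Then $a_0\in L^{\frac{N}{2-2\theta}}$, but $\nabla u\notin L^{q}$ for $q\ge\frac{N}{1-\beta}$. Your own remark that $a_0$ always stays in $L^{N/(2-2\theta)}$ is exactly what caps the recursion: the true recursion is bounded below by $\frac{2-2\theta}{N}$. The delicate point is therefore the endgame, not the start.

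\textbf{The repair: stop earlier.} You only need $f\in L^{\frac{N}{2-\theta}}$, and this threshold lies strictly above the cap because $\frac{2-2\theta}{N}<\frac{2-\theta}{N}$.
\begin{itemize}
\item Iterate while $\frac{1}{s_k}>\frac{2-\theta}{N}$. Then $\frac{1}{s_k}>\frac{1-\zeta}{N}$, so Proposition~\ref{nab}$(iii)$ applies. Also $\frac{1}{s_k}-\frac{\theta-\zeta}{N}>\frac{2-2\theta+\zeta}{N}$, so the $a_2|\nabla u|$ term really is the worst one and the cap is inactive.
\item Once $\frac{1}{s_k}-\frac{\theta-\zeta}{N}\le\frac{2-\theta}{N}$, all three terms $a_2|\nabla u|$, $a_1|u|$, $a_0$ lie in $L^{\frac{N}{2-\theta}}$.
\item Since $\frac{2-\theta}{N}<\frac{2-\zeta}{N}$, Proposition~\ref{pro62}$(v)$ gives $u\in L^{q}$ for every $q<\infty$, in particular $u\in L^{\frac{N}{1-\theta+\zeta}}$.
\end{itemize}
Proposition~\ref{prozz} then applies. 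The paper's own endgame respects the same constraint. It stops at $f\in L^{N/(2-\frac34\theta)}$ and $\nabla u\in L^{N/(1-\frac12\theta)}$, a finite exponent above $N$, and never claims $\nabla u\in\bigcap_q L^q$.
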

\begin{proof}~~Let $\overline{C}(B,r,\overline{r},\zeta,\Omega)$ and $C_{1}$ be as in \eqref{81bzz} and \eqref{2834bc}. Put $M= N[\overline{C}(B,r,\overline{r},\zeta,\Omega) +C_{1}]$. We have
\begin{equation}
 \int_{\Omega}|\nabla u|^{\frac{2}{1+\epsilon}}dx \leb{H\ddot{o}lder} \{\int_{\Omega}b^{-\frac{1}{\epsilon}}dx\}^{\frac{\epsilon}{1+\epsilon}}\{\int_{\Omega}|\nabla u
 |^{2}bdx\}^{\frac{1}{1+\epsilon}}<\infty,
 \label{gz1}
 \end{equation}
 \begin{equation}0<\frac{1-\theta}{N}+\frac{1+\epsilon}{2}=\frac{1}{N}+\frac{1}{2}-\frac{\theta}{N}+\frac{\epsilon}{2}\lleb{N\ge 2,\epsilon< \frac{2\theta}{N}}1, \label{gz1b}
 \end{equation}\hk
 Thus, by Remark \ref{r40} and \eqref{pr10zb0}, $a_{2}|\nabla u|$ is in $L^{1}(\Omega)$. On other hand 
\begin{equation}0<\frac{2-\theta}{N}+\frac{1}{r}\eqb{\eqref{r41d}} \frac{2-\theta}{N} +\frac{1+\epsilon}{2}-\frac{1}{N}=\frac{1}{N}+\frac{1}{2}-\frac{\theta}{N}+\frac{\epsilon}{2}< 1. 
 \label{gz1c}
 \end{equation}\hk
 Therefore, by Remark \ref{r40} and \eqref{pr10zb0}, $a_{1}| u|$ is in $L^{1}(\Omega)$. Since $a_{0}\in L^{1}(\Omega)$,  $(a_{2}|\nabla u| +a_{1}|u|+a_{0})\in L^{1}(\Omega)$. Thus $f\in L^{1}(\Omega)$ and we can apply Proposition \ref{pro62} and Proposition \ref{nab} to $f$ and get 
\begin{equation}
 |u|\leb{\eqref{81bzz}}M I_{\Omega,2-\zeta}f\leb{\eqref{pr10zb0}} M I_{\Omega,2-\zeta}(a_{2}|\nabla u| +a_{1}|u| +a_{0}), 
 \label{gfz1}
 \end{equation}
 \begin{equation}
 |\nabla u|\leb{{2834bc}}M I_{\Omega,1-\zeta}f\leb{\eqref{pr10zb0}} M I_{\Omega,1-\zeta}(a_{2}|\nabla u| +a_{1}|u| +a_{0}), 
 \label{gfz2}
 \end{equation}\hk
 We have
 \begin{equation}
  (\frac{1-\theta}{N}+ \frac{1+\epsilon}{2})-\frac{2-\zeta}{N}=\frac{1+\epsilon}{2}-\frac{1}{N}-\frac{\theta-\zeta}{N}\eqb{Remark~ \ref{rep}}\frac{1}{r}-\frac{\theta-\zeta}{N} ,
 \label{gfz3}
 \end{equation}
 \begin{equation}
  (\frac{2-\theta}{N}+ \frac{1}{r})-\frac{2-\zeta}{N}=\frac{1}{r}-\frac{\theta-\zeta}{N}  , 
 \label{gfz4}
 \end{equation}
 \begin{equation}
  \frac{2-\theta}{N}-\frac{2-\zeta}{N}=-\frac{\theta-\zeta}{N} <0 . 
 \label{gfz4b}
 \end{equation}\hk
  By Lemma \ref{Riez}, Remark \ref{r40},  \eqref{pr10zb0}, Proposition \ref{pro62} and  \eqref{gfz3}-\eqref{gfz4b}, we have  $I_{\Omega,2-\zeta}(a_{2}|\nabla u|)$, $I_{\Omega,2-\zeta}(a_{1}|u|)$ and $I_{\Omega,2-\zeta}(a_{0})$ are in $L^{(\frac{1}{r}-\frac{\theta-\zeta}{N})^{-1}}(\Omega)$. Thus, by \eqref{gfz1}, $u$ is in $L^{(\frac{1}{r}-\frac{\theta-\zeta}{N})^{-1}}(\Omega)$. On other hand
  \begin{equation}
  (\frac{1-\theta}{N}+ \frac{1+\epsilon}{2})-\frac{1-\zeta}{N}=\frac{1+\epsilon}{2}-\frac{\theta-\zeta}{N} \eqb{Remark~ \ref{rep}}\frac{1}{r}+\frac{1}{N}-\frac{\theta-\zeta}{N},
 \label{gfz5}
 \end{equation}
 \begin{equation}
  [\frac{2-\theta}{N}+ (\frac{1}{r}-\frac{\theta-\zeta}{N})]-\frac{1-\zeta}{N}=\frac{1}{r}+\frac{1}{N}-\frac{\theta-\zeta}{N} , 
 \label{gfz6}
 \end{equation}
  \begin{equation}
  \frac{2-\theta}{N} -\frac{1-\zeta}{N}=\frac{1}{N}-\frac{\theta-\zeta}{N}<\frac{1}{r}+\frac{1}{N}-\frac{\theta-\zeta}{N} . 
 \label{gfz7}
 \end{equation}\hk
  By Lemma \ref{Riez}, Remark \ref{r40},  \eqref{pr10zb0}, Proposition \ref{pro62} and \eqref{gfz5}-\eqref{gfz7},  we have  $I_{\Omega,1-\zeta}(a_{2}|\nabla u|)$, $I_{\Omega,1-\zeta}(a_{1}|u|)$ and $I_{\Omega,1-\zeta}(a_{0})$ are in $L^{(\frac{1}{r}+\frac{1}{N}-\frac{\theta-\zeta}{N})^{-1}}\hspace{-.039in}(\Omega)$.    Thus, by \eqref{gfz2},  $\nabla u\in L^{(\frac{1}{r}+\frac{1}{N}-\frac{\theta-\zeta}{N})^{-1}}\hspace{-.039in}(\hspace{-.01in}\Omega)$.\\\hk
  Replacing $\frac{1+\epsilon}{2}$ and $\frac{1}{r}$ by $\frac{1}{r}+\frac{1}{N}-\frac{\theta-\zeta}{N}$ and $\frac{1}{r}-\frac{\theta-\zeta}{N}$ respectively, we get
   \begin{equation}
  [\frac{1-\theta}{N}+ (\frac{1}{r}+\frac{1}{N}-\frac{\theta-\zeta}{N})]-\frac{2-\zeta}{N}=\frac{1}{r}-2\frac{\theta-\zeta}{N} ,
 \label{gfz32zb}
 \end{equation}
 \begin{equation}
  [\frac{2-\theta}{N}+(\frac{1}{r}-\frac{\theta-\zeta}{N})] -\frac{2-\zeta}{N}=\frac{1}{r}-2\frac{\theta-\zeta}{N}  , 
 \label{gfz42zb}
 \end{equation}
  \begin{equation}
  \frac{2-\theta}{N} -\frac{2-\zeta}{N}=-\frac{\theta-\zeta}{N}<0. 
 \label{gfz42zbb}
 \end{equation}\hk
  By Lemma \ref{Riez}, Remark \ref{r40},  \eqref{pr10zb0}, Proposition \ref{pro62} and  \eqref{gfz32zb}-\eqref{gfz42zbb}, we have $I_{\Omega,2-\zeta}(a_{2}|\nabla u|)$, $I_{\Omega,2-\zeta}(a_{1}|u|)$ and $I_{\Omega,2-\zeta}(a_{0})$ are in $L^{(\frac{1}{r}-2\frac{\theta-\zeta}{N})^{-1}}(\Omega)$. Thus, by \eqref{gfz1}, $u\in L^{(\frac{1}{r}-2\frac{\theta-\zeta}{N})^{-1}}(\Omega)$.
 On other hand
  \begin{equation}
  [\frac{1-\theta}{N}+ (\frac{1}{r}+\frac{1}{N}-\frac{\theta-\zeta}{N})]-\frac{1-\zeta}{N}=\frac{1}{r}+\frac{1}{N}-2\frac{\theta-\zeta}{N},
 \label{gfz5zb}
 \end{equation}
 \begin{equation}
  [\frac{2-\theta}{N}+ (\frac{1}{r}-\frac{\theta-\zeta}{N})]-\frac{1-\zeta}{N}=\frac{1}{r}+\frac{1}{N}-2\frac{\theta-\zeta}{N} , 
 \label{gfz6zb}
 \end{equation}
  \begin{equation}
  \frac{2-\theta}{N} -\frac{1-\zeta}{N}=\frac{1}{N}-\frac{\theta-\zeta}{N} <\frac{1}{r}+\frac{1}{N}-2\frac{\theta-\zeta}{N},\hk~if ~ \frac{1}{r}-\frac{\theta-\zeta}{N}>0. 
 \label{gfz7zb}
 \end{equation}\hk
  By Lemma \ref{Riez}, Remark \ref{r40},  \eqref{pr10zb0}, Proposition \ref{pro62},  \eqref{gfz5zb}-\eqref{gfz7zb}, we have $I_{\Omega,1-\zeta}(a_{2}|\nabla u|)$, $I_{\Omega,1-\zeta}(a_{1}|u|)$ and $I_{\Omega,1-\zeta}(a_{0})$ are in $L^{(\frac{1}{r}+\frac{1}{N}-2\frac{\theta-\zeta}{N})^{-1}}(\Omega)$.    Thus, by \eqref{gfz2},  $\nabla u$ belongs to  $L^{(\frac{1}{r}+\frac{1}{N}-2\frac{\theta-\zeta}{N})^{-1}}(\Omega)$, if $\frac{1}{r}-\frac{\theta-\zeta}{N}>0$.
\\\hk 
  Put $m_{0}= \max\{m: \frac{1}{r}-m\frac{\theta-\zeta}{N}>0\}$. By mathematical induction,  $u\in L^{(\frac{1}{r}-m_{0}\frac{\theta-\zeta}{N})^{-1}}(\Omega)$ and  $\nabla u\in L^{(\frac{1}{r}+\frac{1}{N}-m_{0}\frac{\theta-\zeta}{N})^{-1}}(\Omega)$. Since $\frac{1}{r}-m_{0}\frac{\theta-\zeta}{N}-\frac{\theta-\zeta}{N}= \frac{1}{r}-(m_{0}+1)\frac{\theta-\zeta}{N}\le 0$, we have $\frac{1}{r}-m_{0}\frac{\theta-\zeta}{N}\le \frac{\theta-\zeta}{N}$. Thus $u\in L^{(\frac{\theta-\zeta}{N})^{-1}}(\Omega)$ and  $\nabla u\in L^{(\frac{1}{N}+\frac{\theta-\zeta}{N})^{-1}}(\Omega)=L^{\frac{N}{1+\theta-\zeta}}(\Omega)$. \\\hk
   Let $\overline{\theta} =\frac{1}{2}\theta$. We have 
   \begin{equation}[\frac{1-\theta}{N}+(\frac{1}{N}+\frac{\theta-\zeta}{N})]-\frac{2-\zeta}{N}=0,
   \label{gfz42zbbz}
 \end{equation}
 \begin{equation}(\frac{2-\theta}{N}+\frac{\theta-\zeta}{N})-\frac{2-\zeta}{N}=0,\label{gfz42zbbz1}
 \end{equation}
   \begin{equation}
  \frac{2-\theta}{N} -\frac{2-\zeta}{N}=-\frac{\theta-\zeta}{N}<0. 
 \label{gfz42zbbz2}
 \end{equation}\hk
  By Lemma \ref{Riez},  \eqref{pr10zb0},  Proposition \ref{pro62} and  \eqref{gfz42zbbz}-\eqref{gfz42zbbz2}, $I_{\Omega,2-\zeta}(a_{2}|\nabla u|)$, $I_{\Omega,2-\zeta}(a_{1}|u|)$ and $I_{\Omega,2-\zeta}(a_{0})$ are in $L^{\frac{N}{1-\overline{\theta}+\zeta}}(\Omega)\cap L^{\frac{N}{\theta-\overline{\theta}}}(\Omega)$. Thus, by \eqref{gfz1}, 
 \begin{equation}u\in L^{\frac{N}{1-\frac{1}{2}\theta}}(\Omega)\cap L^{\frac{N}{\frac{1}{4}\theta}}(\Omega).
 \label{gfz42zbbz3b}
 \end{equation}\hk
   On other hand
\begin{equation}\frac{1-\theta}{N}+(\frac{1}{N}+\frac{\theta-\zeta}{N})=\frac{2-\zeta}{N},
\label{gfz42zbbz3}
 \end{equation}
\begin{equation}\frac{2-\theta}{N}+\frac{\frac{1}{4}\theta}{N}=\frac{2-\frac{3}{4}\theta}{N}\lleb{\zeta=\frac{1}{4}\theta}\frac{2-\zeta}{N} ,
\label{gfz42zbbz4}
 \end{equation}
\begin{equation}\frac{2-\theta}{N} \lleb{\zeta=\frac{1}{4}\theta}\frac{2-\zeta}{N}. 
\label{gfz42zbbz5}
 \end{equation}\hk
 Thus $f\in L^{\frac{2-\zeta}{N}}(\Omega)$. Note that\\\hh
$\displaystyle\frac{1-\zeta}{N} -\frac{2-\zeta}{N}=\frac{1}{N}.$\\\hk
 Therefore, by \eqref{gfz2}, $\nabla u$ is in $L^{\frac{1}{N}}(\Omega)$. We have
 \begin{equation}\frac{1-\theta}{N}+\frac{1}{N}=\frac{2-\theta}{N},
\label{gfz42zbbz6}
\end{equation}\hk 
 Combining \eqref{gfz42zbbz6}, \eqref{gfz42zbbz4} and \eqref{gfz42zbbz5}, we get $f\in L^{\frac{2-\frac{3}{4}\theta}{N}}(\Omega)$. Note that
 \[\frac{2-\frac{3}{4}\theta}{N}-\frac{1-\zeta}{N}\eqb{\zeta=\frac{1}{4}\theta}\frac{1-\frac{1}{2}\theta}{N}.\]\hk
 Therefore, by \eqref{gfz42zbbz3b} and\eqref{gfz2}, $u$ and  $\nabla u$ are in $L^{\frac{N}{1-\frac{1}{2}\theta}}(\Omega)$ or $u\in W^{1,\frac{N}{1-\frac{1}{2}\theta}}(\Omega)$. Now arguing as in the proof of Proposition \ref{prozz},  we obtain the desired result.
\end{proof}
\begin{remark} If $b$ and $\overline{b}$ are constant, $b^{ij}$ is Dini-continuous for every $i,j$ in $\{1,\cdots,N\}$ and $A=\partial\Omega$, we get Propositions \ref{prozz} and \ref{prozzbb} with $\zeta=0$ for every $N$ in $\{3,4,\cdots\}$ by using Theorems $(1.1)$ and $(3.3)$  in \cite{GW} and the techniques in the proof of Propositions \ref{prozz} and \ref{prozzbb}.
\label{gw3}
\end{remark}
  \bibliographystyle{amsplain}

\newpage

 \begin{center}{\large\bf Details of the use the Mathematica in the paper}\vspace{.3in}\end{center}
\hk{\bf Proof of (4.33)}\\
In[2]:= $D[t^{1/2}*(t^2 - 10/3*t + 5), t]$\\
Out[2]= $\sqrt{t} (-10/3 + 2 t) + (5 - (10 t)/3 + t^2)/(2 \sqrt{t})$\\
In[3]:= $Expand[\sqrt{t} (-10/3 + 2 t) + (5 - (10 t)/3 + t^2)/(2 \sqrt{t})]$\\
Out[3]= $5/(2 \sqrt{t}) - 5 \sqrt{t} + (5 t^{3/2})/2$\\
Thus\\
$D[t^{1/2}*(t^2 - 10/3*t + 5), t] = t^{-1/2}[(5 t^{2})/2-5t+ 5/2]$\\
\hk{\bf Proof of (3.34)}\\
In[1]:= $D[5/(2 \sqrt{t}) - 5 \sqrt{t} + (5 t^(3/2))/2, t]$\\
Out[1]=$ -5/(4 t^{3/2}) - 5/(2 \sqrt{t}]) + (15 \sqrt{t})/4$\\
Thus\\
 $D[5/(2 \sqrt{t}) - 5 \sqrt{t} + (5 t^{3/2})/2, t]= t^{-3/2}[-5/4 - 5t/2  + (15 t^{2})/4]$\\\hk
 {\bf Proof of (4.42)}\\
In[2]:= $D[t^{s + 1/2} (t^2 - 10/3*t + 5), t]$\\
Out[2]= $t^{
  1/2 + s} (-10/3 + 2 t) + (1/2 + s) t^{-1/2 + 
   s} (5 - (10 t)/3 + t^2)$\\
In[3]:= $Expand[
 t^{1/2 + s} (-10/3 + 2 t) + (1/2 + s) t^{-1/2 +  s} (5 - (10 t)/3 + t^2)]$\\
Out[3]= $5/2 t^{-1/2 + s} + 5 s t^{-1/2 + s} - 5 t^{1/2 + s} - 
 10/3 s t^{1/2 + s} + 5/2 t^{3/2 + s} + s t^{3/2 + s}$\\
In[4]:= $Collect[
 5/2 t^{-1/2 + s} + 5 s t^{-1/2 + s} - 5 t^{1/2 + s} - 
  10/3 s t^{1/2 + s} + 5/2 t^{3/2 + s} + s t^{3/2 + s}, t]$\\
Out[4]=$ -5 t^{1/2 + s} - 10/3 s t^{1/2 + s} + 
 t (5/2 t^{1/2 + s} + s t^{1/2 + s}) + (
 5/2 t^{1/2 + s} + 5 s t^{1/2 + s})/t$\\
Thus\\
$D[t^{s + 1/2} (t^2 - 10/3*t + 5), t]=$\\
$=-5 t^{1/2 + s} - 10/3 s t^{1/2 + s} + 
 t (5/2 t^{1/2 + s} + s t^{1/2 + s}) + (
 5/2 t^{1/2 + s} + 5 s t^{1/2 + s})/t$\\
 $=t^{-1/2 + s}[-(5+\frac{10s}{3})t +(\frac{5}{2}+s)t^{2}+ (\frac{5}{2}+5s)]$\\\hk
{\bf Proof of (4.44)}\\
In[6]:= $D[-5 t^{1/2 + s} - 10/3 s t^{1/2 + s} + 
  t (5/2 t^{1/2 + s} + s t^{1/2 + s}) + (
  5/2 t^{1/2 + s} + 5 s t^{1/2 + s})/t, t]$\\
Out[6]= $-5 (1/2 + s) t^{-1/2 + s} - 10/3 s (1/2 + s) t^{-1/2 + s} + 
 5/2 t^{1/2 + s} + s t^{1/2 + s} + 
 t (5/2 (1/2 + s) t^{-1/2 + s} + s (1/2 + s) t^{-1/2 + s}) + (
 5/2 (1/2 + s) t^{-1/2 + s} + 5 s (1/2 + s) t^{-1/2 + s})/t - (
 5/2 t^{1/2 + s} + 5 s t^{1/2 + s})/t^2$\\
In[7]:= $Expand[-5 (1/2 + s) t^{-1/2 + s} - 
  10/3 s (1/2 + s) t^{-1/2 + s} + 5/2 t^{1/2 + s} + s t^{1/2 + s} + 
  t (5/2 (1/2 + s) t^{-1/2 + s} + s (1/2 + s) t^{-1/2 + s}) + (
  5/2 (1/2 + s) t^{-1/2 + s} + 5 s (1/2 + s) t^{-1/2 + s})/t - (
  5/2 t^{1/2 + s} + 5 s t^{1/2 + s})/t^2]$\\
Out[7]= $-5/4 t^{-3/2 + s} + 5 s^2 t^{-3/2 + s} - 5/2 t^{-1/2 + s} - 
 20/3 s t^{-1/2 + s} - 10/3 s^2 t^{-1/2 + s} + 15/4 t^{1/2 + s} + 
 4 s t^{1/2 + s} + s^2 t^{1/2 + s}$\\
In[8]:= $Collect[-5/4 t^{-3/2 + s} + 5 s^2 t^{-3/2 + s} - 
  5/2 t^{-1/2 + s} - 20/3 s t^{-1/2 + s} - 10/3 s^2 t^{-1/2 + s} + 
  15/4 t^{1/2 + s} + 4 s t^{1/2 + s} + s^2 t^{1/2 + s}, t]$\\
Out[8]= $15/4 t^{1/2 + s} + 4 s t^{1/2 + s} + 
 s^2 t^{1/2 + s} + (-5/2 t^{1/2 + s} - 20/3 s t^{1/2 + s} - 
  10/3 s^2 t^{1/2 + s})/t + (-5/4 t^{1/2 + s} + 5 s^2 t^{1/2 + s})/t^2$\\
  Thus\\
  $D[-5 t^{1/2 + s} - 10/3 s t^{1/2 + s} + 
  t (5/2 t^{1/2 + s} + s t^{1/2 + s}) + (
  5/2 t^{1/2 + s} + 5 s t^{1/2 + s})/t, t]=$\\
$=15/4 t^{1/2 + s} + 4 s t^{1/2 + s} + 
 s^2 t^{1/2 + s} + (-5/2 t^{1/2 + s} - 20/3 s t^{1/2 + s} - 
  10/3 s^2 t^{1/2 + s})/t + (-5/4 t^{1/2 + s} + 5 s^2 t^{1/2 + s})/t^2$\\
$=t^{-3/2 + s}[(15/4 + 4s + s^2)t^{2}-(5/2 + 20s/3  + 10s^2/3 )t -5/4 + 5 s^2]
$\vspace{.1in}\\\hk
{\bf Proof of (4.49)}\\
In[21]:$= 
D[t^{2 s}*(t^2 - 10/3*t + 5)*((5/2 + s)*t^2 - (5 + (10*s)/3)*t + 5/
    2 + 5*s), t]$//
Out[21]= $t^{2 s} (-5 - (10 s)/3 + 2 (5/2 + s) t) (5 - (10 t)/3 + t^2) + 
 t^{2 s} (-10/3 + 2 t) (5/2 + 
    5 s - (5 + (10 s)/3) t + (5/2 + s) t^2) + 
 2 s t^{-1 +   2 s} (5 - (10 t)/3 + t^2) (5/2 + 
    5 s - (5 + (10 s)/3) t + (5/2 + s) t^2)$\\
In[22]:=$ Expand[
 t^{2 s} (-5 - (10 s)/3 + 2 (5/2 + s) t) (5 - (10 t)/3 + t^2) + 
  t^{2 s} (-10/3 + 2 t) (5/2 + 
     5 s - (5 + (10 s)/3) t + (5/2 + s) t^2) + 
  2 s t^{-1 +  2 s} (5 - (10 t)/3 + t^2) (5/2 + 
     5 s - (5 + (10 s)/3) t + (5/2 + s) t^2)]$\\
Out[22]= $-(100 t^{2 s})/3 - 100 s t^{2 s} - 200/3 s^2 t^{2 s} + 
 25 s t^{-1 + 2 s} + 50 s^2 t^(-1 + 2 s) + 190/3 t^(1 + 2 s) + 
 950/9 s t^{1 + 2 s} + 380/9 s^2 t^{1 + 2 s} - 40 t^{2 + 2 s} - 
 140/3 s t^{2 + 2 s} - 40/3 s^2 t^{2 + 2 s} + 10 t^{3+ 2 s} + 
 9 s t^{3+ 2 s} + 2 s^2 t^{3+ 2 s}$\\
In[23]:=$ 
Collect[-(100 t^{2 s})/3 - 100 s t^{2 s} - 200/3 s^2 t^{2 s} + 
  25 s t^(-1 + 2 s) + 50 s^2 t^(-1 + 2 s) + 190/3 t^(1 + 2 s) + 
  950/9 s t^(1 + 2 s) + 380/9 s^2 t^(1 + 2 s) - 40 t^{2 + 2 s} - 
  140/3 s t^{2 + 2 s} - 40/3 s^2 t^{2 + 2 s} + 10 t^{3+ 2 s} + 
  9 s t^{3+ 2 s} + 2 s^2 t^{3+ 2 s}, t]$\\
Out[23]= $-(100 t^{2 s})/3 - 100 s t^{2 s} - 200/3 s^2 t^{2 s} + 
 t^2 (-40 t^{2 s} - 140/3 s t^{2 s} - 40/3 s^2 t^{2 s}) + 
 t^3 (10 t^{2 s} + 9 s t^{2 s} + 2 s^2 t^{2 s}) + 
 t ((190 t^{2 s})/3 + 950/9 s t^{2 s} + 380/9 s^2 t^{2 s}) + (
 25 s t^{2 s} + 50 s^2 t^{2 s})/t
$\\
$=t^{2 s-1}[(-(100 )/3 - 100 s  - 200/3 s^2)t  + 
 t^3 (-40  - 140/3 s  - 40/3 s^2) + 
 t^4 (10  + 9 s  + 2 s^2 ) + 
 t^{2} ((190 )/3 + 950/9 s  + 380/9 s^2 ) + (
 25 s  + 50 s^2 )]$\\
Thus we get $(4.49)$.\\\hk
{\bf Proof of $(4.52)$}\\
In[2]:= $Expand[
 2*(t^2 - 10/3*t + 1)^2 + 16*(t - 80/(3*16))^2 - 16*(5/3)^2 + 48]$\\
Out[2]= $50 - (200 t)/3 + (380 t^2)/9 - (40 t^3)/3 + 2 t^4$\\
In[1]:= $N[-16*(5/3)^2 + 48]$\\
Out[1]= 3.55556\\
Thus we get $(4.52)$.\\\hk
{\bf Proof of $(4.53)$}\\
In[3]:= $Expand[
 25 s + 50 s^2 + (-100/3 - 100 s - (200 s^2)/3) t + (190/3 + (950 s)/
     9 + (380 s^2)/9) t^2 + (-40 - (140 s)/3 - (40 s^2)/
     3) t^3 + (10 + 9 s + 2 s^2) t^4]$\\
Out[3]= $25 s + 50 s^2 - (100 t)/3 - 100 s t - (200 s^2 t)/3 + (
 190 t^2)/3 + (950 s t^2)/9 + (380 s^2 t^2)/9 - 40 t^3 - (
 140 s t^3)/3 - (40 s^2 t^3)/3 + 10 t^4 + 9 s t^4 + 2 s^2 t^4$\\
In[4]:= $Collect[
 25 s + 50 s^2 - (100 t)/3 - 100 s t - (200 s^2 t)/3 + (190 t^2)/3 + (
  950 s t^2)/9 + (380 s^2 t^2)/9 - 40 t^3 - (140 s t^3)/3 - (
  40 s^2 t^3)/3 + 10 t^4 + 9 s t^4 + 2 s^2 t^4, s]$\\
Out[4]= $-(100 t)/3 + (190 t^2)/3 - 40 t^3 + 10 t^4 + 
 s^2 (50 - (200 t)/3 + (380 t^2)/9 - (40 t^3)/3 + 2 t^4) + 
 s (25 - 100 t + (950 t^2)/9 - (140 t^3)/3 + 9 t^4)$\\\hk
  Thus we get $(4.53)$.\\\hk
  {\bf Proof of $(4.54)$}\\
In[1]:= $Expand[11*( t^2  - 30/11 t + 2)^2  + 
  2174 /99 (t - (99*70 )/(3*2174))^2 + 6747/1087]$\\
Out[1]:= $
75 - (500 t)/3 + (1330 t^2)/9 - 60 t^3 + 11 t^4$\\\hk
 Thus we get $(4.54)$.\\\hk   
  {\bf Proof of Step 2 in Lemma  11}\\
By $(4.49)$, we have\\
$h(1,\frac{2}{3},t)= k(\frac{2}{3})= 25\frac{2}{3} + 50 (\frac{2}{3})^2 - (100 t)/3 - 100\frac{2}{3} t - (200 (\frac{2}{3})^2 t)/3 + (
 190 t^2)/3 + (950 \frac{2}{3} t^2)/9 + (380 (\frac{2}{3})^2 t^2)/9 - 40 t^3 - (
 140 \frac{2}{3} t^3)/3 - (40 (\frac{2}{3})^2 t^3)/3 + 10 t^4 + 9 \frac{2}{3} t^4 + 2 (\frac{2}{3})^2 t^4$\\
 In[5]:=$ Expand[
 25*2/3 + 50*(2/
    3)^2 + (-100/3 - 100* 2/3 - (200*(2/3)^2)/3) t + (190/3 + (
     950*2/3)/9 + (380*(2/3)^2)/9) t^2 + (-40 - (140*2/3)/3 - (
     40*(2/3)^2)/3) t^3 + (10 + 9*2/3 + 2*(2/3)^2) t^4]$\\
Out[5]= $350/9 - (3500 t)/27 + (12350 t^2)/81 - (2080 t^3)/27 + ( 152 t^4)/9$\\
Thus\\
$h(1,\frac{2}{3},t)=350/9 - (3500 t)/27 + (12350 t^2)/81 - (2080 t^3)/27 + ( 152 t^4)/9$\\
On other hand\\
In[6]:= $Expand[
 152/9*(t^2 - 1040/456*t + 5/4)^2 + 3830/171*(t - 285/383)^2 - 
  3830/171*(285/383)^2 + 225/18]$\\
Out[6]= $350/9 - (3500 t)/27 + (12350 t^2)/81 - (2080 t^3)/27 + ( 152 t^4)/9.$\\
In[15]:= $N[- 3830/171*(285/383)^2 + 225/18]$\\
Out[15]= $0.0979112$\\
It implies\\
$h(1,\frac{2}{3},t)= 152/9*(t^2 - 1040/456*t + 5/4)^2 + 3830/171*(t - 285/383)^2 - 
  3830/171*(285/383)^2 + 225/18]\ge0.0979112>10^{-2}. $\\\hk
  {\bf Proof of  Lemma  12}\\
In[6]:=$ Expand[
 5*(t^2 - 10/3*t + 5) - ((5/2 + s) t^2 - (5 + (10 s)/3)*t + 5/2 + 
    5*s)]$\\
Out[6]= $45/2 - 5 s - (35 t)/3 + (10 s t)/3 + (5 t^2)/2 - s t^2$\\
In[7]:= $Collect[
 45/2 - 5 s - (35 t)/3 + (10 s t)/3 + (5 t^2)/2 - s t^2, t]$\\
Out[7]=$ 45/2 - 5 s + (-35/3 + (10 s)/3) t + (5/2 - s) t^2$.

\end{document}